\documentclass{amsart}

\usepackage{url}
\DeclareUnicodeCharacter{221E}{infinity}
\usepackage[T1]{fontenc}

\usepackage{amsmath,amssymb,amstext,amsthm}
\usepackage{mathrsfs}
\usepackage{mathtools}
\usepackage{todonotes}
\usepackage{xcolor}
\usepackage{esint}
\usepackage[shortlabels]{enumitem}
\usepackage{tikz-cd}
\usepackage[all,cmtip]{xy}
\usepackage{hyperref}
\usepackage{graphicx}
\usepackage{wrapfig}
\usepackage{microtype}
\usepackage[weather]{ifsym}
\usepackage{wasysym}

\usepackage{tensor}
\usepackage{faktor}
\usepackage{bbm}	

\usepackage[margin=1.5in]{geometry}
\newcommand{\Day}{\mathbin{\textnormal{\sun}}} 

\usepackage[noabbrev,nameinlink,capitalise]{cleveref}		

\theoremstyle{definition} 
\numberwithin{equation}{section}

\newtheorem{introtheorem}{Theorem}[section]

\newtheorem{theorem}{Theorem}[section]
\newtheorem*{theorem*}{Theorem}
\newtheorem{lemma}[theorem]{Lemma}
\newtheorem{cor}[theorem]{Corollary}
\newtheorem{prop}[theorem]{Proposition}
\newtheorem*{prop*}{Proposition}

\newtheorem{constr}[theorem]{Construction}
\newtheorem{ex}[theorem]{Example}

\newtheorem{defin}[theorem]{Definition}
\newtheorem{defin*}[theorem]{Definition}
\newtheorem{obs}[theorem]{Observation}
\newtheorem{reminder}[theorem]{Reminder}
\newtheorem{notat}[theorem]{Notation}

\newtheorem{rem}[theorem]{Remark}
\newtheorem{warning}[theorem]{Warning}

\Crefname{rem}{Remark}{Remarks}
\Crefname{reminder}{Reminder}{Reminders}
\Crefname{obs}{Observation}{Observations}
\Crefname{cor}{Corollary}{Corollaries}
\Crefname{prop}{Proposition}{Propositions}
\Crefname{defn}{Definition}{Definitions}
\Crefname{thm}{Theorem}{Theorems}
\Crefname{constr}{Construction}{Contstructions}
\Crefname{notat}{Notation}{Notations}
\Crefname{introtheorem}{Theorem}{Theorems}
\Crefname{subappendix}{Section}{Sections}
\Crefname{corollary}{Corollary}{Corollaries}
\Crefname{axioms}{Axiom}{Axioms}
\Crefname{exercise}{Exercise}{Exercises}
\Crefname{exercisenum}{Exercise}{Exercises}
\Crefname{construction}{Construction}{Constructions}
\Crefname{problem}{Problem}{Problems}
\Crefname{theorem}{Theorem}{Theorems}
\Crefname{defin}{Definition}{Definitions}
\Crefname{proposition}{Proposition}{Propositions}
\Crefname{lemma}{Lemma}{Lemmas}
\Crefname{lem}{Lemma}{Lemmas}
\Crefname{remark}{Remark}{Remarks}
\Crefname{example}{Example}{Examples}
\Crefname{quest}{Question}{Questions}
\Crefname{examplealph}{Example}{Examples}
\Crefname{section}{Section}{Sections}
\Crefname{subsection}{Subsection}{Subsections}
\Crefname{summary}{Summary}{Summaries}
\Crefname{warning}{Warning}{Warnings}
\Crefname{part}{Part}{Parts}
\Crefname{conjecture}{Conjecture}{Conjectures}

\crefformat{nul}{(#2#1#3)} 
\crefname{nul}{}{}
\Crefformat{nul}{(#2#1#3)}
\Crefname{nul}{}{}

\newcommand{\N}{\mathbb{N}}
\newcommand{\C}{\mathbb{C}}
\newcommand{\E}{\mathbb{E}}

\newcommand{\unicodeinfty}{∞}
\newcommand{\Hom}{\operatorname{Hom}}
\newcommand{\Endo}{\operatorname{End}}
\newcommand{\Fun}{\operatorname{Fun}}

\newcommand{\LinEnd}{\Endo^\rL_{\mathcal{V}}}
\newcommand{\Nat}{\operatorname{Nat}}
\newcommand{\Arr}{\operatorname{Arr}}
\newcommand{\Ind}{\operatorname{Ind}}
\newcommand{\lInd}{\Ind_{\tav}}

\newcommand{\iHom}{\underline{\Hom}}

\newcommand{\Env}{\operatorname{Env}}
\newcommand{\vEnv}{\operatorname{vEnv}}
\newcommand{\Sym}{\operatorname{Sym}}

\newcommand{\RMod}{\operatorname{RMod}}

\newcommand{\Spc}{\mathcal{S}}
\newcommand{\Sp}{\mathcal{S} {p}}
\newcommand{\Spcn}{\Sp^{\mathrm{cn}}}
\newcommand{\Spaces}{\Spc}

\newcommand{\Ab}{\operatorname{Ab}}

\newcommand{\Set}{\operatorname{Set}}

\newcommand{\id}{\operatorname{id}}

\newcommand{\Cat}{\mathop{\mathcal{C}at}\nolimits}

\newcommand{\BMod}[2]{{}_{#1 \!} \operatorname{Bimod}_{#2}}
\newcommand{\Catcolim}{{\Cat} {}^{\mathrm{colim}}} 

\newcommand{\Ass}{\operatorname{Ass}}
\newcommand{\Alg}{\operatorname{Alg}}
\newcommand{\CAlg}{\operatorname{CAlg}}
\newcommand{\CMon}{\operatorname{CMon}}

\newcommand{\RM}{\operatorname{RM}}

\newcommand{\Op}{\mathop{\mathcal{O}\! \mathit{p}}\nolimits}

\newcommand{\vOp}{\mathop{\mathit{v}\mathcal{O}\! \mathit{p}}\nolimits}

\newcommand{\Fin}{\operatorname{Fin}_*}
\newcommand{\fset}[1]{{\underline{#1}_{+}}}

\newcommand{\PrV}{\Pr_{\mathcal{V}}}

\newcommand{\PrW}{\Pr_{\mathcal{W}}}

\renewcommand{\Pr}{\operatorname{Pr}\nolimits}

\newcommand{\IOp}{\mathop{{\mathbb{O} \mathrm{p}}}}

\newcommand{\Map}{\operatorname{Map}}

\newcommand{\EM}{\mathcal{EM}}
\newcommand{\Kl}{\mathrm{Kleisli}}

\newcommand{\PShO}{\PSh^{\otimes}_\cV}

\newcommand{\PSh}{{\mathcal{P}}} 
\newcommand{\lPSh}{\hat{\PSh}}

\newcommand{\PShV}{{\PSh_\mathcal{V}}}
\newcommand{\coPShV}{\mathop{\PSh^{\vee\!}_{\cV}}\nolimits}

\newcommand{\LMod}{\operatorname{LMod}}

\newcommand{\Mul}{\operatorname{Mul}}

\newcommand{\kerodon}[1]{\cite[\href{https://kerodon.net/tag/#1}{Tag #1}]{kerodon}}

\newcommand{\CatV}{\Cat(\mathcal{V})}

\newcommand{\vCat}{{\mathop{v\mathcal{C}at}\nolimits}}

\newcommand{\lCat}{\widehat{\Cat}}

\newcommand{\lSpaces}{\hat{\Spaces}}
\newcommand{\vCatV}{\vCat(\mathcal{V})}

\newcommand{\vCatXV}{\vCat_X(\mathcal{V})}

\newcommand{\vEnr}{\mathop{v\mathcal{E}nr}\nolimits}

\newcommand{\FOp}{{\mathop{\!\mathcal{FO}p}\nolimits}}

\newcommand{\ColV}{\mathrm{Col}_{\mathcal{V}}}

\newcommand{\ob}{ob}
\newcommand{\col}{col}
\newcommand{\FreeV}{\operatorname{Free}\nolimits^{\mathcal{V}}_!}

\newcommand{\Free}{\mathrm{Free}}

\newcommand{\coCart}{\mathrm{coCart}}

\newcommand{\op}{\mathrm{op}}

\newcommand{\CIm}{\operatorname{CIm}}

\newcommand{\ICat}{\mathbb{C}\mathrm{at}}
\newcommand{\IlCat}{\widehat{\mathbb{C}\mathrm{at}}}

\newcommand{\ICatcolim}{\mathbb{C}\mathrm{at}^{\mathrm{colim}}}

\newcommand{\IPr}{\mathbb{P}\mathrm{r}}

\newcommand{\IPrV}{\IPr_{\cV}}

\newcommand{\Morita}{\mathbb{M}\mathrm{orita}}

\newcommand{\iNat}{\underline{\Nat}}

\DeclareMathOperator*\colim{colim}

\newcommand{\oArr}[2]{\overunderset{#1}{#2}{\downarrow}}	

\makeatletter
\newcommand{\otriang}{\mathbin{\mathpalette\make@circled{\triangleright}}}
\newcommand{\ostar}{\mathbin{\mathpalette\make@circled{\star}}}
\newcommand{\ootimes}{\mathbin{\mathpalette\make@bigcircled\otimes}}
\newcommand{\make@circled}[2]{%
	\ooalign{$\m@th#1\smallbigcirc{#1}$\cr\hidewidth$\m@th#1#2$\hidewidth\cr}%
}
\newcommand{\smallbigcirc}[1]{%
	\vcenter{\hbox{\scalebox{0.77778}{$\m@th#1\bigcirc$}}}%
}
\newcommand{\make@bigcircled}[2]{%
	\ooalign{$\m@th#1\bigsmallbigcirc{#1}$\cr\hidewidth$\m@th#1#2$\hidewidth\cr}%
}
\newcommand{\bigsmallbigcirc}[1]{%
	\vcenter{\hbox{\scalebox{1}{$\m@th#1\bigcirc$}}}%
}
\makeatother

\makeatletter
\newcommand{\oeert}{\mathbin{\mathpalette\eert@h\relax}}
\newcommand{\otree}{\mathbin{\mathpalette\tree@h\relax}}
\newcommand{\tree@h}[2]{%
	\ooalign{%
		\hfil$\m@th#1\ominus$\hfil\cr
		\hfil\scalebox{0.5}[1]{$\m@th#1<$}\hfil\cr
	}%
}
\newcommand{\eert@h}[2]{%
	\rotatebox[origin = c]{180}{\ooalign{%
			\hfil$\m@th#1\ominus$\hfil\cr
			\hfil\scalebox{0.5}[1]{$\m@th#1<$}\hfil\cr
		}%
	}
}
\makeatother

\makeatletter												
\newcommand{\doublehat}[1]{%
	\begingroup%
	\let\macc@kerna\z@%
	\let\macc@kernb\z@%
	\let\macc@nucleus\@empty%
	\hat{\raisebox{.2ex}{\vphantom{\ensuremath{#1}}}\smash{\hat{#1}}}%
	\endgroup%
}
\makeatother

\makeatletter
\newcommand{\oset}[3][0ex]{%
	\mathrel{\mathop{#3}\limits^{
			\vbox to#1{\kern-2\ex@
				\hbox{$\scriptstyle#2$}\vss}}}}
\makeatother

\DeclareSymbolFont{bbold}{U}{bbold}{m}{n}
\DeclareSymbolFontAlphabet{\mathbbold}{bbold}

\usepackage{eucal}                  

\DeclareFontFamily{T1}{cbgreek}{}
\DeclareFontShape{T1}{cbgreek}{m}{n}{<-6>  grmn0500 <6-7> grmn0600 <7-8> grmn0700 <8-9> grmn0800 <9-10> grmn0900 <10-12> grmn1000 <12-17> grmn1200 <17-> grmn1728}{}
\DeclareSymbolFont{quadratics}{T1}{cbgreek}{m}{n}
\DeclareMathSymbol{\qoppa}{\mathord}{quadratics}{19}
\DeclareMathSymbol{\Qoppa}{\mathord}{quadratics}{21}

\DeclareFontFamily{U}{rcjhbltx}{}	
\DeclareFontShape{U}{rcjhbltx}{m}{n}{<->rcjhbltx}{}
\DeclareSymbolFont{hebrewletters}{U}{rcjhbltx}{m}{n}

\DeclareMathSymbol{\lamed}{\mathord}{hebrewletters}{108}
\DeclareMathSymbol{\mem}{\mathord}{hebrewletters}{109}
\DeclareMathSymbol{\ayin}{\mathord}{hebrewletters}{96}
\DeclareMathSymbol{\tsadi}{\mathord}{hebrewletters}{118}
\DeclareMathSymbol{\qof}{\mathord}{hebrewletters}{113}
\DeclareMathSymbol{\shin}{\mathord}{hebrewletters}{152}
\DeclareMathSymbol{\tav}{\mathord}{hebrewletters}{116}

\DeclareFontFamily{U}{dmjhira}{}
\DeclareFontShape{U}{dmjhira}{m}{n}{ <-> dmjhira }{}

\DeclareRobustCommand{\yo}{\text{\usefont{U}{dmjhira}{m}{n}\symbol{"48}}}

\def\cC{\mathcal C}\def\cD{\mathcal D}

\def\cK{\mathcal K}
\def\cM{\mathcal M}\def\cN{\mathcal N}\def\cO{\mathcal O}\def\cP{\mathcal P}

\def\cV{\mathcal V}\def\cW{\mathcal W}\def\cX{\mathcal X}

\newcommand{\bC}{{\mathbb C}}
\newcommand{\bD}{{\mathbb D}}

\newcommand{\rB}{{\mathrm B}}
\newcommand{\rC}{{\mathrm C}}
\newcommand{\rD}{{\mathrm D}}
\newcommand{\rE}{{\mathrm E}}

\newcommand{\rI}{{\mathrm I}}

\newcommand{\rK}{{\mathrm K}}
\newcommand{\rL}{{\mathrm L}}
\newcommand{\rM}{{\mathrm M}}
\newcommand{\rN}{{\mathrm N}}
\newcommand{\rO}{{\mathrm O}}
\newcommand{\rP}{{\mathrm P}}
\newcommand{\rQ}{{\mathrm Q}}
\newcommand{\rR}{{\mathrm R}}

\newcommand{\rV}{{\mathrm V}}
\newcommand{\rW}{{\mathrm W}}

\newcommand{\HAsec}[1]{\href{http://www.math.ias.edu/~lurie/papers/HA.pdf\#section.#1}{\S #1}}

\newcommand{\HAsubsec}[1]{\href{http://www.math.ias.edu/~lurie/papers/HA.pdf\#subsection.#1}{\S #1}}

\newcommand{\HTTthm}[2]{\href{http://www.math.ias.edu/~lurie/papers/HTT.pdf\#theorem.#2}{#1~#2}}
\newcommand{\HAthm}[2]{\href{http://www.math.ias.edu/~lurie/papers/HA.pdf\#theorem.#2}{#1~#2}}

\newcommand{\SAGthm}[2]{\href{http://www.math.ias.edu/~lurie/papers/SAG-rootfile.pdf\#theorem.#2}{#1~#2}}

\newcommand{\HTT}[2]{\cite[\HTTthm{#1}{#2}]{HTT}}
\newcommand{\HA}[2]{\cite[\HAthm{#1}{#2}]{HA}}

\newcommand{\SAG}[2]{\cite[\SAGthm{#1}{#2}]{SAG}}

\newcommand{\markedthm}[2]{\href{http://www.markus-zetto.com/marked-rootfile.pdf\#theorem.#2}{#1~#2}}
\newcommand{\marked}[2]{\cite[\markedthm{#1}{#2}]{marked}}

\title{Enriched \texorpdfstring{$\infty$}{\unicodeinfty}-operads as marked algebras}

\author{Markus Zetto}
\address{Fachbereich Mathematik, Universit\"at Hamburg}
\email{markus.zetto@uni-hamburg.de}
\urladdr{https://www.markus-zetto.com}

\date{\today{}, Hamburg}

\begin{document}
	
	\pagenumbering{gobble}
	\clearpage
	\hypersetup{pageanchor=false}

	\begin{abstract}
		

		We show that an enriched $\infty$-operad is completely determined by its category of right modules together with a `marking' of the representable modules. More precisely, for any presentably monoidal $\infty$-category $\cV$ we construct an equivalence between the category of colored $\cV$-enriched $\infty$-operads and a certain full subcategory of the category of presentably symmetric monoidal $\cV$-module $\infty$-categories equipped with a functor from an $\infty$-groupoid. This effectively allows us to reduce many aspects of enriched $\infty$-operad theory to the theory of presentably symmetric monoidal $\infty$-categories.
		
		As an application, we describe a notion of univalence (or Rezk-completeness) for enriched $\infty$-operads, and directly construct an equivalence between univalent $\Spaces$-enriched $\infty$-operads in our sense and Lurie's model of $\infty$-operads in \cite{HA}. We study envelopes and categories of algebras for enriched $\infty$-operads and show that, in the $\Spaces$-enriched case, the resulting notions agree in both models.

		

	\end{abstract}
	\maketitle
	\hypersetup{pageanchor=true}
	
	\setcounter{tocdepth}{1}
	\tableofcontents

	
	\pagenumbering{roman}
	
	\section{Introduction}
	\label{sec:intro}

	The notion of a \emph{(colored) operad}, or multicategory, generalizes that of a category by allowing for \emph{$k$-ary multimorphisms} with $k$ sources and a single target, for any $k \geq 0$. Operads were originally introduced in algebraic topology to encode the algebraic structure carried by iterated loop spaces \cite{boardman2006homotopy}, \cite{may2006geometry}. The \emph{little cubes operads}  $\E_n$ introduced for this purpose already indicated the need for a homotopy coherent notion of operads, as their multimorphisms spaces are given by configuration spaces and not merely sets. Incorporating this is one of the main achievements of the notion of $\infty$-operads introduced in \cite{HA}.
	
	Many operads in nature however, most prominently the Lie operad and its derived or spectral variants, do not only have spaces of multimorphisms but rather they are \emph{enriched}: their multimorphisms form objects in some symmetric monoidal category, for instance vector spaces or derived categories. Specifically the latter requires the enrichment category to be homotopy coherent as well, indicating the need for \emph{enriched $\infty$-operads}. One of their main applications is Koszul duality which requires semiadditive enrichment to be defined \cite[Corollary 2.27]{heinemilnor} and enhances to an equivalence for stable enrichment \cite{ching2012bar}, \cite[Thm. 3.4]{heuts2024koszul}, \cite[Prop. 3.6]{heinemilnor}. They also arise in mathematical physics (related to factorization algebras and chiral algebras), geometric representation theory, deformation theory and algebraic geometry; see the introduction of \cite{haugsengsseq} for further pointers.

	\subsection{Enriched \texorpdfstring{$\infty$}{\unicodeinfty}-operads}

	Several definitions of enriched multicolored $\infty$-operads were already given in \cite{haugsengchu} and \cite{haugsengsseq}. We here follow an approach prioneered by \cite{baez1998higher} for classical multicolored operads 
	and extended to single-colored $\infty$-operads in \cite{brantner2017lubin}. See §1.2 of \cite{haugsengsseq} for a more detailed historical account.
	
	Fix a presentably symmetric monoidal $\infty$-category $\cV \in \CAlg(\Pr)$ as our enrichment category. Given a space $X$, let $\Sym X = \coprod_{k \geq 0} X^{\times k}_{h \Sigma_k}$ be the coproduct over its symmetric powers, i.e.\ the free symmetric monoidal $\infty$-category on $X$; and $\Fun(\Sym X, \cV)$ equipped with Day convolution the free presentably symmetric monoidal $\cV$-linear $\infty$-category on it. 
	This means that there is an equivalence
	\[
	\Endo^{\rL, \otimes}_\cV( \Fun(\Sym X, \cV) ) \simeq \Fun(X, \Fun(\Sym X, \cV)) \simeq \Fun(\Sym X \times X, \cV) \; .
	\]
	We show in \cref{sec:appstar} that presentably symmetric monoidal $\cV$-linear $\infty$-categories form a $2$-category $\CAlg(\RMod_\cV(\IPr))$, where we write $\IPr$ instead of $\Pr$ to indicate the $2$-categorical structure. Transporting the composition operation on endofunctors along this equivalence hence induces a monoidal structure $\oeert$ on the right, (the reverse of) which is often referred to as the \emph{composition product} or \emph{plethysm product} on the category of \emph{$X$-colored symmetric sequences}. Explicitly, given $A, B \in \Fun(\Sym X \times X, \cV)$, we show in \cref{cor:compositionproduct} that for $(x_1, \dots, x_n) \in \Sym X$ and $z \in X$ the product $(A \oeert B)(x_1, \dots, x_n; z)$ can be expressed as 
	\[
	\bigsqcup_{\underset{k \geq 0}{I_1 \sqcup \dots \sqcup I_k = \{1, \dots, n\}}} \colim_{(y_1, \dots, y_k) \in X^{\times k}} \left( A((x_{i_1})_{i_1 \in I_1}; y_1) \otimes \dots \otimes A((x_{i_k})_{i_k \in I_k};  y_k) \right) \otimes_{\Sigma_k} B(y_1 , \dots , y_k; z)
	\]
	where we take a coproduct over the partitions of $\{1, \dots, n\}$, and the $\Sigma_k$-action permutes the blocks $(x_{i_1})_{i_1 \in I_1}, \dots, (x_{i_k})_{i_k \in I_k}$ partitioning $(x_1, \dots, x_n)$, as well as the $y_1, \dots, y_k$. 
	
	An object $\Mul_\cO \in \Fun(\Sym X \times X, \cV)$ assigns to a finite symmetric tuple of source colors in $X$ and a single target color an object of $\cV$, which is precisely the data of multimorphisms in a $\cV$-enriched operad. Equipping $\Mul_\cO$ with an algebra structure for $\oeert$ encodes homotopy-coherent composition and identities. This motivates the following definition:
	
	\begin{defin}
		A \emph{$\cV$-enriched operad with space of colors} $X$ is a monad on $\Fun(\Sym X, \cV)$ in the $(\infty,2)$-category $\CAlg(\RMod_\cV(\IPr))$. In other words, it is an algebra for the composition product $\oeert$ on $\Fun( \Sym X \times X, \cV)$. We denote their category by 
		\[ \vOp_X(\cV) := \Alg(\Endo^{\rL, \otimes}_\cV( \Fun(\Sym X, \cV) )) \; . \]
	\end{defin}

	\subsection{Marked algebras}

	Given that we have defined an enriched $\infty$-operad  as a monad $\cO\in \Alg( \Endo^{\rL, \otimes}_\cV( \Fun(\Sym X, \cV) ))$, we define an \emph{operadic presheaf}\footnote{This is usually referred to as a \emph{right $\cO$-module}; we have opted for a different name since it is more suggestive, and since we model them as left modules due to our composition product being reverse.} to be an algebra over this monad. The \emph{operadic presheaf category} $\PShO(\cO)$ of $\cO$ is hence defined as its Eilenberg-Moore category in $\CAlg(\RMod_\cV(\IPr))$, and thereby part of a monadic adjunction
	\[
	\Fun(\Sym X, \cV) \rightleftarrows \LMod_\cO(\Fun(\Sym X, \cV)) =: \PShO(\cO) \; .
	\]
	Unwinding this definition using the explicit expression for the phlethysm product from above, we see that an operadic presheaf consists of a functor $W: \Sym(X) \to \cV$ equipped with an action morphism $\cO \oeert W \to W$ in $\cV$, i.e.\ a morphism
	\[
	\left( \cO((x_{i_1})_{i_1 \in I_1}; y_1) \otimes \dots \otimes \cO((x_{i_k})_{i_k \in I_k};  y_k) \right) \otimes_{\Sigma_k} W(y_1 , \dots , y_k) \to W(x_1, \dots, x_n)
	\]
	for every partition $I_1 \sqcup \dots \sqcup I_k = \{1, \dots, n\}$ with $k \geq 0$ and any $y_1, \dots, y_k \in X$, satisfying coherence relations. For a $\Spaces$-enriched operad, we show in \cref{obs:operadicpshspaces} that the operadic presheaf category can be expressed as $\PSh^\otimes_{\Spaces}(\cO) \simeq \PSh \Env (\cO)$, where the \emph{envelope} $\Env(\cO)$ is the free symmetric monoidal category generated by the $\infty$-operad $\cO$, i.e.\ left adjoint to the forgetful functor $\CAlg(\Cat) \to \Op$. For enriched operads, we define an enriched envelope functor such that this holds \cref{defin:envelopefun}.
	
	An an example, since $\cO$ is an algebra, the functor $\cO(-; x)$ for any $x \in X$ admits an action as above. We refer to it as the \emph{representable operadic presheaf} $\yo_x$ on $x$, and define the \emph{operadic Yoneda embedding} as the composition $X \subseteq \Sym X \subseteq \Fun(\Sym (X), \Spaces) \to \Fun(\Sym (X), \cV) \to \PShO(\cO)$.

	In $\cV$-enriched category theory, representable presheaves $\yo_c$ have the property that they are \emph{atomic}, i.e.\ the functor
	\[
	\iHom_{\PShV(\cC)}(\yo_c, -) \simeq \mathrm{ev}_c : \PShV(\cC) \to \cV
	\]
	preserves colimits and $\cV$-tensorings (equivalently, weighted colimits), where $\iHom$ denotes the internal hom for the $\cV$-tensoring. In the operadic setting, this is replaced by the following condition:
	
	\begin{defin}[{\cref{obs:hereditary}}]
		Let $\cM \in \CAlg(\RMod_\cV(\Pr))$ be a presentably symmetric monoidal $\cV$-module $\infty$-category, and $X \in \Spaces$ by a space. Then a functor $y: X \to \cM$ is called a \emph{$\otimes$-atomic marking} if 
		for any $x_1, \dots, x_n \in X$ with $n \geq 0$,
		\begin{itemize}
			\item The product $y (x_1) \otimes \dots \otimes y (x_n)$ is an atomic object in $\cM$,
			\item The unitor induces isomorphisms $\emptyset_\cV \simeq \iHom_\cM(y(x_1) \otimes \dots \otimes y(x_n), 1_\cM)$ unless $n=0$, in which case $1_\cV \simeq \iHom_\cM(1_\cM, 1_\cM)$,
			\item For any $m_1, m_2 \in \cM$, the multiplication map
			\[
			\bigsqcup_{\underline{n} = S \sqcup T} \iHom_\cM \left( \bigotimes_{s \in S} y (x_s), m_1 \right) \otimes \iHom_\cM \left( \bigotimes_{t \in T} y (x_t), m_2 \right) \overset{\otimes}{\to} \iHom_\cM \left( \bigotimes_{k=1}^n y (x_k) , m_1 \otimes m_2 \right)
			\]
			is an isomorphism. This is known as the \emph{hereditary condition}, c.f.\ \cite[Prop. 2.4.6, Rem. 2.4.7]{haugsengenv}.
		\end{itemize}
		An object $m \in \cM$ is called \emph{$\otimes$-atomic} if the functor $* \to \cM$ hitting $m$ is a $\otimes$-atomic marking.
	\end{defin}
	This completely characterizes those maps $X \to \cM$ which arise as the operadic Yoneda embeddings of enriched operads:

	\begin{introtheorem}[{\cref{thm:markedalgebras}}] \label{introthm:markedalgebras}
	For a space $X$, the assignment of the operadic Yoneda embedding $\yo : X \to \PShO(\cO)$ to a $\cV$-enriched operad $\cO$ with space of objects $X$ induces a fully faithful embedding
		\[
		\vOp_X(\cV) = \Alg\left(\Endo^{\rL, \otimes}_\cV(\Fun(\Sym X, \cV))\right) \hookrightarrow \CAlg (\RMod_\cV(\Pr))_{X/}
		\]
		whose image consists those functors $y: X \to \cM$ with $\cM \in \CAlg (\RMod_\cV(\Pr))$ such that:
		\begin{itemize}
			\item $y$ is a $\otimes$-atomic marking, 
			\item The image of $y$ generates $\cM$ under colimits, symmetric monoidal structure and $\cV$-tensoring.
		\end{itemize}
		We refer to functors $y: X \to \cM$ in the essential image of this embedding as \emph{marked $\cV$-algebras}. 
	\end{introtheorem}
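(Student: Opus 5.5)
The plan is to run a Barr--Beck / Eilenberg--Moore argument inside the $(\infty,2)$-category $\CAlg(\RMod_\cV(\IPr))$. The free object on $X$ is $\Fun(\Sym X,\cV)$, so a functor $y\colon X\to\cM$ of spaces is the same as a morphism $L_y\colon \Fun(\Sym X,\cV)\to\cM$ in $\CAlg(\RMod_\cV(\Pr))$. Write $R_y$ for its right adjoint, which is lax symmetric monoidal and $\cV$-linear. Concretely,
\[
R_y(m)(x_1,\dots,x_n)=\iHom_\cM\bigl(y(x_1)\otimes\dots\otimes y(x_n),m\bigr).
\]
The proof has three steps: characterize when $R_y$ is a morphism in the $2$-category, apply monadicity there, and then check that the two constructions are mutually inverse.

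\textbf{Step 1: when is $R_y$ a morphism?} I will show that $R_y$ is itself a morphism of $\CAlg(\RMod_\cV(\IPr))$ (colimit-preserving, $\cV$-linear and strong symmetric monoidal) exactly when $y$ is a $\otimes$-atomic marking. The three conditions in the definition match the three requirements.
\begin{itemize}
\item Atomicity of each product $y(x_1)\otimes\dots\otimes y(x_n)$ says that $R_y$ preserves colimits and $\cV$-tensorings.
\item The unit condition says that $R_y(1_\cM)$ is the Day unit, namely $1_\cV$ at the empty tuple and $\emptyset_\cV$ elsewhere.
\item The hereditary condition says that the lax structure map $R_y(m_1)\Day R_y(m_2)\to R_y(m_1\otimes m_2)$ is an equivalence. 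This uses the Day convolution formula $(F\Day G)(x_1,\dots,x_n)=\bigsqcup_{S\sqcup T}F(x_S)\otimes G(x_T)$.
\end{itemize}
In that case $L_y\dashv R_y$ is an adjunction internal to the $2$-category, and $T_y=R_yL_y$ is a monad on $\Fun(\Sym X,\cV)$ there, i.e.\ an object of $\vOp_X(\cV)$. This defines the inverse construction $y\mapsto T_y$.

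\textbf{Step 2: monadicity.} Conversely, for $\cO\in\vOp_X(\cV)$ the Eilenberg--Moore object $\PShO(\cO)$ exists in $\CAlg(\RMod_\cV(\IPr))$. I take this from \cref{sec:appstar}: limits of adjoints along strong monoidal colimit-preserving right adjoints are computed underlying. The free functor restricted to $X$ is precisely $\yo$. Since its right adjoint is the forgetful functor, which is a strong morphism, Step 1 shows that $\yo$ is $\otimes$-atomic. The generation condition holds because every algebra is a colimit of free algebras (bar resolution), and free algebras are generated by the $L(x_1,\dots,x_n)=\yo(x_1)\otimes\dots\otimes\yo(x_n)$ under $\cV$-tensoring and colimits.

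\textbf{Step 3: the round trips.} Starting from $\cO$, we recover $T_{\yo}=\cO$ because the monad of a monadic adjunction is the original monad. Starting from $y$ satisfying both conditions, the Barr--Beck--Lurie theorem applies to $R_y$, which is conservative and preserves all colimits by Step 1. Conservativity comes from generation: the objects $\bigotimes y(x_i)$ form a family of generators, and because they are atomic, the subcategory on which maps detected by $R_y$ are equivalences is closed under colimits and tensors. Hence $\cM\simeq\LMod_{T_y}$ compatibly with the markings.

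For full faithfulness, I compare mapping spaces. A map $\cM\to\cM'$ under $X$ corresponds to a map $L_{y'}\to$ (morphism)$\circ L_y$. By the universal property of Eilenberg--Moore objects in the $2$-category, maps $\PShO(\cO)\to\cN$ under $\Fun(\Sym X,\cV)$ are equivalent to left $\cO$-module structures on $L\colon \Fun(\Sym X,\cV)\to\cN$. For $\cN=\PShO(\cO')$, such structures are equivalent to monad maps $\cO\to\cO'$.

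\textbf{Main obstacle.} I expect the hardest step to be the $2$-categorical bookkeeping. Specifically, one must verify that Eilenberg--Moore objects in $\CAlg(\RMod_\cV(\IPr))$ have the expected universal property for maps out, which requires the Kleisli-type description with colimit-preserving left adjoints. One must also check that the mapping-space identification is natural enough to give an actual functor into $\CAlg(\RMod_\cV(\Pr))_{X/}$. To handle this, I would first try to reduce everything to the corresponding statement for monads in $\IPr$ via the forgetful functor, using that it creates the relevant lax limits.
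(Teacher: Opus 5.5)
Your proposal is correct and is essentially the paper's argument unpacked. Step~1 is the definition of a $\otimes$-atomic marking (internal left adjointness, unwound in \cref{obs:hereditary}); your Barr--Beck step with conservativity from generation is \cref{prop:monadicmorphisms} together with \cref{lem:cdomgeneration}; and your mapping-space argument for full faithfulness is what the paper obtains at once from \cref{cor:monadsff}, i.e.\ the general fact that the Kleisli-object functor $\Alg(\Endo_\C(c)) \hookrightarrow (\C^{\leq 1})_{c/}$ is fully faithful. The obstacle you flag is that Eilenberg--Moore objects in $\CAlg(\RMod_\cV(\IPr))$ also satisfy the Kleisli universal property for maps out, where such maps correspond to \emph{right} $\cO$-module structures under precomposition rather than left ones; this is exactly \cref{thm:EMPrV}, proved via \cref{prop:locgr} using that these $2$-categories locally admit geometric realizations.
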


	This closely follows the main idea of \cite{marked}. Indeed, $y: X \to \cM$ being a $\otimes$-atomic marking is equivalent to the induced cocontinuous symmetric monoidal functor $\Fun(\Sym X, \cV) \to \cM$ being a left adjoint $1$-morphism in the $(\infty,2)$-category $\CAlg(\RMod_\cV(\IPr))$. The second conditon of \cref{introthm:markedalgebras} is the further assertion that this adjunction is monadic. Since $\PShO(\cO)$ is the Eilenberg-Moore object of this adjunction, \cref{introthm:markedalgebras} then follows from general statements about monads.

	\subsection{Fissile categories}
	
	In stark contrast to the situation with enriched $\infty$-categories in \cite{marked}, the condition of being a $\otimes$-atomic marking cannot be checked one object at a time, i.e.\ $y: X \to \cM$ being $\otimes$-atomic is \emph{not} equivalent to (but does imply) $y(x)$ being $\otimes$-atomic for all $x \in X$. This is however true under the following additional assumption on $\cM$: 
	
	\begin{defin}[{\cref{def:fissile}}]
		A presentably symmetric monoidal $\cV$-module $\infty$-category $\cM$ is called \emph{fissile} if the unit $\cV \to \cM$ and the multiplication map $\otimes: \cM \otimes_\cV \cM \to \cM$ are left adjoint $1$-morphisms in $\CAlg(\RMod_\cV(\IPr))$.
	\end{defin}
	
	\begin{rem}
		This is very similar, but somewhat orthogonal to the notion of \emph{rigidity} in \cite[Chapter 1, §9]{gr}, where the multiplication is required to admit an adjoint as a $\cV$-$\cV$-bimodule map.
	\end{rem}
	
	We show in \cref{prop:iLfissile} that given a fissile category $\cM$, a functor $y: X \to \cM$ is a $\otimes$-atomic marking iff for every $x \in X$, the object $y(x) \in \cM$ is $\otimes$-atomic. A key example of a fissile category is $\Fun(\Sym X, \cV)$, since its multiplication is obtained by applying the free functor $\PSh \Sym (-) \otimes \cV$ to the fold map $\nabla: X \sqcup X \to X$. This generalizes to the following theorem: 
	
	\begin{introtheorem}[{\cref{thm:markedalgfissile}}]
	For every $\cV$-enriched operad $\cO$, the operadic presheaf category $\PShO(\cO)$ is fissile. In particular, the image of the functor from \cref{introthm:markedalgebras} may equivalently be characterized as consisting of those functors $y: X \to \cM$ such that \begin{itemize}
		\item $\cM$ is fissile,
		\item Every $x \in X$ is sent to a $\otimes$-atomic object in $\cM$,
		\item The image of $y$ generates $\cM$ under colimits, $\cV$-tensoring and symmetric monoidal structure.
	\end{itemize} 
	\end{introtheorem}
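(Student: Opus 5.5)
We prove that $\PShO(\cO)$ is fissile by exhibiting the right adjoints of the unit and of the multiplication. We then obtain the alternative characterization by combining the first theorem with \cref{prop:iLfissile}.

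\textbf{Reduction to the free case.} Write $F: \Fun(\Sym X, \cV) \rightleftarrows \PShO(\cO) : U$ for the monadic adjunction, internal to the $2$-category $\CAlg(\RMod_\cV(\IPr))$. Since $\cO$ is a monad there, $F$ is symmetric monoidal and $U$ is a lax symmetric monoidal $\cV$-linear colimit-preserving functor; the last point holds because $\cO \oeert -$ preserves colimits. The free category $\Fun(\Sym X, \cV)$ is fissile, as noted above. For the unit of $\PShO(\cO)$, observe that $\cV \to \PShO(\cO)$ factors as $\cV \to \Fun(\Sym X,\cV) \xrightarrow{F} \PShO(\cO)$. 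Composites of left adjoints are left adjoints in any $2$-category, so the unit is a left adjoint.

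\textbf{The multiplication.} The key observation is that $\PShO(\cO)\otimes_\cV \PShO(\cO)$ is itself the Eilenberg--Moore object of the monad $\cO\boxtimes\cO$ on $\Fun(\Sym X,\cV)\otimes_\cV\Fun(\Sym X,\cV)\simeq \Fun(\Sym(X\sqcup X),\cV)$. This holds because the relative tensor product of presentable $\cV$-modules preserves monadic adjunctions whose monads preserve colimits: such module categories are modules over algebras in $\RMod_\cV(\Pr)$, and $\LMod_A\otimes\LMod_B\simeq\LMod_{A\otimes B}$. Under this identification, $\otimes$ on $\PShO(\cO)$ is induced by the multiplication $\nabla_!$ on the free category, together with a morphism of monads relating $\cO\boxtimes\cO$ and $\cO$ along $\nabla$. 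The multiplication $\nabla_!$ on the free category has a symmetric monoidal $\cV$-linear cocontinuous right adjoint $\nabla^*$, obtained by restricting along $\Sym X\sqcup\Sym X\to\Sym X$. The plan is to show that $\nabla^*$ lifts to the module categories, giving a functor $\PShO(\cO)\to\PShO(\cO)\otimes_\cV\PShO(\cO)$ right adjoint to $\otimes$, and then to check that this right adjoint is again a $1$-morphism in $\CAlg(\RMod_\cV(\IPr))$. Concretely, it must be strong symmetric monoidal, preserve colimits, and be $\cV$-linear, with unit and counit symmetric monoidal. Colimit preservation and $\cV$-linearity can be checked after applying the conservative, colimit-preserving forgetful functors $U$, where they reduce to the corresponding statements for $\nabla^*$.

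\textbf{Main obstacle.} The hard step is showing that $\nabla^*$ intertwines the monads, i.e.\ that $\nabla^*(\cO\oeert W)\simeq(\cO\boxtimes\cO)\oeert\nabla^*W$ via the canonical map, so that the lift exists. In the explicit formula for $\oeert$, evaluating $\cO\oeert W$ on a tuple $(x_1,\dots,x_n)$ split as $S\sqcup T$ involves partitions of $\underline n$. The required isomorphism says these partitions correspond to pairs of partitions of $S$ and of $T$. This is exactly the hereditary property for the representables. Since $\cO$ comes from a one-sorted multimorphism functor on $\Sym X\times X$, each output color $y_j$ receives a block $I_j$, and $\Sym$ of a disjoint union splits as a product; so the isomorphism should follow from $\Sym(X\sqcup X)\simeq\Sym X\times\Sym X$ and the distributivity of $\otimes$ over coproducts in $\cV$. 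Given this, the \emph{In particular} clause follows: the image of the first theorem consists of fissile categories by what we just proved. Conversely, \cref{prop:iLfissile} turns objectwise $\otimes$-atomicity into $\otimes$-atomicity of the marking when $\cM$ is fissile, and generation is the same condition in both characterizations.
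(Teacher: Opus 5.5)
Your treatment of the unit is fine. So is your derivation of the \emph{in particular} clause via \cref{prop:iLfissile}, though for the forward direction you should also cite \cref{obs:otimesatomicmarkingobj} to get objectwise $\otimes$-atomicity. There is, however, a genuine gap in the multiplication step: it rests on a claim that is false. The isomorphism $\nabla^*(\cO\oeert W)\simeq(\cO\boxtimes\cO)\oeert\nabla^*W$ would require partitions of $\underline n=S\sqcup T$ to correspond to pairs of partitions of $S$ and of $T$. But a block may meet both $S$ and $T$, and such summands have no counterpart on the right. Concretely, take $\cV=\Spaces$, the single-colored commutative operad ($\cO(n)\simeq *$) and $W=\yo_{(*)}$. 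Then $\nabla^*W\simeq \yo_{((*),())}\sqcup\yo_{((),(*))}$, so $((\cO\boxtimes\cO)\oeert\nabla^*W)((*),(*))\simeq\emptyset$, while $(\nabla^*(\cO\oeert W))((*),(*))\simeq\cO(2)\simeq *$. Writing $G$ for the right adjoint of $\otimes$, your claim amounts to $G\circ F\simeq(F\otimes_\cV F)\circ\nabla^*$, a Beck--Chevalley condition that does not hold. The hereditary condition is about something different: it splits the \emph{source} tuple of a map into $m_1\otimes m_2$. Moreover, your plan never explains how to get strong symmetric monoidality of $G$, which is the actual content of fissility; you only address colimits and $\cV$-linearity.

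The intertwining is also unnecessary. $G$ exists because $\otimes$ is a colimit-preserving functor between presentable categories. The identity $(U\otimes_\cV U)\circ G\simeq\nabla^*\circ U$ holds automatically, since both sides are right adjoint to $\otimes\circ(F\otimes_\cV F)\simeq F\circ\nabla_!$. Contrary to what you write, $U$ is \emph{strong} symmetric monoidal, because $F\dashv U$ is an adjunction internal to $\CAlg(\IPrV)$. Since $\otimes_\cV$ is a $2$-functor, $F\otimes_\cV F$ is internally left adjoint, so $U\otimes_\cV U$ is strong symmetric monoidal, $\cV$-linear and colimit-preserving. It is also conservative, because $F\otimes_\cV F$ is colimit-dominant. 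Hence every required property of $G$ can be detected after applying $U\otimes_\cV U$, where it reduces to the corresponding property of $\nabla^*$. This includes invertibility of its lax structure maps $G(p)\otimes G(p')\to G(p\otimes p')$. This repaired argument is exactly the paper's: \cref{prop:tensoragdisj} applies \cref{lem:iLcdomdiagram} to the unit and multiplication squares of the internally left adjoint, colimit-dominant free functor out of the fissile category $\PSh\Sym X\otimes\cV$. No Eilenberg--Moore description of $\PShO(\cO)\otimes_\cV\PShO(\cO)$ is needed.
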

	
	\begin{rem}
		The notion of fissile categories is not technically necessary for most results in this paper. The reason why it is useful to separate from the notion of being $\otimes$-atomically generated is that it only refers to $\cM$, and not the existence of some functor $X \to \cM$. Where we do require it is for our discussion of colimits over enriched $\infty$-operads in \cref{prop:operadslimcolim}, and in future work \cite{cauchyoperads} to define the \emph{Cauchy-completion} of an enriched $\infty$-operad, see \cref{rem:cauchy}.
	\end{rem}

\subsection{Univalence and comparison to Lurie's \texorpdfstring{$\infty$}{\unicodeinfty}-operads}
	
Building on \cref{introthm:markedalgebras} we define the category of \emph{valent $\cV$-enriched operads} $\vOp(\cV)$ as the full subcategory of the pullback $\Fun([1], \lCat) \times_{\lCat} \CAlg(\RMod_\cV(\Pr))$ on the marked $\cV$-algebras. The terminology \emph{valent} as opposed to \emph{univalent} indicates that the space of objects $X$ that we begin with need not coincide with the actual underlying space of colors of our operad. To recover the correct underlying space, as suggested by the Yoneda lemma, one must require that $X$ embeds as a subcategory of $\PShO(\cO)$:
	
	\begin{defin*}
		A valent $\cV$-enriched operad $\cO$ is called \emph{univalent} if its associated marked algebra $\yo: \col \cO \to \PShO(\cO)$ is a subcategory inclusion, in other words $\col \cO \simeq \mathrm{Im}(\yo)^\simeq$. Denote by $\Op(\cV) \subseteq \vOp(\cV)$ the full subcategory on the univalent $\cV$-operads.
	\end{defin*}
	
	\begin{introtheorem}[{\cref{thm:vOpS},\cref{cor:OpS}}]
		The category $\vOp(\Spaces)$ of valent $\Spaces$-enriched operads is equivalent to the category of \emph{flagged $\infty$-operads}, i.e.\ pairs consisting of an $\infty$-operad $\rO$ in the sense of Lurie together with a surjective functor $X \to \underline{\rO}$ from some space $X$ to the underlying category of $\rO$.
		
		Moreover, the full subcategory $\Op(\Spaces)$ of univalent $\Spaces$-enriched operads is equivalent to Lurie's category $\Op$ of $\infty$-operads.
	\end{introtheorem}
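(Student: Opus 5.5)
The plan is to pass through \cref{introthm:markedalgebras} in the case $\cV = \Spaces$, which identifies $\vOp(\Spaces)$ with pairs $(y\colon X \to \cM)$ satisfying two conditions: $y$ is a $\otimes$-atomic marking, and $\cM \in \CAlg(\Pr)$ is generated by $y(X)$ under colimits and tensor products. On Lurie's side, a flagged $\infty$-operad $(\rO, X \twoheadrightarrow \underline{\rO})$ is sent to the symmetric monoidal presheaf category $\PSh\Env(\rO)$, equipped with the composite
\[ X \to \underline{\rO} \to \Env(\rO) \to \PSh\Env(\rO). \]
\cref{obs:operadicpshspaces} shows that $\PSh^\otimes_\Spaces(\cO) \simeq \PSh\Env(\cO)$ compatibly with the Yoneda maps. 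So both sides embed into $\CAlg(\Pr)_{X/}$, varying over $X$, and it suffices to identify the images and check full faithfulness on Lurie's side.

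\textbf{Step 1: $\PSh\Env(\rO)$ lies in the image.} Generation is clear, since $\Env(\rO)$ is generated under $\otimes$ by the colors. For $\otimes$-atomicity, note that $y(x_1)\otimes\dots\otimes y(x_n)$ is representable, hence atomic. The conditions on maps to the unit reflect that in an envelope $\Map_{\Env}((x_i), ())$ is empty unless $n=0$. The hereditary condition follows from the explicit formula
\[ \Map_{\Env(\rO)}\big((x_1,\dots,x_n),(z_1,\dots,z_m)\big) \simeq \coprod_{f\colon \underline n\to\underline m} \prod_j \Mul_\rO\big((x_i)_{f(i)=j}; z_j\big). \]
This formula shows the map is an equivalence on representables. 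Both sides preserve colimits separately in $m_1$ and $m_2$, because the marking objects are atomic and $\otimes$ preserves colimits in each variable. Hence the equivalence extends to all of $\PSh\Env(\rO)$.

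\textbf{Step 2: essential surjectivity and full faithfulness.} Take a marked $\Spaces$-algebra $y\colon X\to\cM$. Let $\cE\subseteq\cM$ be the full subcategory on tensor products of objects $y(x)$; it is symmetric monoidal. Atomicity plus generation give $\cM\simeq\PSh(\cE)$ symmetric monoidally, with Day convolution, because atomic generators give a presheaf category and the tensor product preserves colimits in each variable. The hereditary and unit conditions are exactly Haugseng's characterization \cite[Prop. 2.4.6]{haugsengenv} of envelopes among symmetric monoidal categories. So $\cE\simeq\Env(\rO)$, where $\rO$ is the suboperad on the essential image of $X$, and $X \to \underline{\rO}$ is surjective by construction. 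For maps, note that morphisms of marked algebras are symmetric monoidal left adjoints under $X$. They restrict to $\cE$, giving symmetric monoidal functors that preserve the generating colors. By \cite{haugsengenv} these are equivalent to maps of $\infty$-operads, together with the compatible map of flags.

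\textbf{Step 3: univalence.} The univalent objects are those with $X \simeq \mathrm{Im}(\yo)^\simeq$. This means the flag $X \to \underline{\rO}$ is the core inclusion, so we recover Lurie's $\Op$ as flagged operads whose flag is an equivalence onto the core.

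\textbf{Main obstacle.} The hardest step is Step 2. One must make the envelope recognition coherent, as a functor rather than objectwise, and check that morphisms of marked algebras correspond exactly to maps of flagged operads. I would first try to use the equivalence between $\Op$ and the category of symmetric monoidal categories with hereditary, unit-compatible generating subspaces, from \cite{haugsengenv}, and deduce the statement as an equivalence of full subcategories of $\CAlg(\Cat)_{X/}$ via the presheaf construction.
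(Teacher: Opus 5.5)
Your argument is correct, but it is organized differently from the paper's.

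\textbf{The paper's route.} The paper first builds the functor $\FOp\to\vOp(\Spaces)$ as a map of pullbacks (\cref{con:associatedmarked}). It obtains $\otimes$-atomicity of $X\to\PSh\Env\rO$ from Steinebrunner's \cref{thm:dayweaklydisj} together with $\otimes$-disjunctivity of $\Env(f)$. It then constructs, for arbitrary $\cV$, a right adjoint $\cO\mapsto(\col\cO\to\operatorname{Im}(\yo_\cO))$ (\cref{prop:freeunderlyingadj}). The hom-space computation for this adjoint is essentially your ``maps'' argument: orthogonality of surjective and fully faithful operad maps, plus the universal property of $\PSh\Env$. Finally it checks the unit, which is an equivalence because $\rO\to\PSh\Env\rO$ is fully faithful. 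It checks the counit via \cref{prop:fflemma}, which reduces full faithfulness of $\PSh\Env(\operatorname{Im}\yo)\to\cM$ to multimorphism spaces into single generators.

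\textbf{Your route.} You verify the conditions of \cref{obs:hereditary} by hand, using the explicit mapping-space formula in $\Env(\rO)$ and complete compactness of representables. You prove essential surjectivity by recognizing $\cM\simeq\PSh(\mathcal{E})$ and $\mathcal{E}\simeq\Env(\rO)$. This is the paper's counit check done directly, without internal adjunctions.

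\textbf{What each buys.} Your route is more elementary and self-contained for $\cV=\Spaces$. The paper's route yields the adjunction $\FOp\rightleftarrows\vOp(\cV)$ for every $\cV$. It also gives internal left adjointness of $\PSh\Env(f)$ for all operad maps $f$, which the paper reuses later, e.g.\ in \cref{cor:tensordisjchar}.

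\textbf{Three small remarks.}

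First, \cref{obs:operadicpshspaces} is deduced from this very theorem, so it cannot serve as input. Your Steps 1--3 never actually use it, so nothing breaks.

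Second, your appeal to \cite{haugsengenv} to get $\mathcal{E}\simeq\Env(\rO)$ is not quite what you need off the shelf. Their recognition of envelopes is formulated for symmetric monoidal categories whose underlying groupoid is already known to be free. Instead, iterate the hereditary and unit conditions to get
$\Map_\cM(\bigotimes_i y(x_i),\bigotimes_j y(z_j))\simeq\coprod_{f\colon\underline n\to\underline m}\prod_j\Map_\cM(\bigotimes_{f(i)=j}y(x_i),y(z_j))$.
Together with your Step 1 formula, this shows directly that the canonical functor $\Env(\rO)\to\mathcal{E}$ is fully faithful, and freeness of $\mathcal{E}^\simeq$ follows.

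Third, the coherence problem you flag as the main obstacle does not arise. Once the functor $\FOp\to\vOp(\Spaces)$ is constructed, essential surjectivity needs only objectwise recognition. Full faithfulness is the mapping-space computation via orthogonality. So no functorial envelope recognition is required.
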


	Let us sketch how this comparison works. We first explain how to associate a monad in $\CAlg(\Pr)$ to a flagged $\infty$-operad $X \to \rO$. Regard $X$ as an $\infty$-operad $\mathrm{Triv}_X$ with only unary operations, so that $\Env \mathrm{Triv}_X \simeq \Sym X \in \CAlg(\Cat)$. Applying the envelope functor $\Env$ and then the presheaf functor $\PSh$ to the map of operads $i: \mathrm{Triv}_X \to \rO$ yields an adjunction
	\[
	\begin{tikzcd}[ampersand replacement=\&]
		\PSh \Sym X  \ar[rr, bend left, "\Env(i)_!", "\otimes \text{, L}"'] \& \& \PSh \Env \cO \ar[ll, bend left, "\Env(i)^*", "\otimes \text{, L}"']
	\end{tikzcd}
	\]
	in which the left adjoint is symmetric monoidal and the right adjoint preserves colimits. In \cref{thm:dayweaklydisj} we show, using an argument due to Jan Steinebrunner, that the induced lax symmetric monoidal structure on the right adjoint is actually strong. Hence the above is an adjunction in the $2$-category $\CAlg(\IPr)$, so we obtain a symmetric monoidal colimit-preserving monad
	\[
	(\Env f)^* (\Env f)_! \in \Alg \left( \Endo^{\rL, \otimes} (\PSh \Sym X) \right) \simeq \Alg( \Fun(\Sym X \times X, \Spaces) ) \; .
	\]
	The latter equivalence precomposes with the Yoneda embedding $o \mapsto \yo_{(o)}$, so we calculate
	\[
	(\Env f)^* (\Env f)_! (\yo_{(o)}) = (\Env f)^* \yo_{(o)} = \Map_{\Env(\cO)}( - , (o)) \circ \Env (f)
	\]
	which sends a symmetric tuple $(o_1, \dots, o_n) \in \Sym (\col \cO)$ to
	\[
	\Map_{\Env(\cO)}( (o_1, \dots, o_n) , (o)) \simeq \Mul_\cO( o_1, \dots, o_n ; o) \; .
	\]
	Thus, the multimorphism object $\Mul_\cO$ is promoted to an algebra in $\Fun(\Sym X \times X, \Spaces)$, i.e.\ a $\Spaces$-enriched operad. By construction, the marked algebra associated to this $\Spaces$-operad is $X \to \PSh \Env \rO$, because the induced functor $\PSh \Sym X \to \PSh \Env \rO$ is indeed a monadic $1$-morphism in $\CAlg(\IPr)$. Hence the argument identifies $\PSh \Env \rO$ with the operadic presheaf category of $\rO$. We deduce that the enriched envelope we define in \cref{defin:envelopefun}, and the categories of $\cO$-algebras we define in \cref{defin:alg}, agree with Lurie's definitions \cref{rem:envagrees}, \cref{prop:algspaces}.
	
	\begin{rem}
		In this paper, we only consider symmetric enriched operads. In principle, our approach immediately extends to non-symmetric operads, or $\rO$-operads for any Lurie-operad $\rO$, by replacing the $(\infty, 2)$-category $\CAlg(\RMod_\cV(\IPr))$ with $\Alg_{\rO}(\RMod_\cV(\IPr))$, the symmetric algebra functor $\Sym$ with the free $\rO$-algebra functor $\mathrm{Free}_{\rO}$, and so on. The main difficulty arises in \cref{sec:appstar}, since there is no $\star$-product on $\Alg_\rO(\Cat)$ so the $(\infty,2)$-categorical structure on $\Alg_{\rO}(\RMod_\cV(\IPr))$ must be described via different means, e.g.\ by embedding it into $\IOp_{/\rO}$ or $\Fun(\rO^\otimes, \ICat)$. Also, our definition of fissile categories only makes sense in the symmetric situation.
	\end{rem}
	
	\begin{rem}
		While we only describe $\cV$-enriched operads for presentably symmetric monoidal enrichment category $\cV \in \CAlg(\RMod_\cV(\Pr))$, our discussion extends to enrichment in any $\rV \in \CAlg(\Cat)$ since we can symmetric monoidally embed $\rV \subseteq \PSh (\rV) \in \CAlg(\RMod_\cV(\Pr))$ into its presheaf category, so $\vOp(\rV) \subseteq \vOp(\PSh(\rV))$ can be defined as the full subcategory of $\PSh(\rV)$-enriched operads whose multimorphism objects all lie in $\rV \subseteq \PSh(\rV)$. Compare \marked{Def.}{9.4}. In fact, we may even enrich over a Lurie-operad $\rO \in \Op$ by embedding it into $\PSh \Env \rO \in \CAlg(\Pr)$.
	\end{rem}

\subsection{Organization of the paper}

In \cref{sec:enrichedoperads}, we define enriched $\infty$-operads as monads in the $(\infty,2)$-category $\CAlg(\RMod_\cV(\IPr))$, and develop some basic intuition and constructions. \cref{sec:tensoratomics} is concerned with studying internally left adjoint $1$-morphisms as well as monadic $1$-morphisms in $\CAlg(\RMod_\cV(\IPr))$, including criteria for characterizing and methods to construct them. 
We use this theory in \cref{sec:markedalgebras} to give an alternative description of enriched $\infty$-operads using marked algebras, and introduce the notion of fissile categories. In \cref{sec:comparison} we compare $\Spaces$-enriched $\infty$-operads with Lurie's notion of $\infty$-operads, which involves the notion of univalence. \cref{sec:envelopes} is about enriched envelopes, and \cref{sec:algebras} defines categories of algebras and for instance examines limits, colimits and free algebras. Finally, \cref{sec:appstar} constructs the $(\infty,2)$-category $\CAlg(\RMod_\cV(\IPr))$ we make use of throughout the paper, establishing several of its properties we use in the main text like the existence of Eilenberg-Moore objects.

\subsection{Relation to other work}

In \cite{haugsengmonads}, Haugseng proves that $\infty$-operads in the sense of Lurie are equivalent to algebras in $\Spaces$-enriched symmetric sequences. However, his construction of the composition product differs from the one obtained by identifying symmetric sequences with an endomorphism object in $\CAlg(\RMod_\cV(\IPr))$, and he does not supply a comparison between the two. As he notes in §1.2, this left open the question of whether this notion of enriched $\infty$-operads agrees with this more widely used one. During the preparation of this work, both monoidal structures were compared in the single-colored case by \cite{arakawa}. 

There is further unpublished work on our comparison result for $\Spaces$-enriched $\infty$-operads in the single-colored case: Lukas Brantner and Gijs Heuts \cite{brantnerheuts} are preparing a proof using the theory of analytic monads. Also Thomas Blom, Connor Malin and Niall Taggart \cite{blomkoszul} give a proof that is closely related to ours and describe Koszul duality in their formalism. Finally,  Shaul Barkan and Jan Steinebrunner were considering a similar framework that extends to colored enriched $\infty$-properads, see also the outlook of \cite{equifib}.

\subsection{Acknowledgements}

I would like to thank Fernando Abellán, Nikolaus Betker, Thomas Blom, Adam Dauser, Jonte Gödicke and Jonas Linßen for valuable discussions about the contents of this paper, as well as Tobias Dyckerhoff for the opportunity to talk about it in his Seminar. Particular thanks to Jan Steinebrunner for telling me about \cref{thm:dayweaklydisj} and further helpful comments, and to my PhD supervisor David Reutter for extensive feedback on a preliminary draft. The author was supported by the Deutsche Forschungsgemeinschaft via the Emmy Noether program – 493608176.

\subsection{Notation and Conventions}
	Throughout this paper we freely use language and statements of $\infty$-category theory as developed in \cite{joyal2002quasi}, \cite{HTT}, \cite{HA} and \cite{kerodon}. We also make extensive use of $(\infty,2)$-categories and enriched $\infty$-categories, for which we mostly refer to \cite{bienriched}, \cite{heinemonadicity} and \cite{marked}.

\begin{itemize}
	\item We write $\Spaces$ for the $\infty$-category of `spaces' or `$\infty$-groupoids'.
	\item By a category, $2$-category or $\cV$-category we always refer to an $(\infty, 1)$-category, $(\infty, 2)$-category or a $\cV$-enriched $\infty$-category. All categorical constructions, like functors, limits and algebra objects, are homotopy coherent.
	
	\item We fix uncountable inaccessible cardinals $\tav < \hat{\tav} < \doublehat{\tav}$, and call them the universes of \emph{small}, \emph{large} and \emph{very large} sets, respectively\footnote{$\tav$ (\emph{tav} or \emph{taw}) is the last letter in the Hebrew alphabet, used by Cantor to denote the ``absolute infinite''.}.
	
	\item Unless stated otherwise (e.g.\ by calling them presentable) categories are always small, and by having or preserving all colimits we refer to small colimits. The large analog of a construction is then denoted by a hat -- e.g.\ $\Cat$ denotes the large category of small categories, while $\widehat{\Cat}$ denotes the very large category of large categories.
	\item The mapping space in a category $\rC$ is denoted $\Map_\rC$, the morphism object in an enriched category $\cC$ by $\Hom_\cC$, and the internal Hom in a module category $\cM$ by $\iHom_\cM$.
	\item We denote by $\rC \hookrightarrow \rD$ a (not necessarily full) subcategory inclusion, i.e.\ a functor inducing a monomorphism $\rC^\simeq \hookrightarrow \rD^\simeq$ on maximal subspaces, and monomorphisms $\Map_\rC(c, c') \hookrightarrow \Map_\rD(Fc, Fc')$ for all $c, c' \in \rC$. Equivalently, these are the monomorphisms in $\Cat$ by \kerodon{04W5}.
	\item We work with categories \emph{univalently} and model-independently, meaning that we do not make use of evil notions. For instance, (full) subcategories are closed under isomorphisms, by the image or \emph{full image} of a functor we mean the essential image, and by a \emph{surjective} functor we refer to what is usually known as an essentially surjective functor.
	\item We write $\Arr(\rC) := \Fun([1], \rC)$ for the \emph{arrow category} of a category $\rC$, where $[1]$ is the walking arrow. Given a property $P$ that morphisms of $\rC$ can possess, we write $\Arr^P(\rC) \subseteq \Arr(\rC)$ for the respective full subcategory of the arrow category. Similarly, we write $\rC_{/^{P} c} \subseteq \rC_{/ c}, \rC_{c/^{P}} \subseteq \rC_{c/}$ for the respective \emph{full} subcategories on arrows satisfying $P$.
	\item A category $\rC$ is called \emph{weakly contractible} if its geometric realization $|\rC| \in \Spaces$, i.e.\ its localization at all of its morphisms, is isomorphic to the point $*$. A functor $f: \rC \to \rD$ is called \emph{colimit (limit) cofinal} if precomposing with it preserves colimits (limits).
	\item Given a class of colimit shapes $\cK \subseteq \Cat$, a functor $F: \rC \to \rD$ \emph{reflects $\cK$-shaped colimits} if for any cone $\bar{p} : \rK^\triangleright \to \rC$ with $\rK \in \cK$ such that $F \circ \bar{p}$ is a colimit cone in $\rD$, already $\bar{p}$ was a colimit cone in $\rC$. Further $F$ \emph{creates $\cK$-shaped colimits} if it both preserves and reflects them. Similarly for limits.
	\item We write $F: \rC \rightleftarrows \rD : G$ or $F \dashv G$ for functors $F: \rC \to \rD$ and $G: \rD \to \rC$ such that $F$ is left adjoint to $G$.
	\item For $\rC$ a small category denote by $\PSh(\rC) := \Fun(\rC^{\op}, \Spaces)$ its presheaf category, which is the recipient of the fully faithful Yoneda embedding $\yo : \rC \hookrightarrow \PSh(\rC)$ by \HTT{Prop.}{5.1.3.1}, as well as the free cocompletion of $\rC$ under small colimits by \HTT{Thm.}{5.1.5.6}. Similarly for $\cM$ a large category, we write $\lPSh(\cM) := \Fun(\cM^{\op}, \hat{\Spaces})$ which is its free cocompletion under large colimits, and contains the free cocompletion $\PSh^{\tav\text{-rex}}(\cM)$ of $\cM$ under small colimits as its full subcategory on $\tav$-compact objects. 
	Further, $\lPSh(\cM)$ contains the Ind-$\tav$-completion $\lInd(\cM)$, as the full subcategory spanned by those presheaves $\cM^{\op} \to \lSpaces$ that preserve small limits.
	\item We denote the large category of presentable categories and colimit-preserving functors by $\Pr$, and the tensor product of presentable categories $\cM, \cN \in \Pr$ by $\cM \otimes \cN$. 
	\item We let $\mathrm{Fin}$ be the category of finite sets, whose objects we denote as $\underline{n} := \{1, \dots, n\}$. Similarly $\Fin := \mathrm{Fin}_{\underline{1} /}$ is the category of finite pointed sets whose objects we denote $\underline{n}_+ := \{*, 1, \dots, n\}$; and $\mathrm{Fin}^\simeq \hookrightarrow \mathrm{Fin}$ is the wide subcategory of finite sets and bijections.
	\item By a (Lurie-)operad $\rO$ we mean a small symmetric colored $\infty$-operad, unless explicitly stated otherwise. We denote its \emph{category of operations}, the associated fibration over $\Fin$, by $\rO^\otimes \to \Fin$. 
	Also we refer to fiber over $\underline{1}_+ \in \Fin$ as its \emph{underlying category} $\underline{\rO} := \rO^\otimes_{\underline{1}_+}$, and refer to the maximal subspace $\col \rO := \underline{\rO}^\simeq$ as its \emph{space of colors}. We call the morphisms covering the terminal map $\underline{1}_+ \to \underline{1}_+$ in $\Fin$ the \emph{$n$-ary multimorphisms} or \emph{$n$-ary operations} or $\rO$. Write $\Op$ for the category of operads. 
	\item We refer to symmetric monoidal categories by their underlying category which we usually call $\rV$, write $V^\otimes \to \Fin$ for the associated category of operations and $\underline{\rV}: \Fin \to \Cat$ for the associated commutative monoid or Segal object in $\Cat$. 
	\item We refer to $\cV \in \Alg(\Pr)$ as a \emph{presentably monoidal category}; explicitly this unwinds to a monoidal category that is presentable and (left and right) closed. Similarly, $\cM \in \RMod_\cV (\Pr) =: \PrV$ will be called a \emph{presentable (right) $\cV$-module category}; i.e.\ a presentable category equipped with a right $\cV$-action that admits both an internal hom $\iHom_\cM(-, -) : \cM^{\op} \times \cM \to \cV$ and a cotensoring $(-)^{(-)}: \cV^{\op} \times \cM \to \cM$.
\end{itemize}

\pagenumbering{arabic}


\section{Enriched Operads}
\label{sec:enrichedoperads}

For $\cV \in \CAlg(\Pr)$ a presentably symmetric monoidal category, in order to define $\cV$-enriched operads we make use of the $2$-category $\CAlg(\IPrV)$ whose
\begin{itemize}
	\item underlying $1$-category is the category $\CAlg(\PrV) := \CAlg(\RMod_\cV(\Pr))$ of presentably symmetric monoidal $\cV$-module categories,
	\item morphism categories $\Fun^{\rL, \otimes}_\cV(\cM, \cN)$ between $\cM, \cN \in \CAlg(\PrV)$ consist of symmetric monoidal $\cV$-linear colimit-preserving functors.
\end{itemize}
We construct and study this $2$-category in \cref{sec:appstar}, establishing the following properties:
\begin{itemize}
	\item By \cref{cor:freeforgetfulfunctors} the forgetful $2$-functors
	\[
	\CAlg(\IPrV) \to \CAlg(\IPr) \to \CAlg(\widehat{\ICat}) \to \widehat{\ICat}
	\]
	admit (partially) left adjoint $2$-functors 
	\[
	\ICat \overset{\Sym}{\longrightarrow} \CAlg(\ICat) \overset{\PSh}{\longrightarrow} \CAlg(\IPr) \overset{- \otimes \cV}{\longrightarrow} \CAlg(\IPrV)
	\]
	by which we mean in particular that for a given $\rC \in \Cat$ and $\cM\in \CAlg(\PrV)$, precomposing with the canonical functor $\rC \subseteq \Sym(\rC) \subseteq \PSh \Sym(\rC) \to \PSh\Sym (\rC) \otimes \cV$ induces an equivalence
	\[ \Fun^{\rL, \otimes}_\cV \left( \PSh(\Sym \rC) \otimes \cV, \cM \right) \simeq \Fun(\rC, \cM) \; . \]
	Since $\PSh, \Sym$ and $- \otimes \cV$ merely enhance the respective $1$-functors, we calculate
	\[ \PSh(\Sym \rC) \otimes \cV \simeq \Fun(\Sym \rC ^{\op}, \cV) \simeq \prod_{n \geq 0} \Fun\left( (\rC^\op)^{\times n}_{h \Sigma_n}, \cV \right) \; . \]
	\item For $\cM, \cN, \cP \in \CAlg(\PrV)$, the composition functor \[ \circ : \Fun^{\rL, \otimes}_\cV(\cM, \cN) \times \Fun^{\rL, \otimes}_\cV (\cN, \cP) \to \Fun^{\rL, \otimes}_\cV (\cM, \cP) \] preserves sifted colimits in the left argument, and all colimits in the right argument by \cref{prop:CAlgPrVclosed}.
	\item By \cref{thm:EMPrV} the $2$-category $\CAlg(\IPrV)$ admits 
	Eilenberg-Moore-objects $\LMod_T(\cM)$ for any monad $T \in \Alg(\Endo^{\otimes, \rL}_\cV(\cM))$, which are created by all of the above forgetful $2$-functors and hence calculated in $\IlCat$. Also they agree with the respective Kleisli objects, meaning that we have the following equivalences:
	\begin{align*}
		\Fun^{\rL, \otimes}_\cV(\cN, \LMod_T(\cM)) &\simeq \LMod_T(\Fun^{\rL, \otimes}_\cV(\cN, \cM)) \\  \Fun^{\rL, \otimes}_\cV(\LMod_T(\cM), \cN) &\simeq \RMod_T(\Fun^{\rL, \otimes}_\cV(\cM, \cN))
	\end{align*}
\end{itemize}

\begin{defin}
	\label{def:vOpX}
	Given a space $X \in \Spaces$, a \emph{$\cV$-enriched operad with colors $X$} is a monad on $\PSh(\Sym X) \otimes \cV \simeq \Fun ( \Sym X^{\op} , \cV) $ in the $2$-category $\CAlg (\RMod_\cV (\IPr))$. Using the above adjunction 
	and that $X^\op \simeq X$, this unwinds to an algebra in
	\[
	\Endo^{\rL, \otimes}_\cV ( \PSh ( \Sym X ) \otimes \cV ) \simeq 
	\Fun( X, \Fun( \Sym X^{\op} ,  \cV ) ) \simeq \Fun \left( \Sym X \times X, \cV \right) \, .
	\]
	We refer to $\Fun \left( \Sym X \times X, \cV \right)$, equipped with the composition monoidal structure $\oeert$ it obtains as an endomorphism object, as the category of \emph{$X$-colored symmetric sequences} in $\cV$.
	
	Given a $\cV$-enriched operad $\cO$, we call its underlying functor $\Mul_\cO : \Sym X \times X \to \cV$ the \emph{multigraph} of $\cO$, and denote its evaluation on a tuple $((x_1, \dots, x_n), x)$ by $\Mul_\cO(x_1, \dots, x_n ; x) \in \cV$. Its points $1_\cV \to \Mul_\cO(x_1, \dots, x_n ; x)$ are called \emph{$n$-ary morphisms} in $\cO$.
\end{defin}

\begin{warning}
		From the explicit formula in \cref{cor:compositionproduct} we will see that the monoidal structure $\oeert$ on $\Fun \left( \Sym X \times X, \cV \right)$ is reverse to what is commonly called the composition product on symmetric sequences
		, which we denote $\otree$. Their categories of algebras $\vOp_X(\cV)$ are of course equivalent by passing to the reverse algebra.
\end{warning}

\begin{notat}
	We write $\underline{x}$ for a symmetric tuple $(x_1, \dots, x_n) \in \Sym X$. If $X = *$ we say $\cO\in \Fun \left( \Sym (*) \times *, \cV \right) \simeq \Fun(\bigsqcup_{k \geq 0} B \Sigma_k, \cV)$ is \emph{single-colored}, and write $\cO(n)$ for $\Mul_\cO( * , \dots, *; *) \in \cV$ evaluating on the unique $n$-tuple.
\end{notat}

\begin{lemma}
	\label{lem:dayconvolution}
	Let $X \in \Cat$ and $W_1, \dots, W_k \in \PSh \Sym X \otimes \cV$. Then, their Day convolution product can be calculated as
	\[
		W_1 \Day \dots \Day W_k  (x_1, \dots, x_n) \simeq \bigsqcup_{I_1 \sqcup \dots \sqcup I_k = \underline{n}} W_1(\underline{x}|_{I_1}) \otimes \dots \otimes W_k(\underline{x}|_{I_k})
	\]
	where the second coproduct is indexed by the set of partitions of $\underline{n} = \{1, \dots , n\}$ into $k$ disjoint subsets $I_1, \dots, I_k$, and $\underline{x}|_{I_1}$ is the symmetric tuple consisting of those $x_j$ with $j \in I_1$.
\end{lemma}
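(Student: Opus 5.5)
I will compute the Day convolution directly from its defining left Kan extension formula. Recall that for a symmetric monoidal category $\rC$ (here $\rC = \Sym X$, which equals its own opposite up to replacing $X$ by $X^{\op}$), the Day convolution of $W_1, \dots, W_k \in \Fun(\rC^{\op}, \cV)$ is the left Kan extension of
\[
(\rC^{\op})^{\times k} \xrightarrow{W_1 \times \dots \times W_k} \cV^{\times k} \xrightarrow{\otimes} \cV
\]
along the tensor functor $\otimes : (\rC^{\op})^{\times k} \to \rC^{\op}$. Pointwise this is
\[
(W_1 \Day \dots \Day W_k)(c) \simeq \colim_{(c_1, \dots, c_k, \, c \to c_1 \otimes \dots \otimes c_k)} W_1(c_1) \otimes \dots \otimes W_k(c_k),
\]
where the colimit runs over the comma category $(\rC^{\op})^{\times k} \times_{\rC^{\op}} (\rC^{\op})_{c/}$. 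That this is the Day convolution on $\PSh(\Sym X) \otimes \cV$, i.e.\ the symmetric monoidal structure making $\PSh(\Sym X) \otimes \cV$ the free object of the introduction, is standard: it is the extension of the Day convolution on $\PSh(\Sym X)$ along $-\otimes\cV$, and $\PSh(\Sym X)\otimes \cV \simeq \Fun(\Sym X^{\op},\cV)$.

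The key step is to identify this comma category for $\rC = \Sym X$ with $c = (x_1, \dots, x_n)$. I will use that $\Sym X \simeq \mathrm{Fin}^{\simeq} \times_{\mathrm{Fin}^{\simeq}} \dots$; concretely, $\Sym X$ is the groupoid of pairs (finite set $S$, map $S \to X$), with tensor given by disjoint union. Since $\Sym X$ is a groupoid, the comma category is a groupoid. It is the fiber of $\sqcup : (\Sym X)^{\times k} \to \Sym X$ over $c$. I claim this fiber is equivalent to the discrete set of partitions $I_1 \sqcup \dots \sqcup I_k = \underline{n}$, with the partition $(I_j)$ corresponding to the point $(\underline{x}|_{I_1}, \dots, \underline{x}|_{I_k})$. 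To see this, observe first that $\sqcup : (\mathrm{Fin}^{\simeq})^{\times k} \to \mathrm{Fin}^{\simeq}$ has fiber over $\underline{n}$ given by the set of ordered $k$-partitions of $\underline{n}$. An object of the fiber is a $k$-tuple of finite sets together with a bijection of their disjoint union to $\underline{n}$. Automorphisms of the tuple act freely and are absorbed by the bijection, so each such object is uniquely isomorphic to the one given by the image subsets $I_j$. Then $\Sym X \to \mathrm{Fin}^{\simeq}$ is the left fibration classified by $S \mapsto \Map(S, X)$. Since $\Map(S \sqcup T, X) \simeq \Map(S,X)\times \Map(T,X)$, the relevant square is a pullback, and the fiber over $c$ lying over a fixed partition is contractible: it is determined by restricting $\underline{x}$ to each $I_j$.

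Given this identification, the colimit over a discrete indexing set becomes a coproduct, which yields the formula. The main obstacle is making the fiber computation rigorous $\infty$-categorically, specifically showing that the map of left fibrations over $\mathrm{Fin}^{\simeq}$ induced by $\sqcup$ is cartesian. I would argue this by straightening: $(\Sym X)^{\times k} \to \Sym X$ lies over $(\mathrm{Fin}^{\simeq})^{\times k} \to \mathrm{Fin}^{\simeq}$, and on fibers it is the equivalence $\prod_j \Map(S_j, X) \simeq \Map(\sqcup_j S_j, X)$. Hence fibers over $c$ are computed by the fibers in $\mathrm{Fin}^{\simeq}$. The statement allows $X \in \Cat$ rather than a space; in that case the comma category is no longer a groupoid. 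It is, however, still the analogous category with terminal objects in each component, namely identity maps $c \to \underline{x}|_{I_1}\otimes\dots$. The same argument then applies with a cofinality step replacing the fiber computation, using the fact that $\Sym X \to \mathrm{Fin}^{\simeq}$ is a cocartesian fibration.
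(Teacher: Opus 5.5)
Your proposal is correct and follows essentially the paper's route. You map the Day-convolution indexing category to the discrete set of ordered partitions of $\underline{n}$ via $\Sym X \to \mathrm{Fin}^{\simeq}$, and observe that each fiber is contractible (for $X$ a space) or, for general $X \in \Cat$, a product of slices with a terminal object; the colimit then collapses to the coproduct of the values at these terminal objects, which is exactly the paper's argument that the projection $\Phi$ admits a right adjoint picking out those terminal objects and is therefore colimit-cofinal. One minor notational slip: for the left Kan extension the slice should be $(\rC^{\op})_{/c}$, whose objects are maps $c \to c_1 \otimes \dots \otimes c_k$ in $\rC$, not $(\rC^{\op})_{c/}$.
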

\begin{proof}
	By \HA{Ex.}{2.2.6.17} the Day convolution product can be calculated as the colimit
	\[
	\colim_{\underline{{x}}^{(1)} \otimes \dots \otimes \underline{{x}}^{(k)} \to \underline{x}} W_1(\underline{{x}}^{(1)}) \otimes \dots \otimes W_k(\underline{{x}}^{(k)})
	\]
	over the indexing category $((\Sym X)^{\times k}) \times_{\Sym X} (\Sym X)_{/\underline{x}}$. Now, the terminal map $X \to *$ induces a forgetful functor $\Sym X \to \Sym(*) \simeq \mathrm{Fin}^{\simeq}$ into the category $\mathrm{Fin}^{\simeq}$ of finite sets and bijections, so we obtain the following functor on slice categories:
	\[
	\Phi: ((\Sym X)^{\times k}) \times_{\Sym X} (\Sym X)_{/\underline{x}} \to (\mathrm{Fin}^{\simeq, \times k}) \times_{\mathrm{Fin}^{\simeq}} (\mathrm{Fin}^{\simeq})_{/\underline{x}} \; .
	\]
	The right side, which a priori is a $(1,1)$-category, is in fact the discrete set of partitions of $\underline{n}$. Further, the fiber of $\Phi$ over a fixed partition $I_1 \sqcup \dots \sqcup I_k = \underline{n}$ is the product
	\[
		\prod_{i=1}^{k} (\Sym X)_{/\underline{x}^{(k)}}
	\]
	since $(\Sym X)^\otimes \to \Fin$ is a coCartesian fibration, as is its pullback to $\mathrm{Fin}^{\simeq}$, so we may apply \HTT{Def.}{2.4.1.1}. 
	This product admits a terminal object since each factor has terminal object $\id_{\underline{x}^{(k)}}$. Hence, $\Phi$ admits a right adjoint $\Phi^\rR$ sending each partition to the respective terminal object in the fiber, and since right adjoints are colimit-cofinal the colimit in question agrees with the coproduct over partitions over the evaluations at the respective terminal objects.
\end{proof}

\begin{lemma}
	\label{lem:inducedfreefunctor}
	Let $X \in \Cat$ and $\cM \in \CAlg(\PrV)$. Under the equivalence $\Fun(X, \cM) \simeq \Fun^{\rL, \otimes}_\cV (\PSh \Sym X \otimes \cV, \cM)$ from \cref{cor:freeforgetfulfunctors}, a functor $A: X \to \cM$ is sent to the morphism $\bar{A}: \Fun(\Sym X^{\op} , \cV) \to \cM$ in $\CAlg(\PrV)$ determined by the coend
	\[
	\bar{A} : (W: \Sym X^{\op} \to \cV) \mapsto \oint^{\underline{x} \in \Sym X} A(x_1) \otimes \dots \otimes A(x_n) \otimes W(\underline{x}) \; .
	\]
\end{lemma}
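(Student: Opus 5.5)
The plan is to build the functor $\bar{A}$ given by the coend formula, check that it is a morphism in $\CAlg(\PrV)$ restricting to $A$ along $X \to \Fun(\Sym X^{\op}, \cV)$, and then invoke the uniqueness part of the universal property in \cref{cor:freeforgetfulfunctors}. The coend is the Day-type left Kan extension of the symmetric monoidal extension of $A$.

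First, $A$ extends uniquely to a symmetric monoidal functor $\tilde{A}: \Sym X \to \cM$ with $\underline{x} \mapsto A(x_1) \otimes \dots \otimes A(x_n)$, by the free property of $\Sym$. Second, the restriction of $\tilde{A}$ to presheaves $\PSh \Sym X \to \cM$ is the left Kan extension along the Yoneda embedding, i.e.\ $W \mapsto \colim_{\Sym X_{/W}} \tilde{A}$. Third, passing to $\cV$-linear functors out of $\PSh\Sym X \otimes \cV \simeq \Fun(\Sym X^{\op}, \cV)$ uses that every $W$ is the weighted colimit $\oint^{\underline{x}} \yo_{\underline{x}} \otimes W(\underline{x})$ (the $\cV$-enriched co-Yoneda lemma). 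A $\cV$-linear cocontinuous functor must therefore send $W$ to $\oint^{\underline{x}} \tilde{A}(\underline{x}) \otimes W(\underline{x})$. This identifies the underlying $\cV$-linear colimit-preserving functor of the unique extension $\bar{A}$ with the stated coend. At this level the statement is just the universal property of $\Fun(\Sym X^{\op},\cV)$ as the free presentable $\cV$-module on $\Sym X$.

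It then remains to check that the symmetric monoidal structure is compatible, since the equivalence of \cref{cor:freeforgetfulfunctors} lives in $\CAlg(\PrV)$. Because the forgetful functors $\CAlg(\PrV) \to \PrV$ and the free constructions commute (the free object $\PSh\Sym X\otimes\cV$ with Day convolution is $\PSh \otimes \cV$ applied to $\Sym X$), the underlying functor of $\bar{A}$ is the $\cV$-linear cocontinuous extension of $\tilde{A}$. Concretely, Day convolution is the cocontinuous extension of the monoidal structure on representables, so the coend formula is monoidal. One can confirm this via \cref{lem:dayconvolution}: plugging $W_1 \Day W_2$ into the coend and using that $\otimes$ on $\cM$ commutes with colimits in each variable gives $\bar{A}(W_1)\otimes\bar{A}(W_2)$. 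The unit $\yo_{()}$ is sent to $1_\cM$.

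The main obstacle is making this compatibility coherent rather than merely objectwise. I would handle it by never constructing the monoidal structure by hand. Instead I would argue that the forgetful functor $\Fun^{\rL,\otimes}_\cV(\PSh\Sym X\otimes\cV,\cM) \to \Fun^{\rL}_\cV(\PSh\Sym X \otimes \cV, \cM)\simeq \Fun(\Sym X,\cM)$ sends $\bar{A}$ to $\tilde{A}$. This holds by compatibility of the left adjoints in \cref{cor:freeforgetfulfunctors} with the forgetful $2$-functors. The coend formula then identifies the underlying functor.
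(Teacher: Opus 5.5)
Your proposal is correct and takes essentially the paper's route: extend $A$ uniquely to the symmetric monoidal functor $\tilde{A}\colon \Sym X \to \cM$, then read off the coend as the $\cV$-linear cocontinuous extension of $\tilde{A}$ via the enriched co-Yoneda formula, which is exactly the content of \marked{Lem.}{2.13} that the paper cites for this step. The extra monoidal-coherence discussion is unnecessary, because the lemma only identifies the underlying functor of $\bar{A}$; that functor is already forced once you note that $\bar{A}$ composed with the symmetric monoidal Yoneda functor $\Sym X \to \PSh \Sym X \otimes \cV$ is a symmetric monoidal extension of $A$, hence equals $\tilde{A}$.
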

\begin{proof}
	As a first step, $A$ extends to a unique symmetric monoidal functor $\Sym X \to \cM$ sending $(x_1, \dots, x_n) \mapsto A(x_1) \otimes \dots \otimes A(x_n)$. From here, the formula for the extension to $\PSh(\Sym X) \otimes \cV$ follows from \marked{Lem.}{2.13}.
\end{proof}

\begin{cor}
	\label{cor:compositionproduct}
	The monoidal structure $\oeert$ on $\Fun \left( \Sym X \times X, \cV \right)$ sends $A, B : \Sym X \times X \to \cV$ to
	\[
	A \oeert B ( \underline{x}, z ) = \bigsqcup_{k \geq 0} \bigsqcup_{I_1 \sqcup \dots \sqcup I_k = \underline{n}} \oint^{\underline{y} \in \Sym^k X} A(\underline{{x}}^{(1)} ; y_1) \otimes \dots \otimes A(\underline{{x}}^{(k)}; y_k) \otimes B(\underline{y}; z) \; .
	\]
	If additionally $X$ is a space, this agrees with
	\[
	\bigsqcup_{k \geq 0} \bigsqcup_{I_1 \sqcup \dots \sqcup I_k = \underline{n}} \colim_{(y_1, \dots, y_k) \in X^{\times k}} \left( A(\underline{x}|_{I_1}; y_1) \otimes \dots \otimes A( \underline{x}|_{I_k}; y_k) \right) \otimes_{\Sigma_k} B(y_1 , \dots , y_k; z)
	\]
	where the $\Sigma_k$-action permutes the blocks $\underline{x}|_{I_1}, \dots, \underline{x}|_{I_k}$ partitioning $\underline{x}$ and the entries of $\underline{y}$. Specifically for $X = *$ we obtain
	\[
	\bigsqcup_{k \geq 0} \bigsqcup_{I_1 \sqcup \dots \sqcup I_k = \underline{n}} (A( I_1) \otimes \dots \otimes A( I_k)) \otimes_{\Sigma_k} B(k) \; . 
	\]
\end{cor}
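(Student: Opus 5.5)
We identify $A, B$ with endomorphisms $\bar A, \bar B$ of $\Fun(\Sym X^{\op}, \cV)$ in $\CAlg(\PrV)$ via \cref{lem:inducedfreefunctor}. Under this identification $A$ corresponds to the functor $X \to \Fun(\Sym X^{\op},\cV)$, $y \mapsto A(-;y)$. The product $A \oeert B$ corresponds to the composite $\bar A \circ \bar B$: since $\oeert$ is reverse to the usual composition product, $B$ is applied first and then $\bar A$. To read off $A \oeert B$ we evaluate this composite on the generator $z \in X$, that is, on the image of $z$ under $X \subseteq \Sym X \to \PSh\Sym X \otimes \cV$. This image is $\bar B(\yo_z) = B(-;z)$. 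It therefore suffices to compute $\bar A(B(-;z))$ evaluated at $\underline{x}$.

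\cref{lem:inducedfreefunctor} gives
\[
\bar A(W) \simeq \oint^{\underline{y} \in \Sym X} A(-;y_1)\Day \dots \Day A(-;y_k) \otimes W(\underline{y}),
\]
with the tensor product taken in $\Fun(\Sym X^{\op},\cV)$, i.e.\ Day convolution. Colimits and $\cV$-tensorings in $\Fun(\Sym X^{\op},\cV)$ are computed pointwise, so evaluating at $\underline{x}$ commutes with the coend. Next we split the coend over $\Sym X = \bigsqcup_k \Sym^k X$ into a coproduct of coends over the components $\Sym^k X$. Then \cref{lem:dayconvolution} rewrites the $k$-fold Day convolution at $\underline{x}$ as a coproduct over partitions $I_1 \sqcup \dots \sqcup I_k = \underline{n}$ of $A(\underline{x}|_{I_1};y_1)\otimes\dots\otimes A(\underline{x}|_{I_k};y_k)$. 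Since $\otimes$ in $\cV$ preserves colimits in each variable, the coproduct over partitions can be pulled outside the coend. This yields the first formula.

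The main subtlety is exactly this last interchange. The partition index set is not functorial in $\underline{y} \in \Sym^k X$ by itself: morphisms of $\Sym^k X$ permute the $y_i$, and the identification in \cref{lem:dayconvolution} is natural only for the combined action that permutes the blocks $I_j$ along with the $y_j$. So we must check that the coproduct over partitions, taken as a functor of $\underline{y}$, is the $\Sigma_k$-equivariant object described above, with $\Sigma_k$ acting simultaneously on the blocks and on the $y_i$. The first formula should be read with this simultaneous action.

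For $X$ a space, $\Sym^k X \simeq X^{\times k}_{h\Sigma_k}$ is a groupoid, so the coend reduces to a colimit over $\Sym^k X$ of the pointwise tensor product. Computing this colimit over the homotopy orbits in two stages gives first a colimit over $X^{\times k}$ and then $\Sigma_k$-coinvariants, which we write as $\otimes_{\Sigma_k}$. This yields the second formula, with the action as described. Taking $X = *$ makes the colimit over $X^{\times k}$ trivial and gives the third formula.
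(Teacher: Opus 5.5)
Your proposal is correct and follows the paper's proof essentially step by step. You write $A \oeert B(\underline{x}; z) \simeq \bar A(B(-;z))(\underline{x})$, expand via the coend formula of \cref{lem:inducedfreefunctor} split over the components $\Sym^k X$, apply \cref{lem:dayconvolution}, and for a space $X$ replace the coend by a colimit, computed in two stages over $X^{\times k}$ and then $B\Sigma_k$. Your remark is also a fair clarification of notation the paper leaves implicit: the coproduct over ordered partitions is only $\Sigma_k$-equivariant jointly with the permutation of the $y_i$, so the displayed formulas must be read with this simultaneous action rather than as a literal coproduct of functors of $\underline{y}$.
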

\begin{proof}
	If we denote by $\bar{A}, \bar{B}$ the endofunctors of $\PSh(\Sym X) \otimes \cV$ associated to $A$ and $B$, then by definition we can write $A \oeert B (  (x_i) ; z ) = \bar{A} \circ \bar{B}(\yo_z \otimes 1_\cV) (x_1, \dots, x_n) = \bar{A}(B(- ; z))(x_1, \dots, x_n)$ so the first expression follows from \cref{lem:inducedfreefunctor}, noting that $\mathrm{Tw}(\Sym X) \simeq \bigsqcup_{k \geq 0} \mathrm{Tw} (\Sym^k X)$ to expand the coend, and the formula for the Day convolution product in \cref{lem:dayconvolution}. 
	
	By \kerodon{048H} and \kerodon{048L}, for a space $X$ we have $\mathrm{Tw}(X) \simeq X$ so coends agree with colimits over $X$. Further, the functor $B \Sigma_k \to \Spaces$ encoding the $\Sigma_k$-action on $X^{\times k}$ unstraightens into a coCartesian fibration $X^{\times k}_{h \Sigma_k} \to B \Sigma_k$ with fiber $X^{\times k}$. Hence, $\colim_{X^{\times k}_{h \Sigma_k}} \simeq \colim_{B\Sigma_k} \colim_{X^{\times k}}$, and the colimit over $B \Sigma_k$ turns the tensor product into a relative tensor product $\otimes_{\Sigma_k}$.	
\end{proof}

\begin{reminder}
	In \marked{Def.}{2.30}, we have defined a \emph{valent $\cV$-enriched category} with space of objectss $X$ in a similar manner, namely as a monad on $\PSh(X) \otimes \cV$ in $\IPrV$. We denote their category by
	\[ \vCatXV := \Alg(\Endo^\rL_\cV(\PSh(X) \otimes \cV)) \simeq \Alg(\Fun(X^{\op} \times X, \cV)) \; . \]
\end{reminder}

\begin{constr}
	\label{constr:modulesvsalgebras}
	By \cref{prop:freeforgetfulfunctors} the symmetric algebra functor 
	\[ \Sym^{\otimes_\cV}: \IPrV \to \CAlg(\IPrV) \]
	is a $2$-functor, so it induces a monoidal functor between endomorphism objects
	\[
	\Fun(X \times X , \cV) \simeq \LinEnd(\PSh(X) \otimes \cV) \rightarrow \Endo^{\rL, \otimes}_\cV(\PSh \Sym X \otimes \cV) \simeq \Fun(\Sym X \times X, \cV)
	\]
	explicitly acting by left Kan extension along the full inclusion of the summand $X \subseteq \Sym X$, i.e.\ extension by the initial object $\emptyset_\cV$. In particular the latter is fully faithful and preserves colimits, so it admits a lax monoidal right adjoint. We obtain an adjunction between algebra objects
	\[
	\mathrm{Triv}_\cV : \vCatXV \rightleftarrows \vOp_X(\cV) : \ColV \; ,
	\]
	where the fully faithful left adjoint $\mathrm{Triv}_\cV$ sends a $\cV$-enriched category $\cC$ to the \emph{trivial $\cV$-operad} with colors $\cC$ and multimorphisms
	\[
	\Mul_{\mathrm{Triv}_\cV(\cC)}(c_1, \dots, c_n ; c) = \left\lbrace \begin{matrix}\Hom_\cC(c_1, c) \text{ if } n=1 \\ \emptyset_\cV \text{ otherwise}\end{matrix} \right. \; .
	\]
	The right adjoint $\ColV$ sends a $\cV$-operad to its \emph{$\cV$-category of colors} with $\Hom_{\ColV(\cO)}(o_1, o_2) \simeq \Mul_\cO(o_1; o_2)$.
\end{constr}

\begin{constr}
	\label{constr:env}
	Similarly, the forgetful $2$-functor $\CAlg(\IPrV) \to \IPrV$ induces on endomorphism categories a monoidal functor
	\[
	\Fun(\Sym X \times X, \cV) \simeq \Endo^{\rL, \otimes}_\cV(\PSh \Sym X \otimes \cV) \to \Endo^{\rL}_\cV(\PSh \Sym X \otimes \cV) \simeq \Fun(\Sym X \times \Sym X, \cV)
	\]
	explicitly given by extending symmetric monoidally in the first argument. On algebra objects we obtain a functor
	\[
	\vEnv_\cV : \vOp_X(\cV) \to \vCat_{\Sym X}(\cV) \; 
	\]
	which we call the \emph{(valent) $\cV$-enriched envelope}. Explicitly,
	\[
	\Hom_{\vEnv_\cV(\cO)}((o_1, \dots, o_n), (p_1, \dots , p_m)) \simeq \bigotimes_{i=1}^{m} \Mul_{\cO}(o_1, \dots, o_n; p_i) \; .
	\]
	We will see in \cref{defin:envelopefun} that this enhances to a functor $\vEnv_\cV: \vOp(\cV) \to \CAlg(\vCatV)$.
\end{constr}

\begin{constr}
	\label{constr:coespace}
	A morphism $f: \cV \to \cW$ in $\CAlg(\PrV)$ induces by \cref{prop:eosstar} a $2$-functor $f_{\mathrm{ext}} : \CAlg(\IPrV) \to \CAlg(\IPr_\cW)$. As in \cref{constr:modulesvsalgebras}, we obtain a \emph{change-of-enrichment} adjunction
	\[
	f_! : \vOp_X(\cV) \rightleftarrows \vOp_X(\cW) : f_!^{\rR}
	\]
	that acts on multigraphs by postcomposing $\Mul_\cO : \Sym X \times X \to \cV$ with $f$, or postcomposing with $f^\rR$ in the other direction.
\end{constr}

\begin{defin}
	\label{defin:operadicpsh}
	Given a $\cV$-operad $\cO \in \vOp_X(\cV)$, we define its \emph{operadic (enriched) presheaf category} as the Eilenberg-Moore object
	\[ \PShO(\cO) := \LMod_\cO(\PSh \Sym (X) \otimes \cV) \in  \CAlg(\IPr) \; .\]
	It is part of an Eilenberg-Moore adjunction
	\[
	\PSh \Sym (X) \otimes \cV \rightleftarrows \PShO(\cO)
	\]
	between free and forgetful functor. We define the \emph{operadic Yoneda functor} $\yo_\cO : X \to \PSh \Sym (X) \otimes \cV \to \PShO(\cO)$ of $\cO$ by composing the unit of the free-forgetful adjunction $\lCat \rightleftarrows \CAlg(\RMod_\cV(\Catcolim))$ with the free module functor.
\end{defin}
\begin{rem}
	What we call $\PShO(\cO)$ is usually referred to as the category of \emph{right modules} over $\cO$, see e.g.\ \cite{heinemilnor}. The unfortunate change in direction comes from the fact that our composition product is reverse to the classical convention.
\end{rem}

\begin{prop}[Operadic Yoneda Lemma]
	\label{prop:operadicyoneda}
	For $\cO\in \vOp_X(\cV)$ and colors $x_1, \dots, x_n, x \in X$, there is a canonical equivalence
	\[   \iHom_{\PShO(\cO)}(\yo(x_1) \otimes \dots \otimes \yo(x_n), \yo(x)) \simeq  \Mul_\cO(x_1, \dots, x_n; x)  \; .\]
\end{prop}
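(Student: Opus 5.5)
Write $F \dashv U$ for the Eilenberg--Moore adjunction $F : \PSh\Sym(X)\otimes\cV \rightleftarrows \PShO(\cO) : U$ of \cref{defin:operadicpsh}, and $j : X \to \PSh\Sym(X)\otimes\cV$ for the unit of the free--forgetful adjunction, so that $\yo = F \circ j$ and $j(x) = \yo_{(x)}\otimes 1_\cV$. The plan is to push the computation back to the free category $\PSh\Sym(X)\otimes\cV$ in three steps.

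First, since $F$ is a morphism in $\CAlg(\PrV)$, it is symmetric monoidal, and hence
\[
\yo(x_1)\otimes\dots\otimes\yo(x_n) \simeq F\bigl(j(x_1)\Day\dots\Day j(x_n)\bigr).
\]
By \cref{lem:dayconvolution}, the Day convolution $j(x_1)\Day\dots\Day j(x_n)$ is the representable $\yo_{(x_1,\dots,x_n)}\otimes 1_\cV$ on the symmetric tuple. The partition formula identifies it on each tuple with the relevant mapping space tensored with $1_\cV$; alternatively, note that the composite $\Sym X \to \PSh\Sym X$ is symmetric monoidal, so the Yoneda embedding carries the tuple to the Day product.

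Second, $F \dashv U$ is an adjunction in $\CAlg(\IPrV)$, so in particular $U$ is $\cV$-linear with a $\cV$-linear left adjoint. The adjunction therefore enriches to an equivalence of internal homs, $\iHom_{\PShO(\cO)}(F w, m) \simeq \iHom(w, U m)$. Applying this with $w = \yo_{(\underline{x})}\otimes 1_\cV$ and using the enriched Yoneda lemma in $\Fun(\Sym X^{\op}, \cV)$ (evaluation at a representable computes $\iHom$ out of it), we get
\[
\iHom_{\PShO(\cO)}\bigl(\yo(x_1)\otimes\dots\otimes\yo(x_n), \yo(x)\bigr) \simeq (UF\,j(x))(x_1,\dots,x_n).
\]

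Third, $UF$ is the monad $\cO$, viewed as an endofunctor $\bar{\cO}$, and under the equivalence $\Endo^{\rL,\otimes}_\cV(\PSh\Sym X\otimes\cV)\simeq\Fun(\Sym X\times X,\cV)$ of \cref{def:vOpX} the multigraph is recovered as $\Mul_\cO(-;x) = \bar{\cO}(j(x))$. This was already used in the proof of \cref{cor:compositionproduct}, and it holds because the equivalence is given by restriction along $j$. Evaluating at $(x_1,\dots,x_n)$ gives $\Mul_\cO(x_1,\dots,x_n;x)$.

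The main obstacle is the second step: justifying that the Eilenberg--Moore adjunction is $\cV$-enriched, so that it identifies internal homs and not merely mapping spaces. I would deduce this from \cref{thm:EMPrV}, which constructs $F \dashv U$ inside $\CAlg(\IPrV)$, so that $U$ is a $\cV$-linear right adjoint. A linear adjunction between $\cV$-modules induces equivalences on $\iHom$, by testing against $v \otimes -$ and using the Yoneda lemma in $\cV$. A secondary point is canonicity: one should check that the composite equivalence is the map induced by the operadic composition. For the statement as given, however, it suffices to produce the equivalence by the chain above.
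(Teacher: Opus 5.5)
Your proof is correct and follows the paper's argument. The paper likewise writes $\Mul_\cO(x_1,\dots,x_n;x) \simeq \mathrm{ev}_{(x_1,\dots,x_n)} Y^\rR \yo(x)$ for the Eilenberg--Moore adjunction $Y \dashv Y^\rR$, identifies $\mathrm{ev}_{(x_1,\dots,x_n)} \circ Y^\rR$ as the right adjoint of $v \mapsto Y(\yo_{(x_1,\dots,x_n)}) \otimes v$ and hence as $\iHom_{\PShO(\cO)}(Y(\yo_{(x_1,\dots,x_n)}),-)$, and concludes by symmetric monoidality of $Y$. The step you flag as the main obstacle is easier than you suggest: the identification $\iHom(Fw,m)\simeq\iHom(w,Um)$ only needs $\cV$-linearity of the left adjoint $F$, which holds automatically for a morphism in $\CAlg(\PrV)$.
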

\begin{proof}
	We obtain the monad $\cO \in \Alg(\Endo^{\rL, \otimes}_\cV(\PSh \Sym X \otimes \cV))$ as the composition of the forgetful and free functor in the Eilenberg-Moore-adjunction $Y: \PSh \Sym (X) \otimes \cV \rightleftarrows \PShO(\cO) : Y^\rR$. To obtain the associated $\Mul_\cO \in \Fun(\Sym X \times X, \cV)$, we must precompose $Y$ with $X \to \PSh \Sym X \otimes \cV$ obtaining $\yo$. Thus we can write
	\[
	\Mul_\cO(x_1, \dots, x_n; x) \simeq \mathrm{ev}_{(x_1, \dots, x_n)} Y^\rR  \yo(x) \; .
	\]
	But the evaluation $\mathrm{ev}_{(x_1, \dots, x_n)}$ is right adjoint to the unique functor $\cV \to \PSh \Sym X \otimes 1_\cV$ sending $1_\cV \mapsto \yo_{(x_1, \dots, x_n)}$, to its composition with $Y^\rR$ is right adjoint to the unique functor $\cV \to \PShO(\cO)$ sending $1_\cV \mapsto Y(\yo_{(x_1, \dots, x_n)})$, and thereby agrees with $\iHom_{\PShO(\cO)}(Y(\yo_{(x_1, \dots, x_n)}), -)$. We conclude since $Y$ is symmetric monoidal.
\end{proof}

\section{\texorpdfstring{$\otimes$}{Tensor}-atomic markings}
\label{sec:tensoratomics}

Similarly to atomic objects and atomically generated categories in the world of $\IPrV$ as studied in \cite{ramzi2024dualizable} or \cite[§3]{marked}, we introduce $\otimes$-atomic objects and $\otimes$-atomically generated categories in $\CAlg(\IPrV)$. Surprisingly, many statements are no longer true in the symmetric monoidal world or require further assumptions.

\subsection{Internal left adjoints} 

We first study left adjoint $1$-morphisms internally to the $2$-category $\CAlg(\IPrV)$, allowing us to define $\otimes$-atomic objects and markings.

\begin{prop}
	A $1$-morphism $F: \cM \to \cN$ in the $2$-category $\CAlg(\IPrV)$ is internally left adjoint if its right adjoint functor $F^\rR$ preserves colimits, and the canonical lax symmetric monoidal lax $\cV$-linear structure on $F^\rR$ is strong symmetric monoidal and strong $\cV$-linear.
\end{prop}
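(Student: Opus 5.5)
We have to show that if the right adjoint $F^\rR$ preserves colimits and its canonical lax structures are strong, then $F$ admits a right adjoint inside the $2$-category $\CAlg(\IPrV)$. The strategy is to produce the adjunction data $(F^\rR,\eta,\epsilon)$ in $\widehat{\ICat}$ and check that every piece lifts along the forgetful $2$-functor $\CAlg(\IPrV)\to\widehat{\ICat}$. For the $1$-morphism this is immediate. By hypothesis $F^\rR$ is cocontinuous and carries a strong symmetric monoidal, strong $\cV$-linear structure, so it is a $1$-morphism $\cN\to\cM$ in $\CAlg(\IPrV)$.

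The main point is then to check that the unit $\eta:\id\Rightarrow F^\rR F$ and the counit $\epsilon: FF^\rR\Rightarrow\id$ are $2$-morphisms in $\CAlg(\IPrV)$, i.e.\ symmetric monoidal $\cV$-linear natural transformations. Once they are, the triangle identities hold automatically: the forgetful $2$-functor is faithful on $2$-morphisms, since the morphism categories $\Fun^{\rL,\otimes}_\cV(\cM,\cN)$ sit inside $\Fun(\cM,\cN)$ as (non-full) subcategories of $\CAlg$-objects over $\Fun$. More precisely, the mapping spaces of $\Fun^{\rL,\otimes}_\cV$ are computed as mapping spaces of commutative algebra and module objects, which the forgetful functor detects coherently. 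So I would phrase the whole argument through the description of these morphism categories from \cref{sec:appstar}, rather than directly in terms of transformations.

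To handle $\eta$ and $\epsilon$ coherently, I would invoke the doctrinal adjunction principle, in the form of \HA{Cor.}{7.3.2.7} together with its $\cV$-linear analogue. A symmetric monoidal functor $F$ whose underlying functor has a right adjoint determines a relative adjunction over $\Fin$ (respectively over the operad for right $\cV$-modules). Consequently $F^\rR$ acquires a canonical lax structure, and $\eta$ and $\epsilon$ become transformations of lax symmetric monoidal, lax $\cV$-linear functors. In other words, the adjunction $F\dashv F^\rR$ is an adjunction in the $2$-category of symmetric monoidal $\cV$-module categories with lax structure-preserving functors. The hypothesis says that the right adjoint lies in the sub-$2$-category of strong, cocontinuous functors, and the left adjoint already does.

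It remains to check that this sub-$2$-category is locally full, i.e.\ that a lax transformation between strong functors is the same as a transformation in $\Fun^{\rL,\otimes}_\cV$. I expect this step to be the main obstacle, because it depends on exactly how $\CAlg(\IPrV)$ is modelled in \cref{sec:appstar}, via the $\star$-product or as commutative algebras in the $2$-category $\RMod_\cV(\IPr)$. The approach I would try first: identify $\Fun^{\rL,\otimes}_\cV(\cM,\cN)$ with $\CAlg$ of a Day-convolution or $\star$-type structure on $\Fun^\rL_\cV(\cM,\cN)$, and observe that lax-monoidal natural transformations between strong functors are maps of algebras. With local fullness in hand, $(F^\rR,\eta,\epsilon)$ is an adjunction in $\CAlg(\IPrV)$, which proves the claim.
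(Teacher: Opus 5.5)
Your argument is correct in its final form, and it takes a different route from the paper. One sentence should be removed, though. The forgetful $2$-functor is \emph{not} faithful on $2$-morphisms in the $\infty$-categorical setting. A symmetric monoidal $\cV$-linear structure on a transformation is coherence data, not a property, so $\Fun^{\rL,\otimes}_\cV(\cM,\cN)\to\Fun(\cM,\cN)$ is not a subcategory inclusion. For instance, $\Fun^{\rL,\otimes}_{\Sp}(\Sp,\Sp)\simeq *$ because $\Sp$ is initial, whereas $\id_{\Sp}$ has automorphism space $\mathrm{GL}_1(\mathbb{S})$ in $\Fun(\Sp,\Sp)$. Nothing depends on this claim. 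The relative adjunction of \cite[Cor.~7.3.2.7]{HA} already provides unit, counit and triangle identities in $\Fun_{/\Fin}$, i.e.\ an adjunction in the lax $2$-category, and local fullness then restricts the whole adjunction.

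The paper stays on the straightened side instead. By \cref{obs:CAlgPrVemb} it regards $\CAlg(\IPrV)$ as a locally full sub-$2$-category of the functor $2$-category $\Fun(\Fin,\RMod_\cV(\ICat))$. It then uses that adjunctions there are pointwise adjunctions whose naturality squares are right adjointable \cite[Cor.~5.15]{heinemonadicity}. The mate condition for the squares $\alpha_!$ is literally strong monoidality of $F^\rR$. The pointwise condition in $\RMod_\cV(\IPr)$ unwinds to cocontinuity and strong $\cV$-linearity.

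Both proofs end with the same restriction along a locally full sub-$2$-category; they differ in how the ambient adjunction is produced. The paper characterizes internal left adjoints in one stroke and never leaves the model in which $\CAlg(\IPrV)$ was built, at the price of a nontrivial theorem on adjunctions in functor $2$-categories. Yours rests on the more classical doctrinal adjunction, but the step you flag is indeed where the work lies. You must identify the $\star$-enriched $2$-category with a locally full sub-$2$-category of the slice over $\Fin$, compatibly with composition and not only on hom-categories. This follows from \cref{obs:CAlgPrVemb} and \cref{lem:pointwisesm} together with $2$-categorical straightening. Moreover, the $\cV$-linear case needs a relative adjunction over $(\E_\infty\otimes_{\mathrm{BV}}\RM)^\otimes$ with the $\cV$-component held fixed, which you only gesture at (the paper is equally terse there).
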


\begin{proof}
	\cref{obs:CAlgPrVemb} exhibits $\CAlg(\IPrV)$ as a sub-$2$-category of $\Fun(\Fin, \RMod_\cV(\ICat))$, so adjunctions can be detected in the latter. Adjunctions in a functor $2$-category are given by pointwise adjunctions satisfying a Beck-Chevalley condition, see for instance \cite[Cor. 5.15]{heinemonadicity}. In our case, for any morphism $\alpha : \underline{n}_+ \to \underline{m}_+$ in $\Fin$ the diagram
	\[
	\begin{tikzcd}
		\cM^{\times n} \arrow[r, "F^{\times n}"] \arrow[d, "\alpha_!"] & \cN^{\times n} \arrow[d, "\alpha_!"] \\
		\cM^{\times m} \arrow[r, "F^{\times n}"] & \cN^{\times m}
	\end{tikzcd}
	\]
	must be horizontally right adjointable, yielding the strong symmetric monoidality condition for $F^\rR$. It suffices to check the pointwise adjointness condition in $\PrV$ on the component $F^{\times 1} : \cM^{\times 1} \to \cN^{\times 1}$, where it unwinds into strong $\cV$-linearity and cocontinuity of $F^\rR$ by a similar argument. Compare \cite[\S~4]{ben2024naturality} for this.
\end{proof}

\begin{defin}
	For $\cM\in \CAlg(\PrV)$, an object $m \in \cM$ is called \emph{$\otimes$-atomic} if the unique map $\Fun(\bigsqcup_{k \geq 0} B \Sigma_k, \cV) \simeq \PSh \Sym(*) \otimes \cV \to \cM$ sending $1_\cV \mapsto m$ is internally left adjoint. In other words, the \emph{internal multi-hom} functor
	\[
	\iHom^\otimes_{\cM}(m , -) : \cM \to \prod_{k \geq 0} \Fun\left(B \Sigma_k, \cV\right)
	\]
	right adjoint to the above map 
	preserves colimits, and its canonical lax symmetric monoidal lax $\cV$-linear structure is strong. Denote by $\mathcal{M}^{\otimes\mathrm{at}} \subseteq \mathcal{M}$ the full subcategory on the $\otimes$-atomic objects.
	
	Given a small category $\rC$, a functor $\rC \to \cM$ is called a \emph{$\otimes$-atomic marking} if its unique extension $\PSh \Sym \rC \to \cM$ in $\CAlg(\PrV)$ is internally left adjoint. Specifically a sequence of objects $(m_1, \dots, m_n) \in \cM$ is called \emph{$\otimes$-atomic tuple} if the functor $\{1, \dots, n\} \to \cM$ marking them is a $\otimes$-atomic marking.
\end{defin}

\begin{rem}
	This naming is chosen to resemble the case of $\cM \in \IPrV$, where an object $m \in \cM$ is called \emph{$\cV$-atomic} or simply \emph{atomic} if the $1$-morphism $\PSh(*) \otimes \cV \simeq \cV \to \cM$ in $\IPrV$ sending $* \mapsto m$ is internally left adjoint. Compare \cite{ben2024naturality}, \cite{ramzi2024dualizable}, \marked{Def.}{3.20}.
\end{rem}

\begin{obs}
	\label{obs:iLpresatomic}
	Internally left adjoint functors in $\CAlg(\PrV)$ are closed under composition; in particular they preserve $\otimes$-atomic markings and $\otimes$-atomic objects. We will prove partial converses in \cref{lem:iLcdomdiagram} and \cref{prop:iLonsequ}.
\end{obs}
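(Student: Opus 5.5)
The observation has two parts: closure of internally left adjoint $1$-morphisms under composition, and the consequence that they preserve $\otimes$-atomic markings and objects. Both follow from the $2$-categorical definition together with the free-functor equivalence of \cref{cor:freeforgetfulfunctors}.

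For closure under composition, let $F: \cM \to \cN$ and $G: \cN \to \cP$ be internally left adjoint in $\CAlg(\IPrV)$. Their internal right adjoints $F^\rR, G^\rR$ are $1$-morphisms of $\CAlg(\IPrV)$ with units and counits that are $2$-morphisms there. The composite $F^\rR G^\rR$ is right adjoint to $GF$ by the usual pasting:
\begin{itemize}
	\item the unit is $\id \Rightarrow F^\rR F \Rightarrow F^\rR G^\rR G F$;
	\item the counit is $G F F^\rR G^\rR \Rightarrow G G^\rR \Rightarrow \id$.
\end{itemize}
The triangle identities follow from those of the two given adjunctions. This works in any $2$-category.

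Alternatively, one can use the characterization of the preceding proposition. $F^\rR$ and $G^\rR$ preserve colimits and have strong symmetric monoidal, strong $\cV$-linear structures, so their composite does too. The lax structure canonically induced on $(GF)^\rR \simeq F^\rR G^\rR$ by the mate construction is the composite of the two lax structures, because mates are compatible with pasting. Hence it is strong.

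For markings, let $y: \rC \to \cM$ be a $\otimes$-atomic marking and $F: \cM \to \cN$ internally left adjoint. By \cref{cor:freeforgetfulfunctors}, the unique extension of $F \circ y$ to $\PSh \Sym \rC \otimes \cV \to \cN$ is $F \circ \bar{y}$, where $\bar{y}$ is the extension of $y$. This holds because precomposition with $\rC \to \PSh\Sym\rC \otimes \cV$ is natural in the target, so $F\circ\bar y$ restricts to $F\circ y$. Since $\bar{y}$ and $F$ are both internally left adjoint, so is $F \circ \bar{y}$ by the first part, and therefore $F \circ y$ is a $\otimes$-atomic marking. Taking $\rC = *$ gives the statement for $\otimes$-atomic objects, and taking $\rC = \{1, \dots, n\}$ gives it for $\otimes$-atomic tuples.

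The only step needing care is the identification of the canonical lax structure on a composite right adjoint with the composite of the lax structures. The purely $2$-categorical argument avoids this entirely, so I would present that one.
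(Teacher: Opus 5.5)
Your proposal is correct and is essentially the argument the paper leaves implicit: adjunctions compose in any $2$-category, and naturality of the free--forgetful equivalence of \cref{cor:freeforgetfulfunctors} identifies the extension of $F \circ y$ with $F \circ \bar{y}$, which is therefore a composite of internal left adjoints. Presenting the purely $2$-categorical pasting argument, as you propose, is the right choice, since it avoids comparing lax structures on composite right adjoints.
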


\begin{obs}
	\label{obs:tensoratomic}
	Let us work out explicitly what it means for an object $m \in \cM$ to be $\otimes$-atomic in $\cM \in \CAlg(\PrV)$. The induced functor
	\[
	\PSh \Sym(*) \otimes \cV \simeq \prod_{k \geq 0} \Fun(B \Sigma_k , \cV) \to \cM
	\]
	sends a tuple $( v_0, v_1, v_2, \dots)$ of objects $v_k \in \cV$ with $\Sigma_k$-action to
	\[
	\bigsqcup_{k \geq 0} m^{\otimes k} \otimes_{\Sigma_k} v_k \in \cM \;
	\]
	where we set $m^{\otimes 0} := 1_\cV$, and $\Sigma_k$ acts on $m^{\otimes k}$ by permuting the factors.  
	So its right adjoint $\iHom_\cM^{\otimes}(m ,-): \cM \to \prod_{k \geq 0} \Fun(B \Sigma_k , \cV)$ sends $m' \in \cM$ to the tuple
	\begin{align*}
		\left( \iHom_\cM( m^{\otimes k} , m' ) \in \Fun(B \Sigma_k, \cV)  \right)_{k \geq 0}
	\end{align*}
	with $\Sigma_k$-action permuting the factors of $m$.
	This means that $m \in \cM$ is $\otimes$-atomic iff for any $n \in \N_0$,
	\begin{itemize}
		\item the functor $\iHom_\cM( m^{\otimes n} , -)$ preserves colimits and $\cV$-tensoring; in other words $m^{\otimes n}$ is $\cV$-atomic,
		\item The unitor induces isomorphisms $\emptyset_\cV \simeq \iHom_\cM(m^{\otimes n}, 1_\cM)$ unless $n=0$, where $1_\cV \simeq \iHom_\cM(1_\cM, 1_\cM)$,
		\item For any $m_1, m_2 \in \cM$, the multiplication map 
		\[
		\bigsqcup_{\underline{n} = S \sqcup T} \iHom_\cM(m^{\otimes  S }, m_1) \otimes \iHom_\cM(m^{\otimes  T }, m_2) \overset{\otimes}{\to} \iHom_\cM(m^{\otimes n}, m_1 \otimes m_2)
		\]
		is an isomorphism.
	\end{itemize}
\end{obs}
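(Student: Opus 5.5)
Throughout, $F := \PSh \Sym(*) \otimes \cV \simeq \prod_{k \geq 0} \Fun(B\Sigma_k, \cV) \to \cM$ denotes the functor sending $1_\cV \mapsto m$. The plan is to unwind the three requirements in the definition of $\otimes$-atomicity: $F^\rR$ preserves colimits, it is strong $\cV$-linear, and its lax symmetric monoidal structure is strong. Each is translated into a statement about $\iHom_\cM(m^{\otimes k}, -)$.

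\textbf{Step 1: the formula for $F$.} By \cref{lem:inducedfreefunctor} with $X = *$, $F$ is the coend $\oint^{\underline{x} \in \Sym(*)} m^{\otimes n} \otimes W(\underline{x})$. Since $\Sym(*) \simeq \bigsqcup_k B\Sigma_k$ is a space, this coend is a colimit over $\bigsqcup_k B\Sigma_k$, namely $\bigsqcup_k m^{\otimes k} \otimes_{\Sigma_k} v_k$, exactly as in the proof of \cref{cor:compositionproduct}.

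\textbf{Step 2: the right adjoint.} The right adjoint of a coproduct of functors $v_k \mapsto m^{\otimes k} \otimes_{\Sigma_k} v_k$ out of a product is the product of their right adjoints. The right adjoint of $m^{\otimes k} \otimes_{\Sigma_k} (-) : \Fun(B\Sigma_k,\cV) \to \cM$ is $\iHom_\cM(m^{\otimes k}, -)$ with the induced $\Sigma_k$-action. This gives the displayed formula for $\iHom^\otimes_\cM(m,-)$.

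\textbf{Step 3: colimits and $\cV$-linearity.} Colimits and $\cV$-tensors in $\prod_k \Fun(B\Sigma_k,\cV)$ are computed componentwise, and the forgetful functor $\Fun(B\Sigma_k,\cV) \to \cV$ is conservative and creates both. Hence $F^\rR$ preserves colimits and is strong $\cV$-linear iff each $\iHom_\cM(m^{\otimes k},-)$ does, i.e.\ iff each $m^{\otimes k}$ is $\cV$-atomic. For the $\cV$-linearity part, one checks that the lax comparison map is the componentwise one.

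\textbf{Step 4: strong monoidality.} A lax symmetric monoidal functor is strong iff its unit map and its binary multiplication maps are equivalences.
\begin{itemize}
\item \emph{Unit.} The unit of $\prod_k \Fun(B\Sigma_k,\cV)$ is $1_\cV$ in degree $0$ and $\emptyset_\cV$ elsewhere. So strongness on units says that $\iHom_\cM(m^{\otimes n}, 1_\cM)$ is $\emptyset_\cV$ for $n>0$ and $1_\cV$ for $n=0$.
\item \emph{Multiplication.} By \cref{lem:dayconvolution} with $X=*$, the Day convolution of the two tuples $F^\rR m_1$ and $F^\rR m_2$ has $n$-th component $\bigsqcup_{\underline{n}=S\sqcup T} \iHom_\cM(m^{\otimes S}, m_1) \otimes \iHom_\cM(m^{\otimes T}, m_2)$. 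Strongness on products says that the comparison map from this to $\iHom_\cM(m^{\otimes n}, m_1 \otimes m_2)$ is an equivalence, where equivalences in $\Fun(B\Sigma_n,\cV)$ are detected on underlying objects.
\end{itemize}

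\textbf{Main obstacle.} The expected difficulty is identifying the abstract lax structure maps (mates of the strong monoidal structure of $F$) with the concrete ones. The unit comparison should come from the unitor, and the multiplication comparison from tensoring morphisms $m^{\otimes S} \to m_1$ and $m^{\otimes T} \to m_2$. To do this I would write the mate explicitly: adjoin the counits $F F^\rR m_i \to m_i$, tensor them, and compose with the equivalence $F(a \Day b) \simeq Fa \otimes Fb$. Evaluated on the summand indexed by $(S,T)$, this composite is exactly the tensor product of multimorphisms.
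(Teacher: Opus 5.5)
Your proposal is correct and takes essentially the same route as the paper, whose observation is its own proof: compute $F$ from the coend formula of \cref{lem:inducedfreefunctor}, take the componentwise right adjoint, and read off $\cV$-atomicity of each $m^{\otimes n}$, the unit condition, and the hereditary condition using the Day convolution formula of \cref{lem:dayconvolution}. Your mate computation, which identifies the abstract lax comparison maps with the unitor and with the tensor product of multimorphisms, fills in a step the paper leaves implicit.
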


\begin{obs}
	\label{obs:hereditary}
	A similar discussion shows that a marking $y: \rC \to \cM$ is $\otimes$-atomic if for any finite sequence of objects $c_0, \dots, c_n \in \rC$ where $n \geq 0$,
	\begin{itemize}
		\item The product $y (x_1) \otimes \dots \otimes y (x_n)$ is an atomic object in $\cM$,
		\item The unitor induces isomorphisms $\emptyset_\cV \simeq \iHom_\cM(y(x_1) \otimes \dots \otimes y(x_n), 1_\cM)$ unless $n=0$, in which case $1_\cV \simeq \iHom_\cM(1_\cM, 1_\cM)$,
		\item For any $m_1, m_2 \in \cM$, the multiplication map
		\[
			\bigsqcup_{\underline{n} = S \sqcup T} \iHom_\cM \left( \bigotimes_{s \in S} y (x_s), m_1 \right) \otimes \iHom_\cM \left( \bigotimes_{t \in T} y (x_t), m_2 \right) \overset{\otimes}{\to} \iHom_\cM \left( \bigotimes_{k=1}^n y (x_k) , m_1 \otimes m_2 \right)
		\]
		is an isomorphism. This is known as the \emph{hereditary condition}, c.f.\ \cite[Prop. 2.4.6, Rem. 2.4.7]{haugsengenv}.
	\end{itemize}
\end{obs}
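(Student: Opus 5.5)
We unwind the definition exactly as in \cref{obs:tensoratomic}, now for a general small category $\rC$ in place of the point. By \cref{lem:inducedfreefunctor}, the extension $\bar y : \PSh\Sym\rC\otimes\cV \simeq \Fun(\Sym\rC^{\op},\cV) \to \cM$ of $y$ sends $W$ to the coend $\oint^{\underline c} y(c_1)\otimes\dots\otimes y(c_n)\otimes W(\underline c)$. Hence its right adjoint is
\[
\bar y^{\rR}(m)(c_1,\dots,c_n) \simeq \iHom_\cM\bigl(y(c_1)\otimes\dots\otimes y(c_n), m\bigr).
\]
To see this, observe that $\mathrm{ev}_{\underline c}$ is right adjoint to $v\mapsto \yo_{\underline c}\otimes v$, and $\bar y$ sends $\yo_{\underline c}\otimes v$ to $y(c_1)\otimes\dots\otimes y(c_n)\otimes v$, by the argument used in the proof of \cref{prop:operadicyoneda}. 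By the characterization of internal left adjoints in $\CAlg(\IPrV)$ proved above, $y$ is a $\otimes$-atomic marking if and only if $\bar y^{\rR}$ preserves colimits, is strong $\cV$-linear, and is strong symmetric monoidal. The plan is to translate each of these three conditions into the corresponding bullet point.

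The first bullet comes from colimits and $\cV$-linearity. Colimits and $\cV$-tensorings in $\Fun(\Sym\rC^{\op},\cV)$ are computed pointwise. So $\bar y^{\rR}$ preserves colimits and is strong $\cV$-linear exactly when each $\iHom_\cM(y(c_1)\otimes\dots\otimes y(c_n),-)$ preserves colimits and $\cV$-tensoring. This says precisely that every such product is atomic.

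The remaining two bullets come from strong monoidality. A lax symmetric monoidal structure is strong if and only if the unit comparison and the binary comparison are equivalences; the $n$-ary ones follow by induction. For the unit, the unit of Day convolution is $\yo_{()}\otimes 1_\cV$, which is $1_\cV$ at the empty tuple and $\emptyset_\cV$ elsewhere. Comparing it with $\bar y^{\rR}(1_\cM)$ pointwise gives the second bullet. For the binary comparison, evaluate both sides at $\underline c = (c_1,\dots,c_n)$ and use \cref{lem:dayconvolution} with $k=2$. This gives
\[
(\bar y^{\rR} m_1 \Day \bar y^{\rR} m_2)(\underline c) \simeq \bigsqcup_{\underline n = S\sqcup T} \iHom_\cM\Bigl(\bigotimes_{s\in S} y(c_s), m_1\Bigr)\otimes \iHom_\cM\Bigl(\bigotimes_{t\in T} y(c_t), m_2\Bigr).
\]
The comparison map to $\iHom_\cM(\bigotimes_k y(c_k), m_1\otimes m_2)$ is then the third bullet's map.

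The step requiring care is checking that the canonical lax structure maps, namely the mates of the strong structure on $\bar y$, really are the maps in the statement. On each summand, the binary comparison should be the adjoint of
\[
\bar y\bigl(\bar y^{\rR}m_1\Day\bar y^{\rR}m_2\bigr)\simeq \bar y\bar y^{\rR}m_1\otimes\bar y\bar y^{\rR}m_2 \to m_1\otimes m_2,
\]
built from the counits. Restricted to representables, this is the tensoring of internal homs followed by evaluation, i.e.\ the map $\otimes$ in the statement. I expect this identification to be routine via the Yoneda description of $\bar y^{\rR}$ together with the symmetric monoidal structure of $\bar y$ on representables, $\bar y(\yo_{\underline c}) \simeq \bigotimes y(c_i)$. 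The unit comparison is handled in the same way.
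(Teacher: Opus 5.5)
Your proposal is correct and follows the paper's route. The paper simply repeats the unwinding of \cref{obs:tensoratomic} for a general $\rC$: compute the right adjoint of the extension $\PSh\Sym\rC\otimes\cV\to\cM$ pointwise as $\iHom_\cM(y(c_1)\otimes\dots\otimes y(c_n),-)$, then read off colimit preservation, $\cV$-linearity, and the unit and binary monoidal comparisons (the latter via \cref{lem:dayconvolution}). Your argument in fact gives the intended ``if and only if'', and the paper likewise leaves implicit the identification of the mate structure maps with the stated multiplication map, which you flag as routine.
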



\begin{obs}
	We learn that for any $\otimes$-atomic objects or $\otimes$-atomic marking to exist in $\cM \in \CAlg(\PrV)$, the unit $1_\cM \in \cM$ must be $\cV$-atomic and the lax symmetric monoidal structure on the forgetful functor $\iHom_\cM(1_\cM, -): \cM \to \cV$ must be strong.
\end{obs}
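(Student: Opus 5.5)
The claim is that whenever $\cM \in \CAlg(\PrV)$ has a $\otimes$-atomic object or marking $y: \rC \to \cM$, two things hold. First, $1_\cM$ is $\cV$-atomic. Second, the lax symmetric monoidal structure on $\iHom_\cM(1_\cM,-)$ is strong. I would prove both by specializing the $n=0$ case of the criteria in \cref{obs:tensoratomic} and \cref{obs:hereditary}.

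For atomicity of the unit, I take the empty tuple, so $n=0$. By convention the empty tensor product $y(x_1)\otimes\dots\otimes y(x_n)$ (respectively $m^{\otimes 0}$) is the unit $1_\cM$. The first bullet of \cref{obs:hereditary} then says directly that $1_\cM$ is $\cV$-atomic. Equivalently, $\iHom_\cM(1_\cM,-)$ preserves colimits and $\cV$-tensorings.

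For strong monoidality, I use the remaining two bullets with $n=0$.

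The unit constraint of the lax structure on $\iHom_\cM(1_\cM,-)$ is the map $1_\cV \to \iHom_\cM(1_\cM,1_\cM)$. The second bullet, in its $n=0$ case, says exactly that this map is an isomorphism.

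The multiplication constraint is the map
\[
\iHom_\cM(1_\cM,m_1)\otimes\iHom_\cM(1_\cM,m_2)\to\iHom_\cM(1_\cM,m_1\otimes m_2).
\]
This is the hereditary map in the third bullet for $n=0$. There the coproduct runs over decompositions $\underline{0}=S\sqcup T$, and there is exactly one, namely $S=T=\emptyset$. So the coproduct reduces to the single summand above, and the bullet says this map is an isomorphism for all $m_1,m_2$.

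The binary and unit constraints being invertible implies the whole lax symmetric monoidal structure is strong.

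The only point needing care is to check that the maps in the bullets really are the lax structure maps of $\iHom_\cM(1_\cM,-)$. I would argue this as follows. The right adjoint $\iHom^\otimes_\cM(m,-)$, composed with projection to its $k=0$ component, is the right adjoint of the composite of $\cV \to \PSh\Sym(*)\otimes\cV$ (inclusion of the $0$-th factor) with the map to $\cM$. That composite is the unit map $\cV\to\cM$ of $\cM$, so its right adjoint is $\iHom_\cM(1_\cM,-)$.

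Now $\iHom_\cM(1_\cM,-)$ equals the composite of $\iHom^\otimes_\cM(m,-)$ with the projection to the $0$-th factor. This projection is strong symmetric monoidal for Day convolution, by \cref{lem:dayconvolution} with $n=0$. The lax structure of a composite of right adjoints is the composite of the mates, so $\iHom_\cM(1_\cM,-)$ is a composite of strong monoidal, colimit-preserving, $\cV$-linear functors. This recovers both conclusions at once.

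For a general marking $y$ the same argument applies, projecting $\PSh\Sym\rC\otimes\cV$ onto its $\Sym^0\rC=*$ summand.
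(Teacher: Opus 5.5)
Your proposal is correct and is essentially the paper's argument: the observation is the $n=0$ case of the bullets in \cref{obs:tensoratomic} and \cref{obs:hereditary}. There the empty tensor product is $1_\cM$, and the unique decomposition $\underline{0}=\emptyset\sqcup\emptyset$ turns the hereditary map into the binary lax structure map of $\iHom_\cM(1_\cM,-)$. Your closing check factors the unit $\cV\to\cM$ as the composite of internal left adjoints $\cV\to\PSh\Sym(*)\otimes\cV\to\cM$ (as in \cref{obs:otimesatomicmarkingobj}); it is a clean self-contained proof on its own, and it does not rely on the bullet lists being necessary conditions.
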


\begin{rem}
	\label{rem:indecomp}
	Let $\cM \in \CAlg(\Pr)$ so $\cV = \Spaces$, and $m_1, m_2 \in \cM$. If the product $m_1 \otimes m_2$ is $\otimes$-atomic, then in particular the multiplication map must exhibit
	\[
	\left( \Map_\cM(m_1 \otimes m_2, m_1) \times \Map_\cM(1, m_2) \right) \sqcup \left( \Map_\cM(1, m_1) \times \Map_\cM(m_1 \otimes m_2, m_2) \right)
	\]
	as isomorphic to $\Map_\cM(m_1 \otimes m_2, m_1 \otimes m_2)$, in particular the identity $\id_{m_1 \otimes m_2}$ factors through one of the summands. This means that there exist $i: m_1 \otimes m_2 \to m_1$ and $r: 1 \to m_2$ such that
	\[
	\id_{m_1 \otimes m_2} \simeq r \otimes i \simeq (m_1 \otimes m_2 \overset{i}{\to} m_1 \simeq m_1 \otimes 1 \overset{m_1 \otimes r}{\to} m_1 \otimes m_2)
	\]
	exhibiting $m_1 \otimes m_2$ as a retract of $m_1$, or the other way around\footnote{Conversely retracts of $\otimes$-atomic objects are always $\otimes$-atomic, since colimit-preserving $\cV$-linear symmetric monoidal functors are closed under retracts in lax $\cV$-linear lax symmetric monoidal functors.}. 
	Informally, it is rare for the tensor product of $\otimes$-atomic objects to remain $\otimes$-atomic; they should be imagined as singletts or indecomposables. 
	
	If we instead choose $\cV = \Ab$, then $\id_{m_1 \otimes m_2}$ must be in the image of
	\[ \left( \Map_\cM(m_1 \otimes m_2, m_1) \otimes \Map_\cM(1, m_2) \right) \oplus \left( \Map_\cM(1, m_1) \otimes \Map_\cM(m_1 \otimes m_2, m_2) \right) \]
	under the multiplication map, meaning that it can be written as a direct sum whose summands are retracts of either $m_1$ or $m_2$. In future work \cite{cauchyoperads}, we will see that for general enrichment, $\otimes$-atomic objects behave like singletts up to \emph{absolute colimits}.
\end{rem}

\begin{lemma}
	\label{lem:PSymisiL}
	Given any functor $\rC \to \rD$, the induced map $\PSh \Sym (\rC) \otimes \cV \to \PSh \Sym (\rD) \otimes \cV$ in $\CAlg(\IPrV)$ is internally left adjoint.
\end{lemma}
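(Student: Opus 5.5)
We show that the right adjoint of $f_! := \PSh\Sym(f)\otimes\cV$ is restriction along $\Sym f$, and that restriction is cocontinuous, $\cV$-linear and strong symmetric monoidal. By the preceding proposition, this makes $f_!$ internally left adjoint in $\CAlg(\IPrV)$.

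Write $f:\rC\to\rD$ and $\Sym f:\Sym\rC\to\Sym\rD$. Under the identification $\PSh\Sym(\rC)\otimes\cV\simeq\Fun(\Sym\rC^{\op},\cV)$, the map $f_!$ is left Kan extension along $\Sym f$, so its right adjoint is restriction
\[ (\Sym f)^*:\Fun(\Sym\rD^{\op},\cV)\to\Fun(\Sym\rC^{\op},\cV). \]
To justify this, note that $f_!$ sends $\yo_c\otimes v\mapsto \yo_{f(c)}\otimes v$ and is symmetric monoidal and cocontinuous. Left Kan extension along a symmetric monoidal functor is symmetric monoidal for Day convolution, so by the universal property in \cref{cor:freeforgetfulfunctors} the two agree.

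Restriction $(\Sym f)^*$ is computed pointwise. It therefore preserves all colimits, and it commutes with the $\cV$-tensoring, since $(W\otimes v)(\underline{x})=W(\underline{x})\otimes v$ pointwise. Hence it is cocontinuous and strong $\cV$-linear.

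The main point is strong symmetric monoidality. First, the unit of Day convolution is $\yo_{()}\otimes 1_\cV$, the functor concentrated on the empty tuple with value $1_\cV$. Restriction preserves it, because only the empty tuple of $\Sym\rC$ maps to the empty tuple of $\Sym\rD$, and $\Map_{\Sym\rD}((),())\simeq *$.

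Second, for binary products I use \cref{lem:dayconvolution} on both sides. For $W_1,W_2\in\Fun(\Sym\rD^{\op},\cV)$ and $\underline{c}=(c_1,\dots,c_n)$, restricting the Day convolution gives
\[ (W_1\Day W_2)(f c_1,\dots,f c_n)\simeq\bigsqcup_{I_1\sqcup I_2=\underline{n}} W_1(f\underline{c}|_{I_1})\otimes W_2(f\underline{c}|_{I_2}). \]
This is exactly $((\Sym f)^*W_1\Day(\Sym f)^*W_2)(\underline{c})$. The partitions of $\underline{n}$ index both sides because $\Sym f$ preserves arity: the projections $\Sym\rC\to\mathrm{Fin}^{\simeq}$ and $\Sym\rD\to\mathrm{Fin}^{\simeq}$ commute with $\Sym f$.

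It remains to identify these isomorphisms with the canonical lax structure map on $(\Sym f)^*$. That map is the mate of $f_!\big((\Sym f)^*W_1\Day(\Sym f)^*W_2\big)\simeq f_!(\Sym f)^*W_1\Day f_!(\Sym f)^*W_2\to W_1\Day W_2$, built from the counits. On each summand indexed by a partition, it is the evident map induced by the same partition, so it coincides with the identification above. The same argument applies to $k$-fold products, which yields all the Beck--Chevalley conditions over $\Fin$.

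I expect this naturality check to be the only real obstacle. To make it clean, I would reduce to the case $\cV=\Spaces$, since everything is $-\otimes\cV$ applied to the $\Spaces$ case. There, restriction along the symmetric monoidal functor $\Sym f$ is strong monoidal exactly when the comparison above is an equivalence, and this can be checked on representables $\yo_{\underline{d}_1}\Day\yo_{\underline{d}_2}$, where both sides are disjoint unions of mapping spaces computed as in \cref{lem:dayconvolution}.
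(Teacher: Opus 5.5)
Your proof is correct, but it takes a different route from the paper. The paper argues purely formally. $\PSh(f)\colon\PSh(\rC)\to\PSh(\rD)$ is internally left adjoint in $\IPr$, since its right adjoint $f^*$ is cocontinuous. Because $\Sym^{\otimes}$ and $-\otimes\cV$ are $2$-functors by \cref{prop:freeforgetfulfunctors}, they carry this adjunction to one in $\CAlg(\IPrV)$, using $\Sym^{\otimes}\PSh(\rC)\simeq\PSh\Sym(\rC)$. That argument needs no computation and no coherence check, but it depends on the appendix's construction of these $2$-functors via the $\star$-product.

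You instead identify the right adjoint explicitly as $(\Sym f)^*$ and verify the criterion for internal left adjoints by hand using \cref{lem:dayconvolution}. This is in effect the special case of \cref{thm:dayweaklydisj} for the functor $\Sym f$; the paper proves that theorem later and notes that it generalizes this lemma. Your route is more elementary and yields an explicit formula for the right adjoint, at the cost of the naturality check you flag.

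The cleanest way to do that check is the one in the proof of \cref{thm:dayweaklydisj}. Written via the colimit formula for Day convolution, the lax structure map of $(\Sym f)^*$ is induced by the functor between indexing categories obtained by applying $\Sym f$. This functor lies over the identity of the discrete set of partitions of $\underline{n}$. It sends the terminal object of each fiber to the terminal object of the corresponding fiber, so it is colimit-cofinal. Hence the comparison is an equivalence, with no summand-by-summand bookkeeping. The argument works verbatim for $\cV$-valued presheaves, so your proposed reduction to $\cV=\Spaces$ is unnecessary; that reduction would quietly reuse the $2$-functoriality of $-\otimes\cV$ that the paper's proof relies on.
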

\begin{proof}
	We apply $\Sym^{\otimes}$ and $- \otimes \cV$, which are $2$-functors by \cref{prop:freeforgetfulfunctors}, to $\PSh (\rC) \to \PSh (\rD)$ which is internally left adjoint in $\IPr$.
\end{proof}

\begin{obs}
	\label{obs:otimesatomicmarkingobj}
	If $y: \rC \to \cM$ is a $\otimes$-atomic marking, then for any $c \in \rC$ the object $y(c) \in \cM$ is $\otimes$-atomic, since the functor $\PSh \Sym(*) \otimes \cV \to \PSh \Sym(\rC) \otimes \cV \to \cM$ marking $y(c)$ is a composition of internally left adjoint functors by \cref{lem:PSymisiL}.
\end{obs}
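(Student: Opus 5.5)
The plan is to write the $1$-morphism that marks $y(c)$ as a composite of two internally left adjoint $1$-morphisms in $\CAlg(\IPrV)$. Then \cref{obs:iLpresatomic} applies, since internal left adjoints are closed under composition in any $2$-category.

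\textbf{Step 1: factor the marking map.} Fix $c \in \rC$ and regard it as a functor $c : * \to \rC$. Functoriality of $\PSh \Sym(-) \otimes \cV$ gives a morphism
\[ c_! : \PSh \Sym(*) \otimes \cV \to \PSh \Sym(\rC) \otimes \cV \]
in $\CAlg(\PrV)$. Write $\bar{y} : \PSh \Sym(\rC) \otimes \cV \to \cM$ for the unique extension of $y$ from \cref{cor:freeforgetfulfunctors}. I will show that $\bar{y} \circ c_!$ is the map marking $y(c)$. By the universal property of \cref{cor:freeforgetfulfunctors}, a morphism out of $\PSh \Sym(*) \otimes \cV$ is determined by its restriction along the canonical functor $* \to \PSh\Sym(*) \otimes \cV$, which picks out $\yo_{(*)} \otimes 1_\cV$. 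The canonical functors $\rC \to \PSh \Sym(\rC) \otimes \cV$ form the unit of the free--forgetful adjunction, so they are natural in $\rC$. Hence $c_!$ sends $\yo_{(*)} \otimes 1_\cV$ to $\yo_{(c)} \otimes 1_\cV$, and $\bar{y}$ sends the latter to $y(c)$. So the restriction of $\bar{y} \circ c_!$ is the functor $* \to \cM$ hitting $y(c)$, and by uniqueness $\bar{y} \circ c_!$ is the marking map of $y(c)$.

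\textbf{Step 2: each factor is an internal left adjoint.} The map $c_!$ is internally left adjoint by \cref{lem:PSymisiL}, applied to the functor $c: * \to \rC$. The map $\bar{y}$ is internally left adjoint by the definition of $y$ being a $\otimes$-atomic marking.

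\textbf{Step 3: conclude.} The composite $\bar{y} \circ c_!$ is internally left adjoint. By definition this says that $y(c)$ is $\otimes$-atomic.

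The only point needing care is Step 1, namely that the composite really is the marking map of $y(c)$. This reduces to naturality of the unit of the free--forgetful adjunction in \cref{cor:freeforgetfulfunctors} and is essentially formal, so I do not expect a genuine obstacle.
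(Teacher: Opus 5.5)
Your proposal is correct and follows the paper's argument: you factor the marking map of $y(c)$ as $\PSh \Sym(*) \otimes \cV \to \PSh \Sym(\rC) \otimes \cV \to \cM$ and observe that both factors are internally left adjoint, the first by \cref{lem:PSymisiL} and the second by the definition of a $\otimes$-atomic marking. Your Step 1, identifying the composite with the marking map via naturality of the unit of the free--forgetful adjunction, just makes explicit what the paper leaves implicit.
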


\begin{warning}
	\label{warning:atomicpair}
	The converse to \cref{obs:otimesatomicmarkingobj} is wrong, already for $\rC = * \sqcup *$, i.e.\ $\otimes$-atomic pairs. The product in $\CAlg(\Pr)$ is created by the forgetful functor to $\Pr$, so it agrees with the pointwise symmetric monoidal structure induced by the Cartesian (which agrees with the coCartesian) symmetric monoidal structure on $\Pr$. In particular given $F: \cM_1 \to \cN_1$ and $G: \cM_2 \to \cN_2$ in $\CAlg(\Pr)$, the product $F \times G: \cM_1 \times \cM_2 \to \cN_1 \times \cN_2$ enhances to a morphism in $\CAlg(\Pr)$; and if $F$ and $G$ are internally left adjoint then so is $F \times G$ with right adjoint $F^\rR \times G^\rR$. 
	
	Consider $\PSh \Sym(*)^{\times 2} = \Fun(* \sqcup *, \PSh\Sym(*)) \in \CAlg(\Pr)$. Since the pointwise symmetric monoidal structure is functorial in the inclusions $* \to * \sqcup *$, both inclusions into and projections out of this biproduct in $\Pr$ are maps in $\CAlg(\Pr)$. The objects $(*, 1), (1, *) \in \PSh \Sym(*)^{\times 2}$ are both $\otimes$-atomic: The functor $\PSh\Sym(*) \to \PSh \Sym(*)^{\times 2}$ marking $(*,1)$ agrees with the left inclusion $i_1$, which is internally left adjoint to the left projection $\mathrm{pr}_1$, and similarly for $(1, *)$.
	However they do not form a $\otimes$-atomic pair, since otherwise their product $(*, *) \in \PSh \Sym (*)^{\times 2} \simeq \PSh(\Sym (*) \sqcup \Sym (*))$ would be $\Spaces$-atomic by \cref{obs:tensoratomic}. This is impossible since $\Sym (*) \sqcup \Sym (*)$ is a groupoid and thereby idempotent complete, so $(*,*)$ would have to be a representable presheaf which it is not.
\end{warning}

\begin{rem}
	We will however see in \cref{prop:iLonsequ} that to check that $y: \rC \to \cM$ is a $\otimes$-atomic marking, it is sufficient to verify that the restriction $\{c_1, \dots, c_n\} \to \cM$ is a $\otimes$-atomic marking for any finite set of objects $c_1, \dots, c_n$ in $\rC$. 
	
	Also in \cref{def:fissile} we introduce \emph{fissile categories}, and prove in \cref{prop:iLfissile} that if $\cM$ is fissile, the converse of \cref{obs:otimesatomicmarkingobj} \emph{is} in fact true, i.e.\ $y: \rC \to \cM$ is a $\otimes$-atomic marking iff it factors through the $\otimes$-atomic objects.
\end{rem}

\begin{obs}
	\label{obs:relationinternalhoms}
	By definition of the internal multihom, we have 
	\[ \Map_{\Fun(B \Sigma, \cV)}(1_{\Fun(B \Sigma, \cV)}, \iHom^\otimes_{\cM}(m , -)) \simeq \Map_\cM(m \oeert 1_{\Fun(B \Sigma, \cV)} , -) \simeq \Map_\cM(m , -) \; .\]
	This enhances to $\iHom_{\Fun(B \Sigma, \cV)}(1_{\Fun(B \Sigma, \cV)}, \iHom^\otimes_{\cM}(m , -)) \simeq \iHom_\cM(m , -)$ since
	\begin{align*}
	\Map_\cV &\left( v, \iHom_{\Fun(B \Sigma, \cV)}(1_{\Fun(B \Sigma, \cV)}, \iHom^\otimes_{\cM}(m , -)) \right) \simeq \\ 
	&\simeq \Map_{\Fun(B \Sigma, \cV)} \left( 1_{\Fun(B \Sigma, \cV)} \otimes v,  \iHom^\otimes_{\cM}(m , -)) \right) \simeq \\
	&\simeq \Map_\cM(m \oeert 1_{\Fun(B \Sigma, \cV)} \otimes v, -) \simeq \Map_\cM(m \otimes v, -) \; .
	\end{align*}
\end{obs}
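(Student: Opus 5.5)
The plan is to deduce both equivalences from the adjunction $F \dashv \iHom^\otimes_\cM(m,-)$ together with the $\cV$-linearity of $F$. Here
\[
F : \PSh \Sym(*) \otimes \cV \simeq \prod_{k \geq 0} \Fun(B\Sigma_k, \cV) \to \cM
\]
is the functor of \cref{obs:tensoratomic}, $(v_k)_k \mapsto \bigsqcup_k m^{\otimes k} \otimes_{\Sigma_k} v_k$. Throughout, I read $1_{\Fun(B\Sigma,\cV)}$ as the generator $\yo_{*} \otimes 1_\cV$. This is the object concentrated in arity $k=1$ with value $1_\cV$ (and trivial $\Sigma_1$-action), i.e.\ the image of the point under $* \to \Sym(*) \to \PSh\Sym(*) \otimes \cV$; the expression $m \oeert 1_{\Fun(B\Sigma,\cV)}$ means $F$ applied to it. The only input is that $F$ is the free extension of $* \mapsto m$ in the sense of \cref{cor:freeforgetfulfunctors}, so by construction $F(\yo_* \otimes 1_\cV) \simeq m$. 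This can also be read off the formula in \cref{obs:tensoratomic}: only the $k=1$ summand survives, giving $m^{\otimes 1} \otimes_{\Sigma_1} 1_\cV \simeq m$.

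\emph{Step 1 (mapping spaces).} The adjunction $F \dashv \iHom^\otimes_\cM(m,-)$ gives, naturally in $m' \in \cM$,
\[
\Map(\yo_*\otimes 1_\cV, \iHom^\otimes_\cM(m,m')) \simeq \Map_\cM(F(\yo_*\otimes 1_\cV), m') \simeq \Map_\cM(m,m').
\]
For $\cV = \Spaces$ this is already the claim.

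\emph{Step 2 (enrichment).} Let $v \in \cV$. The $\cV$-module structure on $\prod_k \Fun(B\Sigma_k,\cV)$ is pointwise, so $\iHom_{\Fun(B\Sigma,\cV)}(\yo_*\otimes 1_\cV, -)$ is right adjoint to $v \mapsto \yo_*\otimes v$. Then compute
\begin{align*}
\Map_\cV\bigl(v, \iHom(\yo_*\otimes 1_\cV, \iHom^\otimes_\cM(m,m'))\bigr) &\simeq \Map(\yo_*\otimes v, \iHom^\otimes_\cM(m,m')) \\
&\simeq \Map_\cM(F(\yo_*\otimes 1_\cV)\otimes v, m') \\
&\simeq \Map_\cM(m\otimes v, m') \simeq \Map_\cV(v, \iHom_\cM(m,m')).
\end{align*}
The second step uses the adjunction together with the $\cV$-linearity $F(w\otimes v)\simeq F(w)\otimes v$, which holds because $F$ is a morphism in $\CAlg(\PrV)$. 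The last step is the defining property of $\iHom_\cM$. All equivalences are natural in $v$ and $m'$, so the Yoneda lemma in $\cV$ yields $\iHom(\yo_*\otimes 1_\cV, \iHom^\otimes_\cM(m,-)) \simeq \iHom_\cM(m,-)$ as functors $\cM \to \cV$.

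\emph{Main obstacle.} The only point needing care is bookkeeping rather than mathematics. One must make sure that the object denoted $1_{\Fun(B\Sigma,\cV)}$ is the arity-one generator, not the monoidal (arity-zero) unit; with the latter one would get $\iHom_\cM(1_\cM,-)$ instead. One must also check that the naturality in $v$ is coherent, so that the Yoneda argument produces an equivalence of functors and not just of objects. For the latter I would phrase Step 2 as an equivalence of representable functors $\cV^{\op} \times \cM \to \Spaces$ assembled from adjunction equivalences, each natural by construction.
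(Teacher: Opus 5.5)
Your proposal is correct and follows essentially the same argument as the paper: the mapping-space equivalence comes from the adjunction $F \dashv \iHom^\otimes_\cM(m,-)$ together with $F(1_{\Fun(B\Sigma,\cV)}) \simeq m$, and the enriched version comes from testing against $v \in \cV$ using $\cV$-linearity of $F$ and then Yoneda in $\cV$. Your reading of $1_{\Fun(B\Sigma,\cV)}$ as the arity-one generator is the one that makes $m \oeert 1_{\Fun(B\Sigma,\cV)} \simeq m$ hold. This is the unit for $\oeert$, not the Day convolution unit, which would instead produce $\iHom_\cM(1_\cM,-)$.
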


\begin{prop}
	\label{prop:atomicarecpt}
	If $\kappa$ is a regular cardinal such that the unit $1_\mathcal{V} \in \mathcal{V}$ is $\kappa$-compact, then every $\otimes$-atomic object in $\mathcal{M} \in \CAlg(\PrV)$ is also $\kappa$-compact. In particular, the full subcategory $\mathcal{M}^{\otimes\mathrm{at}} \subseteq \mathcal{M}$ on the $\otimes$-atomic objects is always small.
\end{prop}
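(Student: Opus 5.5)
By \cref{obs:tensoratomic}, a $\otimes$-atomic object $m$ is in particular $\cV$-atomic: the case $n=1$ says that $\iHom_\cM(m,-)\colon \cM \to \cV$ preserves colimits and $\cV$-tensoring. The plan is to reduce $\kappa$-compactness of $m$ in $\cM$ to $\kappa$-compactness of $1_\cV$ in $\cV$ by writing the mapping space out of $m$ as a mapping space out of $1_\cV$. For $m' \in \cM$, the adjunction defining the internal hom (the tensoring $\cV \to \cM$, $v \mapsto m\otimes v$, is left adjoint to $\iHom_\cM(m,-)$) gives a natural equivalence
\[
\Map_\cM(m, m') \simeq \Map_\cM(m \otimes 1_\cV, m') \simeq \Map_\cV\bigl(1_\cV, \iHom_\cM(m, m')\bigr) \; .
\]
This is also the content of \cref{obs:relationinternalhoms} after restricting to the arity-one component.

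Now let $m' \simeq \colim_{i \in I} m_i$ be a $\kappa$-filtered colimit in $\cM$. Since $\iHom_\cM(m,-)$ preserves all small colimits, we get $\iHom_\cM(m, m') \simeq \colim_i \iHom_\cM(m, m_i)$ in $\cV$. Since $1_\cV$ is $\kappa$-compact, $\Map_\cV(1_\cV, -)$ commutes with this $\kappa$-filtered colimit. Combining these with the natural equivalence above gives $\Map_\cM(m, \colim_i m_i) \simeq \colim_i \Map_\cM(m, m_i)$. We must check that the composite is the canonical comparison map; this follows from naturality of the adjunction equivalence in $m'$. Hence $m$ is $\kappa$-compact. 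Note that only $\cV$-atomicity of $m$ is used, not the monoidal conditions.

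For the smallness claim, choose such a $\kappa$. This is possible because $\cV$ is presentable, so every object of $\cV$, in particular $1_\cV$, is $\kappa$-compact for some regular cardinal $\kappa$. We may moreover enlarge $\kappa$ so that $\cM$ is $\kappa$-accessible. Then $\cM^{\otimes\mathrm{at}}$ is a full subcategory of $\cM^\kappa$, and $\cM^\kappa$ is essentially small for presentable $\cM$ \HTT{Prop.}{5.4.2.2}. A full subcategory of an essentially small category is essentially small. The only delicate point is the identification of the comparison map, and that is routine.
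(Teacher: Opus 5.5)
Your proof is correct and takes the same route as the paper. The paper also uses \cref{obs:tensoratomic} to reduce to $\cV$-atomicity of $m$, and then cites \cite[Prop.\ 5.9]{ben2024naturality} and \marked{Prop.}{3.25} for the fact that $\cV$-atomic objects are $\kappa$-compact; you prove that cited step directly via the natural equivalence $\Map_\cM(m,-)\simeq\Map_\cV(1_\cV,\iHom_\cM(m,-))$, and you spell out the smallness deduction (choosing $\kappa$ so that $\cM$ is also $\kappa$-accessible), which the paper leaves implicit.
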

\begin{proof}
	By \cref{obs:tensoratomic}, $\otimes$-atomic objects are in particular $\cV$-atomic, so this follows from {\cite[Prop.\ 5.9]{ben2024naturality}}, \marked{Prop.}{3.25}.
\end{proof}

\subsection{\texorpdfstring{$\otimes$}{Tensor}-atomically generated categories}

Next, we introduce an analogue of the atomically generated categories from \marked{Def.}{3.13}, and point out some subtle differences such as \cref{warning:atomicpair}.

\begin{defin}
	For $\rC$ a small category and $\cM \in \CAlg(\PrV)$, a functor $\rC \to \cM$ is called a \emph{$\otimes$-atomically generating marking} if it is a $\otimes$-atomic marking, and the smallest full subcategory of $\cM$ containing its image, that is closed under symmetric monoidal structure, $\cV$-tensoring and colimits, is $\cM$ itself. A full subcategory $\cM_0 \subseteq \cM$ is called a \emph{system of $\otimes$-atomic generators} if its inclusion is a $\otimes$-atomically generating marking.  We call $\cM$ \emph{$\otimes$-atomically generated} if it admits a $\otimes$-atomically generating marking (or equivalently by \cref{cor:univisag} a system of $\otimes$-atomic generators).
\end{defin}

\begin{ex}
	For $\cO \in \vOp_X(\cV)$ any $\cV$-operad, the operadic Yoneda functor $\yo_\cO: X \to \PSh \Sym X \otimes \cV \to \PShO(\cO) = \LMod_\cO(\PSh \Sym X \otimes \cV )$ is a $\otimes$-atomically generating marking: The free functor $\PSh \Sym X \otimes \cV \to \LMod_\cO(\PSh \Sym X \otimes \cV )$ is internally left adjoint as part of a monadic adjunction, and colimit-dominant since any module is a geometric realization of free modules. Conversely, we will show in \cref{thm:markedalgebras} that any $\otimes$-atomically generating marking arises like this.
\end{ex}

\begin{warning}
	\label{warning:notag}
	While a system of $\otimes$-atomic generators $\cM_0 \subseteq \cM$ always consists of $\otimes$-atomic objects by \cref{obs:otimesatomicmarkingobj}, this is not enough to ensure that its inclusion is a $\otimes$-atomic marking. For instance, the category $\PSh \Sym(*)^{\times 2}$ from \cref{warning:atomicpair} is not $\otimes$-atomically generated: Combining \cref{warning:atomicpair} and \cref{rem:indecomp}, the only $\otimes$-atomic objects in it are $(1,*)$ and $(*,1)$, and as we had seen the full subcategory spanned by them does not constitute a system of $\otimes$-atomic generators.
\end{warning}

\begin{notat}
	Given a functor $F: \rC \to \rD$ where $\rD$ is cocomplete, its \emph{colimit-closed image} $\CIm(F) \subseteq \rD$ is the smallest full subcategory of $\rD$ containing the image of $F$ that is closed under colimits in $\rD$.
\end{notat}

\begin{lemma}
	\label{lem:CImclosed}
	Let $F: \cM\to \cN$ be a morphism in $\CAlg(\PrV)$. Then its colimit-closed image $\CIm(F) \subseteq \cN$ is closed under $\cV$-tensoring and the symmetric monoidal structure.
\end{lemma}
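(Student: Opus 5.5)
The plan is a "closure under a two-variable operation" argument, run one argument at a time. Write $\rD := \CIm(F) \subseteq \cN$. We must show three things: $\rD$ is closed under $- \otimes v$ for $v \in \cV$, it contains the unit $1_\cN$, and it is closed under the binary product $\otimes$ of $\cN$. All of these operations preserve colimits in each variable separately, because $\cN \in \CAlg(\PrV)$ is presentably symmetric monoidal and $\cV$-linear. This separate cocontinuity is the only input beyond $F$ being a morphism in $\CAlg(\PrV)$.

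First, the $\cV$-tensoring and the unit. Fix $v \in \cV$ and let $\rD_v := \{ n \in \cN : n \otimes v \in \rD \}$. Since $- \otimes v$ preserves colimits and $\rD$ is colimit-closed, $\rD_v$ is closed under colimits. Since $F$ is $\cV$-linear, $F(m) \otimes v \simeq F(m \otimes v) \in \rD$, so $\rD_v$ contains the image of $F$. Minimality of $\rD$ then gives $\rD \subseteq \rD_v$, which is the required closure. For the unit, $1_\cN \simeq F(1_\cM)$ because $F$ is symmetric monoidal, so $1_\cN$ already lies in the image of $F$.

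Second, binary products, by the standard two-step argument. For fixed $m \in \cM$, set $\rD^{(m)} := \{ n \in \cN : F(m) \otimes n \in \rD\}$. Since $F(m) \otimes -$ is cocontinuous, $\rD^{(m)}$ is colimit-closed. It contains the image of $F$ because $F(m) \otimes F(m') \simeq F(m \otimes m')$. Hence $\rD \subseteq \rD^{(m)}$. Next, for fixed $d \in \rD$, set $\rD_{(d)} := \{ n \in \cN : n \otimes d \in \rD\}$. This is again colimit-closed, and by the previous step, using symmetry of $\otimes$, it contains every $F(m)$. Hence $\rD \subseteq \rD_{(d)}$, which shows $d' \otimes d \in \rD$ for all $d, d' \in \rD$. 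Closure under $n$-fold products follows by induction, with the case $n=0$ being the unit handled above.

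The only point needing care is foundational rather than conceptual. $\CIm(F)$ is defined as the smallest colimit-closed full subcategory containing the image, so we must know that each auxiliary subcategory ($\rD_v$, $\rD^{(m)}$, $\rD_{(d)}$) is a full, replete, colimit-closed subcategory of $\cN$. This is exactly what makes the minimality argument apply. It follows because preimages of replete colimit-closed full subcategories under colimit-preserving functors are again replete and colimit-closed. I expect no genuine obstacle beyond this.
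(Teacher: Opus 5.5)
Your proof is correct and follows essentially the same route as the paper. The paper first shows that the full subcategory of $n \in \cN$ with $n \otimes F(m) \in \CIm(F)$ for all $m \in \cM$ is colimit-closed and contains the image of $F$, hence contains $\CIm(F)$; it then bootstraps to the subcategory of $n$ with $n \otimes v \in \CIm(F)$ and $n \otimes d \in \CIm(F)$ for all $v \in \cV$ and $d \in \CIm(F)$. These are just your per-$m$, per-$v$ and per-$d$ subcategories packaged into two intersections, and your explicit check of the unit via $1_\cN \simeq F(1_\cM)$ spells out a point the paper leaves implicit.
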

\begin{proof}
	Let $\cM_1$ denote the full subcategory of $\cM$ on those $m$ such that $m \otimes m_0 \in \CIm(F)$ for all $m_0$ in the full image of $F$. This contains the full image of $F$ and is closed under colimits, hence it must agree with $\CIm(F)$. Now, let $\cM_2$ be the full subcategory on those $m \in \cM$ such that $m \otimes v \in \CIm(F)$ for all $v \in \cV$, and $m \otimes m' \in \CIm(F)$ for all $m' \in \CIm(F)$. By the previous argument this contains the full image of $F$, also it is closed under colimits, so $\cM_2$ agrees with $\CIm(F)$ and we are done.
\end{proof}

\begin{lemma}
	\label{lem:cdomgeneration}
	Let $\rC$ be a small category, $\cM \in \CAlg(\PrV)$ and $y: \rC \to \cM$ a functor. Then the following are equivalent:
	\begin{enumerate}[(1)]
		\item The image of $y$ generates $\cM$ under colimits, $\cV$-tensoring and the symmetric monoidal structure,
		\item The unique extension $Y : \PSh \Sym (\rC) \otimes \cV \to \cM$ to a morphism in $\CAlg(\PrV)$ is colimit-dominant,
		\item The right adjoint $Y^\rR : \cM\to \PSh \Sym (\rC) \otimes \cV$, which sends $m$ to $((c_1, \dots, c_k) \mapsto \iHom_\cM(c_1 \otimes \dots \otimes c_k, m)) \in \Fun(\Sym (\rC)^{\op}, \cV)$, is conservative.
	\end{enumerate}
\end{lemma}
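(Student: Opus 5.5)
I will prove the cycle of equivalences (1)$\Leftrightarrow$(2) and (2)$\Leftrightarrow$(3). Here ``colimit-dominant'' means that $\CIm(Y) = \cM$, i.e.\ the image of $Y$ generates $\cM$ under colimits (as in \marked{Def.}{3.13}).

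For (1)$\Leftrightarrow$(2), I will first record that the image of $Y$ lies in the smallest full subcategory $\cM' \subseteq \cM$ containing $y(\rC)$ and closed under colimits, $\cV$-tensoring and $\otimes$. Indeed, by \cref{lem:inducedfreefunctor} every $Y(W)$ is a coend, hence a colimit, of objects $y(c_1)\otimes\dots\otimes y(c_n)\otimes W(\underline{c})$. Consequently $\CIm(Y) \subseteq \cM'$. Conversely, \cref{lem:CImclosed} says $\CIm(Y)$ is closed under $\cV$-tensoring and $\otimes$, and it contains $y(\rC)$ since $y$ factors through $Y$. Hence $\cM' \subseteq \CIm(Y)$, so $\cM' = \CIm(Y)$. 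Statement (1) says $\cM'=\cM$ and (2) says $\CIm(Y)=\cM$, so these coincide.

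For (2)$\Leftrightarrow$(3), I will use the standard fact for a colimit-preserving functor between presentable categories with right adjoint $Y^\rR$ (e.g.\ \marked{Lem.}{3.?} or the analogous statement in \cite{ramzi2024dualizable}). The direction (2)$\Rightarrow$(3) goes as follows. Let $f: m\to m'$ be such that $Y^\rR f$ is invertible. The full subcategory of those $n$ with $\Map(n,m)\to\Map(n,m')$ invertible is closed under colimits. It contains $Y(W)$ for all $W$ by adjunction, so it contains $\CIm(Y) = \cM$; by Yoneda, $f$ is invertible. For (3)$\Rightarrow$(2), $\CIm(Y)$ is presentable, since it is generated under colimits by a small set of objects: it is generated by the image of the presentable category $\PSh\Sym(\rC)\otimes\cV$, which is in turn generated by a small set. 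So the inclusion $\CIm(Y)\hookrightarrow\cM$ has a right adjoint $R$ by the adjoint functor theorem. For any $m$, the counit $\varepsilon: iR(m)\to m$ becomes an equivalence after applying $\Map(Y(W),-)$, because $Y(W)\in\CIm(Y)$. Hence $Y^\rR\varepsilon$ is invertible, conservativity gives that $\varepsilon$ is invertible, and thus $m\in\CIm(Y)$.

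Finally, the explicit formula for $Y^\rR$ in (3) will follow as in the proof of \cref{prop:operadicyoneda}. Evaluation at $\underline{c}$ is right adjoint to $v \mapsto \yo_{\underline{c}}\otimes v$, and $Y$ sends $\yo_{\underline{c}}$ to $y(c_1)\otimes\dots\otimes y(c_k)$, so $\mathrm{ev}_{\underline{c}} Y^\rR \simeq \iHom_\cM(y(c_1)\otimes\dots\otimes y(c_k),-)$.

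The main obstacle is the size issue in (3)$\Rightarrow$(2), namely that $\CIm(Y)$ is presentable. I plan to handle it by noting that $\CIm(Y)$ equals the colimit closure of the small set $\{Y(\yo_{\underline{c}}\otimes v)\}$, with $v$ ranging over a small set of generators of $\cV$. Such a closure is the essential image of a colimit-preserving functor out of a presheaf category, and is therefore presentable (\HTT{Prop.}{5.5.1.?}-type argument). Alternatively, I can avoid this by applying the conservativity directly to a transfinite colimit approximation.
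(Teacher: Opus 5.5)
Your strategy is sound. For (1)$\Leftrightarrow$(2) it is the paper's argument: you identify the closure of $y(\rC)$ under colimits, $\cV$-tensoring and $\otimes$ with $\CIm(Y)$. One inclusion uses \cref{lem:CImclosed}; the other uses that $Y$ preserves these operations, and your coend formula is an explicit version of the paper's remark that $\rC$ generates $\PSh\Sym(\rC)\otimes\cV$ under them. For (2)$\Leftrightarrow$(3) the paper simply cites \marked{Cor.}{3.13}, whereas you reprove that corollary: Yoneda in one direction, coreflection onto $\CIm(Y)$ in the other. This makes the lemma self-contained, but it puts all the weight on the presentability of $\CIm(Y)$.

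That is the one step whose justification fails as written. Being the essential image of a colimit-preserving functor out of a presheaf category does not by itself give presentability. In fact, the essential image of the evident functor $\PSh(S)\to\cM$ induced by a small set $S$ is in general not even closed under colimits. For example, for $S=\{*\sqcup *\}\subseteq\Spaces$ the image consists of the spaces $P\sqcup P$, yet the coequalizer of the identity and the swap is $S^1$. So that image is usually strictly smaller than the colimit closure of $S$.

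The standard fix is as follows. Choose a regular cardinal $\kappa$ such that all objects of $S$ are $\kappa$-compact in $\cM$. Let $\cM_0$ be the closure of $S$ under $\kappa$-small colimits; it is essentially small and consists of $\kappa$-compact objects. The induced functor $\Ind_\kappa(\cM_0)\to\cM$ is then fully faithful and preserves small colimits. Hence its essential image is closed under colimits, contains $S$, and lies in the colimit closure of $S$. So it equals $\CIm(Y)$, which is therefore presentable, and the inclusion admits a right adjoint by the adjoint functor theorem.

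With this in place, or by simply citing \marked{Cor.}{3.13} as the paper does, your argument is complete. The vaguer ``transfinite approximation'' alternative is then unnecessary.
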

\begin{proof}
	The equivalence $(2) \Leftrightarrow (3)$ follows from \marked{Cor.}{3.13}. Also $(2) \Rightarrow (1)$ since the image of $\rC \to \PSh \Sym \rC \otimes \cV$ clearly generates under colimit, $\cV$-tensoring and symmetric monoidal structure. Finally, $(1) \Rightarrow (2)$ follows since the colimit-closed image of $Y$ contains the image of $y$ and is closed under colimits, $\cV$-tensoring and symmetric monoidal structure by \cref{lem:CImclosed}
\end{proof}

\begin{lemma}
	\label{lem:iLcdomdiagram}
	Let $\cM, \cN, \cP \in \CAlg(\PrV)$ together with functors
	\[
	\begin{tikzcd}
		\cM \arrow[rr, "F"] & & \cP \\
		& \cN \arrow[ul, "I"', "\mathrm{iL,cdom}"] \arrow[ur, "G", "\mathrm{iL}"']
	\end{tikzcd}
	\]
	in $\CAlg(\PrV)$ where $I$ is internally left adjoint and colimit-dominant, and $G$ is internally left adjoint. Then $F$ is internally left adjoint.
\end{lemma}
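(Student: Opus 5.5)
The plan is to use the characterization of internal left adjoints from the Proposition at the start of this section: a $1$-morphism in $\CAlg(\IPrV)$ is internally left adjoint iff its right adjoint preserves colimits and its canonical lax symmetric monoidal, lax $\cV$-linear structure is strong. Write $F^\rR, G^\rR, I^\rR$ for the right adjoints of the underlying functors. The triangle presumably commutes, $F \circ I \simeq G$, so passing to right adjoints gives $I^\rR \circ F^\rR \simeq G^\rR$, and this is an equivalence of lax symmetric monoidal lax $\cV$-linear functors, since mates of composites are composites of mates.

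\textbf{Colimits.} Let $p: \rK \to \cP$ be a small diagram, and consider the comparison map $\phi: \colim F^\rR p \to F^\rR(\colim p)$ in $\cM$. Apply $I^\rR$. Since $I^\rR$ preserves colimits, $I^\rR(\phi)$ is identified with the comparison map $\colim G^\rR p \to G^\rR(\colim p)$, which is an equivalence because $G^\rR$ preserves colimits. Since $I$ is colimit-dominant, \marked{Cor.}{3.13} (as used in \cref{lem:cdomgeneration}) shows that $I^\rR$ is conservative. Hence $\phi$ is an equivalence.

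\textbf{Monoidal and linear structure.} The same argument applies to the structure maps. For $p_1, p_2 \in \cP$ the lax structure map is $\mu: F^\rR p_1 \otimes F^\rR p_2 \to F^\rR(p_1 \otimes p_2)$. The composite structure on $I^\rR F^\rR$ is obtained as
\[ I^\rR F^\rR p_1 \otimes I^\rR F^\rR p_2 \to I^\rR(F^\rR p_1 \otimes F^\rR p_2) \to I^\rR F^\rR(p_1 \otimes p_2). \]
Here the first map is an equivalence because $I^\rR$ is strong, and the whole composite is an equivalence because it agrees with the strong structure map of $G^\rR$. By two-out-of-three, $I^\rR(\mu)$ is an equivalence, so $\mu$ is an equivalence by conservativity of $I^\rR$. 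The unit map $1_\cM \to F^\rR 1_\cP$ and the $\cV$-linearity maps $F^\rR(p) \otimes v \to F^\rR(p \otimes v)$ are handled identically. The earlier Proposition then shows that $F$ is internally left adjoint.

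\textbf{Main obstacle.} The step needing the most care is identifying the lax structure on $I^\rR F^\rR$, obtained as the mate of $F \circ I$, with the lax structure on $G^\rR$. I would handle this by doing the mate calculus in the ambient $2$-category of lax symmetric monoidal, lax $\cV$-linear functors, i.e.\ inside $\Fun(\Fin, \RMod_\cV(\ICat))$-style lax transformations as in \cref{obs:CAlgPrVemb}, where the mate of a composite is canonically the composite of mates.
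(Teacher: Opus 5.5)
Your proposal is correct and is essentially the paper's argument. The paper shows that the full subcategory of $m \in \cM$ for which the comparison maps of $F^\rR$ (colimits, $\cV$-tensoring, tensor product) induce equivalences on $\Map_\cM(m,-)$ contains the image of $I$, using that $I^\rR$ and $I^\rR F^\rR \simeq G^\rR$ are strong, and is closed under colimits. That is just the unpacked form of your step ``colimit-dominance of $I$ implies conservativity of $I^\rR$''. The identification of the lax structure on $I^\rR F^\rR$ with that on $G^\rR$, which you flag as the main obstacle, is also used implicitly in the paper's proof.
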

\begin{proof}
	Consider the full subcategory $\widetilde{\cM} \subseteq \cM$ on those $m \in \cM$ such that for all diagrams $\rI \to \cP$, and all objects $p, p' \in \cP$ and $v\in \cV$ the morphisms
	\[ \Map_{\cM} (m, \colim_i F^\rR p_i) \to  \Map_{\cM}(m, F^\rR \colim_i p_i)  
	\]
	\[\Map_{\cM}(m, F^\rR p \otimes v) \to \Map_{\cM}(m, F^\rR (p \otimes v))
	\]
	\[\Map_{\cM}(m, F^\rR p \otimes F^\rR p') \to \Map_{\cM}(m, F^\rR (p \otimes p'))
	\]
	are isomorphisms. By assumption $\widetilde{\cM}$ contains the full image of $I$, since $\Map_\cM(I(n), -) \simeq \Map_\cM(n, I^\rR -)$ and both $I^\rR$ and $I^\rR F^\rR$ belong to $\CAlg(\PrV)$. Moreover, $\widetilde{\cM}$ is closed under colimits since we can pull them out as limits, so by colimit-dominance we deduce $\widetilde{\cM} = \cM$. By Yoneda, we learn that $F^\rR$ preserves colimits, symmetric monoidal structure and tensoring so $F$ is internally left adjoint.
\end{proof}

\begin{lemma}[Adjoint lifting theorem in $\CAlg(\IPrV)$]
	\label{lem:iRconsdiagram}
	Let $\cM, \cN, \cP \in \CAlg(\PrV)$ together with functors
	\[
	\begin{tikzcd}
		\cM \arrow[dr,"U", "\mathrm{iR}"'] \arrow[rr, "F"] & & \cP \arrow[dl, "V"', "\mathrm{iR, cons}"] \\
		& \cN &
	\end{tikzcd}
	\]
	in $\CAlg(\PrV)$ where $U$ is internally right adjoint, and $V$ is internally right adjoint and conservative. Then $F$ is internally right adjoint.
\end{lemma}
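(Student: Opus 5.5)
The plan is to build the left adjoint $F^\rL$ of $F$ in two steps. First, $F^\rL$ exists as a plain colimit-preserving functor by the adjoint functor theorem. Second, its canonical oplax symmetric monoidal oplax $\cV$-linear structure is strong, which we test on the image of $V^\rL$ and then extend using colimit-dominance. Write $U^\rL \dashv U$ and $V^\rL \dashv V$ for the given adjunctions in $\CAlg(\IPrV)$. Since $V \circ F \simeq U$, we expect $F^\rL \circ V^\rL \simeq U^\rL$.

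\emph{Existence of $F^\rL$.} Since $F$ is a morphism in $\CAlg(\PrV)$, it is a colimit-preserving functor between presentable categories, hence accessible. So it suffices to show that $F$ preserves small limits.
\begin{itemize}
\item Take a limit diagram $\bar p$ in $\cM$. Then $V F \bar p \simeq U \bar p$ is a limit diagram, since $U$ is a right adjoint.
\item $V$ is a right adjoint, so it preserves limits, and it is conservative. Hence it reflects those limits whose underlying diagram admits a limit in $\cP$, which holds here since $\cP$ is presentable.
\item Therefore $F\bar p$ is a limit diagram.
\end{itemize}
By the adjoint functor theorem, $F$ has a left adjoint $F^\rL$, which automatically preserves colimits. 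Since left adjoints compose and are unique, $F^\rL V^\rL \simeq (VF)^\rL \simeq U^\rL$.

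\emph{Strongness of $F^\rL$.} The strong symmetric monoidal $\cV$-linear structure of $F$ induces, by passing to mates, an oplax symmetric monoidal oplax $\cV$-linear structure on $F^\rL$. We must show that the comparison maps
\[ F^\rL(p\otimes p')\to F^\rL p\otimes F^\rL p', \qquad F^\rL(1_\cP)\to 1_\cM, \qquad F^\rL(p\otimes v)\to F^\rL(p)\otimes v \]
are equivalences.
\begin{itemize}
\item Consider the full subcategory of pairs $(p,p')$ (respectively objects $p$) for which the relevant map is an equivalence. All functors involved preserve colimits separately in each variable, because $\otimes$ on objects of $\CAlg(\PrV)$ preserves colimits in each variable and $F^\rL$ is cocontinuous. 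Hence this subcategory is closed under colimits in each variable.
\item Since $V$ is conservative, $V^\rL$ is colimit-dominant, by the same equivalence (\marked{Cor.}{3.13}) used in \cref{lem:cdomgeneration}. So it suffices to check the maps on objects $V^\rL n$, $V^\rL n'$ with $n,n'\in\cN$, first in one variable and then in the other.
\item For these objects, mates are compatible with composition. So under $F^\rL V^\rL\simeq U^\rL$, the comparison map for $F^\rL$ at $(V^\rL n, V^\rL n')$, precomposed with the equivalence $V^\rL(n\otimes n')\simeq V^\rL n\otimes V^\rL n'$ given by the strong monoidal structure of $V^\rL$, identifies with the oplax structure map of $U^\rL$ at $(n,n')$. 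That map is an equivalence because $U$ is internally right adjoint.
\item The unit and the $\cV$-linearity constraints are treated identically, using $V^\rL(1_\cN)\simeq 1_\cP$ and $V^\rL(n\otimes v)\simeq V^\rL(n)\otimes v$.
\end{itemize}

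Thus $F^\rL$ is a morphism of $\CAlg(\PrV)$, and the adjunction $F^\rL\dashv F$ lives in $\CAlg(\IPrV)$. One route to see this: the embedding of \cref{obs:CAlgPrVemb} into $\Fun(\Fin,\RMod_\cV(\ICat))$ together with the Beck–Chevalley criterion used in the characterization of internal left adjoints above. Alternatively, by the dual of that characterization, since the right adjoint $F$ is already strong and cocontinuous. Either way, $F$ is internally right adjoint.

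The main obstacle is the third bullet of the strongness step: making precise that the mate of the composite strong structure on $VF\simeq U$ is the composite of the mates, so that the comparison map of $F^\rL$ restricted along $V^\rL$ really is that of $U^\rL$. I would handle this in the $2$-category $\Fun(\Fin,\RMod_\cV(\ICat))$, where mates of pasted squares are pastes of mates (cf.\ \cite{heinemonadicity}). Applying this to the squares encoding the Segal structure maps $\alpha_!$ gives the required identification.
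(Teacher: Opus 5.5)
Your proof is correct and follows the paper's strategy: construct $F^\rL$, then show that its oplax symmetric monoidal oplax $\cV$-linear structure is strong on the image of $V^\rL$, where it reduces via $F^\rL V^\rL \simeq U^\rL$ to the strong structure of $U^\rL$, and extend this using colimit-dominance of $V^\rL$. The only difference is the existence step: you use the adjoint functor theorem, with $V$ reflecting limits, whereas the paper shows directly that the objects $p \in \cP$ for which $\Map_\cP(p, F-)$ is corepresentable contain the image of $V^\rL$ and are closed under colimits, which avoids the adjoint functor theorem and gives $F^\rL V^\rL \simeq U^\rL$ by construction.
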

\begin{proof}
	First, note that the full subcategory of $\cN$ spanned by those $n$ such that the copresheaf $\Map_\cM(n , F -) : \cM\to \Spaces$ is representable contains the image of $V^\rL$ by assumption that $U$ is internally right adjoint, also it is closed under colimits since the coYoneda embedding preserves colimits. Hence we deduce that $F$ admits a left adjoint, which automatically inherits an oplax $\cV$-linear oplax symmetric monoidal structure. Once again using that $U$ is internally right adjoint, we learn that $F^\rL$ is strong $\cV$-linear and symmetric monoidal on the image of $V^\rL$, so by colimit-dominance of $V^\rL$ it is so everywhere (similarly to \cref{lem:iLcdomdiagram}).
\end{proof}

\begin{prop}
	\label{prop:iLpreserve}
	Let $F: \cM \to \cN$ be a morphism in $\CAlg(\PrV)$, and $\rC \to \cM$ any $\otimes$-atomically generating marking. Then, $F$ is internally left adjoint iff
	the composition $\rC \to \cM \to \cN$ is a $\otimes$-atomic marking.
\end{prop}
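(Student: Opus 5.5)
Both directions reduce to \cref{lem:iLcdomdiagram} and \cref{obs:iLpresatomic} once we factor everything through the free object on $\rC$. Write $y: \rC \to \cM$ for the given $\otimes$-atomically generating marking. Let $Y: \PSh \Sym(\rC) \otimes \cV \to \cM$ be its unique extension in $\CAlg(\PrV)$, provided by \cref{cor:freeforgetfulfunctors}. By definition of a $\otimes$-atomic marking, $Y$ is internally left adjoint. By \cref{lem:cdomgeneration}, the generation hypothesis is equivalent to $Y$ being colimit-dominant.

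For the composite $F \circ y: \rC \to \cN$, the unique extension to a morphism $\PSh \Sym(\rC) \otimes \cV \to \cN$ in $\CAlg(\PrV)$ is $F \circ Y$. This holds because the equivalence $\Fun^{\rL, \otimes}_\cV(\PSh \Sym \rC \otimes \cV, -) \simeq \Fun(\rC, -)$ is given by precomposition with $\rC \to \PSh\Sym\rC \otimes \cV$, and is therefore natural in the target. So $F \circ y$ is a $\otimes$-atomic marking if and only if $F \circ Y$ is internally left adjoint.

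\emph{Forward direction.} Suppose $F$ is internally left adjoint. Then $F \circ Y$ is a composite of internally left adjoint $1$-morphisms, hence internally left adjoint by \cref{obs:iLpresatomic}, and $F \circ y$ is a $\otimes$-atomic marking.

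\emph{Converse.} Suppose $F \circ y$ is a $\otimes$-atomic marking, so $G := F \circ Y$ is internally left adjoint. We apply \cref{lem:iLcdomdiagram} to the triangle with $\cN$ there given by $\PSh\Sym(\rC)\otimes\cV$, $I := Y$ (internally left adjoint and colimit-dominant), $G := F\circ Y$ (internally left adjoint), and the top arrow equal to $F$. The triangle commutes by construction, so the lemma gives that $F$ is internally left adjoint.

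No step is a real obstacle. The only point needing care is the naturality claim that the extension of $F\circ y$ is $F\circ Y$, and this is immediate from the universal property being precomposition-induced.
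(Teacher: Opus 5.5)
Your proposal is correct and follows essentially the same route as the paper: factor through the extension $Y:\PSh\Sym(\rC)\otimes\cV\to\cM$, use closure of internal left adjoints under composition for one direction, and apply \cref{lem:iLcdomdiagram} to the triangle $F\circ Y$ for the other. You also make explicit, via \cref{lem:cdomgeneration}, the colimit-dominance of $Y$ that the lemma requires and that the paper's proof uses only implicitly.
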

\begin{proof}
	Consider the composition of the unique extension:
	\[
	\PSh \Sym (\rC) \otimes \cV \to \cM \to \cN
	\]
	Internally left adjoint functors compose, so the \emph{if} direction is immediate. For the only if direction, apply \cref{lem:iLcdomdiagram} to this composition.
\end{proof}

\begin{cor}
	\label{cor:univisag}
	Let $y: \rC \to \cM$ be a functor with $\cM \in \CAlg(\PrV)$, and $f: \rC' \to \rC$ a surjective functor. Then $y$ is a $\otimes$-atomically generating marking iff the composition $y \circ f : \rC' \to \cM$ is a $\otimes$-atomically generating marking.
	
	In particular, $y: \rC \to \cM$ is a $\otimes$-atomically generating marking iff the fully faithful inclusion $\operatorname{Im}(y) \hookrightarrow \cM$ is, iff the subcategory inclusion $\operatorname{Im}(y)^{\simeq} \hookrightarrow \cM$ is.
\end{cor}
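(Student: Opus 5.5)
The plan is to factor everything through the induced functor on free objects, $\Phi := \PSh\Sym(f)\otimes\cV : \PSh\Sym(\rC')\otimes\cV \to \PSh\Sym(\rC)\otimes\cV$. Write $Y$ for the extension of $y$; then the extension of $y\circ f$ is $Y' = Y\circ\Phi$. Two properties of $\Phi$ drive the argument.

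\textbf{Step 1: properties of $\Phi$.} By \cref{lem:PSymisiL}, $\Phi$ is internally left adjoint. Next, $\Phi$ is colimit-dominant: since $f$ is surjective, every object of $\rC$ is isomorphic to the image of some object of $\rC'$. So the image of $\rC'$ under $\Phi$ contains every generator $\yo_c\otimes 1_\cV$ up to isomorphism. These objects generate $\PSh\Sym(\rC)\otimes\cV$ under colimits, $\cV$-tensoring and the symmetric monoidal structure, so \cref{lem:cdomgeneration} shows $\Phi$ is colimit-dominant.

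\textbf{Step 2: the marking condition.} If $Y$ is internally left adjoint, then $Y' = Y\Phi$ is a composite of internal left adjoints, hence internally left adjoint by \cref{obs:iLpresatomic}. Conversely, if $Y'$ is internally left adjoint, apply \cref{lem:iLcdomdiagram} with $I = \Phi$ (internally left adjoint and colimit-dominant) and $G = Y'$; this gives that $Y$ is internally left adjoint.

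\textbf{Step 3: the generation condition.} The images of $y$ and $y\circ f$ agree up to isomorphism, because $f$ is surjective. Full subcategories are closed under isomorphisms, so these images generate the same closure under colimits, $\cV$-tensoring and the symmetric monoidal structure. Alternatively, use \cref{lem:cdomgeneration}: colimit-dominant functors are closed under composition, and if $Y\Phi$ is colimit-dominant then so is $Y$.

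\textbf{The ``in particular'' clause.} It suffices to exhibit two surjections and apply the first part:
\begin{itemize}
\item $\rC \to \operatorname{Im}(y)$ is surjective and its composite with the inclusion is $y$. So $y$ is a $\otimes$-atomically generating marking iff the inclusion $\operatorname{Im}(y)\hookrightarrow\cM$ is.
\item $\operatorname{Im}(y)^\simeq \to \operatorname{Im}(y)$ is surjective (it is the identity on objects). So the inclusion of $\operatorname{Im}(y)^\simeq$ into $\cM$ is a $\otimes$-atomically generating marking iff the inclusion of $\operatorname{Im}(y)$ is.
\end{itemize}

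The only step needing real care is Step 1's colimit-dominance of $\Phi$. There we must check that colimit-dominance depends only on the essential image, which is exactly what \cref{lem:cdomgeneration}(1) provides. The rest is formal.
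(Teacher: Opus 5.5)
Your proposal is correct and follows essentially the same route as the paper, which also notes that $\PSh\Sym(f)\otimes\cV$ is internally left adjoint by \cref{lem:PSymisiL} and colimit-dominant because it hits all representables, and then concludes via \cref{lem:iLcdomdiagram} together with composition and cancellation of colimit-dominant and internally left adjoint functors. The only difference is that you justify colimit-dominance through \cref{lem:cdomgeneration} rather than by hitting representables, which handles the $\cV$-tensored setting directly.
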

\begin{proof}
	Since $f: \rC \to \rC'$ is surjective, the induced $\PSh \Sym (\rC) \to \PSh \Sym (\rC')$ is colimit-dominant since it hits all representables, and internally left adjoint by \cref{lem:PSymisiL}. Now the result follows by applying \cref{lem:iLcdomdiagram}, together with composition and cancellation properties of colimit-dominant and internally left adjoint functors.
\end{proof}

\begin{rem}
	We will show in \cref{cor:tensoragfissilechar} that in fact, $\cM \in \CAlg(\PrV)$ is $\otimes$-atomically generated iff the full subcategory inclusion $\cM^{\otimes\mathrm{at}} \subseteq \cM$ on \emph{all} $\otimes$-atomic objects is a system of $\otimes$-atomic generators.
\end{rem}

Even for general $\cM$, we give the following weaker criterion for detecting $\otimes$-atomic markings:

\begin{prop}
	\label{prop:iLonsequ}
	A functor $y: \rC \to \cM$ into $\cM \in \CAlg(\PrV)$ is a $\otimes$-atomic marking iff for any finite tuple $(c_1, \dots, c_n)$ of objects in $\rC$, the images $(y(c_1), \dots,  y(c_n))$ form a $\otimes$-atomic tuple in $\cM$.
\end{prop}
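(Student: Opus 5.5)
The ``only if'' direction is formal. Given a finite tuple $(c_1, \dots, c_n)$ in $\rC$, the functor $\{1, \dots, n\} \to \rC \to \cM$ induces a composite
\[ \PSh \Sym(\{1,\dots,n\}) \otimes \cV \to \PSh \Sym(\rC) \otimes \cV \to \cM . \]
The first map is internally left adjoint by \cref{lem:PSymisiL}, and the second is internally left adjoint by assumption. Internally left adjoint morphisms compose (\cref{obs:iLpresatomic}), so $(y(c_1), \dots, y(c_n))$ is a $\otimes$-atomic tuple.

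For the ``if'' direction, I would verify the three conditions of \cref{obs:hereditary} for the extension $Y: \PSh\Sym(\rC) \otimes \cV \to \cM$. Equivalently, I would check that $Y^\rR$, given by
\[ Y^\rR(m)(c_1, \dots, c_k) = \iHom_\cM(y(c_1) \otimes \dots \otimes y(c_k), m), \]
preserves colimits and $\cV$-tensoring, and that its lax symmetric monoidal structure is strong. Each of these conditions can be checked pointwise after evaluating at a tuple $(c_1, \dots, c_n) \in \Sym \rC$, because colimits, tensorings and isomorphisms in $\Fun(\Sym(\rC)^{\op}, \cV)$ are detected pointwise.

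Each condition involves only the finitely many objects $c_1, \dots, c_n$. In the hereditary condition at $(c_1, \dots, c_n)$, every term on both sides involves only sub-tuples $(c_s)_{s \in S}$ and $(c_t)_{t \in T}$ of the given tuple. The atomicity condition and the unit condition likewise only concern $y(c_1) \otimes \dots \otimes y(c_n)$. So evaluated at $(c_1, \dots, c_n)$, each condition coincides with the corresponding condition for the marking $\{1, \dots, n\} \to \cM$, $i \mapsto y(c_i)$, evaluated at the tuple $(1, \dots, n)$. The latter marking is $\otimes$-atomic by hypothesis. Repetitions among the $c_i$ cause no trouble, because we index by $\{1,\dots,n\}$ rather than by the image.

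The main point to get right is the compatibility of the structure maps. One has to check that the colimit-comparison, tensoring-comparison and lax monoidal structure maps of $Y^\rR$, evaluated at $(c_1,\dots,c_n)$, coincide with those of the right adjoint for the restricted marking evaluated at $(1,\dots,n)$. To see this, I would use that $Y$ restricted along $\PSh\Sym(\{1,\dots,n\}) \otimes \cV \to \PSh\Sym(\rC) \otimes \cV$ is the extension of the restricted marking. Taking right adjoints then identifies the restricted right adjoint with $\iota^* \circ Y^\rR$, where $\iota^*$ is restriction along $\Sym\{1,\dots,n\} \to \Sym\rC$. The map $\iota^*$ preserves colimits and tensoring and is strong symmetric monoidal, since it is itself the internal right adjoint from \cref{lem:PSymisiL}.

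So the comparison maps of $\iota^* Y^\rR$ are $\iota^*$ applied to those of $Y^\rR$, and $\iota^*$ becomes an equivalence after evaluation at $(1,\dots,n)$. This gives the desired identification. Finally, since every point of $\Sym\rC$ arises this way, the family of the functors $\iota^*$ over all finite tuples is jointly conservative, which completes the argument.
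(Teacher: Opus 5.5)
Your proposal is correct and follows essentially the paper's proof. The paper likewise reduces to the generators $\yo_{(c_1,\dots,c_n)}\otimes 1_\cV$, phrased via a full subcategory $\cX$ of test objects closed under colimits and $\cV$-tensoring rather than your pointwise evaluation. It then uses the adjunction $Z \dashv Z^\rR$ for the internally left adjoint $Z\colon \PSh\Sym\{1,\dots,n\}\otimes\cV\to\PSh\Sym(\rC)\otimes\cV$ to identify the comparison maps of $Y$ at that tuple with those of $Y\circ Z$, which is exactly your $\iota^*$ argument.
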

\begin{proof}
	Let $Y : \PSh \Sym (\rC) \otimes \cV \to \cM$ be the unique extension of $y$, and consider the full subcategory ${\cX} \subseteq \PSh \Sym (\rC) \otimes \cV$ on those $x$ 
	such that for all diagrams $m_\bullet: \rI \to \cM$, and all objects $m, m' \in \cP$ and $v\in \cV$ the canonical morphisms
	\[ \iHom_{\PSh \Sym (\rC) \otimes \cV} (x, \colim_i Y^\rR m_i) \to  \iHom_{\PSh \Sym (\rC) \otimes \cV}(x, Y^\rR \colim_i m_i)  
	\]
	\[\iHom_{\PSh \Sym (\rC) \otimes \cV}(x, Y^\rR m \otimes v) \to \iHom_{\PSh \Sym (\rC) \otimes \cV}(x, Y^\rR (m \otimes v))
	\]
	\[\iHom_{\PSh \Sym (\rC) \otimes \cV}(x, Y^\rR m \otimes Y^\rR m') \to \iHom_{\PSh \Sym (\rC) \otimes \cV}(x, Y^\rR (m \otimes m'))
	\]
	are isomorphisms in $\cV$. Since ${\cX} \subseteq \PSh \Sym (\rC) \otimes \cV$ is closed under colimits and $\cV$-tensoring by construction, it suffices to show that it contains the image of $\Sym \rC \subseteq \PSh \Sym(\rC) \to \PSh \Sym (\rC) \otimes \cV$.  So let us choose $c_1, \dots, c_n \in \rC$ and denote $Z: \PSh \Sym \{1, \dots, n\} \to \PSh \Sym (\rC)$ the induced internally left adjoint functor; then for instance for the last map we may rewrite
	\[
	\iHom(Z(1 \otimes \dots \otimes n), Y^\rR m \otimes Y^\rR m') \simeq \iHom(1 \otimes \dots \otimes n, (Y \circ Z)^\rR m \otimes (Y \circ Z)^\rR m')
	\]
	and similarly on the right side, so it is an isomorphism as $Y \circ Z$ is internally left adjoint by assumption.
\end{proof}

\begin{prop}
	\label{prop:fflemma}
	Let $y: \rC \to \cM$ be a $\otimes$-atomically generating marking. An internally left adjoint morphism $F: \cM \to \cN$ in $\CAlg(\PrV)$ is fully faithful if and only if for all $c_1, \dots, c_n, c' \in \rC$, the induced morphism 
	\begin{equation*}
		F: \iHom_{\mathcal{M}}(y c_1 \otimes \dots \otimes y c_n, y c') \overset{}{\to} \iHom_{\mathcal{N}} (F(y c_1) \otimes \dots \otimes F(y c_n), F(y c_m'))
	\end{equation*}
	is an isomorphism in $\mathcal{V}$.	
\end{prop}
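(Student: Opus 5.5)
The only if direction is formal. If $F$ is fully faithful, it induces equivalences on mapping spaces. Since $F$ is $\cV$-linear, it also induces equivalences on the internal homs: for $v \in \cV$,
\[
\Map_\cV(v, \iHom_\cM(a,b)) \simeq \Map_\cM(a \otimes v, b) \simeq \Map_\cN(F(a)\otimes v, F(b)) \simeq \Map_\cV(v, \iHom_\cN(Fa, Fb)),
\]
and we conclude by Yoneda. Applying this with $a = yc_1 \otimes \dots \otimes yc_n$ and $b = yc'$, and using that $F$ is symmetric monoidal, gives the stated isomorphisms.

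For the if direction, fully faithfulness of the left adjoint $F$ is equivalent to the unit $\eta: \id \to F^\rR F$ being an equivalence. Let $\cM' \subseteq \cM$ be the full subcategory of those $m$ with $\eta_m$ an equivalence. Since $F$ is internally left adjoint, $F^\rR$ preserves colimits and $\cV$-tensoring, and its lax symmetric monoidal structure is strong. Hence $F^\rR F$ is a colimit-preserving, $\cV$-linear, symmetric monoidal endofunctor, and $\eta$ is a symmetric monoidal $\cV$-linear transformation. It follows that $\cM'$ is closed under colimits, under $\cV$-tensoring, and under tensor products, including the unit, since $\eta_{m\otimes m'}$ is identified with $\eta_m \otimes \eta_{m'}$. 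By the generation hypothesis it therefore suffices to show $yc' \in \cM'$ for every $c' \in \rC$.

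To test whether $\eta_{yc'}: yc' \to F^\rR F yc'$ is an equivalence, I use \cref{lem:cdomgeneration}: the functor $Y^\rR$, sending $m$ to $\iHom_\cM(yc_1 \otimes\dots\otimes yc_n, m)$, is conservative. So it suffices to show that for all $c_1,\dots,c_n$ the map
\[
\iHom_\cM(yc_1\otimes\dots\otimes yc_n, yc') \to \iHom_\cM(yc_1\otimes\dots\otimes yc_n, F^\rR F yc')
\]
is an isomorphism. By the $\cV$-enriched adjunction, available because $F^\rR$ is strong $\cV$-linear, the target is $\iHom_\cN(F(yc_1\otimes\dots\otimes yc_n), F yc')$. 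Since $F$ is symmetric monoidal, this is $\iHom_\cN(Fyc_1\otimes\dots\otimes Fyc_n, Fyc')$. The composite is the map induced by $F$, which is an isomorphism by hypothesis.

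The main obstacle is the bookkeeping: verifying that the composite of $\eta$ with the enriched adjunction equivalence really is the functorial map of $F$, and that $\eta$ is monoidal so that $\cM'$ is closed under $\otimes$. Both follow from standard properties of adjunctions internal to the $2$-category $\CAlg(\IPrV)$; the unit is a $2$-morphism there. Note also that only $\cV$-atomicity of $y$ is needed in the conservativity step, and that is provided by the $\otimes$-atomic marking hypothesis via \cref{lem:cdomgeneration}.
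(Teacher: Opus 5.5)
Your proof is correct, but it is organized differently from the paper's.

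The paper first transports the hypothesis along $Y^\rR$. It shows that the class of $m' \in \cM$ for which $Y^\rR(m') \to Y^\rR F^\rR F(m')$ is an isomorphism is closed under colimits, $\cV$-tensoring and $\otimes$. This step uses that \emph{both} $Y$ and $F$ are internally left adjoint. Since the class contains the image of $y$, it is all of $\cM$. The paper then fixes $m'$ and runs a second closure argument in the source variable to get $\iHom_\cM(m,m') \simeq \iHom_\cN(Fm,Fm')$ for all $m$.

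You instead run the closure argument directly on $\{m : \eta_m \text{ invertible}\}$. This only needs $F^\rR F$ to be colimit-preserving, $\cV$-linear and strong symmetric monoidal, with $\eta$ a $2$-cell in $\CAlg(\IPrV)$. You then check the generators via conservativity of $Y^\rR$. The paper's second step is essentially an inline proof of that conservativity.

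One consequence of your ordering is that you never use that $y$ is a $\otimes$-atomic marking. \cref{lem:cdomgeneration} derives conservativity of $Y^\rR$ from the generation condition alone, with no atomicity hypothesis. So your closing remark attributing this step to $\cV$-atomicity of $y$ is inaccurate. In fact your argument proves the statement for any generating $y$, as long as $F$ is internally left adjoint. The paper's route, by contrast, needs $Y^\rR$ to preserve colimits, tensorings and products in its first step.

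A minor point: the enriched adjunction $\iHom_\cM(a, F^\rR b) \simeq \iHom_\cN(Fa, b)$ only requires $F$ to be strong $\cV$-linear, not $F^\rR$.
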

\begin{proof}
	The \emph{only if} statement is immediate from the definition of $\iHom$. Conversely assuming the given morphisms are isomorphisms, our first step will be to replace the target $y c'$ by an arbitrary $m' \in \cM$.
	
	Let $Y: \PSh \Sym (\rC) \otimes \cV \to \cM$ be the unique extension of $y$ to a morphism in $\CAlg(\PrV)$, then the above comparison map is obtained by evaluating the transformation $Y^{\rR}(y c') \to (F \circ Y)^\rR (F y c')$ induced by the unit $\id \to F^\rR F$ at the tuple $(c_1, \dots, c_n)$. Since both $Y$ and $F$ are internally left adjoint by assumption, the full subcategory of $m' \in \cM$ where $Y^\rR (m') \to Y^\rR F^\rR F (m')$ is an isomorphism is closed under colimits, $\cV$-tensoring and symmetric monoidal structure, so if it contains the image of $y$ it agrees with $\cM$.
	
	As our next step, for fixed $m' \in \cM$, consider the full subcategory on those $m \in \cM$ such that $\iHom_{\cM}(m, m') \to \iHom_{\cM}(Fm, Fm')$ is an isomorphism. It contains the image of the symmetric monoidal extension $\Sym \rC \to \cM$ by assumption, and is closed under colimits and $\cV$-tensoring since we can pull them out of the expression. Hence it contains the colimit-closed image of $\PSh \Sym(\rC) \otimes \cV$ which is all of $\cM$.
\end{proof}
	%

\begin{prop}
	\label{lem:iLclosedsiftedcolim}
	The subcategory $\CAlg(\PrV)^{\mathrm{iL}} \hookrightarrow \CAlg(\PrV)$ is closed under sifted colimits.
\end{prop}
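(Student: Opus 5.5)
The plan is to reduce the statement to the classical fact that, in $\Pr$ and in $\PrV$, a colimit along left adjoints is computed as the limit along the right adjoints. Concretely, I read the statement as follows. Let $\cM_\bullet: \rI \to \CAlg(\PrV)$ be a sifted diagram whose transition functors $F_{\alpha}$ are internally left adjoint, and let $\cM = \colim_i \cM_i$ with legs $\iota_i$. The claims are:
\begin{enumerate}[(a)]
	\item the legs $\iota_i$ are internally left adjoint;
	\item a morphism $G: \cM \to \cN$ is internally left adjoint iff all composites $G \iota_i$ are.
\end{enumerate}

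\textbf{Step 1: build the limit of the right adjoints.} Since each $F_\alpha$ is internally left adjoint, the right adjoints $F_\alpha^\rR$ are colimit-preserving, strong $\cV$-linear and strong symmetric monoidal. Passing to mates therefore gives a diagram $\cM_\bullet^\rR: \rI^{\op} \to \CAlg(\PrV)$. Form its limit $\cL := \lim_i \cM_i^\rR$ in $\lCat$. It inherits a pointwise symmetric monoidal structure, a pointwise $\cV$-action and pointwise colimits, because all transition maps preserve these. It is presentable as a limit in $\Pr^\rR$. So $\cL \in \CAlg(\PrV)$, and the projections $p_i: \cL \to \cM_i$ are morphisms in $\CAlg(\PrV)$ which preserve colimits. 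They therefore have left adjoints $p_i^\rL$. By construction each $p_i^\rL$ is internally left adjoint: its right adjoint $p_i$ is cocontinuous, strong $\cV$-linear and strong symmetric monoidal.

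\textbf{Step 2: identify $\cL$ with the colimit $\cM$.} The $p_i^\rL$ assemble into a cocone $\cM_\bullet \to \cL$ in $\CAlg(\PrV)$, hence induce a comparison $\Phi: \cM \to \cL$. Because $\rI$ is sifted, the forgetful functor $\CAlg(\PrV) \to \PrV$ preserves this colimit: the relative tensor product $\otimes_\cV$ preserves colimits in each variable, so sifted colimits of commutative algebras are computed underlying. By the $\cV$-linear version of the $\Pr$ colimit/limit theorem (HTT 5.5.3.13 applied to $\PrV = \RMod_\cV(\Pr)$), the image of $\Phi$ in $\PrV$ is an equivalence. Since the forgetful functor is conservative, $\Phi$ is an equivalence in $\CAlg(\PrV)$, and under it $\iota_i \simeq p_i^\rL$. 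This proves (a).

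\textbf{Step 3: morphisms out of the colimit.} Let $G: \cM \simeq \cL \to \cN$ be such that each $G \iota_i$ is internally left adjoint. The right adjoint of $G$ is the induced functor
\[ G^\rR \simeq \lim_i (G\iota_i)^\rR : \cN \to \cL \; . \]
Colimits, the $\cV$-tensoring and the symmetric monoidal structure on $\cL$ are all computed pointwise, and the projections $p_i$ are jointly conservative. Hence $G^\rR$ preserves colimits and is strong $\cV$-linear and strong symmetric monoidal, because each component $(G\iota_i)^\rR$ is. The converse direction of (b) is just closure of internally left adjoint maps under composition, using (a).

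\textbf{Main obstacle.} I expect the crux to be Step 2, and specifically the interaction of the monoidal structures. One must check that the cocone $p_i^\rL$ is genuinely a cocone in $\CAlg(\PrV)$, with its oplax monoidal structure strong. One must also check that, after forgetting, $\Phi$ agrees with the standard comparison in $\PrV$. This is where siftedness enters: for non-sifted shapes such as coproducts, the colimit in $\CAlg(\PrV)$ is a tensor product rather than the underlying colimit, and the argument breaks. I would handle the first point by noting that strong monoidality of the $p_i^\rL$ follows from that of their right adjoints $p_i$ in the internal sense, via the 2-categorical mate correspondence in $\CAlg(\IPrV)$. I would then compare with $\PrV$ purely on underlying objects, where conservativity finishes the argument.
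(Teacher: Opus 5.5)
Your overall route is the one the paper takes. \HA{Cor.}{3.2.3.2} reduces the sifted colimit to a colimit in $\PrV$, which is then computed as the limit along the right adjoints, as in \cite[Cor.~1.32]{ramzi2024dualizable}.

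The gap is in the step you yourself flag as the crux: showing that the left adjoints $p_i^\rL$ are strong symmetric monoidal for the pointwise structure on $\mathcal{L}$. The justification you give is false. The mate correspondence only gives the left adjoint of a strong symmetric monoidal functor an \emph{oplax} structure. To get a strong one you would need $p_i$ to be internally right adjoint in $\CAlg(\IPrV)$, and that is precisely what is being proven, so the appeal to mates is circular. For the same reason, calling $p_i^\rL$ ``internally left adjoint by construction'' in Step~1 already presupposes that it is a $1$-morphism of $\CAlg(\IPrV)$.

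Your Step~1 construction itself gives a counterexample, since it makes sense for any diagram shape. For the discrete diagram on two objects $\cM_1, \cM_2$ it produces $\mathcal{L} = \cM_1 \times \cM_2$ with the pointwise structure. Here $p_1 = \mathrm{pr}_1$ is cocontinuous, $\cV$-linear and strong symmetric monoidal. But $p_1^\rL(m) = (m, \emptyset)$ sends the unit to $(1,\emptyset) \not\simeq (1,1)$ as soon as $\cM_2 \neq 0$. So no argument for this step can avoid siftedness, and yours never uses it, even though you say this is where siftedness enters.

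What is needed is a Beck--Chevalley argument that combines siftedness with strong monoidality of the transition right adjoints.
\begin{itemize}
\item Since the diagonal $\rI \to \rI \times \rI$ is cofinal, $\mathcal{L} \otimes_\cV \mathcal{L} \simeq \colim_{\rI} (\cM_i \otimes_\cV \cM_i)$. The transition maps $F_\alpha \otimes F_\alpha$ are again internal left adjoints. So this is the limit of the $\cM_i \otimes_\cV \cM_i$ along the $F_\alpha^\rR \otimes F_\alpha^\rR$, and the pointwise multiplication of $\mathcal{L}$ is the limit of the multiplications $\otimes_i$.
\item For $\alpha\colon i \to j$, the squares $F_\alpha^\rR \circ \otimes_j \simeq \otimes_i \circ (F_\alpha^\rR \otimes F_\alpha^\rR)$ exist because $F_\alpha^\rR$ is strong monoidal. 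Their mates along $F_\alpha \dashv F_\alpha^\rR$ are the invertible monoidal structure maps of $F_\alpha$.
\item Limit-stability of such adjointable squares (cf.\ \cite[\S 4.7.4]{HA}) then shows that the cone squares $p_i \circ \otimes \simeq \otimes_i \circ (p_i \otimes p_i)$ are adjointable too. This says exactly that the oplax map $p_i^\rL(a \otimes b) \to p_i^\rL a \otimes p_i^\rL b$ is an equivalence.
\item The unit is handled the same way, using that $\rI$ is weakly contractible.
\end{itemize}
For filtered $\rI$ you can instead compute directly with $p_j p_i^\rL \simeq \colim_{k} F^\rR_{j \to k} F_{i \to k}$ and cofinality of the diagonal. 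Once strong monoidality of the $p_i^\rL$ is established this way, your Steps~2 and~3 go through as written.
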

\begin{proof}
	Since the forgetful functor $\CAlg(\PrV) \to \PrV$ preserves sifted colimits by \HA{Cor.}{3.2.3.2}, this is analogous to {\cite[Corollarly 1.32]{ramzi2024dualizable}}.
\end{proof}

\begin{warning}
	The subcategory $\CAlg(\PrV)^{\mathrm{iL}} \hookrightarrow \CAlg(\PrV)$ is not closed under coproducts: By \HA{Prop.}{3.2.4.7} the coproduct is the pointwise tensor product $\otimes_\cV$ in $\PrV$. Internally left adjoint maps from $\PSh \Sym(*) \otimes \cV$ into $\cM\in \CAlg(\PrV)$ correspond to $\otimes$-atomic objects, while internally left adjoint maps from $(\PSh \Sym(*) \otimes \cV) \otimes_\cV (\PSh \Sym(*) \otimes \cV) \simeq \PSh \Sym(* \sqcup *) \otimes \cV$ correspond to $\otimes$-atomic pairs, which are not the same as pairs of $\otimes$-atomic objects by \cref{warning:atomicpair}. This is remedied by restricting to either fissile or $\otimes$-atomically generated categories, see \cref{prop:colimitfissile} and \cref{prop:colimitiLtensorag}.
\end{warning}

\subsection{\texorpdfstring{$\otimes$}{Tensor}-disjunctive functors}

The extensive list of conditions in \cref{obs:tensoratomic} indicates that checking explicitly whether a given object or pair is $\otimes$-atomic can be difficult. However the following discussion ensures an ample supply of internally left adjoint functors and $\otimes$-atomic objects:

\begin{notat}
	Given a functor $F: \rC \to \rD$ and $d \in \rD$, we write $\rC_{/d}$ for the pullback $\rC \times_{\rD} \rD_{/d}$.
\end{notat}

\begin{defin}[{compare \cite[Def. 3.2.14]{equifib}}]
	\label{defin:tensordisjunctive}
	A symmetric monoidal functor $F: \rC \to \rD$ is \emph{$\otimes$-disjunctive} if for any $a, b \in \rD$ the induced functor
	\[
	\otimes : \rC_{/a} \times \rC_{/b} \to  \rC_{/a \otimes b}
	\]
	is an equivalence. More generally $F$ is called \emph{weakly $\otimes$-disjunctive} if this induced functor is colimit-cofinal for all $a, b \in \rD$.
\end{defin}

\begin{obs}
	\label{obs:tensordisjmultiple}
	Inductively, note that $F$ is (weakly) $\otimes$-disjunctive iff for any $n \in \N_0$ and $d_1, \dots, d_n \in \rD$ the functor
	\[
	\otimes : \rC_{/d_1} \times \dots \times \rC_{/d_n} \to  \rC_{/d_1 \otimes \dots \otimes d_n}
	\]
	induced by the symmetric monoidal structure is an equivalence (colimit-cofinal).
\end{obs}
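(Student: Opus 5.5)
The plan is to prove both directions directly, splitting by $n$: the cases $n=1$ and $n=2$ are immediate, $n \geq 3$ follows by induction, and $n=0$ is deduced from the $n=2$ condition via the unitor. The direction ``$\Leftarrow$'' is trivial, since the case $n=2$ is literally \cref{defin:tensordisjunctive}. For ``$\Rightarrow$'', the case $n=1$ is the identity functor $\rC_{/d_1} \to \rC_{/d_1}$, and $n=2$ is the hypothesis.

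For $n \geq 3$ I argue by induction. Using the associativity constraints of the symmetric monoidal structures on $\rC$, $\rD$ and on $F$, the functor $\otimes : \rC_{/d_1} \times \dots \times \rC_{/d_n} \to \rC_{/d_1 \otimes \dots \otimes d_n}$ factors as
\[
\rC_{/d_1} \times \dots \times \rC_{/d_{n-1}} \times \rC_{/d_n} \xrightarrow{\ \otimes \times \id\ } \rC_{/d_1 \otimes \dots \otimes d_{n-1}} \times \rC_{/d_n} \xrightarrow{\ \otimes\ } \rC_{/(d_1 \otimes \dots \otimes d_{n-1}) \otimes d_n} \; .
\]
Here I should make sure the identification of the target slice along $(d_1 \otimes \dots \otimes d_{n-1}) \otimes d_n \simeq d_1 \otimes \dots \otimes d_n$ is compatible with the two $\otimes$'s. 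This holds because all the maps are induced by the symmetric monoidal functor $\rC^\otimes \to \rD^\otimes$ of operations over $\Fin$.
\begin{itemize}
\item In the strong case, the second map is an equivalence by hypothesis and the first by induction, so the composite is an equivalence.
\item In the weak case, I use that colimit-cofinal functors compose. I also need that $f \times \id_{\rK}$ is colimit-cofinal whenever $f$ is. I would check this via Quillen's Theorem A (\HTT{Thm.}{4.1.3.1}): the under-category of $f \times \id$ at $(a,k)$ is the product of the under-category of $f$ at $a$, which is weakly contractible, with $\rK_{k/}$, which has an initial object and so is weakly contractible.
\end{itemize}

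The case $n=0$ is the main subtlety: it asks that $* \to \rC_{/1_\rD}$, picking $(1_\rC, F(1_\rC) \simeq 1_\rD)$, be an equivalence, respectively colimit-cofinal. I deduce this from the $n=2$ condition with $a = b = 1_\rD$. The unitor shows that the composite
\[
\rC_{/1} \xrightarrow{(1_\rC, -)} \rC_{/1} \times \rC_{/1} \xrightarrow{\otimes} \rC_{/1}
\]
is the identity.
\begin{itemize}
\item In the strong case, $\otimes$ is an equivalence, so $\{1_\rC\} \times \rC_{/1} \hookrightarrow \rC_{/1} \times \rC_{/1}$ is an equivalence. Since $\rC_{/1}$ is nonempty, this forces $\rC_{/1} \simeq *$ with $(1_\rC, \id)$ as its point, as required.
\item In the weak case, the same argument after geometric realization gives $|\rC_{/1}| \simeq *$. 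Here I expect the main obstacle: cofinality of a point inclusion amounts to that object being terminal, which is strictly stronger than weak contractibility. The step I would try first is to run the same retraction argument inside the under-categories appearing in Theorem A, for the functor $* \to \rC_{/1}$. If that does not suffice, I would restrict the weak statement to $n \geq 1$ and state the $n=0$ case only as weak contractibility of $\rC_{/1_\rD}$. That weaker form is all that is needed for the colimit formulas downstream.
\end{itemize}
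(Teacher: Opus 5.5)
Your argument is correct wherever the statement is true. For $n \geq 1$ it is exactly the induction the paper's ``inductively'' refers to: $n=1$ is trivial, $n=2$ is \cref{defin:tensordisjunctive}, and $n \geq 3$ follows by factoring through $\rC_{/d_1\otimes\dots\otimes d_{n-1}}\times\rC_{/d_n}$. In the weak case this uses that cofinal functors compose and are stable under $-\times\id_\rK$, and your Theorem A check of the latter is fine. The paper's one-word proof does not address $n=0$. In the strong case your unitor argument settles it.

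In the weak case, however, the obstacle you flag at $n=0$ cannot be overcome, because the claim is false there. Take $\rD=*$ and $\rC=[1]$ with $\otimes=\max$ and unit $0$. Then $\rC_{/*}=[1]$, and $\max\colon[1]\times[1]\to[1]$ is colimit-cofinal: for each $k$ the poset $\{(x,y) : k\le\max(x,y)\}$ has terminal object $(1,1)$. So $F\colon\rC\to *$ is weakly $\otimes$-disjunctive. But for $n=0$ the functor $*\to\rC_{/1_\rD}=[1]$ picks out $0$, which is not terminal, so it is not cofinal. This is consistent with your geometric-realization computation, since $|[1]|\simeq *$. Hence your first plan (rerunning the retraction inside the Theorem A under-categories) must fail, and restricting the weak statement to $n\ge 1$ is the right correction of the observation.

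Your justification for that fallback is wrong, though: weak contractibility of $\rC_{/1_\rD}$ is \emph{not} enough downstream. In \cref{thm:dayweaklydisj}, the unit constraint $\yo_{1_\rC}\to F^*\yo_{1_\rD}$ is the map of right fibrations $\rC_{/1_\rC}\to\rC_{/1_\rD}$ over $\rC$ sending $\id_{1_\rC}$ to $(1_\rC,\id)$. It is an equivalence iff $(1_\rC,\id)$ is terminal in $\rC_{/1_\rD}$, which is exactly the $n=0$ cofinality. In the example, $F^*\colon\Spaces\to\PSh([1])$ sends $*$ to the constant presheaf $\yo_1\not\simeq\yo_0$. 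So $F^*$ is not strong monoidal, even though its binary constraints are equivalences and $F$ is weakly $\otimes$-disjunctive in the paper's sense. The correct repair is to build the $n=0$ condition (terminality of $(1_\rC,\id)$ in $\rC_{/1_\rD}$) into the definition of weak $\otimes$-disjunctivity. With that, your induction proves the weak observation, and in the strong case your unitor argument shows the extra condition is automatic.
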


\begin{obs}
	\label{obs:tensordisj}
	This generalizes the notion of $\otimes$-disjunctive symmetric monoidal categories from \cite[Def. 3.2.14]{equifib}, in the sense that $\rC$ is $\otimes$-disjunctive iff the identity functor $\id_\rC$ is. It follows from \cite[Cor. 3.2.16]{equifib} that the envelope $\Env(\rO)$ of any operad $\rO$ is $\otimes$-disjunctive, i.e.\ for tuples $\underline{o}, \underline{o'} \in \Env(\rO)$ we have
	\[
	\Env(\rO)_{/\underline{o}} \times \Env(\rO)_{/\underline{o'}} \overset{\simeq}{\longrightarrow} \Env(\rO)_{/\underline{o} \otimes \underline{o'}}
	\]
	where $\underline{o} \otimes \underline{o'}$ is the concatenation of tuples. Our next goal is to extend this statement to maps of operads.
\end{obs}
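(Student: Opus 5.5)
The statement has two parts: a comparison of definitions, and the $\otimes$-disjunctiveness of envelopes. I would treat them separately.

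\textbf{The comparison.} This is formal. For $F = \id_\rC$, the pullback $\rC_{/d} := \rC \times_{\rC} \rC_{/d}$ along the identity is canonically $\rC_{/d}$ itself. The comparison functor $\otimes : \rC_{/a} \times \rC_{/b} \to \rC_{/a \otimes b}$ of \cref{defin:tensordisjunctive} is then exactly the functor used in \cite[Def.~3.2.14]{equifib}. So the two conditions agree verbatim. The same holds for the weak versions, and by \cref{obs:tensordisjmultiple} also for the $n$-fold variants.

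\textbf{Envelopes.} Here I would invoke \cite[Cor.~3.2.16]{equifib} directly. To make the observation self-contained, I would also sketch why it holds, using the explicit model of $\Env(\rO)$ as the wide subcategory of $\rO^\otimes$ on the active morphisms.

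\begin{itemize}
\item Objects of $\Env(\rO)$ are tuples $\underline{o} = (o_1, \dots, o_n)$.
\item A morphism $\underline{o} \to \underline{p} = (p_1, \dots, p_m)$ consists of a map of finite sets $f : \underline{n} \to \underline{m}$ together with multimorphisms in $\Mul_\rO((o_i)_{i \in f^{-1}(j)} ; p_j)$ for each $j$.
\item Equivalently, the mapping space is
\[ \Map_{\Env(\rO)}(\underline{o}, \underline{p}) \simeq \bigsqcup_{f : \underline{n} \to \underline{m}} \prod_{j=1}^{m} \Mul_\rO\bigl((o_i)_{i \in f^{-1}(j)} ; p_j\bigr). \]
\end{itemize}

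Now take a morphism into a concatenation $\underline{p} \otimes \underline{p'}$. Its underlying map $f : \underline{n} \to \underline{m} \sqcup \underline{m'}$ splits the source uniquely as $\underline{n} = f^{-1}(\underline{m}) \sqcup f^{-1}(\underline{m'})$. The multimorphism data splits accordingly into a morphism into $\underline{p}$ and a morphism into $\underline{p'}$. This yields an inverse, on objects, to the concatenation functor $\Env(\rO)_{/\underline{p}} \times \Env(\rO)_{/\underline{p'}} \to \Env(\rO)_{/\underline{p} \otimes \underline{p'}}$.

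\textbf{Main obstacle.} The difficulty is doing this splitting homotopy-coherently, not just on objects, which is why I would rely on the citation. If I had to argue it directly, I would first prove the analogous statement for $\Env(\mathrm{Triv}_*) \simeq \mathrm{Fin}$, where $\mathrm{Fin}_{/\underline{m} \sqcup \underline{m'}} \simeq \mathrm{Fin}_{/\underline{m}} \times \mathrm{Fin}_{/\underline{m'}}$ by extensivity of $\mathrm{Fin}$. I would then lift this along the projection $\Env(\rO) \to \mathrm{Fin}$, comparing the slice categories fiberwise over $\mathrm{Fin}$. The fiberwise comparison would use the Segal condition on $\rO^\otimes$, which identifies multimorphisms into a tuple with products of multimorphisms into each entry. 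I expect this fiberwise identification to be the only non-formal step.
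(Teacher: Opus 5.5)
Your proposal matches the paper, which likewise treats the comparison with \cite[Def.~3.2.14]{equifib} as the tautology $F = \id_\rC$ and simply quotes \cite[Cor.~3.2.16]{equifib} for envelopes. One correction to your optional sketch: $\Env(\mathrm{Triv}_*) \simeq \Sym(*) \simeq \mathrm{Fin}^{\simeq}$, whereas finite sets with all maps form the envelope of the commutative operad, so the projection you want is $\Env$ of the terminal map $\rO \to \E_\infty$; this is in general neither a Cartesian nor a coCartesian fibration, so a fiberwise comparison over $\mathrm{Fin}$ is not formal, and the cleaner direct route is to check essential surjectivity (your splitting of objects) and full faithfulness (via your mapping-space formula), which also means no coherent inverse has to be built by hand.
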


\begin{defin}[{\cite[Def. 2.2.1]{equifib}}]
	\label{defin:equifibered}
	A symmetric monoidal functor $F: \rC \to \rD$ is called \emph{equifibered} if the following commutative square is a pullback:
	\[
	\begin{tikzcd}
		\rC \times \rC \arrow[r, "F \times F"] \arrow[d, "\otimes"] & \rD \times \rD \arrow[d, "\otimes"] \\
		\rC \arrow[r, "F"] & \rD
	\end{tikzcd}
	\]
\end{defin}

\begin{lemma}
	\label{lem:odisjequif}
	A symmetric monoidal functor $F: \rC \to \rD$ is $\otimes$-disjunctive iff the target projection $\rC \times_\rD \Arr(\rD) \to \rD$, regarded as a symmetric monoidal functor by equipping $\Arr(\rD)$ with the pointwise symmetric monoidal structure, is equifibered.
\end{lemma}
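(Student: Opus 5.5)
The plan is to identify the equifibered pullback square for $p$ fiber by fiber with the functors $\otimes : \rC_{/a} \times \rC_{/b} \to \rC_{/a \otimes b}$ from \cref{defin:tensordisjunctive}.

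Write $\rE := \rC \times_\rD \Arr(\rD)$ and $p : \rE \to \rD$ for the target projection. First I would make the symmetric monoidal structure on $p$ explicit. Since $F$ is symmetric monoidal and $\Arr(\rD)$ carries the pointwise structure, $\rE$ inherits a symmetric monoidal structure. On objects it sends $(c, F c \to a)$ and $(c', F c' \to b)$ to
\[ (c \otimes c',\ F(c \otimes c') \simeq F c \otimes F c' \to a \otimes b) \; , \]
and $p$ is strictly compatible with it. The fiber of $p$ over $d \in \rD$ is $\rC \times_\rD \rD_{/d} = \rC_{/d}$ in the notation above \cref{defin:tensordisjunctive}. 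Restricting $\otimes_\rE$ to the fiber over $(a,b)$ gives exactly the functor $\otimes : \rC_{/a} \times \rC_{/b} \to \rC_{/a \otimes b}$ from \cref{defin:tensordisjunctive}.

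Next I would check that $p$ is a coCartesian fibration. The target projection $\Arr(\rD) \to \rD$ is one, with coCartesian lifts given by postcomposition, and pulling back along $F$ in the source variable preserves this property. The same holds for $p \times p : \rE \times \rE \to \rD \times \rD$. Moreover $\otimes_\rE$ sends $(p\times p)$-coCartesian edges to $p$-coCartesian edges, since the tensor product of two postcompositions is again a postcomposition.

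Now consider the comparison functor
\[ \Phi : \rE \times \rE \to (\rD \times \rD) \times_{\rD} \rE \]
induced by the square in \cref{defin:equifibered}. Its target is a coCartesian fibration over $\rD \times \rD$, being the pullback of $p$ along $\otimes_\rD$. The functor $\Phi$ is a map of coCartesian fibrations over $\rD \times \rD$, by the previous paragraph. By \HTT{Cor.}{2.4.4.4}, $\Phi$ is an equivalence, i.e.\ the square is a pullback, if and only if it induces equivalences on all fibers. Over $(a,b)$ the induced fiber map is
\[ \rC_{/a} \times \rC_{/b} \to \rC_{/a \otimes b} \; , \]
which is exactly the functor $\otimes$ identified in the first step. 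Hence $p$ is equifibered iff each such functor is an equivalence, iff $F$ is $\otimes$-disjunctive.

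The main obstacle is the bookkeeping in the first step: one must make precise that the fiberwise restriction of $\otimes_\rE$ really is the functor of \cref{defin:tensordisjunctive}, including the structure map $F(c\otimes c') \simeq Fc \otimes Fc'$. One must also check that $\otimes_\rE$ preserves coCartesian edges coherently. I would handle both by writing $\rE$ as the pullback of symmetric monoidal categories $\rC \times_{\rD} \Arr(\rD)$ along $\mathrm{ev}_0$. This is possible because $\mathrm{ev}_0$ and $\mathrm{ev}_1$ are symmetric monoidal for the pointwise structure, so all the compatibilities are inherited from $\Arr(\rD)$.
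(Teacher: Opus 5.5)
Your proposal is correct and is essentially the argument the paper relies on by citing \cite[Lem.~3.2.15]{equifib}: the target projection is a coCartesian fibration, and its tensor product preserves coCartesian edges. Hence whether the equifibered square is a pullback can be checked on the fibers over $(a,b)$, where the comparison map becomes $\rC_{/a}\times\rC_{/b}\to\rC_{/a\otimes b}$. You have simply carried out that fiberwise comparison for a general $F$ rather than $\id_\rC$; also, the coherence worry in your last paragraph is unnecessary, since preserving coCartesian edges is a property that can be checked edge by edge.
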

\begin{proof}
	As $(1) \Leftrightarrow (2)$ in \cite[Lem. 3.2.15]{equifib}.
\end{proof}

\begin{lemma}[{\cite[above Cor. 2.2.28]{equifib}}]
	\label{lem:equifclosed}
	Equifibered symmetric monoidal functors are closed under composition and pullbacks in $\CAlg(\Cat)$, since they form the right side of a factorization system.
\end{lemma}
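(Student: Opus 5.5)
The plan is to prove both closure properties directly from \cref{defin:equifibered}, using only the pasting law for pullback squares and the fact that limits in $\CAlg(\Cat)$ are created by the forgetful functor to $\Cat$. This avoids constructing the factorization system referred to in the statement.

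\emph{Composition.} Let $F: \rC \to \rD$ and $G: \rD \to \rE$ be equifibered. Because $G$ is symmetric monoidal, the square expressing equifiberedness of $G \circ F$ is the horizontal pasting of the squares for $F$ and for $G$:
\[
\begin{tikzcd}
	\rC \times \rC \arrow[r, "F \times F"] \arrow[d, "\otimes"] & \rD \times \rD \arrow[r, "G \times G"] \arrow[d, "\otimes"] & \rE \times \rE \arrow[d, "\otimes"] \\
	\rC \arrow[r, "F"] & \rD \arrow[r, "G"] & \rE
\end{tikzcd}
\]
Both small squares are pullbacks by assumption, so the outer square is a pullback by the pasting law.

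\emph{Pullbacks.} Let $F: \rC \to \rD$ be equifibered and $G: \rD' \to \rD$ any morphism in $\CAlg(\Cat)$. Set $\rC' := \rC \times_\rD \rD'$ with projection $F': \rC' \to \rD'$. By \HA{Prop.}{3.2.2.1} the forgetful functor $\CAlg(\Cat) \to \Cat$ creates limits, so $\rC'$ is computed in $\Cat$ and its tensor product is computed componentwise. We must show that the canonical map $\rC' \times \rC' \to \rC' \times_{\rD'} (\rD' \times \rD')$ is an equivalence, where the fiber product is formed along $\otimes: \rD' \times \rD' \to \rD'$. I would compute the target by successive rewriting:
\[
\rC' \times_{\rD'} (\rD' \times \rD') \simeq \rC \times_{\rD} (\rD' \times \rD') \simeq \rC \times_\rD (\rD \times \rD) \times_{\rD \times \rD} (\rD' \times \rD') \simeq (\rC \times \rC) \times_{\rD \times \rD} (\rD' \times \rD') .
\]
Here $\rD' \times \rD' \to \rD$ is $G \circ \otimes \simeq \otimes \circ (G \times G)$, using the monoidal structure of $G$, and the last step is the equifiberedness of $F$. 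The final expression is $(\rC \times_\rD \rD') \times (\rC \times_\rD \rD') = \rC' \times \rC'$, since products commute with pullbacks.

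The main obstacle is bookkeeping: checking that the composite of these equivalences is the canonical comparison map induced by $\otimes$ on $\rC'$. I would handle this by arranging everything into a single commutative cube whose faces are the defining squares. The front and back faces are the pullbacks defining $\rC' \times \rC'$ and $\rC'$, and the side faces are the equifiberedness squares of $F$ and of $F'$. The required commutativity is supplied by the symmetric monoidal structures on $F$, $G$ and the projections. The claim then follows from the pasting law applied in the cube, which makes the coherence automatic rather than requiring us to check the composite map by hand.
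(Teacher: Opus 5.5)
Your proof is correct, but it takes a different route from the paper. The paper gives no argument beyond the citation. There, closure is inherited from the fact, proved in \cite{equifib}, that equifibered functors form the right class of a factorization system on $\CAlg(\Cat)$; right classes are always closed under composition and stable under pullback.

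You instead verify both properties directly from \cref{defin:equifibered}. You use only the pasting law and the fact that limits in $\CAlg(\Cat)$ are computed in $\Cat$, pointwise on Segal objects. Both parts check out.

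In the cube, the back face is a pullback, and so is the composite of the front face with the $F$-face. The pasting law then makes the $F'$-face a pullback. The cube commutes coherently because it is the pullback square $[1]\times[1]\to\CAlg(\Cat)\subseteq\Fun(\Fin,\Cat)$ evaluated at the active map $\underline{2}_+\to\underline{1}_+$.

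A slicker packaging avoids the bookkeeping you worry about:
\begin{itemize}
\item sending $F$ to its equifiberedness square is a limit-preserving functor $\Arr(\CAlg(\Cat))\to\Fun([1]\times[1],\Cat)$;
\item pullback squares are closed under limits, and identities are equifibered;
\item $F'$ is the pullback of $F\to\id_{\rD}\leftarrow\id_{\rD'}$ in the arrow category.
\end{itemize}
This even gives closure under all limits in $\Arr(\CAlg(\Cat))$.

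The trade-off is this. The paper's citation is a one-liner, but it rests on the nontrivial existence of the factorization system; in return it also supplies the left class and factorizations. Your argument is self-contained and elementary. The same pasting law also gives the cancellation property: if $G$ and $G\circ F$ are equifibered, then so is $F$.
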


\begin{lemma}
	\label{lem:pastingequif}
	Let $\rC, \rD, \rE$ be symmetric monoidal categories, $F: \rC \to \rD$ an equifibered symmetric monoidal functor and $G: \rD \to \rE$ a $\otimes$-disjunctive symmetric monoidal functor. Then the composite $G \circ F$ is $\otimes$-disjunctive.
\end{lemma}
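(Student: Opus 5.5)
The plan is to reduce everything to the equifibered characterization of \cref{lem:odisjequif}, and then use only the closure properties of \cref{lem:equifclosed}. By \cref{lem:odisjequif}, the composite $G \circ F$ is $\otimes$-disjunctive iff the target projection
\[
p_{GF} : \rC \times_\rE \Arr(\rE) \to \rE
\]
is equifibered. The same lemma, applied to the hypothesis on $G$, says that
\[
p_G : \rD \times_\rE \Arr(\rE) \to \rE
\]
is equifibered. In both cases $\Arr(\rE)$ carries the pointwise symmetric monoidal structure, and the fiber products are formed along the source projection $\Arr(\rE) \to \rE$.

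The key observation is that the source of $p_{GF}$ decomposes as an iterated pullback:
\[
\rC \times_\rE \Arr(\rE) \simeq \rC \times_\rD \left( \rD \times_\rE \Arr(\rE) \right),
\]
where the outer pullback is taken along $F$ and the first projection $q : \rD \times_\rE \Arr(\rE) \to \rD$. Under this identification, $p_{GF}$ factors as
\[
\rC \times_\rD \left( \rD \times_\rE \Arr(\rE) \right) \xrightarrow{\ F'\ } \rD \times_\rE \Arr(\rE) \xrightarrow{\ p_G\ } \rE,
\]
with $F'$ the base change of $F$ along $q$. The steps are then as follows.
\begin{enumerate}
	\item Note that pullbacks in $\CAlg(\Cat)$ are created by the forgetful functor to $\Cat$. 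Hence all the fiber products above are symmetric monoidal categories with the pointwise structure, $q$ is symmetric monoidal, and the pasting identification is an equivalence in $\CAlg(\Cat)$.
	\item Apply \cref{lem:equifclosed} to conclude that $F'$, as a pullback of the equifibered functor $F$, is equifibered.
	\item Apply \cref{lem:equifclosed} again to conclude that the composite $p_G \circ F' \simeq p_{GF}$ is equifibered.
	\item Conclude by \cref{lem:odisjequif} that $G \circ F$ is $\otimes$-disjunctive.
\end{enumerate}

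The only step needing care, and the one I expect to be the main obstacle, is step 1. One must check that the symmetric monoidal structure on $\rC \times_\rE \Arr(\rE)$ used in \cref{lem:odisjequif}, obtained from $G \circ F$ and the pointwise structure on $\Arr(\rE)$, agrees with the one on the iterated pullback. One must also check that the target projection is compatible with the factorization. Both follow formally from the facts that limits in $\CAlg(\Cat)$ are computed underlying and that the pasting law for pullback squares holds in $\CAlg(\Cat)$.

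As a fallback, if this bookkeeping becomes awkward, I would argue directly on slices. Writing $\rC_{/e} = \rC \times_\rD \rD_{/e}$ for $e \in \rE$, equifiberedness of $F$ shows that over each object of $\rD_{/a} \times \rD_{/b}$ the fibers of $\otimes$ are equivalences. One then pulls back the equivalence $\rD_{/a} \times \rD_{/b} \simeq \rD_{/a \otimes b}$ along $\rC_{/a \otimes b} \to \rD_{/a \otimes b}$ to get the required equivalence for $G \circ F$.
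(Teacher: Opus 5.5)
Your proposal is correct and is essentially the paper's proof. The paper also writes the target projection $\rC \times_\rE \Arr(\rE) \to \rE$ as the base change of $F$ along $\rD \times_\rE \Arr(\rE) \to \rD$, followed by the projection $\rD \times_\rE \Arr(\rE) \to \rE$, which is equifibered by \cref{lem:odisjequif}, and then concludes by \cref{lem:equifclosed}. Your step 1, that pullbacks in $\CAlg(\Cat)$ are computed on underlying categories so the pasting identification is symmetric monoidal, makes explicit a point the paper uses silently.
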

\begin{rem}
	Compare this statement to pasting for (lax) pullback squares: Pullback squares paste (just as equifibered functors compose), but also a pullback square can be pasted from the left to a lax pullback square to obtain another lax pullback.
\end{rem}
\begin{proof}
	By \cref{lem:odisjequif} we must show that the horizontal composition in the commutative diagram
	\[
	\begin{tikzcd}
		\rC \times_{\rE} \Arr(\rE) \arrow[r] \arrow[d] \arrow[dr, phantom, "\scalebox{1}{$\lrcorner$}" , very near start, color=black] & \rD \times_{\rE} \Arr(\rE) \arrow[d, "\mathrm{src}"] \arrow[r, "\mathrm{trg}"] & \rE \\
		\rC \arrow[r, "F"] & \rD & 
	\end{tikzcd}
	\]
	is equifibered. But $\rD \times_{\rE} \Arr(\rE) \to \rE$ is equifibered by the same Lemma, so since the left square is a pullback this follows from \cref{lem:equifclosed}.
\end{proof}

\begin{prop}
	Let $f: \rO \to \rP$ be a map of Lurie-operads. Then the induced symmetric monoidal functor $\Env(f) : \Env(\rO) \to \Env(\rP)$ is $\otimes$-disjunctive.
\end{prop}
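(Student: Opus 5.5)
The plan is to use \cref{lem:pastingequif}. Since $\Env(\rP)$ is $\otimes$-disjunctive by \cref{obs:tensordisj} (from \cite[Cor. 3.2.16]{equifib}), the identity $\id_{\Env(\rP)}$ is a $\otimes$-disjunctive symmetric monoidal functor. It therefore suffices to show that $\Env(f): \Env(\rO) \to \Env(\rP)$ is \emph{equifibered}. Then $\Env(f) = \id_{\Env(\rP)} \circ \Env(f)$ is an equifibered functor followed by a $\otimes$-disjunctive one, hence $\otimes$-disjunctive by \cref{lem:pastingequif}.

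So the main step is to show that $\Env(f)$ is equifibered for every map of operads $f$, i.e.\ that the square
\[
\begin{tikzcd}
	\Env(\rO) \times \Env(\rO) \arrow[r] \arrow[d, "\otimes"] & \Env(\rP) \times \Env(\rP) \arrow[d, "\otimes"] \\
	\Env(\rO) \arrow[r] & \Env(\rP)
\end{tikzcd}
\]
is a pullback. I would first try to quote this directly from \cite{equifib}. There, envelopes of operads are characterized as equifibered symmetric monoidal categories whose underlying structure is controlled by $\Env(\rO) \to \Env(\mathrm{Comm}) \simeq \mathrm{Fin}$. The morphisms between envelopes induced by operad maps are exactly the equifibered ones, which gives an equivalence of $\Op$ with a full subcategory of $\CAlg(\Cat)_{/\mathrm{Fin}}$ or of equifibered objects over it.

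Concretely, each $\Env(\rO) \to \Env(\mathrm{Comm})$ is equifibered. This is the $\otimes$-disjunctivity of envelopes read through the equivalence $\Env(\rO) \simeq \rO^\otimes_{\mathrm{act}}$: an active morphism into a concatenation of tuples splits uniquely. We have $\Env(\mathrm{Comm}) = \mathrm{Fin}$ and $\Env(\rO) \to \mathrm{Fin}$ factors as $\Env(\rO) \to \Env(\rP) \to \mathrm{Fin}$. Equifibered functors form the right class of a factorization system (\cref{lem:equifclosed}), so they satisfy right cancellation: if $g$ and $g \circ h$ are equifibered then so is $h$. Hence $\Env(f)$ is equifibered.

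The main obstacle is justifying that $\Env(\rO) \to \mathrm{Fin}$ is equifibered, i.e.\ that pairs of objects and morphisms of $\Env(\rO)$ over a splitting in $\mathrm{Fin}$ correspond to morphisms of $\Env(\rO)$. On objects this is clear: a tuple of colors together with a splitting of its index set is the same as a pair of tuples. On mapping spaces it amounts to the formula
\[ \Map_{\Env(\rO)}(\underline{o}, \underline{p}) \simeq \coprod_{\alpha: \underline{n} \to \underline{m}} \prod_{j} \Mul_\rO(\underline{o}|_{\alpha^{-1}(j)}; p_j), \]
which is compatible with concatenation of both the source and the target tuple. I would cite \cite[§2.4]{equifib} for this and check it fiberwise over $\mathrm{Fin}$, which suffices since the square's maps over $\mathrm{Fin} \times \mathrm{Fin} \to \mathrm{Fin}$ detect the pullback fiberwise.
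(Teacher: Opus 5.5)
Your proposal is correct and follows the paper's proof: write $\Env(f) = \id_{\Env(\rP)}\circ\Env(f)$ and apply \cref{lem:pastingequif}, using that $\id_{\Env(\rP)}$ is $\otimes$-disjunctive by \cref{obs:tensordisj} and that $\Env(f)$ is equifibered, which the paper simply cites as \cite[Thm. 3.2.13]{equifib}. Your optional derivation of that equifiberedness is also sound: right cancellation for equifibered functors follows from pullback pasting, and you apply it to $\Env(\rO)\to\Env(\rP)\to\mathrm{Fin}$ after checking via the mapping-space formula that $\Env(\rO)\to\mathrm{Fin}$ is equifibered.
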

\begin{proof}
	By \cref{lem:pastingequif}, writing $\Env(f) = \id_{\Env(\rP)} \circ \Env(f)$ it suffices to note that $\Env(f)$ is equifibered by \cite[Thm. 3.2.13]{equifib} and $\id_{\Env(\rP)}$ is $\otimes$-disjunctive by \cref{obs:tensordisj}.
\end{proof} 

\begin{rem}
	We will see in \cref{cor:tensordisjchar} that in fact, a symmetric monoidal functor $F: \Env(\rO) \to \Env(\rP)$ is of the form $\Env(f)$ for some map of operads $f: \rO \to \rP$ iff it is $\otimes$-disjunctive iff it is weakly $\otimes$-disjunctive.
\end{rem}

\begin{obs}
	\label{obs:precomplax}
	For $F: \rC \to \rD$ a symmetric monoidal functor, since $\PSh : \Cat \to \Pr$ is symmetric monoidal by \HA{Rem.}{4.8.1.8} the induced $F_! : \PSh(\rC) \to \PSh(\rD)$ is symmetric monoidal with respect to the Day convolution product. Thus its right adjoint precomposition functor $F^* : \PSh(\rD) \to \PSh(\rC)$ inherits a canonical lax symmetric monoidal structure.
\end{obs}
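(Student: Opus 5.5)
The plan is to obtain the symmetric monoidal structure on $F_!$ formally from functoriality of $\PSh$, and then pass to the right adjoint using the general fact that right adjoints of symmetric monoidal functors are lax symmetric monoidal.

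\textbf{Step 1.} First I apply $\CAlg(-)$ to the symmetric monoidal functor $\PSh : \Cat \to \Pr$ of \HA{Rem.}{4.8.1.8}. This gives a functor $\CAlg(\Cat) \to \CAlg(\Pr)$. It sends the symmetric monoidal category $\rC$ to $\PSh(\rC)$ with some presentably symmetric monoidal structure, and sends $F$ to a colimit-preserving symmetric monoidal functor $\PSh(\rC) \to \PSh(\rD)$.

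\textbf{Step 2.} Two identifications are then needed.
\begin{enumerate}[(a)]
\item The induced structure on $\PSh(\rC)$ is Day convolution. It is presentably symmetric monoidal and makes the Yoneda embedding $\rC \hookrightarrow \PSh(\rC)$ symmetric monoidal, because $\yo$ is the unit of the free-cocompletion adjunction and is compatible with the monoidal structure of $\PSh$. By \HA{Prop.}{4.8.1.10}, such a structure is unique, and Day convolution (\HA{Ex.}{2.2.6.17}) is one such structure, so the two agree.
\item The underlying functor of $\PSh(F)$ is $F_!$. This holds because $\PSh(F)$ is by definition the unique colimit-preserving extension of $\yo \circ F$, and that extension is the left Kan extension $F_!$.
\end{enumerate}
Together these show that $F_!$ is symmetric monoidal with respect to Day convolution.

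\textbf{Step 3.} Finally, $F_! \dashv F^*$ is an adjunction whose left adjoint is symmetric monoidal. By \HA{Cor.}{7.3.2.7}, the right adjoint $F^*$ inherits a canonical lax symmetric monoidal structure, which is the adjoint of the symmetric monoidal structure on $F_!$. Concretely, the lax structure maps $F^*A \Day F^*B \to F^*(A \Day B)$ are the adjuncts of
\[
F_!(F^*A \Day F^*B) \simeq F_!F^*A \Day F_!F^*B \to A \Day B,
\]
where the second map is built from the counit.

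The only point requiring care is Step 2(a): identifying the abstractly induced structure with Day convolution. Everything else is formal.
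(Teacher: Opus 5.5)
Your proposal is correct and follows the paper's own argument. The paper justifies the observation in one line: $\PSh$ is symmetric monoidal, so $F_!$ is symmetric monoidal for Day convolution, and the lax structure then passes to the right adjoint $F^*$. Your Steps 2 and 3 only make explicit what the paper leaves implicit, namely identifying the induced structure with Day convolution via its uniqueness, and the doctrinal-adjunction result that equips $F^*$ with its lax symmetric monoidal structure.
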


We thank Jan Steinebrunner for communicating to us the following statement:

\begin{theorem}
	\label{thm:dayweaklydisj}
	Let $F: \rC \to \rD$ be a symmetric monoidal functor, then the lax symmetric monoidal structure on $F^* : \PSh(\rD) \to \PSh(\rC)$ from \cref{obs:precomplax} is strong symmetric monoidal if and only if $F$ is weakly $\otimes$-disjunctive.
\end{theorem}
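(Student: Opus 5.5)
We compare both sides of the lax structure map objectwise. Since everything is colimit-preserving in each variable (Day convolution preserves colimits separately in each variable, and $F^*$ preserves all colimits as restriction), it suffices to check the binary structure map $F^*P \Day F^*Q \to F^*(P \Day Q)$ and the unit map $1_{\PSh(\rC)} \to F^*1_{\PSh(\rD)}$ on representables $P = \yo_a$, $Q = \yo_b$ with $a,b \in \rD$. Indeed, every presheaf is a colimit of representables, and both sides commute with these colimits in each variable. Both sides are then evaluated at an object $c \in \rC$.

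For the right side, $\yo_a \Day \yo_b \simeq \yo_{a \otimes b}$ since $\yo$ is symmetric monoidal for Day convolution. Hence
\[ F^*(\yo_a \Day \yo_b)(c) \simeq \Map_\rD(Fc, a \otimes b). \]
For the left side, $F^*\yo_a \simeq \colim_{(c', Fc' \to a) \in \rC_{/a}} \yo_{c'}$ by the standard colimit formula for a presheaf as a colimit over its category of elements: the category of elements of $F^*\yo_a = \Map_\rD(F-, a)$ is exactly $\rC_{/a} = \rC \times_\rD \rD_{/a}$. Since Day convolution commutes with colimits in each variable,
\[ F^*\yo_a \Day F^*\yo_b \simeq \colim_{\rC_{/a} \times \rC_{/b}} \yo_{c' \otimes c''}. \]
Similarly $F^*\yo_{a\otimes b} \simeq \colim_{\rC_{/a\otimes b}} \yo_{c'''}$. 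The next step is to identify the lax structure map with the map on colimits induced by the functor $\otimes: \rC_{/a} \times \rC_{/b} \to \rC_{/a\otimes b}$, which sends $(c' \to a, c'' \to b)$ to $F(c' \otimes c'') \simeq Fc' \otimes Fc'' \to a \otimes b$, together with the identification of the diagrams $\yo_{c'} \Day \yo_{c''} \simeq \yo_{c' \otimes c''}$. I expect this identification to be the main technical obstacle. I would handle it by noting that the lax structure is the mate of the strong structure of $F_!$. Under Yoneda, a map $\yo_{c'} \Day \yo_{c''} \to F^*\yo_{a\otimes b}$ corresponds to $F_!(\yo_{c'}\Day\yo_{c''}) \simeq \yo_{Fc'\otimes Fc''} \to \yo_{a \otimes b}$, which is the tensor of the two given maps. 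Checking this on generators suffices, by naturality.

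Granting this, the binary structure map at $(a,b)$ is the canonical comparison
\[ \colim_{\rC_{/a}\times\rC_{/b}} \yo_{\otimes(-)} \to \colim_{\rC_{/a\otimes b}} \yo_{(-)} \]
induced by precomposition along $\otimes$, where the diagram on the right is the projection followed by $\yo$. Evaluating at $c$, the right side is $\Map_\rD(Fc,a\otimes b)$. The comparison is then an equivalence for all such diagrams precisely when $\otimes$ is colimit-cofinal: cofinality for the specific diagram $\yo \circ \mathrm{pr}$ into $\PSh(\rC)$ is equivalent to cofinality in general. To see this, evaluate at $c$. By the Quillen A criterion, the map becomes the map on geometric realizations of the comma categories over each object $(c \to \ldots)$, so pointwise equivalence for all $c$ is exactly Joyal/Lurie's cofinality criterion (\HTT{Thm.}{4.1.3.1}), applied to the left fibration $\rC_{/a\otimes b}$-slices. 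Thus strongness of the binary maps is equivalent to weak $\otimes$-disjunctivity for the binary case.

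For the unit, $1_{\PSh(\rD)} = \yo_{1}$ and $F^*\yo_1 \simeq \colim_{\rC_{/1_\rD}} \yo$. The comparison map from $\yo_{1_\rC}$ is induced by the object $1_\rC \to 1_\rD$, i.e.\ by the functor $* \to \rC_{/1_\rD}$, which is the nullary case of \cref{obs:tensordisjmultiple}. The same argument shows that the unit map is an equivalence iff this functor is cofinal. Finally, I would confirm that the binary condition implies the nullary one (or that the definition's condition should be read to include $n=0$ via \cref{obs:tensordisjmultiple}). If needed, I would deduce the unit case from the binary case with $b = 1_\rD$ using unitality of the structure maps.
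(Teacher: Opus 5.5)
Your argument for the binary structure maps is correct and is essentially the paper's proof in different packaging. The paper phrases strongness as horizontal right adjointability of the square comparing $F_!$ with $\otimes_!$. It models $F^*$ of a representable by the right fibration $(\rC\times\rC)_{/(a,b)}=\rC_{/a}\times\rC_{/b}$, and computes $\otimes_!$ via the (colimit-cofinal, right fibration) factorization. That is the same computation as your colimit over categories of elements followed by Lurie's pointwise cofinality criterion. Your step ``cofinality for this specific diagram is equivalent to cofinality in general'' works because $\rC_{/a\otimes b}\to\rC$ is a \emph{right} fibration. As a result, $(\rC_{/a}\times\rC_{/b})\times_{\rC}\rC_{c/}$ decomposes over the space $\Map_{\rD}(Fc,a\otimes b)$ with fibers the comma categories of $\otimes$; you should say this explicitly rather than invoking ``left fibration slices''.

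The one real problem is the unit, which you left open. The binary condition of \cref{defin:tensordisjunctive} does \emph{not} imply the nullary one, so you cannot ``confirm'' it, and your fallback via $b=1_\rD$ fails. Invertibility of the binary maps plus unitality only shows that $F^*P\Day u$ is invertible for $P$ of the form $F^*$ of a representable, not that $u\colon 1_{\PSh(\rC)}\to F^*1_{\PSh(\rD)}$ itself is. Concretely, let $\rC=([1],\max)$, whose unit is $0$ and whose terminal object is $1$, and let $\rD=*$. Then $\max\colon[1]\times[1]\to[1]$ preserves terminal objects, hence is colimit-cofinal, so $F$ is weakly $\otimes$-disjunctive in the binary sense. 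However, the unit map of $F^*\colon\Spaces\to\PSh([1])$ is the map from the presheaf represented by $0$ to the constant presheaf $*$ (represented by $1$), which is not invertible.

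So the statement only holds if weak $\otimes$-disjunctivity includes the nullary condition that $(1_\rC, F1_\rC\simeq 1_\rD)$ is terminal in $\rC_{/1_\rD}$. This is the $n=0$ case of \cref{obs:tensordisjmultiple}. The paper's argument, run for the active map $\underline{0}_+\to\underline{1}_+$, needs exactly this condition, and the example shows it is an extra hypothesis rather than a consequence of the binary one. With that reading made explicit, your unit argument is right and your proof is complete.
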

\begin{proof}
	To show that the lax symmetric monoidal functor $F^*$ is monoidal, it suffices to prove that for any active morphism $\alpha : \fset{n} \to \fset{m}$ and $w_1, \dots, w_n \in \PSh(\rD)$ the commutative (by symmetric monoidality of $F_!$) diagram
	\[
	\begin{tikzcd}
		\PSh(\rC^{\times n}) \arrow[r, "F_!"] \arrow[d, "\otimes_\alpha"] & \PSh(\rD^{\times n}) \arrow[d, "\otimes_\alpha"] \\
		\PSh(\rC^{\times m}) \arrow[r, "F_!"] & \PSh(\rD^{\times m})
	\end{tikzcd}
	\]
	is  horizontally right adjointable. But by the definition of a Lurie-operad, the operation $\otimes_\alpha : \PSh(\rD^{\times n}) \to \PSh(\rD^{\times m})$ splits into a product of $m$ operations $\otimes_{\alpha_i}: \PSh(\rD^{\times f^{-1}(\{i\})}) \to \PSh(\rD^{\times \{i\}})$, so it suffices to consider the case where $m = 1$ and $\alpha : \fset{n} \to \fset{1}$ is the unique active map. Horizontal right adjointability of the diagram
	\[
	\begin{tikzcd}
		\PSh(\rC^{\times n}) \arrow[r] \arrow[d] & \PSh(\rD^{\times n}) \arrow[d] \\
		\PSh(\rC) \arrow[r] & \PSh(\rD)
	\end{tikzcd}
	\]
	can be checked on representable presheaves $\yo_{(d_1, d_2)} \in \PSh(\rD \times \rD)$, since all of the involved functors (including the horizontal right adjoints) preserve colimits. This presheaf is represented by the right fibration $(\rD^{\times n})_{/(d_1, \dots, d_n)} \to \rD^{\times n}$, and its image $\otimes_! \yo_{(d_1, d_2)} \simeq \yo_{d_1 \otimes d_2}$ by the right fibration $\rD_{/d_1 \otimes d_2} \to \rD$. They fit into a commutative diagram
	\[
	\begin{tikzcd}[sep=.4cm]
		&[+11pt] (\rC \times \rC)_{/(d_1,d_2)} \arrow[rr, "\otimes"] \arrow[dd] \arrow[dl, "F \times F"]  & &[-13pt] \rC_{/d_1 \otimes d_2} \arrow[dd] \arrow[dl, "F"]\\
		(\rD \times \rD)_{/(d_1, d_2)} \arrow[rr, crossing over, "\otimes"] \arrow[dd]  &   & \rD_{/d_1 \otimes d_2}  & \\
		& \rC \times \rC \arrow[rr, "\otimes" {pos=0.5}] \arrow[dl, "F \times F"] &  & \rC \arrow[dl, "F"]  \\
		\rD \times \rD \arrow[rr, "\otimes"] &   & \rD \arrow[from=uu, crossing over] &
	\end{tikzcd}
	\]
	where the vertical morphisms are right fibrations, and the left and right squares are pullbacks. Recall that for a presheaf $W \in \PSh(\rB)$ represented by a right fibration $\rE \to \rB$ and a functor $g: \rB \to \rB'$, if we factor the composite map $\rE \to \rB'$ into a colimit-cofinal map $\rE \to \rE'$ and a right fibration $\rE' \to \rB'$ using that those classes form a factorization system, then $\rE' \to \rB'$ represents the left Kan extension $F_! W \in \PSh(\rB')$: This follows from the composing the adjunction $ g \circ -: \Cat_{/\rB} \to \Cat_{/\rB'} : g^*$ and the reflection of $\mathrm{RFib}_{/\rB'} \subseteq \Cat_{/\rB'}$ by factoring through a right fibration. In other words, to show that the Beck-Chevalley map is an isomorphism it suffices to check that the horizontal map $(\rC \times \rC)_{/(d_1, d_2)} \to \rC_{/d_1 \otimes d_2}$ is colimit-cofinal, thereby exhibiting the presheaf $F^* \yo_{/d_1 \otimes d_2}$ as $\otimes_! F^* \yo_{(d_1, d_2)}$. But this is precisely the condition of weak $\otimes$-disjunctivity.
\end{proof}

This generalizes \cref{lem:PSymisiL} from symmetric algebras of categories, i.e.\ envelops of trivial operads, to envelopes of arbitrary operads:

\begin{cor}
	If $f: \rO\to \rP$ is a map of Lurie-operads, then the induced functor $\PSh \Env (f) \otimes \cV : \PSh \Env (\rO) \otimes \cV \to \PSh \Env (\rP) \otimes \cV$ is internally left adjoint in $\CAlg(\IPrV)$.
\end{cor}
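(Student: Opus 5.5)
The plan is to reduce to the $\cV = \Spaces$ case and then base change along $-\otimes \cV$. First, by the preceding proposition, $\Env(f) : \Env(\rO) \to \Env(\rP)$ is $\otimes$-disjunctive. It is therefore in particular weakly $\otimes$-disjunctive, since an equivalence of slice categories is colimit-cofinal. Hence \cref{thm:dayweaklydisj} applies: the right adjoint $\Env(f)^* : \PSh \Env(\rP) \to \PSh \Env(\rO)$ of the Day-convolution symmetric monoidal functor $\Env(f)_!$ is strong symmetric monoidal.

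Next I check that $\Env(f)^*$ preserves colimits. This is automatic, since it is a precomposition functor between presheaf categories and colimits there are computed pointwise. The only $\cV$-linearity in play here is for $\cV = \Spaces$, where it is vacuous. By the characterization of internal left adjoints in $\CAlg(\IPrV)$ (the proposition opening \cref{sec:tensoratomics}), specialised to $\cV = \Spaces$, the functor $\PSh\Env(f)$ is therefore internally left adjoint in $\CAlg(\IPr)$.

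Finally, I transfer this along the $2$-functor $-\otimes \cV : \CAlg(\IPr) \to \CAlg(\IPrV)$ from \cref{cor:freeforgetfulfunctors} / \cref{prop:freeforgetfulfunctors}, exactly as in the proof of \cref{lem:PSymisiL}. A $2$-functor sends adjunctions (unit, counit, triangle identities) to adjunctions, so $\PSh\Env(f)\otimes\cV$ is internally left adjoint, with right adjoint $\Env(f)^*\otimes \cV$.

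The main obstacle is making sure $-\otimes\cV$ really is a $2$-functor on $\CAlg(\IPr)$, and not merely on the underlying $1$-category. The paper asserts this in \cref{sec:appstar}, and I would simply cite it. As an alternative that avoids the $2$-functoriality, I could identify $\PSh\Env(\rP)\otimes\cV \simeq \Fun(\Env(\rP)^{\op},\cV)$ and verify directly that $\Env(f)^*$ there is cocontinuous and $\cV$-linear, since it is pointwise precomposition. Strong monoidality would then follow by rerunning the Beck--Chevalley argument of \cref{thm:dayweaklydisj} with $\cV$-valued presheaves. The key point in that rerun is that $\cV$-valued left Kan extensions along colimit-cofinal maps of slices are still computed by the same colimits, because $\otimes$ in $\cV$ preserves colimits in each variable.
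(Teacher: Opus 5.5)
Your proposal is correct and follows the paper's proof. The paper likewise uses that $-\otimes\cV$ is a $2$-functor (\cref{cor:freeforgetfulfunctors}) to reduce to $\cV=\Spaces$, and there combines the $\otimes$-disjunctivity of $\Env(f)$ with \cref{thm:dayweaklydisj}; you simply make explicit the steps it leaves implicit (weak disjunctivity, cocontinuity of precomposition, and the characterization of internal left adjoints). Your alternative route via $\cV$-valued presheaves is not needed.
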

\begin{proof}
	Since the free module functor $- \otimes \cV : \CAlg(\Pr) \to \CAlg(\PrV)$ is a $2$-functor by \cref{cor:freeforgetfulfunctors}, and thus preserves internal left adjoints, it suffices to consider the case $\cV = \Spaces$, which is part of \cref{thm:dayweaklydisj}.
\end{proof}

\begin{cor}
	\label{cor:colorstensoratomic}
	If $\rO$ is a Lurie-operad, then $\PSh \Env (\rO) \otimes \cV \in \CAlg(\PrV)$ is $\otimes$-atomically generated via the marking given by the Yoneda functor $\underline{\rO}^\simeq \hookrightarrow \Env(\rO) \subseteq \PSh \Env(\rO) \to \PSh \Env(\rO) \otimes \cV$. In particular for any color $o \in \rO$, the representable presheaf $\yo_{(o)} \otimes 1_\cV$ on the singlett $(o)$ is $\otimes$-atomic in $\PSh \Env \rO \otimes \cV$.
\end{cor}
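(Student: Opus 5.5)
We apply the preceding corollary to the map of Lurie-operads $i: \mathrm{Triv}_{X} \to \rO$, where $X := \underline{\rO}^\simeq$ and $\mathrm{Triv}_X$ is the operad with colors $X$ and only the unary operations of the space $X$. The inclusion of the colors gives this map, and $\Env(\mathrm{Triv}_X) \simeq \Sym X$. Under the identification $\Env(\mathrm{Triv}_X) \simeq \Sym X$, the induced symmetric monoidal functor $\Env(i): \Sym X \to \Env(\rO)$ is the symmetric monoidal extension of $X \hookrightarrow \underline{\rO} \subseteq \Env(\rO)$. Hence
\[
Y := \PSh \Env(i) \otimes \cV : \PSh \Sym X \otimes \cV \to \PSh \Env(\rO) \otimes \cV
\]
is the unique extension in $\CAlg(\PrV)$ of the Yoneda marking $X \to \PSh \Env(\rO) \otimes \cV$. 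By the preceding corollary, $Y$ is internally left adjoint. By definition, this says the marking is $\otimes$-atomic.

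For generation we use \cref{lem:cdomgeneration}: it suffices that the image of the marking generates $\PSh \Env(\rO) \otimes \cV$ under colimits, $\cV$-tensoring and the symmetric monoidal structure. Every object of $\Env(\rO)$ is a tuple $(o_1, \dots, o_n) \simeq (o_1) \otimes \dots \otimes (o_n)$ of colors. By Day convolution, the Yoneda embedding $\Env(\rO) \to \PSh\Env(\rO)$ is symmetric monoidal, so every representable is a tensor product of representables on singletons. Representables generate $\PSh \Env(\rO)$ under colimits, and objects $\yo_e \otimes 1_\cV$ generate $\PSh\Env(\rO)\otimes\cV$ under colimits and $\cV$-tensoring, since $- \otimes \cV$ is the free module functor. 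Alternatively, $Y$ is colimit-dominant because $\Env(i)$ is essentially surjective, so $Y$ hits all representables, as in \cref{cor:univisag}. Either way, condition (2) of \cref{lem:cdomgeneration} holds.

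The final claim is \cref{obs:otimesatomicmarkingobj} applied to a point $o \in X$. The only step needing care is identifying $\Env(\mathrm{Triv}_X)$ with $\Sym X$ compatibly with the markings, so that $Y$ really is the extension of the marking. I expect this to follow from the universal property of $\Env$ as left adjoint to $\CAlg(\Cat) \to \Op$, together with the fact that $\mathrm{Triv}$ is left adjoint to $\Op \to \Cat$ (restricted to spaces). Composing these left adjoints gives $\Sym$.
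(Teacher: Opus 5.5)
Your proof is correct and is essentially the paper's argument. The paper gives no separate proof here: the corollary is meant as a direct consequence of the preceding corollary applied to $\mathrm{Triv}_{\underline{\rO}^\simeq} \to \rO$ (using $\Env \circ \mathrm{Triv}_{(-)} \simeq \Sym$), with generation coming from surjectivity of the induced functor on envelopes, exactly as you argue (cf.\ \cref{lem:PShEnvcdom}).
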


\section{Marked algebras and fissile categories}
\label{sec:markedalgebras}

We have defined enriched operads as certain monads in $\CAlg(\IPrV)$. Next, we give a description and characterization of the associated monadic $1$-morphisms, leading to the notion of marked algebras.

\begin{prop}
	\label{prop:monadicmorphisms}
	An adjunction $F: \cM\rightleftarrows \cN : G$ internal to $\CAlg (\IPrV)$ is monadic iff it satifies either of the following equivalent conditions:
	\begin{itemize}
		\item The underlying adjunction in $\IPrV$ is monadic,
		\item The underlying adjunction in $\IPr$ is monadic,
		\item The underlying adjunction in $\widehat{\ICat}$ is monadic,
		\item $F$ is colimit-dominant,
		\item $G$ is conservative.
	\end{itemize}
\end{prop}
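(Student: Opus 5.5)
The plan is to reduce everything to the Barr--Beck--Lurie theorem in $\widehat{\ICat}$, using that Eilenberg--Moore objects in $\CAlg(\IPrV)$ are created by the forgetful $2$-functors (\cref{thm:EMPrV}). Concretely, I will show
\[
\text{monadic in } \CAlg(\IPrV) \Leftrightarrow \text{monadic in } \IPrV \Leftrightarrow \text{monadic in } \IPr \Leftrightarrow \text{monadic in } \widehat{\ICat} \Leftrightarrow G \text{ conservative} \Leftrightarrow F \text{ colimit-dominant}.
\]

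First, monadicity in each of these $2$-categories means that the comparison $1$-morphism $\cN \to \LMod_T(\cM)$ to the Eilenberg--Moore object of $T = GF$ is an equivalence. The $2$-functors $\CAlg(\IPrV) \to \IPrV \to \IPr \to \IlCat$ preserve the adjunction and the monad. By \cref{thm:EMPrV}, they also carry the Eilenberg--Moore object and its universal comparison map to the corresponding ones downstairs. The comparison functor therefore has the same underlying functor of large categories in every case. A $1$-morphism in $\CAlg(\IPrV)$ is an equivalence iff its underlying functor is, since each forgetful functor is conservative on $1$-morphisms. This establishes the first four equivalences.

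Next, I treat monadicity in $\widehat{\ICat}$ via Barr--Beck--Lurie. The right adjoint $G$ preserves all colimits, because the adjunction is internal to $\CAlg(\IPrV)$ and hence to $\IPr$. In particular $G$ preserves geometric realizations of $G$-split simplicial objects, which exist since $\cN$ is presentable. So $G$ is monadic iff it is conservative.

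Finally, conservativity of $G$ is equivalent to colimit-dominance of $F$ by \marked{Cor.}{3.13}, exactly as in the proof of $(2)\Leftrightarrow(3)$ in \cref{lem:cdomgeneration}. For a direct argument: if $F$ is colimit-dominant and $G(f)$ is an equivalence, then $\Map(F m, f)$ is an equivalence for all $m$. The full subcategory of objects $n$ for which $\Map(n,f)$ is an equivalence is closed under colimits, so it is everything, and $f$ is an equivalence. Conversely, if $G$ is conservative, $\cN$ is generated under colimits by free objects, since every object is the geometric realization of its bar resolution by objects $F(GF)^k G n$ once monadicity holds.

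The main obstacle I expect is the first step: justifying that the Eilenberg--Moore object and the comparison map are preserved and created by the forgetful $2$-functors, rather than merely the underlying categories agreeing. This is exactly what \cref{thm:EMPrV} is quoted to provide. If it only yields creation of the objects, I would instead argue directly that the $\IlCat$-comparison equivalence lifts. I would do this by endowing $\LMod_T(\cM)$ with its symmetric monoidal $\cV$-linear structure such that the forgetful functor to $\cM$ is strict, noting that the comparison functor commutes with these forgetful functors, and concluding that the comparison is an equivalence in $\CAlg(\IPrV)$ because its underlying functor is one.
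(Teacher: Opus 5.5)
Your proposal is correct and follows essentially the same route as the paper. The forgetful $2$-functors create Eilenberg--Moore objects and monadic $1$-morphisms by \cref{thm:EMPrV}. Barr--Beck--Lurie in $\widehat{\ICat}$ then reduces monadicity to conservativity of $G$, since $G$ preserves colimits. Finally, conservativity of $G$ is equivalent to colimit-dominance of $F$ by the cited result from \cite{marked}, which you additionally verify directly.
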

\begin{proof}
	The forgetful $2$-functors $\CAlg(\IPrV) \to \IPrV \to \IPr \to \widehat{\ICat}$ create Eilenberg-Moore-objects and monadic $1$-morphisms by \cref{thm:EMPrV}. In $\widehat{\ICat}$ we may apply Barr-Beck-Lurie \HA{Thm.}{4.7.3.5}, where the colimit conditions are automatically satisfied so only conservativity of $G$ remains. This is equivalent to colimit-dominance of $F$ by \marked{Cor.}{3.15}.
\end{proof}

Given a monad $T$ on an object $c$ in a $2$-category $\C$, we should always be able to recover $T$ from the associated monadic adjunction
\[
\mathrm{free} : c \rightleftarrows \LMod_T(c) : \mathrm{fgt}
\]
as the composition $\mathrm{fgt} \circ \mathrm{free} : c \to c$, assuming the Eilenberg-Moore object $\LMod_T(c)$ exists. The following statement makes this precise:

\begin{prop}[{\cite{heinemonadicity, haugsengmonads, stockall2025large}}]
	\label{prop:hmonadsff}
	Let $\C$ be a $2$-category that admits Eilenberg-Moore objects, with underlying $1$-category $\C^{\leq 1}$ and $c \in \C$. Then there exists a fully faithful {left adjoint} functor
	\[
	\EM: \Alg(\Endo_\C(c))^{\op} \hookrightarrow (\C^{\leq 1})_{/c}
	\]
	sending a monad $T$ on $c$ to its Eilenberg-Moore object $\LMod_T(c) \in \C$, equipped with the monadic right adjoint $\LMod_T(c) \to c$ in the corresponding monadic adjunction. Its right adjoint sends $f: d \to c$ to the endomorphism object $\Endo_{\Hom_{\C}(d,c)}(f)$ for the left $\Endo_{\C}(c)$-action on $\Hom_{\C}(d,c)$ by postcomposition. 
	
	Passing to opposites, if $\C$ admits Kleisli-objects, there exists a fully faithful left adjoint
	\[\mathcal{KL} : \Alg(\Endo_\C(c)) \hookrightarrow (\C^{\leq 1})_{c/} \]
	sending a monad $T$ on $c$ to its Kleisli-object $\Kl_T(c) \in \C$, equipped with the free module functor $c \to \Kl_T(c)$.
\end{prop}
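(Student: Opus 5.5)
The plan is to build the functor $\EM$ as a restriction of a left adjoint that exists in any $2$-category, namely the ``endomorphism monad'' construction, and then to use the universal property of Eilenberg--Moore objects to identify its image and prove full faithfulness. I will first construct the right adjoint $R : (\C^{\leq 1})_{/c} \to \Alg(\Endo_\C(c))^{\op}$. The category $\Hom_\C(d,c)$ is left-tensored over the monoidal category $\Endo_\C(c)$ by postcomposition. For $f: d \to c$ I set $R(f) := \Endo_{\Hom_\C(d,c)}(f)$, the endomorphism algebra for this action, which exists whenever $f$ admits a left adjoint $f^\rL$: then $\Endo(f) \simeq f f^\rL$ with its canonical monad structure. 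Alternatively I define $R(f)$ by the universal property $\Map_{\Alg}(T, R(f)) \simeq \{\text{$T$-module structures on } f\}$. Functoriality in $(\C^{\leq 1})_{/c}$ comes from precomposition: a map $g: d' \to d$ over $c$ turns a $T$-module structure on $f$ into one on $f g$. This makes $R$ contravariant, as the statement requires.

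Next I will produce the adjunction. By the defining property of the Eilenberg--Moore object in a $2$-category (the form recorded above, $\Hom_\C(d, \LMod_T(c)) \simeq \LMod_T(\Hom_\C(d,c))$), passing to maximal subgroupoids gives, for $f: d \to c$,
\[
\Map_{(\C^{\leq 1})_{/c}}(f, \LMod_T(c) \to c) \simeq \{\text{$T$-module structures on } f\} \simeq \Map_{\Alg(\Endo_\C(c))}(T, R(f)).
\]
This is the adjunction $\EM \dashv R$ once we regard $\EM$ as landing in $(\C^{\leq1})_{/c}$ from $\Alg^{\op}$. Care is needed for $f$ without a left adjoint. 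Here I would define the ``module structures'' presheaf on $\Alg(\Endo_\C(c))$ and only claim representability where needed, or restrict to those $f$ for which it is representable; the cited sources handle this through the endomorphism object in the sense of Lurie, \HA{Sect.}{4.7.1}.

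Full faithfulness then amounts to showing that the unit $T \to R(\EM(T)) = \Endo(\mathrm{fgt})$ is an equivalence of algebras. Since $\mathrm{fgt}$ has left adjoint $\mathrm{free}$, we get $\Endo(\mathrm{fgt}) \simeq \mathrm{fgt}\circ\mathrm{free}$, and the unit map is the standard comparison $T \to \mathrm{fgt}\,\mathrm{free}$. To see that it is invertible I will test it in $\widehat{\ICat}$ via the representables $\Hom_\C(d,-)$: there the Eilenberg--Moore object is $\LMod_T(\Hom_\C(d,c))$, and the forgetful--free composite on a module category is $T\circ -$. Both sides are natural in $d$, so the $2$-categorical Yoneda lemma upgrades this to an equivalence in $\Endo_\C(c)$. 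The main obstacle is the coherence: the equivalence must hold as algebras, not merely as underlying endomorphisms. I expect to handle it by noting that the forgetful functor $\Alg \to \Endo_\C(c)$ is conservative, so it suffices to check the underlying map. The Kleisli statement follows by applying the same argument to $\C^{\mathrm{op}}$ on $2$-morphisms-reversed and $1$-morphisms-reversed (i.e.\ $\C^{\op}$), under which Eilenberg--Moore objects become Kleisli objects and $\Alg^{\op}$ becomes $\Alg$.
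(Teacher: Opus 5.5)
The paper gives no proof of this proposition; it cites Heine, Haugseng and Stockall. Your strategy is the standard one: read off the hom-spaces from the universal property of $\LMod_T(c)$, then show $T\simeq\Endo(\mathrm{fgt}_T)$. There is one concrete error, and it is precisely the step meant to produce the claimed adjunction. Your display reads
\[
\Map_{(\C^{\leq 1})_{/c}}\bigl(f,\EM(T)\bigr)\simeq\Map_{\Alg(\Endo_\C(c))}\bigl(T,R(f)\bigr)=\Map_{\Alg(\Endo_\C(c))^{\op}}\bigl(R(f),T\bigr).
\]
Here $\EM$ sits in the target slot and $R$ in the source slot. So this exhibits $R\dashv\EM$, not $\EM\dashv R$: the endomorphism-monad construction is the \emph{left} adjoint and $\EM$ is the \emph{right} adjoint.

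No other choice of right adjoint rescues the first half of the statement as written. For $\C=\IlCat$ and $c=*$, the functor $\EM$ is $*\to\lCat$ picking out the terminal category. This is a right adjoint but not a left adjoint, since $*$ is not initial. So ``left'' and ``right'' are interchanged in the first half of the statement. What your computation actually proves is the corrected version: $\EM$ is a fully faithful right adjoint, and its left adjoint $f\mapsto\Endo_{\Hom_\C(d,c)}(f)$ is defined on those $f$ for which this endomorphism object exists (e.g.\ all $f$ admitting a left adjoint). Your caveat about this partial definedness is well placed; the statement glosses over it.

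Your own last step should have exposed the mismatch. Passing to $\C^{\op}$ identifies $\bigl((\C^{\op})^{\leq 1}\bigr)_{/c}$ with $\bigl((\C^{\leq1})_{c/}\bigr)^{\op}$, which swaps left and right adjoints. Only the $1$-morphisms should be reversed here; reversing the $2$-morphisms too would turn monads into comonads. With your labelling you would get $\mathcal{KL}$ as a \emph{right} adjoint, contradicting the second half of the statement. With the correct labelling you get a left adjoint, as claimed. Directly: $\Map_{(\C^{\leq 1})_{c/}}(\mathcal{KL}(T),g)$ is the space of right $T$-module structures on $g$, hence equivalent to $\Map_{\Alg(\Endo_\C(c))}(T,\Endo(g))$. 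That is the half used in \cref{cor:monadsff}, so nothing downstream is affected.

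The rest of your argument survives the relabelling. The algebra map $T\to\Endo(\mathrm{fgt}_T)$ you check is the counit of $R\dashv\EM$, not the unit. Its invertibility is exactly full faithfulness of the right adjoint $\EM$. For that check, conservativity of $\Alg(\Endo_\C(c))\to\Endo_\C(c)$ suffices, together with the identification $\Endo(\mathrm{fgt})\simeq\mathrm{fgt}\circ\mathrm{free}\simeq T$. Under this identification the map becomes $\mu\circ T\eta\simeq\id_T$, so the detour through representables in $\IlCat$ is unnecessary.
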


\begin{cor}
	\label{cor:monadsff}
	For any $\cM \in \CAlg(\PrV)$, there exists a fully faithful functor
	\[
	\Alg(\Endo^{\otimes, \rL}_\cV(\cM)) \hookrightarrow \CAlg(\PrV)_{\cM/}
	\]
	sending a colimit-preserving $\cV$-linear symmetric monoidal monad $T$ on $\cM$ to its Eilenberg-Moore object $\LMod_T(\cM)$, equipped with the free module functor $\cM \to \LMod_T(\cM)$.
\end{cor}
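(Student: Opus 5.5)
We obtain the corollary by applying the Kleisli half of \cref{prop:hmonadsff} to the $2$-category $\C = \CAlg(\IPrV)$ and the object $c = \cM$. Its underlying $1$-category is $\CAlg(\PrV)$, and its endomorphism monoidal category is $\Endo_{\C}(\cM) = \Endo^{\otimes, \rL}_\cV(\cM)$. So we only need to check that Kleisli objects exist in $\C$ and that they are computed by $\LMod_T(\cM)$ together with the free module functor.

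By \cref{thm:EMPrV}, $\CAlg(\IPrV)$ admits Eilenberg--Moore objects, and these agree with Kleisli objects. More precisely, that theorem supplies the natural equivalence
\[
\Fun^{\rL, \otimes}_\cV(\LMod_T(\cM), \cN) \simeq \RMod_T(\Fun^{\rL, \otimes}_\cV(\cM, \cN)),
\]
induced by precomposition with the free functor $\cM \to \LMod_T(\cM)$. This is exactly the universal property of the Kleisli object $\Kl_T(\cM)$, with structure map the free module functor. Consequently the functor $\mathcal{KL}$ of \cref{prop:hmonadsff} is available. It sends a monad $T$ to $\LMod_T(\cM)$ equipped with the free functor $\cM \to \LMod_T(\cM)$, and it is fully faithful. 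It is this functor that gives the claimed embedding into $\CAlg(\PrV)_{\cM/}$.

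The one point needing care is the passage to opposites. We would apply the Eilenberg--Moore version of \cref{prop:hmonadsff} to $\C^{\op}$, the $2$-category obtained by reversing the $1$-morphisms. In $\C^{\op}$, Kleisli objects of $\C$ become Eilenberg--Moore objects. The endomorphism monoidal category becomes $\Endo_{\C}(\cM)^{\mathrm{rev}}$, and monads in $\C^{\op}$ are the same as monads in $\C$ (there is no $2$-cell reversal, so they are not comonads). The proposition then yields a fully faithful functor $\Alg(\Endo_{\C^{\op}}(\cM))^{\op} \hookrightarrow ((\C^{\op})^{\leq 1})_{/\cM}$. Taking opposites of both sides gives $\Alg(\Endo_\C(\cM)) \hookrightarrow (\C^{\leq 1})_{\cM/}$.

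We expect this bookkeeping to be the main obstacle, specifically confirming that the monad structure on $T$ is unchanged and that the unit of the adjunction identifies with the free module functor. Both facts follow from the fact that Eilenberg--Moore and Kleisli objects coincide in $\CAlg(\IPrV)$, via the second equivalence of \cref{thm:EMPrV} recalled at the beginning of \cref{sec:enrichedoperads}.
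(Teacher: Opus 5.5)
Your proposal is correct and follows the paper's proof. Like the paper, you apply the Kleisli half of \cref{prop:hmonadsff} to $\CAlg(\IPrV)$ and use that Eilenberg--Moore and Kleisli objects there coincide in a way that identifies the structure map with the free module functor; the paper cites \cref{prop:locgr} for this, which is what underlies \cref{thm:EMPrV}. Your explicit passage to opposites is correct bookkeeping, but it only re-derives the second half of \cref{prop:hmonadsff}, which the paper already states.
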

\begin{proof}
	Apply the second part of \cref{prop:hmonadsff}, and use that Eilenberg-Moore objects and Kleisli-objects in $\CAlg(\IPrV)$ agree in a way that identifies the free module functors by \cref{prop:locgr}.
\end{proof}
\begin{rem}
	Alternatively we could  have also used the first part of \cref{prop:hmonadsff}: By \cref{lem:iRconsdiagram}, the functor $\EM$ sends any map of algebras to an internal right adjoint in $\CAlg(\IPrV)$. Hence, we can pass to left adjoints, which by \cite[Thm. 5.3.8]{fernandomate} induces an equivalence $\CAlg(\PrV)^{\mathrm{iL}} \simeq (\CAlg(\PrV)^{\mathrm{iR}})^{\op}$, so passing to slice categories and using \cref{lem:iLcdomdiagram} we obtain
	\begin{align*}
	\Alg(\Endo^{\otimes, \rL}_\cV(\cM)) &\simeq (\CAlg(\PrV)_{/^{\mathrm{monra}}\cM})^{\op} \simeq (\CAlg(\PrV)^{\mathrm{iR}}_{/^{\mathrm{monra}}\cM})^{\op} \simeq \\
		&\simeq \CAlg(\PrV)^{\mathrm{iL}}_{\cM/^{\mathrm{monla}}} \simeq \CAlg(\PrV)_{\cM/^{\mathrm{monla}}}
	\end{align*}
	where $\CAlg(\PrV)_{\cM/^{\mathrm{monra}}}$ denotes the full subcategory on the monadic right adjoints, and similarly for left adjoints.

	We expect a version of the adjoint lifting statements \cref{lem:iLcdomdiagram} and \cref{lem:iRconsdiagram} to hold in any $2$-category that admits Eilenberg-Moore objects and locally admits geometric realizations. Note that only the second Lemma holds for monadic right adjoints in $\ICat$.
\end{rem}

\begin{notat}
	\label{notat:slicesequivalent}
	For $X \in \Cat$, let $\CAlg(\PrV)_{X/}$ denote the pullback $\CAlg(\PrV) \times_{\lCat} \lCat_{X/}$. By the same argument as in \marked{Lem.}{5.4}, the adjunction $(\PSh^{\tav\mathrm{-rex}} \Sym - ) \otimes \cV : \lCat \rightleftarrows \CAlg(\RMod_\cV(\Catcolim))$ exhibits this as equivalent to $\CAlg(\PrV)_{\PSh \Sym X \otimes \cV /}$.
\end{notat}

\begin{theorem}
	\label{thm:markedalgebras}
	Fix a space $X$ and an enrichment category $\cV \in \CAlg(\PrV)$. 
	The assignment $\cO \mapsto (X \to \PShO(\cO))$ sending a valent enriched $\cV$-operad with space of objects $X$ to its operadic Yoneda functor \cref{defin:operadicpsh} induces a fully faithful embedding
	\[
	\vOp_X(\cV) = \Alg(\Endo^{\rL, \otimes}_\cV((\PSh(\Sym X) \otimes \cV))) \hookrightarrow \CAlg (\PrV)_{X/}
	\]
	whose image consists those functors $\yo: X \to \cM$ with $\cM \in \CAlg (\RMod_\cV(\Pr))$ such that:
	\begin{itemize}
		\item The unique extension $Y: \PSh \Sym (X) \otimes \cV \to \cM$ is an internally left adjoint $1$-morphism in $\CAlg(\RMod_\cV(\IPr))$,
		\item The image of $\yo$ generates $\cM$ under colimits, symmetric monoidal structure and $\cV$-tensoring.
	\end{itemize}
\end{theorem}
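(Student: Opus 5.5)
The plan is to obtain the embedding formally from \cref{cor:monadsff} applied to $\cM := \PSh \Sym (X) \otimes \cV$. Then I identify the essential image using \cref{prop:monadicmorphisms}, and finally transport along the equivalence of slices in \cref{notat:slicesequivalent}.

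First, \cref{cor:monadsff} provides a fully faithful functor
\[
\Alg(\Endo^{\otimes,\rL}_\cV(\PSh \Sym X \otimes \cV)) \hookrightarrow \CAlg(\PrV)_{\PSh \Sym X \otimes \cV/} .
\]
It sends a monad $\cO$ to the free module functor $\PSh \Sym X \otimes \cV \to \LMod_\cO(\PSh \Sym X \otimes \cV) = \PShO(\cO)$. I then compose with the equivalence $\CAlg(\PrV)_{\PSh \Sym X \otimes \cV/} \simeq \CAlg(\PrV)_{X/}$ of \cref{notat:slicesequivalent}, which restricts along $X \to \PSh \Sym X \otimes \cV$. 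By \cref{defin:operadicpsh}, the composite sends $\cO$ to the operadic Yoneda functor $\yo_\cO$. This gives the fully faithful embedding and the claimed description of the functor on objects.

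Next, I identify the essential image. Under the slice equivalence, a functor $\yo : X \to \cM$ corresponds to its unique extension $Y : \PSh \Sym X \otimes \cV \to \cM$. The second part of \cref{prop:hmonadsff}, as used in \cref{cor:monadsff}, shows that the essential image consists of those $Y$ that are the free functor of an Eilenberg--Moore (equivalently Kleisli, by \cref{thm:EMPrV}) object. These are exactly the $Y$ that are internally left adjoint in $\CAlg(\IPrV)$ and for which the resulting adjunction is monadic. Indeed, given an internal adjunction $Y \dashv Y^\rR$ that is monadic, the comparison $\cM \to \LMod_{Y^\rR Y}(\PSh \Sym X \otimes \cV)$ is an equivalence under $\PSh \Sym X \otimes \cV$. \cref{prop:monadicmorphisms} reduces monadicity to colimit-dominance of $Y$, and \cref{lem:cdomgeneration} shows this is equivalent to the image of $\yo$ generating $\cM$ under colimits, $\cV$-tensoring and the symmetric monoidal structure. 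Conversely, for any monad $\cO$ the free functor into $\PShO(\cO)$ is internally left adjoint and monadic, hence colimit-dominant, so it satisfies both conditions.

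I expect the main obstacle to be the image characterization rather than the full faithfulness. The difficulty is checking that the essential image of the Kleisli-type embedding is precisely the internally left adjoint and monadic $1$-morphisms. This means verifying that for such $Y$ the canonical comparison to the Eilenberg--Moore object is an equivalence compatible with the structure maps out of $\PSh \Sym X \otimes \cV$. I would argue this by noting that Eilenberg--Moore objects in $\CAlg(\IPrV)$ are created in $\widehat{\ICat}$ (\cref{thm:EMPrV}), so the comparison functor is the usual Barr--Beck--Lurie comparison. That comparison is an equivalence by monadicity, and it lives in $\CAlg(\PrV)$ because it is a morphism under the source.
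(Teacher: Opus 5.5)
Your proposal is correct and follows the paper's own proof. The embedding comes from \cref{cor:monadsff} transported along \cref{notat:slicesequivalent}, and the image is identified via \cref{prop:monadicmorphisms} (for internal left adjoints, monadic iff colimit-dominant) together with \cref{lem:cdomgeneration}; the comparison-functor check you flag at the end is already covered by \cref{prop:monadicmorphisms} and \cref{thm:EMPrV}, so it needs no separate argument.
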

\begin{proof}
	By the characterization in \cref{prop:monadicmorphisms}, an internally left adjoint morphism in $\CAlg(\IPrV)$ is a monadic left adjoint iff it is colimit-dominant, which by \cref{lem:cdomgeneration} is equivalent to the second condition. Thus, the statement follows immediately from \cref{cor:monadsff} using \cref{notat:slicesequivalent}.
\end{proof}

\begin{defin}
	\label{defin:markedalgebras}
	We refer to objects $(X \to \cM) \in \CAlg(\PrV)_{X/}$ in the image of the embedding in \cref{thm:markedalgebras} as \emph{marked $\cV$-algebras}.
\end{defin}


This is similar to the description of enriched categories as marked modules in \marked{Thm.}{5.5}. The main difference is that $y: X \to \cM$ being a $\otimes$-atomic marking is not equivalent to $y(x)$ being $\otimes$-atomic for every $x \in X$, see \cref{warning:atomicpair}. We now introduce an extra condition on $\cM$ that remidies this issue. 

Recall that $\CAlg(\PrV) \simeq \CAlg(\CAlg(\PrV))$, i.e.\ the unit $\cV \to \cM$ and the multiplication map $\otimes: \cM \otimes_\cV \cM \to \cM$ are themselves morphisms in $\CAlg(\PrV)$.

\begin{defin}
	\label{def:fissile}
	A presentably symmetric monoidal $\cV$-module $\cM \in \CAlg(\PrV)$ is called \emph{fissile} if the unit $\cV \to \cM$ and the multiplication map $\otimes: \cM \otimes_\cV \cM \to \cM$ are internal left adjoints in $\CAlg(\IPrV)$.
\end{defin}

\begin{obs}
	\label{obs:fissilemultiple}
	Inductively, we see that $\cM \in \CAlg(\PrV)$ is fissile iff for any $n \geq 0$, the $n$-fold multiplication map $\otimes^n : \cM^{\otimes_\cV n} \to \cM$ is internally left adjoint in $\CAlg(\IPrV)$. In other words, $\cM$ lies in $\CAlg(\CAlg(\PrV)^{\mathrm{iL}}) \hookrightarrow \CAlg(\CAlg(\PrV)) \simeq \CAlg(\PrV)$, which is the subcategory of $\CAlg(\PrV)$ on fissile categories and internally left adjoint functors.
\end{obs}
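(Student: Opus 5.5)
The statement has two parts, and I would prove them in turn. The first says that fissility is equivalent to all $n$-fold multiplication maps $\otimes^n : \cM^{\otimes_\cV n} \to \cM$ being internally left adjoint. The second reformulates this as membership in $\CAlg(\CAlg(\PrV)^{\mathrm{iL}})$.

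\textbf{The $n$-fold multiplication maps.} I would argue by induction on $n$. For $n=0$ the map $\otimes^0$ is the unit $\cV \to \cM$, since $\cM^{\otimes_\cV 0} \simeq \cV$ is the unit of $\otimes_\cV$, i.e.\ the initial object of $\CAlg(\PrV)$. For $n=1$ it is $\id_\cM$, and for $n=2$ it is the multiplication itself. So the condition for all $n$ implies fissility.

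For the converse, assume $\cM$ is fissile. Write
\[ \otimes^{n+1} \simeq \otimes \circ (\otimes^n \otimes_\cV \id_\cM) : \cM^{\otimes_\cV (n+1)} \to \cM^{\otimes_\cV 2} \to \cM , \]
which uses the associativity coherence of the commutative algebra $\cM$ in $\CAlg(\PrV)$. Internal left adjoints compose by \cref{obs:iLpresatomic}, so the key point is that $-\otimes_\cV \id_\cM$ preserves internal left adjoints.

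This is where I expect the only real work. The coproduct in $\CAlg(\PrV)$ is $\otimes_\cV$ by \HA{Prop.}{3.2.4.7}. I would use that $\otimes_\cV$ enhances to a $2$-functor $\CAlg(\IPrV) \times \CAlg(\IPrV) \to \CAlg(\IPrV)$, either from the construction in \cref{sec:appstar} or because $\otimes_\cV$ is the $2$-categorical coproduct there. Then if $F \dashv F^\rR$ internally, $F \otimes_\cV \id$ is internally left adjoint to $F^\rR \otimes_\cV \id$, with unit and counit obtained by applying the $2$-functor.

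If I could not extract that $2$-functoriality directly, I would instead check the criterion of the first proposition of \cref{sec:tensoratomics} by hand. The candidate right adjoint is the colimit-preserving extension of $F^\rR \otimes \id$ on pure tensors. I would verify strong monoidality and $\cV$-linearity on the generating pure tensors $m \otimes m'$ and then extend by colimits, using that all functors involved preserve colimits.

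\textbf{Reformulation as $\CAlg(\CAlg(\PrV)^{\mathrm{iL}})$.} We have $\CAlg(\CAlg(\PrV)) \simeq \CAlg(\PrV)$ because $\otimes_\cV$ is the coproduct. So a commutative algebra structure is given by the structure maps $\otimes^\alpha$ for active $\alpha$, i.e.\ products of the maps $\otimes^n$. The subcategory $\CAlg(\PrV)^{\mathrm{iL}}$ contains all objects, and by the previous paragraph it is closed under $\otimes_\cV$, so it inherits the symmetric monoidal structure.

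An object of $\CAlg(\PrV)$ then lies in $\CAlg(\CAlg(\PrV)^{\mathrm{iL}})$ iff all its structure maps are internally left adjoint, which by the first part is equivalent to fissility. Morphisms in this subcategory are the internally left adjoint symmetric monoidal maps. For these the compatibility squares automatically live in the subcategory, because it is replete on $2$-cells and its morphisms are characterized by a property, so the inclusion is as described.
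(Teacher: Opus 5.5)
Your proposal is correct and follows the same argument the paper compresses into the word ``inductively'': write $\otimes^{n+1} \simeq \otimes \circ (\otimes^n \otimes_\cV \id_\cM)$, note that $\otimes_\cV$ is a $2$-functor on $\CAlg(\IPrV)$ and so preserves internal left adjoints (the paper assumes this without comment, e.g.\ in \cref{prop:iLfissile} and \cref{prop:tensoragdisj}), and identify the commutative algebra structure maps in $\CAlg(\CAlg(\PrV))$ with $\otimes_\cV$-products of the maps $\otimes^n$. Your justification via the coproduct is also sound: $\CAlg(\IPrV)$ admits cotensors by \cref{prop:CAlgPrVweightedlimits}, and in a $2$-category with cotensors every underlying $1$-categorical coproduct is automatically a conical coproduct.
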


\begin{obs}
	\label{obs:smfissile}
	Any symmetric monoidal $2$-functor $\CAlg(\IPrV) \to \CAlg(\IPr_\cW)$, for instance extension-of-scalars along some $f: \cV \to \cW$, preserves fissile categories.
\end{obs}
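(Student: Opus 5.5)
The plan is to reduce the statement to the observation that a symmetric monoidal $2$-functor $\Phi : \CAlg(\IPrV) \to \CAlg(\IPr_\cW)$ preserves all the data entering \cref{def:fissile}. These data are the unit map $\cV \to \cM$, the multiplication $\otimes : \cM \otimes_\cV \cM \to \cM$, and the property of being an internal left adjoint.

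First, internal left adjoints. A $2$-functor sends an adjunction $F \dashv F^\rR$ in $\CAlg(\IPrV)$, with its unit, counit and triangle identities, to an adjunction $\Phi(F) \dashv \Phi(F^\rR)$ in $\CAlg(\IPr_\cW)$. Hence $\Phi$ preserves internally left adjoint $1$-morphisms.

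Second, the algebra structure. By the recollection above \cref{def:fissile}, the coproduct in $\CAlg(\PrV)$ is $\otimes_\cV$. So $\cM$ is a commutative algebra in the symmetric monoidal $2$-category $(\CAlg(\IPrV), \otimes_\cV)$, whose unit object is $\cV$; its unit and multiplication maps are exactly the two maps of \cref{def:fissile}. A symmetric monoidal $2$-functor induces a functor on commutative algebra objects. Using the structure equivalences $\Phi(\cM) \otimes_\cW \Phi(\cM) \simeq \Phi(\cM \otimes_\cV \cM)$ and $\Phi(\cV) \simeq \cW$, it identifies the multiplication of $\Phi(\cM)$ with $\Phi(\otimes)$ and its unit map with $\Phi(\cV \to \cM)$, each precomposed with a structure equivalence.

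The point I expect to need care is that the commutative algebra structure on $\Phi(\cM)$ obtained this way must agree with its own symmetric monoidal structure, as an object of $\CAlg(\PrW) \simeq \CAlg(\CAlg(\PrW))$. Otherwise the maps we have shown to be internal left adjoints would not be the ones \cref{def:fissile} asks about. This agreement holds by uniqueness of the canonical algebra structure in a coCartesian symmetric monoidal category (\HA{Prop.}{2.4.3.9}), since $\otimes_\cW$ is the coproduct there. Equivalently, one can phrase the whole argument via \cref{obs:fissilemultiple}: $\Phi$ restricts to internal left adjoints and, being symmetric monoidal, induces a functor $\CAlg(\CAlg(\PrV)^{\mathrm{iL}}) \to \CAlg(\CAlg(\PrW)^{\mathrm{iL}})$.

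Given this, the multiplication and unit of $\Phi(\cM)$ are composites of equivalences with $\Phi$-images of internal left adjoints. Equivalences are internal left adjoints and internal left adjoints compose (\cref{obs:iLpresatomic}), so $\Phi(\cM)$ is fissile.

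For the example of extension of scalars $f_{\mathrm{ext}}$ along $f : \cV \to \cW$, I would cite \cref{prop:eosstar} for $f_{\mathrm{ext}}$ being a $2$-functor. Its symmetric monoidality with respect to $\otimes_\cV$ and $\otimes_\cW$ comes from $(\cM \otimes_\cV \cN) \otimes_\cV \cW \simeq (\cM \otimes_\cV \cW) \otimes_\cW (\cN \otimes_\cV \cW)$. Alternatively, $f_{\mathrm{ext}}$ is a left adjoint preserving coproducts, and the coproduct is the tensor product on both sides.
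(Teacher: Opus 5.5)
Your proposal is correct and is essentially the argument the paper leaves implicit. By \cref{obs:fissilemultiple}, fissile categories are exactly the objects of $\CAlg(\CAlg(\PrV)^{\mathrm{iL}})$, and a symmetric monoidal $2$-functor preserves both internal left adjoints and commutative algebra objects; your point about uniqueness of algebra structures for the coCartesian structure $\otimes_\cW$ correctly ensures that the induced algebra structure on the image is its own multiplication.
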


\begin{rem}
	While this definition is reminiscent of the notion of rigid categories \cite[Chapter 1, §9]{gr}, its implications are very different and neither implies the other. Our naming was suggested by Nikolaus Betker and stems from the intuition that $\otimes$-atomic objects are separated enough such that atomic objects can be split apart into a product of them, compare \cref{prop:iLfissile}.
\end{rem}

\begin{ex}	
	Given a small symmetric monoidal category $\rC \in \CAlg(\Cat)$, we deduce from \cref{thm:dayweaklydisj} that $\PSh \rC \in \CAlg(\Pr)$ is fissile iff
	\begin{itemize}
		\item The unit $1_\rC \in \rC$ is terminal, and
		\item The multiplication map
		\[
		(\rC \times \rC)_{/c} \times (\rC \times \rC)_{/c'} \to (\rC \times \rC)_{/c \otimes c'}
		\]
		sending $(c_1 \otimes c_2 \to c, c'_1 \otimes c'_2 \to c')$ to $(c_1 \otimes c'_1) \otimes (c_2 \otimes c'_2) \to c \otimes c'$ is colimit-cofinal. 
	\end{itemize}
\end{ex}

\begin{ex}
	Given a category $X \in \Cat$, the unit $* \to \Sym X$ of its symmetric algebra is obtained by applying $\Sym: \Cat \to \CAlg(\Cat)$ to $\emptyset \to X$, and the multiplication $\Sym X \times \Sym X \to \Sym X$ is obtained by applying $\Sym$ to the fold map $X \sqcup X \to X$. Hence by \cref{lem:PSymisiL}, we deduce that $\PSh \Sym X \otimes \cV$ is fissile.
\end{ex}

\begin{prop}
	\label{prop:tensoragdisj}
	Let $F: \cM \to \cN$ be a colimit-dominant morphism in $\CAlg(\PrV)$, and $\cM$ be fissile. Then $\cN$ is fissile. In particular, any $\otimes$-atomically generated category is fissile.
\end{prop}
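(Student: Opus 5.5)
The plan is to mimic the proof of \cref{lem:iLcdomdiagram}, but with the test objects running over the domain of the multiplication instead of over $\cM$. The unit and multiplication are handled the same way, and by \cref{obs:fissilemultiple} it suffices to treat the multiplication $\mu_\cN : \cN \otimes_\cV \cN \to \cN$ together with the unit $u_\cN : \cV \to \cN$. We know that $\mu_\cN \circ (F \otimes F) \simeq F \circ \mu_\cM$ and $u_\cN \simeq F \circ u_\cM$ as morphisms in $\CAlg(\PrV)$, and that $\mu_\cM, u_\cM$ are internally left adjoint.

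\textbf{Step 1 (generation).} First I would show that $F \otimes F : \cM \otimes_\cV \cM \to \cN \otimes_\cV \cN$ is colimit-dominant. The functor $-\otimes_\cV -$ preserves colimits in each variable, and $\cN \otimes_\cV \cN$ is generated under colimits and $\cV$-tensoring by the pure tensors $n \boxtimes n'$. The full subcategory of those $n$ with $n \boxtimes F(m') \in \CIm(F\otimes F)$ for all $m'$ is closed under colimits and contains the image of $F$, hence is all of $\cN$; a second pass of the same kind handles the other variable. This is the argument of \cref{lem:CImclosed}.

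\textbf{Step 2 (closure argument).} As in \cref{lem:iLcdomdiagram}, let $\widetilde{\cX} \subseteq \cN\otimes_\cV\cN$ be the full subcategory of those $w$ for which the three comparison maps
\[
\Map(w, \colim_i \mu_\cN^\rR p_i) \to \Map(w, \mu_\cN^\rR \colim_i p_i), \quad \Map(w, \mu_\cN^\rR p \otimes v) \to \Map(w,\mu_\cN^\rR(p\otimes v)),
\]
\[
\Map(w, \mu_\cN^\rR p \otimes \mu_\cN^\rR p') \to \Map(w,\mu_\cN^\rR(p\otimes p'))
\]
are equivalences. This subcategory is closed under colimits and $\cV$-tensoring, so by Step~1 it suffices to show that it contains the objects $F(m)\boxtimes F(m')$. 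I would do the same for $u_\cN$, with $\cV$ as the domain; there $\cV$ is generated by $1_\cV$, and $u_\cN(1_\cV) = F(1_\cM)$.

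\textbf{Step 3 (generators; the main obstacle).} For $w = F(m) \boxtimes F(m')$ the target side is easy: by adjunction it equals $\Map_\cN(F(m \otimes m'), -)$. The hard part is the source side, for instance $\Map(F m \boxtimes F m', \mu_\cN^\rR p \otimes \mu_\cN^\rR p')$. Since $F$ is not assumed internally left adjoint, we cannot simply pass to $F^\rR$.

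My first attempt would be to reduce to fissility of $\cM$ via the relation $\mu_\cN \circ (F\otimes F) \simeq F \circ \mu_\cM$. I would try to prove the required isomorphisms only after precomposing the comparison natural transformations with $F\otimes F$ on the level of corepresented functors. By hereditary-type decompositions, which fissility of $\cM$ supplies for the objects $m \otimes m'$, this should turn each comparison map into one that is visibly invertible.

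If that fails, the fallback is to use Step~1 more strongly. In that case I would first prove the statement when $F$ is additionally internally left adjoint: there $\mu_\cN \circ (F\otimes F)$ is internally left adjoint, and $F\otimes F$ is internally left adjoint and colimit-dominant, so \cref{lem:iLcdomdiagram} applies directly; the unit is handled the same way. This case already yields the ``in particular'' clause, since a $\otimes$-atomically generating marking gives an internally left adjoint, colimit-dominant map $\PSh\Sym X \otimes \cV \to \cN$ out of the fissile category $\PSh \Sym X \otimes \cV$. I would then try to bootstrap the general case from it.
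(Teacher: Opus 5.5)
\textbf{The general case fails.} Your Step~3 ``first attempt'' cannot succeed, and neither can the plan to bootstrap the general case from the internally-left-adjoint case. The reason is that with $F$ merely colimit-dominant the statement is false.

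Take $\cV=\Spaces$ and $\cM=\PSh\Sym(*)$, which is fissile by the example preceding the proposition. Let $\cN=\Spaces_*$ with the smash product, and let $F$ be the cocontinuous symmetric monoidal functor with $F(\yo_{(*)})=S^0$. We have $X_+\simeq\colim_X S^0$, and every pointed space $(X,x)$ is the pushout of $* \leftarrow S^0 \to X_+$. Hence $S^0$ generates $\Spaces_*$ under colimits, so $F$ is colimit-dominant. But the unit $(-)_+\colon\Spaces\to\Spaces_*$ is not internally left adjoint: its right adjoint, the forgetful functor, sends the initial object $*$ to $*\neq\emptyset$. So $\Spaces_*$ is not fissile. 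Connective spectra with $F(\yo_{(*)})=\mathbb{S}$ give another counterexample, for the same reason.

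This shows exactly where your Step~3 breaks. Already for the unit you would have to show that $\Map_\cN(F(1_\cM),-)$ is cocontinuous and strong monoidal. When $F^\rR$ is not cocontinuous, nothing about $\cM$ controls maps out of objects $F(m)$ in $\cN$.

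\textbf{The fallback is the paper's proof.} The hypothesis actually needed is that $F$ is internally left adjoint as well as colimit-dominant. The paper's proof uses this tacitly when it asserts that the vertical maps $F$ and $F\otimes_\cV F$ are internally left adjoint ``since $\otimes_\cV$ is a $2$-functor''. Under that hypothesis your fallback is precisely the paper's argument:
\begin{itemize}
\item $F\otimes_\cV F$ is internally left adjoint by $2$-functoriality, and colimit-dominant by your Step~1.
\item $F\circ\mu_\cM$ is internally left adjoint by fissility of $\cM$.
\item Hence \cref{lem:iLcdomdiagram} shows that $\mu_\cN$ is internally left adjoint.
\item Finally, $u_\cN\simeq F\circ u_\cM$ is a composite of internal left adjoints.
\end{itemize}
This already gives the ``in particular'' clause, which is the form in which the proposition is used later (e.g.\ in \cref{thm:markedalgfissile}). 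So drop the bootstrap and state the result for internally left adjoint, colimit-dominant $F$.
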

\begin{proof}
	Since the functor $F$ is a map in $\CAlg(\PrV) \simeq \CAlg(\CAlg(\PrV))$, we obtain commutative squares
	\[
	\begin{tikzcd}
		\cV \arrow[r] \arrow[d, "\id"] & \cM \arrow[d, "F"] \\
		\cV  \arrow[r]  & \cN
	\end{tikzcd} \quad
	\begin{tikzcd}
		\cM \otimes_\cV \cM \arrow[r, "\otimes"] \arrow[d, "F \otimes_\cV F"] & \cM \arrow[d, "F"] \\
		\cN \otimes_\cV \cN \arrow[r, "\otimes"]  & \cN
	\end{tikzcd}
	\]
	in $\CAlg(\PrV)$, where the top horizontal maps are internally left adjoint, and the vertical maps are internally left adjoint since $\otimes_\cV$ is a $2$-functor and colimit-dominant by \marked{Prop.}{3.18}. Hence we conclude using \cref{lem:iLcdomdiagram}.
\end{proof}

The main upshot of this definition is the following:

\begin{prop}
	\label{prop:iLfissile}
	Let $F: \cM \to \cN$ be a morphism in $\CAlg(\PrV)$ such that $\cM$ is $\otimes$-atomically generated and $\cN$ is fissile. Then the following are equivalent: 
	\begin{enumerate}[(1)]
		\item $F$ is internally left adjoint,
		\item $F$ preserves $\otimes$-atomic objects,
		\item Given any/ some system of $\otimes$-atomic generators $\rC \to \cM$, the composition $\rC \to \cM \to \cN$ factors through the $\otimes$-atomic objects of $\cN$,
		\item For $\cM_0 \subseteq \cM^{\otimes\mathrm{at}}$ any/ some set of $\otimes$-atomic generators of $\cM$, the image $F(\cM_0)$ consists of $\otimes$-atomic objects in $\cN$.
	\end{enumerate}
\end{prop}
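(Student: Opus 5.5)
The plan is to prove the cycle $(1) \Rightarrow (2) \Rightarrow (4) \Rightarrow (3) \Rightarrow (1)$. The implication $(1) \Rightarrow (2)$ is \cref{obs:iLpresatomic}: internally left adjoint functors compose, so if $\PSh\Sym(*) \otimes \cV \to \cM$ marking $m$ is internally left adjoint, then so is its composite with $F$, which marks $F(m)$. The step $(2) \Rightarrow (4)$ is immediate, since a set of $\otimes$-atomic generators consists of $\otimes$-atomic objects by \cref{obs:otimesatomicmarkingobj}. For $(4) \Leftrightarrow (3)$ I would use \cref{cor:univisag}, which lets us pass between an arbitrary system $\rC \to \cM$ and its full image. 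The ``any/some'' clauses are then handled by the remaining steps: once $(3)$ for \emph{some} system gives $(1)$, and $(1)$ gives $(2)$, we get $(3)$ and $(4)$ for \emph{all} systems.

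The substantive step is $(3) \Rightarrow (1)$. Let $y: \rC \to \cM$ be a $\otimes$-atomically generating marking such that $F \circ y$ lands in $\cN^{\otimes\mathrm{at}}$. By \cref{prop:iLpreserve}, it suffices to show that $F \circ y : \rC \to \cN$ is a $\otimes$-atomic marking. By \cref{prop:iLonsequ}, it suffices to check this on finite tuples, i.e.\ to show that $(F y(c_1), \dots, F y(c_n))$ is a $\otimes$-atomic tuple. So the key claim is the following: \emph{in a fissile $\cN$, any finite tuple $(n_1, \dots, n_k)$ of $\otimes$-atomic objects is a $\otimes$-atomic tuple}.

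To prove the claim, write $\PSh\Sym(\{1,\dots,k\}) \otimes \cV \simeq (\PSh\Sym(*) \otimes \cV)^{\otimes_\cV k}$, since $\Sym$, $\PSh$ and $-\otimes\cV$ are left adjoints sending coproducts to coproducts and coproducts in $\CAlg(\PrV)$ are $\otimes_\cV$. Under this identification, the marking of $(n_1,\dots,n_k)$ factors as
\[
(\PSh\Sym(*) \otimes \cV)^{\otimes_\cV k} \xrightarrow{\;N_1 \otimes_\cV \dots \otimes_\cV N_k\;} \cN^{\otimes_\cV k} \xrightarrow{\;\otimes^k\;} \cN ,
\]
where $N_i$ marks $n_i$; this is the universal property of the coproduct in $\CAlg(\PrV)$. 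Each $N_i$ is internally left adjoint by assumption. Since $\otimes_\cV$ is a $2$-functor on $\CAlg(\IPrV)$ (as already used in \cref{prop:tensoragdisj}), it preserves adjunctions, so $N_1 \otimes_\cV \dots \otimes_\cV N_k$ is internally left adjoint. The map $\otimes^k$ is internally left adjoint by fissility, via \cref{obs:fissilemultiple}. Composing, the marking is internally left adjoint.

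I expect the main obstacle to be the bookkeeping in this last step. One has to verify carefully that $\otimes_\cV$ really acts $2$-functorially on internal adjunctions in $\CAlg(\IPrV)$ (and not merely in $\IPrV$). One also has to check that the composite above agrees with the canonical extension of the marking, which amounts to identifying the coproduct inclusions with the unit insertions $\cN \to \cN^{\otimes_\cV k}$. Both should follow from the results of \cref{sec:appstar} that the paper invokes for \cref{prop:tensoragdisj}.
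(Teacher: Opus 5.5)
Your proposal is correct and follows the paper's argument. The paper likewise reduces to finite tuples via \cref{prop:iLonsequ}, and factors the marking of a tuple as $N_1 \otimes_\cV \dots \otimes_\cV N_k$ followed by $\otimes^k \colon \cN^{\otimes_\cV k} \to \cN$, which are internally left adjoint by $2$-functoriality of $\otimes_\cV$ and by fissility (\cref{obs:fissilemultiple}) respectively. The differences are only cosmetic: you close the cycle through $(4) \Rightarrow (3)$ via \cref{cor:univisag}, and you make explicit the appeal to \cref{prop:iLpreserve} and the correct target $\cN$, which the paper leaves implicit (its written proof has $\cM$ where $\cN$ is meant).
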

\begin{proof}
	The implications $(1) \Rightarrow (2) \Rightarrow (3) \Rightarrow (4)$ are straightforward. For $(4) \Rightarrow (1)$ we use the criterion from \cref{prop:iLonsequ}: We must show that for any finite set of objects $m_1, \dots, m_n \in \cM_0$ with $n \geq 0$, the induced functor $\PSh \Sym \{m_1, \dots, m_n\} \otimes \cV \to \cM$ is internally left adjoint. But this functor is obtained by composing the tensor product $\bigotimes_\cV (\PSh \Sym \{m_i\} \otimes \cV \to \cM)$, which is internally left adjoint since $\otimes_\cV$ is a $2$-functor, with the $n$-fold multiplication map $\otimes^n : \cM^{\otimes_\cV n} \to \cM$ which is internally left adjoint by \cref{obs:fissilemultiple}.
\end{proof}
\begin{theorem}
	\label{thm:markedalgfissile}
	Given a space $X$ and $\cM \in \CAlg(\PrV)$, then a functor $y: X \to \cM$ is a marked algebra, i.e.\ in the image of the functor from \cref{introthm:markedalgebras}, iff \begin{itemize}
		\item $\cM$ is fissile,
		\item Every $x \in X$ is sent to a $\otimes$-atomic object in $\cM$, and
		\item The image of $y$ generates $\cM$ under colimits, $\cV$-tensoring and symmetric monoidal structure.
	\end{itemize} 
\end{theorem}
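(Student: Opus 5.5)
We reduce both directions to \cref{thm:markedalgebras}, which characterizes marked algebras $y: X \to \cM$ by two conditions: the extension $Y: \PSh\Sym(X)\otimes\cV \to \cM$ is internally left adjoint, and the image of $y$ generates $\cM$ under colimits, $\cV$-tensoring and the symmetric monoidal structure. The generation condition appears verbatim in both characterizations, so only the first condition needs to be compared with the pair ``$\cM$ fissile and each $y(x)$ is $\otimes$-atomic''.

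For the forward direction, suppose $y$ is a marked algebra. Then $Y$ is internally left adjoint, and by \cref{lem:cdomgeneration} the generation condition says $Y$ is colimit-dominant. The source $\PSh\Sym(X)\otimes\cV$ is fissile by the example following \cref{def:fissile}. Applying \cref{prop:tensoragdisj} to the colimit-dominant map $Y$ shows $\cM$ is fissile. Since $y$ is a $\otimes$-atomic marking, \cref{obs:otimesatomicmarkingobj} shows each $y(x)$ is $\otimes$-atomic.

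For the converse, suppose $\cM$ is fissile, each $y(x)$ is $\otimes$-atomic, and the generation condition holds. We apply \cref{prop:iLfissile} to $F = Y : \PSh\Sym(X)\otimes\cV \to \cM$. Its hypotheses hold: the target is fissile by assumption, and the source is $\otimes$-atomically generated via the canonical marking $X \to \PSh\Sym(X)\otimes\cV$, since its extension is the identity, which is internally left adjoint and colimit-dominant. Composing this generating marking with $Y$ gives back $y$, which lands in $\otimes$-atomic objects of $\cM$. Condition (3) of \cref{prop:iLfissile} therefore holds for this system of generators, so $Y$ is internally left adjoint. Together with generation, \cref{thm:markedalgebras} shows $y$ is a marked algebra.

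The main point to check is that \cref{prop:iLfissile} applies to a generating marking $X \to \PSh\Sym(X)\otimes\cV$ from a space, rather than to a full subcategory. Its proof only uses \cref{prop:iLonsequ} for finite tuples $x_1,\dots,x_n$, together with the fact that the functor induced by such a tuple factors as $\bigotimes_\cV$ of single markings followed by the $n$-fold multiplication. That argument works verbatim for an arbitrary marking $X \to \cM$. Alternatively, \cref{cor:univisag} lets us pass to the image, which is a full subcategory. Here the $n$-fold multiplication $\cM^{\otimes_\cV n} \to \cM$ is internally left adjoint by \cref{obs:fissilemultiple}, and the $\bigotimes_\cV$ of internally left adjoint maps is again internally left adjoint because $\otimes_\cV$ is a $2$-functor.
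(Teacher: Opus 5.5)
Your proof is correct and follows the paper's own argument. The forward direction uses \cref{prop:tensoragdisj}, applied to the internally left adjoint and colimit-dominant extension $Y$, together with preservation of $\otimes$-atomic objects; the converse applies \cref{prop:iLfissile} to $Y$, using the canonical generating marking of $\PSh\Sym(X)\otimes\cV$, and your remark on arbitrary markings versus full subcategories handles the one notational subtlety correctly.
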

\begin{proof}
	Marked algebras satisfy the given conditions by \cref{prop:tensoragdisj} and \cref{obs:iLpresatomic}. The converse follows from \cref{prop:iLfissile}. 
\end{proof}

\begin{cor}
	\label{cor:tensoragfissilechar}
	Given a presentably symmetric monoidal $\cV$-module $\cM \in \CAlg(\PrV)$, the following are equivalent:
	\begin{itemize}
		\item $\cM$ is $\otimes$-atomically generated.
		\item $\cM$ is fissile, and it is generated under colimits, $\cV$-tensorings and symmetric monoidal structure by its $\otimes$-atomic objects,
		\item The inclusion $\cM^{\otimes\text{at}} \subseteq \cM$ of all $\otimes$-atomic objects into $\cM$ is a $\otimes$-atomic marking.
		\item The inclusion $\cM^{\otimes\text{at}, \simeq} \subseteq \cM$ of the space of all $\otimes$-atomic objects is a $\otimes$-atomic marking.
	\end{itemize}
\end{cor}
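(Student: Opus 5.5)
I will prove the cycle of implications (1) $\Rightarrow$ (3) $\Rightarrow$ (4) $\Rightarrow$ (2) $\Rightarrow$ (1). Here (1) is ``$\otimes$-atomically generated'', (2) is ``fissile and generated by its $\otimes$-atomic objects'', (3) is ``the inclusion $\cM^{\otimes\mathrm{at}} \subseteq \cM$ is a $\otimes$-atomic marking'' and (4) is the same statement for the space $\cM^{\otimes\mathrm{at},\simeq}$. Throughout, $\cM^{\otimes\mathrm{at}}$ is small by \cref{prop:atomicarecpt}, so both inclusions are legitimate markings by small categories.

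\textbf{(1) $\Rightarrow$ (2).} Assume $\cM$ is $\otimes$-atomically generated, via some marking $y: \rC \to \cM$.
\begin{itemize}
\item Fissility follows directly from \cref{prop:tensoragdisj}.
\item By \cref{obs:otimesatomicmarkingobj}, the image of $y$ consists of $\otimes$-atomic objects.
\item Since that image already generates $\cM$, so does the larger full subcategory $\cM^{\otimes\mathrm{at}}$.
\end{itemize}

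\textbf{(2) $\Rightarrow$ (3).} Here I use \cref{prop:iLonsequ}. It suffices to show that every finite tuple $(m_1,\dots,m_n)$ of $\otimes$-atomic objects is a $\otimes$-atomic tuple. Equivalently, the extension $\PSh\Sym\{1,\dots,n\}\otimes\cV \to \cM$ must be internally left adjoint.

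As in the proof of \cref{prop:iLfissile}, I factor this map as
\[
\PSh\Sym\{1,\dots,n\}\otimes\cV \simeq \textstyle\bigotimes_\cV^n (\PSh\Sym(*)\otimes\cV) \to \cM^{\otimes_\cV n} \overset{\otimes^n}{\to} \cM .
\]
The first map is the $\otimes_\cV$-product of the maps marking each $m_i$. Each of these is internally left adjoint because $m_i$ is $\otimes$-atomic, and $\otimes_\cV$ is a $2$-functor, so the product is internally left adjoint. The multiplication $\otimes^n$ is internally left adjoint by fissility and \cref{obs:fissilemultiple}. The composite is therefore internally left adjoint.

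I must also check that this composite really is the extension marking $(m_1,\dots,m_n)$. This holds because $\PSh\Sym(-)\otimes\cV$ takes coproducts of categories to coproducts in $\CAlg(\PrV)$, which by \HA{Prop.}{3.2.4.7} are $\otimes_\cV$, with the fold map given by multiplication. I expect this identification to be the main bookkeeping point, but it is formal.

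\textbf{(3) $\Rightarrow$ (4) $\Rightarrow$ (1).} Both steps run through generation.
\begin{itemize}
\item For (3) $\Rightarrow$ (4), the surjective functor $\cM^{\otimes\mathrm{at},\simeq} \to \cM^{\otimes\mathrm{at}}$ induces an internally left adjoint map $\PSh\Sym(\cM^{\otimes\mathrm{at},\simeq})\otimes\cV \to \PSh\Sym(\cM^{\otimes\mathrm{at}})\otimes\cV$ by \cref{lem:PSymisiL}. Composing it with the internal left adjoint from (3) gives (4).
\item For (4) $\Rightarrow$ (1), it remains to see that $\cM^{\otimes\mathrm{at}}$ generates $\cM$. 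Condition (4) by itself does not give generation, so (4) should be read together with the generation hypothesis, as in the definition of a system of $\otimes$-atomic generators.
\end{itemize}

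\textbf{Generation in (3) and (4).} Conditions (3) and (4) are implicitly meant as ``$\otimes$-atomically \emph{generating} marking''; otherwise a category with no generation, such as one with few atomics, would satisfy (4) vacuously. I will state this reading explicitly. Under it, (3) $\Leftrightarrow$ (4) follows directly from \cref{cor:univisag}, applied to the surjection $\cM^{\otimes\mathrm{at},\simeq} \to \cM^{\otimes\mathrm{at}}$. Then (4) $\Rightarrow$ (1) holds by definition of $\otimes$-atomically generated.

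To close the cycle with generation included, (2) $\Rightarrow$ (3) supplies the generating part from the generation clause of (2). Conversely, generation in (3) or (4) together with the atomicity of the image yields back the second clause of (2).
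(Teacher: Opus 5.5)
Your argument is correct and is essentially the one the paper leaves implicit. The corollary is stated without proof right after \cref{thm:markedalgfissile}, and the pieces are: (1)$\Rightarrow$(2) via \cref{prop:tensoragdisj} and \cref{obs:otimesatomicmarkingobj}; (2)$\Rightarrow$(4) is \cref{thm:markedalgfissile}, whose key step is the fissility factorization from \cref{prop:iLfissile} that you reproduce inline; and (3)$\Leftrightarrow$(4) is \cref{cor:univisag}.

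You are right that (3) and (4) must be read as \emph{generating} markings, matching the paper's earlier remark about a ``system of $\otimes$-atomic generators''. Your stated reason is slightly off, though: the empty marking is not vacuously $\otimes$-atomic, since it requires the unit $\cV \to \cM$ to be internally left adjoint. A genuine witness is presheaves on the poset $(\N,\geq)$ with Day convolution for $+$. There the unit $\yo_0$ is terminal, so there are no $\otimes$-atomic objects, while the unit map (right adjoint $\mathrm{ev}_0$) is internally left adjoint. So (3) and (4) hold literally even though the category is not $\otimes$-atomically generated.
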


\begin{rem}
	\label{rem:cauchyfissile}
	In particular if $y: X \to \cM$ is a marked algebra describing an enriched operad $\cO$, then the inclusion $\cM^{\otimes, \simeq} \hookrightarrow \cM$ is a marked algebra as well, and we call its associated operad the \emph{Cauchy-completion} $\hat{\cO}$ of $\cO$. Compare \cref{rem:cauchy}.
\end{rem}

Finally, we rectify the issues we had in \cref{lem:iLclosedsiftedcolim}, preparing a description of colimits of operads in \cref{prop:operadslimcolim}:

\begin{prop}
	\label{prop:colimitfissile}
	The subcategory $\CAlg(\CAlg(\PrV)^{\mathrm{iL}}) \hookrightarrow \CAlg(\PrV)$ on fissile categories and internally left adjoint morphisms is closed under colimits.
\end{prop}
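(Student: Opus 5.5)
The plan is to use the identification $\CAlg(\PrV) \simeq \CAlg(\CAlg(\PrV))$ from \cref{obs:fissilemultiple}. Under it, the subcategory in question is $\CAlg(\CAlg(\PrV)^{\mathrm{iL}})$, i.e.\ commutative algebras for the coCartesian monoidal structure $\otimes_\cV$, restricted to the wide subcategory of internal left adjoints. Colimits in $\CAlg(\PrV)$ split into sifted colimits and finite coproducts (including the initial object), and I will treat the two cases separately. In each case I must show two things: the colimit of fissile categories along internally left adjoint maps is again fissile, with internally left adjoint colimit injections; and for a cocone of internally left adjoint maps $\cM_i \to \cN$ into a fissile $\cN$, the induced map $\colim_i \cM_i \to \cN$ is internally left adjoint.

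\textbf{Sifted colimits.} By \cref{lem:iLclosedsiftedcolim}, $\CAlg(\PrV)^{\mathrm{iL}} \hookrightarrow \CAlg(\PrV)$ is closed under sifted colimits, so the colimit injections are internally left adjoint and so is the induced map out of the colimit. It remains to show that $\cM = \colim_i \cM_i$ is fissile. The unit $\cV \to \cM$ factors through $\cV \to \cM_{i}$ followed by an injection, so it is a composite of internal left adjoints. For multiplication, $\otimes_\cV$ preserves colimits in each variable, and $\cM \otimes_\cV \cM \simeq \colim_i (\cM_i \otimes_\cV \cM_i)$ because sifted diagrams have cofinal diagonal. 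The multiplication $\cM \otimes_\cV \cM \to \cM$ is then the colimit of the multiplications $\otimes_i$, which form a natural transformation of diagrams in $\CAlg(\PrV)^{\mathrm{iL}}$. Each $\otimes_i$ is internally left adjoint. The transition maps are internally left adjoint on both sides, since $\otimes_\cV$ is a $2$-functor. The induced map on colimits is therefore internally left adjoint, again by \cref{lem:iLclosedsiftedcolim}.

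\textbf{Finite coproducts.} The initial object is $\cV$, which is fissile with identity unit and multiplication. Any unit map $\cV \to \cN$ into a fissile $\cN$ is internally left adjoint by definition, so $\cV$ is initial in the subcategory. For binary coproducts, the coproduct in $\CAlg(\PrV)$ is $\cM_1 \otimes_\cV \cM_2$, and we check three things.
\begin{enumerate}[(1)]
	\item $\cM_1 \otimes_\cV \cM_2$ is fissile. Its unit is $\cV \simeq \cV \otimes_\cV \cV \to \cM_1 \otimes_\cV \cM_2$, the tensor product of the two units. Its multiplication is the composite of a symmetry equivalence with $\otimes_1 \otimes_\cV \otimes_2$. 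Both are internally left adjoint because $\otimes_\cV$ is a $2$-functor.
	\item The inclusion $\cM_1 \simeq \cM_1 \otimes_\cV \cV \to \cM_1 \otimes_\cV \cM_2$ is $\id \otimes_\cV (\text{unit})$, hence internally left adjoint.
	\item Given internally left adjoint $F_j : \cM_j \to \cN$ with $\cN$ fissile, the induced map is the composite $\otimes_\cN \circ (F_1 \otimes_\cV F_2)$. This is internally left adjoint by fissility of $\cN$ and $2$-functoriality of $\otimes_\cV$.
\end{enumerate}

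Combining the two cases gives closure under all small colimits. Sifted colimits and finite coproducts generate all colimits, and a subcategory that contains these colimits and is closed under them, together with the universal maps, is closed under all colimits: a general colimit is a filtered colimit of finite ones, and a finite colimit is a geometric realization of coproducts. The step I expect to be the main obstacle is the sifted case. There one must verify carefully that the colimit of the multiplication maps really is the multiplication of the colimit. Equivalently, the forgetful functor $\CAlg(\CAlg(\PrV)^{\mathrm{iL}}) \to \CAlg(\PrV)^{\mathrm{iL}}$ should preserve sifted colimits. I would deduce this from \HA{Cor.}{3.2.3.2}, applied to the coCartesian monoidal structure on $\CAlg(\PrV)^{\mathrm{iL}}$, which preserves sifted colimits in each variable.
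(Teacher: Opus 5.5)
Your proposal is correct and follows essentially the same route as the paper. Coproducts are handled as $\otimes_\cV$, using $2$-functoriality of $\otimes_\cV$ and fissility of the target; your items (1)--(3) match the paper's. Sifted colimits go through \cref{lem:iLclosedsiftedcolim} together with the fact that the forgetful functor $\CAlg(\CAlg(\PrV)^{\mathrm{iL}}) \to \CAlg(\PrV)^{\mathrm{iL}}$ preserves sifted colimits by \HA{Cor.}{3.2.3.2}, which is exactly the point you flag as the crux. Your explicit treatment of the initial object $\cV$, and of the reduction of arbitrary colimits to sifted colimits and finite coproducts, only spells out what the paper leaves implicit.
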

\begin{proof}
	First, we prove that is closed under coproducts, which in $\CAlg(\PrV)$ are given by $\otimes_\cV$. It is a $2$-functor, so the coproduct of internal left adjoints is an internal left adjoint. In particular if $\cM$ and $\cN$ are fissile, then the multiplication $\otimes^n : (\cM \otimes_\cV \cN)^{\otimes_\cV n} \simeq \cM^{\otimes_\cV n} \otimes_\cV \cN^{\otimes_\cV n} \to \cM \otimes_\cV \cN$ is internally left adjoint as well. 
	
	We must also verify that the universal property of the coproduct restricts, i.e.\ 
	\begin{itemize}
		\item The inclusions $\cM \simeq \cM \otimes_\cV \cV \to \cM \otimes_\cV \cN$ and similarly for $\cN$ are internally left adjoint,
		\item Given $F: \cM \to \cN$ and $G: \cM' \to \cN$ in $\CAlg(\PrV)^{\mathrm{iL}}_{\otimes\text{-ag}}$, the induced map $\cM \otimes_\cV \cM' \to \cN \otimes_\cV \cN \to \cN$ is internally left adjoint, 
	\end{itemize}
	which follow since $\cN$ is fissile.
	
	Finally by \cref{lem:iLclosedsiftedcolim}, $\CAlg(\PrV)^{\mathrm{iL}} \hookrightarrow \CAlg(\PrV)$ is closed under sifted colimits, and the (fully faithful) forgetful functor $\CAlg(\CAlg(\PrV)^{\mathrm{iL}}) \to \CAlg(\PrV)^{\mathrm{iL}}$ preserves sifted colimits since the symmetric monoidal structure $\otimes_cV$ on $\CAlg(\PrV)^{\mathrm{iL}}$ is compatible with sifted colimits as they are calculated in $\CAlg(\PrV)$.
\end{proof}

\begin{prop}
	\label{prop:colimitiLtensorag}
	The subcategory $\CAlg(\PrV)^{\mathrm{iL}}_{\otimes\text{-ag}} \hookrightarrow \CAlg(\PrV)$ on $\otimes$-atomically generated categories and internally left adjoint morphisms is closed under colimits.
\end{prop}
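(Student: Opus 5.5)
The plan is to reduce the statement to \cref{prop:colimitfissile}, leaving generation by $\otimes$-atomic objects as the only new point. By \cref{prop:tensoragdisj} every $\otimes$-atomically generated category is fissile. So $\CAlg(\PrV)^{\mathrm{iL}}_{\otimes\text{-ag}}$ is the full subcategory of $\CAlg(\CAlg(\PrV)^{\mathrm{iL}})$ on the $\otimes$-atomically generated objects.

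Take a diagram $i \mapsto \cM_i$ of $\otimes$-atomically generated categories and internally left adjoint functors, and let $\cM = \colim_i \cM_i$ be its colimit in $\CAlg(\PrV)$. By \cref{prop:colimitfissile}:
\begin{itemize}
\item $\cM$ is fissile;
\item the insertions $\iota_i : \cM_i \to \cM$ are internally left adjoint;
\item for any fissile $\cN$ and cocone of internal left adjoints $\cM_i \to \cN$, the induced map $\cM \to \cN$ is internally left adjoint.
\end{itemize}
This applies in particular when $\cN$ is $\otimes$-atomically generated. Hence the universal property restricts to the subcategory, and it only remains to show that $\cM$ is itself $\otimes$-atomically generated.

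For this I use \cref{cor:tensoragfissilechar}. Since $\cM$ is fissile, it suffices that $\cM$ be generated under colimits, $\cV$-tensoring and the symmetric monoidal structure by $\otimes$-atomic objects. Choose systems of $\otimes$-atomic generators $\cM_{i,0} \subseteq \cM_i$. Their images $\iota_i(\cM_{i,0})$ are $\otimes$-atomic by \cref{obs:iLpresatomic}. I then show that the union of these images generates $\cM$, by showing it generates $\iota_i(\cM_i)$ for each $i$ and that the $\iota_i$ jointly generate $\cM$.
\begin{itemize}
\item The images $\iota_i(\cM_{i,0})$ generate $\iota_i(\cM_i)$. Indeed, $\iota_i$ preserves colimits, tensoring and $\otimes$, and $\cM_{i,0}$ generates $\cM_i$.
\item The $\iota_i$ jointly generate $\cM$, in the sense that the smallest full subcategory of $\cM$ containing all images $\iota_i(\cM_i)$ and closed under colimits, $\cV$-tensoring and $\otimes$ is all of $\cM$.
\end{itemize}

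To prove the second point, I write the colimit as a sifted colimit of coproducts and treat the two cases separately.

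For a coproduct $\cM_1 \otimes_\cV \cM_2$, the tensor product in $\PrV$ is generated under colimits by pure tensors $m_1 \boxtimes m_2$. In $\CAlg(\PrV)$ such a pure tensor equals $\iota_1(m_1) \otimes \iota_2(m_2)$, which lies in the generated subcategory.

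For a sifted colimit, the forgetful functor to $\PrV$ preserves it by \HA{Cor.}{3.2.3.2}. So $\cM$ is computed in $\Pr$ as the limit of the diagram of right adjoints $\iota^\rR$. The projections out of a limit in $\widehat{\Cat}$ are jointly conservative, so the family $(\iota_i^\rR)_i$ is jointly conservative. By the argument of \marked{Cor.}{3.13}, as in \cref{lem:cdomgeneration}, this is equivalent to the $\iota_i$ jointly colimit-generating $\cM$.

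I expect this joint-generation step to be the main obstacle. In particular, one must check that the decomposition of a general colimit into a sifted colimit of coproducts is compatible with the generation statement at each stage. I plan to handle this by transitivity of generation: if a category is generated by a family of subcategories, and each of those is generated by images of the $\cM_i$, then it is generated by images of the $\cM_i$. Alternatively, one can package all the $\cM_{i,0}$ into a single marking $\bigsqcup_i \cM_{i,0} \to \cM$. Its extension is internally left adjoint by \cref{prop:iLfissile}, since its source is $\otimes$-atomically generated and $\cM$ is fissile. One then checks conservativity of the right adjoint of this extension directly via the conservativity argument above.
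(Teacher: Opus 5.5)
Your proof is correct, but it distributes the work differently from the paper's. The paper splits into coproducts and sifted colimits from the start, and in each case proves $\otimes$-atomic generation by exhibiting a generating marking:
\begin{itemize}
\item For coproducts it reruns the argument of \cref{prop:colimitfissile} and notes that $\rC \sqcup \rD \to \cM \otimes_\cV \cN$ is a system of $\otimes$-atomic generators.
\item For sifted colimits it takes the colimit in $\Arr(\CAlg(\PrV)^{\mathrm{iL}})$ of the canonical presentations $\PSh \Sym(\cM_\bullet^{\otimes\mathrm{at}}) \otimes \cV \to \cM_\bullet$. These are functorial because internal left adjoints preserve $\otimes$-atomic objects. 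The result is an arrow $\PSh \Sym(\colim \cM^{\otimes\mathrm{at}}_\bullet) \otimes \cV \to \colim \cM_\bullet$. It is internally left adjoint by \cref{lem:iLclosedsiftedcolim}, and colimit-dominant because the left class of the factorization system of \cref{lem:cdomiLFS} is closed under colimits.
\end{itemize}

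You instead take fissility, internal left adjointness of the insertions and the restricted universal property for arbitrary colimits directly from \cref{prop:colimitfissile}. By \cref{cor:tensoragfissilechar}, only a generation statement is then left, and it holds for any colimit in $\CAlg(\PrV)$ with no adjointness hypotheses. This cleanly separates the adjointness content from the generation content and avoids needing functoriality of $\cM \mapsto \cM^{\otimes\mathrm{at}}$.

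The price is your bar-construction bookkeeping, where infinite coproducts should be treated as filtered colimits of finite ones. This bookkeeping can be bypassed entirely:
\begin{itemize}
\item Let $\cM'' \hookrightarrow \cM$ be the fully faithful part of the factorization from \cref{lem:cdomiLFS} of the extension $\PSh \Sym(\bigsqcup_i \cM_{i,0}) \otimes \cV \to \cM$.
\item Its image contains every $\iota_i(\cM_{i,0})$ and is closed under colimits, $\cV$-tensoring and $\otimes$, so it contains every $\iota_i(\cM_i)$.
\item Fully faithful morphisms are monomorphisms in $\CAlg(\PrV)$, so the colimit cocone factors through $\cM''$.
\item The universal property of $\cM$ then gives a section of $\cM'' \hookrightarrow \cM$, which is therefore an equivalence.
\end{itemize}
So the extension is colimit-dominant, and it is internally left adjoint by \cref{prop:iLfissile}, exactly as in your alternative.
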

\begin{proof}
	For coproducts we argue as in \cref{prop:colimitfissile}, using that $\otimes$-atomically generated categories are fissile by \cref{prop:tensoragdisj}. Note that if $\rC \to \cM$ is a system of $\otimes$-atomic generators for $\cM$ and $\rD \to \cN$ is one for $\cN$, then $\rC \sqcup \rD \to \cM \otimes_\cV \cN$ exhibits $\cM \otimes_\cV \cN$ as $\otimes$-atomically generated. 
	
	We know from \cref{lem:iLclosedsiftedcolim} that $\CAlg(\PrV)^{\mathrm{iL}} \hookrightarrow \CAlg(\PrV)$ is closed under sifted colimits, so it remains to show $\CAlg(\PrV)^{\mathrm{iL}}_{\otimes\text{-ag}} \subseteq \CAlg(\PrV)^{\mathrm{iL}}$ is closed under sifted colimits. Since internal left adjoints preserve $\otimes$-atomics, any sifted diagram $\cM_\bullet: \rK \to \CAlg(\PrV)^{\mathrm{iL}}_{\otimes\text{-ag}}$ restricts to a diagram $\cM^{\otimes\mathrm{at}}_\bullet : \rK \to \Cat$ on $\otimes$-atomic objects. Now, the sifted colimit of the internally left adjoint (by \cref{prop:iLfissile}) and colimit-dominant arrows $(\PSh \Sym \cM^{\otimes\mathrm{at}}_\bullet \otimes \cV \to \cM_\bullet)$ in $\Arr(\CAlg(\PrV)^{\mathrm{iL}})$ recovers an arrow
	\[
	\PSh \Sym (\colim \cM^{\otimes\mathrm{at}}_\bullet) \otimes \cV \to \colim \cM_\bullet
	\]
	that is internally left adjoint by \cref{lem:iLclosedsiftedcolim} and colimit-dominant by \cref{lem:cdomiLFS}, thus exhibiting $\colim \cM_\bullet$ as $\otimes$-atomically generated.
\end{proof}

\begin{defin}
	The \emph{category of valent $\cV$-enriched operads} is defined as the pullback
	\[
	\vOp(\cV) := \Spaces \times_{\CAlg(\PrV)} \Arr^{\mathrm{iL,cdom}}(\CAlg(\PrV))
	\]
	of the source projection along the free functor $\PSh \Sym (-) \otimes \cV : \Spaces \to \CAlg(\PrV)$. We refer to its morphisms as \emph{maps of $\cV$-enriched operads}. 
	
	Denote by $\col: \vOp(\cV) \to \Spaces$ the projection to the first component, and call $\col(\cO)$ the \emph{space of colors} of $\cO$. Similarly, denote the target projection by $\PShO: \vOp(\cV) \to \CAlg(\PrV)$ as it sends $\cO$ to its operadic enriched presheaf category.
\end{defin}

\begin{lemma}
	\label{lem:cdomiLFS}
	The classes of colimit-dominant and fully faithful morphisms form a factorization system on $\CAlg(\PrV)$. This restricts to a factorization system on the wide sub-$2$-category $\CAlg(\PrV)^{\mathrm{iL}}$ on internally left adjoint morphisms.
\end{lemma}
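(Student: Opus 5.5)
The plan is to construct the factorization in $\CAlg(\PrV)$ directly and then check that both classes are stable enough to restrict to internal left adjoints. Given $F: \cM \to \cN$ in $\CAlg(\PrV)$, factor it as $\cM \to \CIm(F) \hookrightarrow \cN$. The colimit-closed image is closed under $\cV$-tensoring and the symmetric monoidal structure by \cref{lem:CImclosed}. It is also presentable, since it is generated under colimits by a small set: if $\cM$ is $\kappa$-compactly generated, it is the colimit-closure of the image of $\cM^\kappa$. Hence $\CIm(F)$ inherits a structure as an object of $\CAlg(\PrV)$. The inclusion $\CIm(F) \hookrightarrow \cN$ is fully faithful, colimit-preserving, $\cV$-linear and symmetric monoidal, and $F$ factors through it. The corestriction $\cM \to \CIm(F)$ is colimit-dominant by construction.

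For the factorization system axioms, both classes contain equivalences and are closed under composition; for colimit-dominant maps this holds because colimit-closures compose. It remains to check unique lifting. Consider a commutative square with a colimit-dominant map $E: \cA \to \cB$ on the left and a fully faithful map $M: \cC \to \cD$ on the right. A lift $\cB \to \cC$ exists and is essentially unique on underlying categories. The reason is that the functor $\cB \to \cD$ lands in $\CIm(M\circ(\cA\to\cC)) \subseteq \cC$, because it preserves colimits and $\cB$ is the colimit-closure of the image of $E$. Since $M$ is fully faithful and a map in $\CAlg(\PrV)$, the full subcategory $\cC \subseteq \cD$ inherits its structure, and the lift is automatically a map in $\CAlg(\PrV)$. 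Equivalently, $\Map(\cB,\cC)\to\Map(\cB,\cD)$ is a monomorphism with image exactly those maps landing in $\cC$. Stating this as the pullback condition on mapping spaces, and citing \marked{Lem.}{3.18} or the analogous statement for $\PrV$, handles the coherence.

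For the restriction to $\CAlg(\PrV)^{\mathrm{iL}}$, the key step is the following. If $F = M \circ E$ with $F$ internally left adjoint, then both $E$ and $M$ should be internally left adjoint. For $E$, apply \cref{lem:iLcdomdiagram}-style reasoning in reverse: $E^\rR \simeq F^\rR \circ M$ as functors, because $M^\rR M \simeq \id$ since $M$ is fully faithful. Then $E^\rR = F^\rR|_{\CIm(F)}$, which preserves colimits, tensorings and products, since these are computed in $\cN$ because $\CIm(F)$ is closed under them. For $M$, we have $M^\rR$ given by the right adjoint of the inclusion $\CIm(F)\subseteq\cN$, and this is where I expect the main obstacle: one must show $M^\rR$ preserves colimits and is strong monoidal and $\cV$-linear. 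I would first try the following. $E$ is colimit-dominant and internally left adjoint, so by \cref{lem:iLcdomdiagram} applied to the triangle $M\circ E = F$, with $E$ as the iL, cdom leg and $F$ as the iL leg, $M$ is internally left adjoint. Finally, lifts in $\CAlg(\PrV)^{\mathrm{iL}}$ exist by the same cancellation. In the square, the diagonal $L$ satisfies $L\circ E$ internally left adjoint with $E$ iL and cdom, so $L$ is iL again by \cref{lem:iLcdomdiagram}.
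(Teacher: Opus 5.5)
Your proposal is correct and takes the same route as the paper, which cites the colimit-closed-image factorization of \marked{Prop.}{3.16} for the first claim and deduces the restriction to internal left adjoints from \cref{lem:iLcdomdiagram} as in \marked{Cor.}{3.35}. Your lifting argument relies on the essential image of a fully faithful colimit-preserving map being colimit-closed, you handle the colimit-dominant factor via $E^\rR \simeq F^\rR \circ M$, and you apply \cref{lem:iLcdomdiagram} to both the fully faithful factor and the diagonal lifts; these are exactly the steps the cited arguments need.
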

\begin{proof}
	The first claim follows analogously to \marked{Prop.}{3.16}. The second claim can be deduced from \cref{lem:iLcdomdiagram} analogously to \marked{Cor.}{3.35}.
\end{proof}

\begin{prop}
	\label{prop:operadslimcolim}
	The space-of-colors functor $\col: \vOp(\cV) \to \Spaces$ is a Cartesian and coCartesian fibration, where a map of $\cV$-operads $F: \cO\to \cP$ encoded by a commutative square
	\[
	\begin{tikzcd}			\PSh \Sym(\col \cO) \otimes \cV \arrow[d] \arrow[r] & \PSh \Sym (\col \cP) \otimes \cV \arrow[d]\\
		\PShO(\cO) \arrow[r] & \PShO(\cP)
	\end{tikzcd}
	\]
	\begin{itemize}
		\item is a $\col$-Cartesian morphism iff $\PShO(\cO) \to \PShO(\cP)$ is fully faithful,
		\item is a $\col$-coCartesian morphism iff this is a pushout square, in other words the induced map \[ \left( \PSh \Sym(\col \cP) \otimes \cV \right) \otimes_{\PSh \Sym(\col \cO) \otimes \cV} \PShO(\cO) \to \PShO(\cP) \] is an isomorphism.
	\end{itemize}
	The category $\vOp(\cV)$ of valent $\cV$-operads admits limits and colimits, and $\col$ preserves them. Its fiber over a space $X$ is equivalent to the category $\vOp_X(\cV)$ defined in \cref{def:vOpX}. 
\end{prop}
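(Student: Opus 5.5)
Since $\vOp(\cV)$ is the pullback of the source projection $\Arr^{\mathrm{iL,cdom}}(\CAlg(\PrV)) \to \CAlg(\PrV)$ along $\PSh\Sym(-)\otimes\cV$, I will first prove that this source projection is a Cartesian and coCartesian fibration with the stated (co)Cartesian edges. Both properties, and the description of the (co)Cartesian edges, are stable under pullback. I will work in the wide subcategory $\CAlg(\PrV)^{\mathrm{iL}}$: every morphism in the arrow category we need has internally left adjoint horizontal components. For an object $Y:\PSh\Sym X\otimes\cV \to \cM$ one needs this for the top map by \cref{lem:PSymisiL} and for the bottom map by \cref{prop:iLpreserve} (the composite $X\to\cM\to\cN$ being a $\otimes$-atomic marking). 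The second point needs care, and I would first check that the definition of $\vOp(\cV)$ indeed lands in $\CAlg(\PrV)^{\mathrm{iL}}$ or force it to.

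\textbf{Cartesian lifts.} Given $Y':\PSh\Sym Y\otimes\cV\to\cN$ (iL, cdom) and a map of spaces $f:X\to Y$, I form the composite $\PSh\Sym X\otimes\cV\to\PSh\Sym Y\otimes\cV\to\cN$. This composite is internally left adjoint, and I factor it, using \cref{lem:cdomiLFS} in $\CAlg(\PrV)^{\mathrm{iL}}$, into a colimit-dominant internal left adjoint $\PSh\Sym X\otimes\cV\to\cM$ followed by a fully faithful internal left adjoint $\cM\hookrightarrow\cN$. The universal property of an orthogonal factorization system then says that squares whose bottom map is fully faithful are Cartesian for the source projection, via the standard lemma that in $\Arr^{L}(\mathcal C)$ for a factorization system $(L,R)$ the source map is Cartesian with Cartesian edges exactly those whose target map is in $R$. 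That both components lie in the right class follows from uniqueness of factorizations.

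\textbf{coCartesian lifts.} Given $\cO$ with $\PSh\Sym X\otimes\cV\to\cM$ and $f:X\to Y$, I take the pushout $\cM' := (\PSh\Sym Y\otimes\cV)\otimes_{\PSh\Sym X\otimes\cV}\cM$ in $\CAlg(\PrV)$. The map $\PSh\Sym Y\otimes\cV\to\cM'$ is colimit-dominant as a pushout of a colimit-dominant map, since left classes of factorization systems are closed under pushouts. It is internally left adjoint because the whole span lies in $\CAlg(\PrV)^{\mathrm{iL}}_{\otimes\text{-ag}}$, where $\PSh\Sym X\otimes\cV$ is $\otimes$-atomically generated and $\cM$ is too, via its marking. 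By \cref{prop:colimitiLtensorag} the pushout computed in $\CAlg(\PrV)$ lies in this subcategory and the coprojections are internally left adjoint. Pushout squares are then coCartesian for the source projection by the universal property, restricted to iL morphisms, again using \cref{prop:colimitiLtensorag}.

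\textbf{(Co)limits and the fiber.} The fiber over $X$ is $\Arr^{\mathrm{iL,cdom}}$ restricted to source $\PSh\Sym X\otimes\cV$ with identity source component, i.e.\ monadic internal left adjoints out of it, by \cref{prop:monadicmorphisms}. By \cref{cor:monadsff} and \cref{thm:markedalgebras} this is $\vOp_X(\cV)$. For existence of limits and colimits I use that $\Spaces$ is complete and cocomplete and that the fibration is bicartesian, so it suffices to show each fiber has (co)limits and the transport functors behave appropriately. Colimits in a fiber: compute in $\CAlg(\PrV)^{\mathrm{iL}}_{\otimes\text{-ag}}$ under $\PSh\Sym X\otimes\cV$, using \cref{prop:colimitiLtensorag} together with colimit-dominance from \cref{lem:cdomiLFS}. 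Limits in a fiber: take the limit of modules (monads have limits via $\Alg$ of a presentable monoidal category, since $\oeert$ preserves sifted colimits in each variable, so $\Alg$ is presentable). Reindexing $f^*$ preserves limits as a right adjoint, and $f_!$ preserves colimits, so by the standard criterion (HTT 4.3.1.11 style) the total category has (co)limits preserved by $\col$.

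\textbf{Main obstacle.} The hardest step is verifying internal left adjointness of the pushout leg together with its universal property inside iL morphisms. This relies on \cref{prop:colimitiLtensorag}, and hence on fissility, to circumvent \cref{warning:atomicpair}.
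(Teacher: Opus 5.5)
Your proposal is correct and follows essentially the same route as the paper, whose proof transports the arguments of \cite{marked} (Cor.\ 6.6, Obs.\ 6.8, Cor.\ 6.9). In both, Cartesian lifts come from the colimit-dominant/fully faithful factorization system on $\CAlg(\PrV)^{\mathrm{iL}}$ (\cref{lem:cdomiLFS}) and coCartesian lifts from pushouts, where \cref{prop:colimitiLtensorag} applies because all categories involved are $\otimes$-atomically generated, which is exactly the subtlety you flag. One point to tidy: the source projection should be taken over $\CAlg(\PrV)^{\mathrm{iL}}_{\otimes\text{-ag}}$ rather than over all of $\CAlg(\PrV)$ before pulling back, and your observation that every square in $\vOp(\cV)$ has internally left adjoint components (via \cref{lem:PSymisiL} and \cref{prop:iLpreserve}) already justifies this.
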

\begin{proof}
	Completely analogous to \marked{Cor.}{6.6}, \marked{Obs.}{6.8} and \marked{Cor.}{6.9} using \cref{lem:cdomiLFS}. The only subtlety is that in order to apply \cref{prop:colimitiLtensorag}, we need to observe that all involved categories are $\otimes$-atomically generated.
\end{proof}

\begin{cor}
	\label{cor:PShOcolim}
	The operadic presheaf category functor $\PShO: \vOp(\cV) \to \CAlg(\PrV)$ preserves colimits.
\end{cor}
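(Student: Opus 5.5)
Since $\vOp(\cV)$ is the pullback $\Spaces \times_{\CAlg(\PrV)} \Arr^{\mathrm{iL,cdom}}(\CAlg(\PrV))$ and $\PShO$ is the target projection, I will show that $\PShO$ factors as
\[ \vOp(\cV) \to \Arr^{\mathrm{iL,cdom}}(\CAlg(\PrV)) \xrightarrow{\mathrm{trg}} \CAlg(\PrV) \]
with both functors colimit-preserving. This comes down to describing colimits in $\vOp(\cV)$ explicitly. Given a diagram $\cO_\bullet : \rK \to \vOp(\cV)$, set $X := \colim \col(\cO_\bullet)$ in $\Spaces$; by \cref{prop:operadslimcolim} the functor $\col$ preserves colimits, so $\col(\colim \cO_\bullet) \simeq X$.

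The first step computes $\colim \cO_\bullet$ in two stages. Because $\col$ is a coCartesian fibration by \cref{prop:operadslimcolim}, the colimit is obtained by pushing each $\cO_k$ forward along $\col \cO_k \to X$ and then taking the colimit in the fiber $\vOp_X(\cV)$.

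\emph{Pushforward stage.} By the description of coCartesian morphisms in \cref{prop:operadslimcolim}, the pushforward of $\cO_k$ has operadic presheaf category
\[ (\PSh\Sym X \otimes \cV) \otimes_{\PSh \Sym(\col \cO_k)\otimes\cV} \PShO(\cO_k). \]
This is a pushout in $\CAlg(\PrV)$.

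\emph{Fiber stage.} I then have a diagram of internally left adjoint, colimit-dominant arrows out of the fixed object $\PSh\Sym X \otimes \cV$. Its colimit in $\vOp(\cV)$ is built from the colimit of the targets in $\CAlg(\PrV)$: the induced map $\PSh\Sym X \otimes \cV \to \colim_k \cM_k$ is colimit-dominant, since colimit-dominant maps are closed under colimits in $\Arr$, being the left class of the factorization system of \cref{lem:cdomiLFS}. Combining the two stages by a pasting argument shows that $\PShO(\colim \cO_\bullet) \simeq \colim_k \PShO(\cO_k)$ in $\CAlg(\PrV)$, because the $\PSh\Sym(-)\otimes\cV$ pieces cancel:
\[ \colim_k \Big( (\PSh\Sym X \otimes \cV) \otimes_{\PSh\Sym(\col\cO_k)\otimes\cV} \PShO(\cO_k) \Big) \simeq (\PSh\Sym X \otimes \cV) \otimes_{\colim_k \PSh\Sym(\col\cO_k)\otimes\cV} \colim_k \PShO(\cO_k). \]
The middle term is $\PSh\Sym X\otimes\cV$, since $\PSh\Sym(-)\otimes\cV$ is a left adjoint and preserves colimits. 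So the right-hand side reduces to $\colim_k \PShO(\cO_k)$.

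A cleaner route to the same conclusion, and the one I would write up, follows the argument of \marked{Cor.}{6.9}. First, the colimit of the diagram of arrows $(\PSh\Sym\col\cO_k\otimes\cV \to \PShO(\cO_k))$, computed in $\Arr(\CAlg(\PrV))$, is computed pointwise. Second, this pointwise colimit already lies in $\Arr^{\mathrm{iL,cdom}}$:
\begin{itemize}
\item colimit-dominance is preserved by \cref{lem:cdomiLFS};
\item internal left adjointness of the colimit arrow follows because all the categories involved are $\otimes$-atomically generated, and the subcategory of such categories with internally left adjoint maps is closed under colimits by \cref{prop:colimitiLtensorag}.
\end{itemize}
Third, the source of this colimit is $\PSh\Sym(X)\otimes\cV$, because $\PSh\Sym(-)\otimes\cV$ preserves colimits. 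So the pointwise colimit is a colimit in the pullback $\vOp(\cV)$, and its target is $\colim_k \PShO(\cO_k)$.

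The main obstacle is the internal-left-adjointness step. The maps in the diagram and the colimit cocone live in $\CAlg(\PrV)^{\mathrm{iL}}_{\otimes\text{-ag}}$, but the colimit is taken in $\CAlg(\PrV)$, so I need a genuine comparison. \Cref{prop:colimitiLtensorag} states closedness under colimits, which means precisely that the inclusion creates them, and I will invoke it in that form. I also need to check that the legs of the cocone are internally left adjoint, so that the universal property is tested against iL-arrows. This follows since the transition maps are internally left adjoint: they are maps of operads, whose target components are iL by \cref{lem:iLcdomdiagram}.
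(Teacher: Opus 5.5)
Your written-up route is correct and is essentially the paper's proof. Both regard $\vOp(\cV)$ as the pullback of the target projection $\Arr^{\mathrm{cdom}}(\CAlg(\PrV)^{\mathrm{iL}}_{\otimes\text{-ag}}) \to \CAlg(\PrV)^{\mathrm{iL}}_{\otimes\text{-ag}}$ along $\PSh\Sym(-)\otimes\cV$, then combine \cref{lem:cdomiLFS}, \cref{prop:colimitiLtensorag}, and the fact that colimits in such a pullback are computed componentwise; your check via \cref{lem:iLcdomdiagram} that target components of transition maps are internally left adjoint justifies this identification, which the paper leaves implicit. Your preliminary two-stage sketch is looser, since a colimit in the fiber $\vOp_X(\cV)$ is a colimit in the under-category of $\PSh\Sym X\otimes\cV$ rather than of the bare targets unless the shape is weakly contractible, but you do not rely on it.
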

\begin{proof}
	We have defined $\PShO$ as the pullback of the target projection $\Arr^{\mathrm{cdom}}(\CAlg(\Pr_\cV)^{\mathrm{iL}}_{\otimes\mathrm{ag}}) \to \CAlg(\Pr_\cV)^{\mathrm{iL}}_{\otimes\mathrm{ag}}$ along the functor $\PSh \Sym (-) \otimes \cV : \Spaces \to \CAlg(\Pr_\cV)^{\mathrm{iL}}_{\otimes\mathrm{ag}}$. Colimit-dominant arrows are closed under colimits by \cref{lem:cdomiLFS}, also the inclusion $\Pr^{\mathrm{iL}_{\cV, \mathrm{ag}}} \hookrightarrow \CAlg(\PrV)$ preserves colimits by \cref{prop:colimitiLtensorag}, so we conclude since $\Catcolim \hookrightarrow \lCat$ is closed under limits.
\end{proof}

\begin{constr}
	\label{constr:changeofenrichment}
	Given a morphism $f: \cV \to \cW$ in $\CAlg(\Pr)$, we define the \emph{change-of-enrichment} functor $f_!: \vOp(\cV) \to \vOp(\cW)$ as the map of pullbacks
	\[
	\Spaces \times_{\CAlg(\PrV)} \Arr^{\mathrm{iL,cdom}}(\CAlg(\PrV)) \longrightarrow \Spaces \times_{\CAlg(\PrV)} \Arr^{\mathrm{iL,cdom}}(\CAlg(\PrV))
	\]
	involving the identity on $\Spaces$, and the extension-of-scalars functor $f_{\mathrm{ext}} : \CAlg(\PrV) \to \CAlg(\PrW)$. Note that it preserves colimit-dominant functors by \marked{Cor.}{3.18}, and internal left adjoints as it is a $2$-functor by \cref{prop:eosstar}\footnote{Alternatively, as a $2$-functor that locally preserves geometric realizations it preserves monadic morphisms by \cref{prop:locgr}.}.
	
	As a left adjoint $2$-functor, $f_{\mathrm{ext}}$ preserves Eilenberg-Moore objects, so we learn that fiberwise the above agrees with the construction of \cref{constr:coespace}. Further, one can show analogously to \marked{Cor.}{6.19} that $f_!$ preserves colimits and admits a right adjoint $f^\rR_!$. 
\end{constr}

\section{Comparison and Univalence}
\label{sec:comparison}

Our next task will be to investigate how $\Spaces$-enriched operads compare to Lurie's category $\Op$ of $\infty$-operads, which we call Lurie-operads for distinction. To be precise, we prove that $\vOp(\Spaces)$ is equivalent to the category of \emph{flagged operads}, and we will introduce a notion of univalence to obtain an equivalence with $\Op$.

\begin{reminder}
	The functor $\underline{(-)}: \Op \to \Cat$, sending a Lurie-operad $\rP$ to the category $\rP^\otimes_{\fset{1}}$ obtained by discarding all $k$-ary multimorphisms with $k \neq 1$, admits a left adjoint $\mathrm{Triv}_{(-)}: \Cat \to \Op$ sending a category to the corresponding \emph{trivial operad} admitting no $k$-ary multimorphisms for $k \neq 1$, see  \HA{Prop.}{2.1.4.11}.
	
	Also, the inclusion $\CAlg(\Cat) \hookrightarrow \Op$ on those operads $\rO^\otimes \to \Fin$ that are coCartesian fibrations admits a left adjoint by \HA{Prop.}{2.2.4.9}, called the \emph{symmetric monoidal envelope} $\Env: \Op \to \CAlg(\Cat)$. Composition of left adjoints implies that $\Sym \simeq \Env \circ \mathrm{Triv}_{(-)}$.
\end{reminder}

\begin{defin}
	A map of Lurie-operads $f: \rO \to \rP$ is called \emph{surjective} if the underlying functor $f: \rO_{\fset{1}} \to \rP_{\fset{1}}$ on categories of colors is surjective. We denote by $\Arr^{\text{surj}}(\Op) \subseteq \Arr(\Op)$ the full subcategory on surjective maps of Lurie-operads.
	
	A map of Lurie-operads $f: \rO \to \rP$ is called \emph{fully faithful} if it induces isomorphisms on multimorphism spaces, i.e.\ for any $n \in \N$ and $o_1, \dots, o_n, o \in \rO$ the induced map $\Mul_\rO(o_1, \dots, o_n ; o) \to \Mul_\rP(f(o_1), \dots, f(o_n) ; f(o))$ is an isomorphism.
\end{defin}

\begin{obs}
	The classes of surjective and fully faithful maps of operads form a factorization system on $\Op$.
\end{obs}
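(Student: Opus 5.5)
The plan is to reduce the claim to the surjective/fully faithful factorization system on $\Cat$, since maps of Lurie-operads are maps of fibrations over $\Fin$. Given $f: \rO \to \rP$, I will construct the factorization explicitly and then check unique lifting against squares.

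For the factorization, let $\rP' \subseteq \rP^\otimes$ be the full subcategory of $\rP^\otimes$ spanned by those tuples $(p_1, \dots, p_n)$ in which every $p_i$ lies in the essential image of $\underline{f} : \underline{\rO} \to \underline{\rP}$. I will check that $\rP' \to \Fin$ is again an $\infty$-operad. Inert coCartesian lifts in $\rP^\otimes$ send such tuples to subtuples, so they stay in $\rP'$. The Segal condition $\rP'_{\fset{n}} \simeq \underline{\rP'}^{\times n}$ and the description of mapping spaces as products over inert projections are inherited, because $\rP'$ is full and closed under the inert lifts. With this, $f$ factors as $\rO \to \rP' \to \rP$. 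The first map is surjective on underlying categories by construction. The inclusion $\rP' \hookrightarrow \rP^\otimes$ is fully faithful on categories of operators, so in particular it induces isomorphisms on multimorphism spaces.

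For orthogonality, consider a commutative square with $s: \rA \to \rB$ surjective on the left, $i: \rC \to \rD$ fully faithful on the right, and top and bottom maps $u: \rA \to \rC$, $v: \rB \to \rD$. First, fully faithful maps of operads are exactly the maps whose functor $\rC^\otimes \to \rD^\otimes$ is fully faithful. This holds because mapping spaces in $\rC^\otimes$ over a map $\alpha$ of $\Fin$ decompose, via inert/active factorization and the Segal condition, into products of multimorphism spaces. Second, $s^\otimes : \rA^\otimes \to \rB^\otimes$ is essentially surjective. Every object of $\rB^\otimes_{\fset{n}}$ is a tuple of colors, each color is hit up to equivalence, and the Segal condition then gives a preimage of the tuple.

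Given these two facts, I will use the factorization system on $\Cat$ of essentially surjective and fully faithful functors. It provides a contractible space of lifts $\rB^\otimes \to \rC^\otimes$ in $\Cat_{/\Fin}$, and orthogonality holds in the slice as well. It then remains to see that any such lift is a map of operads, that is, that it preserves inert coCartesian morphisms. This follows because $i^\otimes$ preserves them and is fully faithful: a morphism of $\rC^\otimes$ whose image is inert coCartesian in $\rD^\otimes$ lying over an inert map is itself coCartesian. Finally, $\Op \subseteq \Cat_{/\Fin}$ is a subcategory containing all equivalences, so the lifting spaces computed in the slice agree with those in $\Op$.

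The main obstacle is this last step. Fully faithful functors over $\Fin$ reflect coCartesian morphisms only if coCartesian lifts exist in the source. So I first have to show that $\rC^\otimes$ has inert lifts landing at tuples of the right colors, which it does because $\rC$ is an operad, and then compare them to $i^\otimes$ of those lifts.
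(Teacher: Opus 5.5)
The paper records this as an observation and gives no argument, so there is no proof to compare yours with. Your proof is correct and complete, and it is the natural justification. You factor $f$ through the full suboperad of $\rP^\otimes$ on tuples of colors in the essential image of $\underline{f}$. You identify fully faithful maps of operads with maps whose functor on categories of operators is fully faithful. Note that this identification needs the multimorphism spaces with $n=0$, since fibres of a map $\alpha\colon \fset{m}\to\fset{n}$ in $\Fin$ can be empty. Finally, you reduce orthogonality to the essentially surjective/fully faithful factorization system on $\Cat_{/\Fin}$. Closure of both classes under retracts, if you use that form of the definition, is immediate.

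Two small corrections.

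\begin{enumerate}
\item The ``main obstacle'' you name is not actually an obstacle. A functor $i\colon \rC^\otimes\to\rD^\otimes$ over $\Fin$ that is fully faithful reflects coCartesian morphisms unconditionally. A morphism $e\colon x\to y$ is coCartesian iff
\[
\Map(y,z)\to \Map(x,z)\times_{\Map_{\Fin}(px,pz)}\Map_{\Fin}(py,pz)
\]
is an equivalence for all $z$. For $z$ in $\rC^\otimes$, full faithfulness identifies this map with the corresponding one in $\rD^\otimes$ at $i(z)$. So your extra comparison with lifts existing in $\rC^\otimes$ is harmless but unnecessary.

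\item The passage from the slice to $\Op$ does not work because $\Op$ contains the equivalences. It works because $\Map_{\Op}\hookrightarrow\Map_{/\Fin}$ is an inclusion of path components. Hence the lifting space in $\Op$ is a union of components of the contractible lifting space in $\Cat_{/\Fin}$. It is therefore contractible as soon as it is nonempty, and your inert-preservation argument is exactly what shows it is nonempty.
\end{enumerate}
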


\begin{lemma}
	\label{lem:PShEnvcdom}
	If $f: \rO \to \rP$ in $\Op$ is surjective, then the induced symmetric monoidal functor $\Env(f): \Env(\rO) \to \Env(\rP)$ is surjective, and $\PSh \Env(f) \otimes \cV : \PSh \Env(\rO)  \otimes \cV \to \PSh \Env(\rP)  \otimes \cV$ in $\CAlg(\PrV)$ is colimit-dominant.
\end{lemma}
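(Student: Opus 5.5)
The plan is to prove the two claims in order, deducing the second from the first.

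For surjectivity of $\Env(f)$, I would use the explicit description of the envelope: its objects are finite tuples $(p_1, \dots, p_n)$ of colors of $\rP$, and the symmetric monoidal structure is concatenation. Equivalently, every object of $\Env(\rP)$ is a finite tensor product $\iota(p_1) \otimes \dots \otimes \iota(p_n)$ of images of colors under the inclusion $\iota: \underline{\rP} \to \Env(\rP)$. Since $f$ is surjective on underlying categories, each $p_i$ is isomorphic to $f(o_i)$ for some color $o_i$ of $\rO$. Because $\Env(f)$ is symmetric monoidal and restricts to $f$ on singletons, we get
\[
\Env(f)(o_1, \dots, o_n) \simeq (f(o_1), \dots, f(o_n)) \simeq (p_1, \dots, p_n).
\]
Hence $\Env(f)$ is essentially surjective. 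Formally, the identification of objects of $\Env(\rP)$ with tuples comes from $\Env(\rP)$ being the subcategory of active arrows in $\rP^\otimes$ over $\Fin$, as in \HA{Prop.}{2.2.4.9}. Its objects are the objects of $\rP^\otimes$, and each of these decomposes via inert maps into a tuple of colors.

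For colimit-dominance, I would first treat the case $\cV = \Spaces$. Here $\PSh \Env(f) = \Env(f)_!$ sends representables to representables: $\Env(f)_! \yo_c \simeq \yo_{\Env(f)(c)}$. Since $\Env(f)$ is surjective, the essential image of $\Env(f)_!$ contains every representable of $\PSh \Env(\rP)$. The representables generate $\PSh \Env(\rP)$ under colimits, so the colimit-closed image of $\Env(f)_!$ is everything, which is exactly colimit-dominance.

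For general $\cV$, I would tensor with $\cV$. The objects $\yo_c \otimes v$, with $c \in \Env(\rP)$ and $v \in \cV$, generate $\PSh \Env(\rP) \otimes \cV$ under colimits, and each is the image of $\yo_{c'} \otimes v$ for a preimage $c'$ of $c$. Alternatively, I would simply cite that $- \otimes \cV$ preserves colimit-dominant functors, as in \marked{Prop.}{3.18}, or as used in \cref{constr:changeofenrichment}.

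No step here is a real obstacle. The only point needing care is justifying the tuple description of objects of $\Env(\rP)$ and the compatibility of $\Env(f)$ with it. I would settle this by the universal property $\Sym \simeq \Env \circ \mathrm{Triv}$: the functor $\Sym \underline{\rP}^\simeq \to \Env(\rP)$ induced by the counit $\mathrm{Triv}_{\underline{\rP}^\simeq} \to \rP$ is surjective, and it is compatible with the corresponding surjection for $\rO$ via $f$.
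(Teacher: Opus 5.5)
Your proposal is correct and follows essentially the same route as the paper. Surjectivity of $\Env(f)$ follows because $\Env(\rP)$ is generated under the symmetric monoidal structure by the colors and $\Env(f)$ is symmetric monoidal; colimit-dominance of $\PSh\Env(f)$ follows because it hits all representables; and this transfers to $\PSh\Env(f)\otimes\cV$ either by the base-change result from \cite{marked}, which is what the paper cites, or by your direct argument with the generators $\yo_c \otimes v$, which works equally well.
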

\begin{proof}
	The first claim is clear, since the image of $\rO \hookrightarrow \Env \rO$ generates $\Env \rO$ under the symmetric monoidal structure, which is preserved by $\Env(f)$. Then $\PSh \Env(f)$ is colimit-dominant since it hits all representables, and so is $\PSh \Env(f) \otimes \cV$ by \marked{Lem.}{3.9}.
\end{proof}

\begin{defin}
	A \emph{flagged operad} consists of a Lurie-operad $\rO$, a space $X$ and a surjective map $X \to \rO_{\fset{1}}$ into the colors of $\rO$. We will simply denote it as $X \to \rO$.
	
	The category $\FOp$ of flagged operads is the full subcategory of the pullback $\Arr(\Cat) \times_{\Cat} \Op$, along the target projection and $\underline{(-)}$, 
	on those pairs $(X \to \underline\rO, \rO)$ where $X$ is a space and the functor is surjective. The adjunction $\Cat \rightleftarrows \Op$ exhibits this as equivalent to the pullback $\Spaces \times_{\Op} \Arr^{\text{surj}}(\Op)$, using \marked{Lem.}{A.7}.
\end{defin}

\begin{constr}\label{con:associatedmarked}
	The \emph{associated marked $\Spaces$-algebra} of a flagged operad $X \to \rO$ is the marked algebra $X \to \rO \to \PSh(\Env \rO)$. Note that this is a $\otimes$-atomic marking by \cref{cor:colorstensoratomic}, and generates under colimits by \cref{lem:PShEnvcdom}. This construction assembles into a functor $\FOp \to \vOp(\Spaces)$ via the following map of pullback squares:		\[
	\begin{tikzcd}[sep=.4cm]
		&[+11pt] \vOp(\Spaces) \arrow[rr] \arrow[dd] \arrow[dr, phantom, "\scalebox{1}{$\lrcorner$}" , very near start, color=black] & &[-13pt] \Arr^{\mathrm{iL,cdom}}(\CAlg(\Pr)) \arrow[dd, "\mathrm{src}" {pos=0.6}]\\
		\FOp \arrow[rr, crossing over] \arrow[dd] \arrow[ur, dashed] \arrow[dr, phantom, "\scalebox{1}{$\lrcorner$}" , very near start, color=black] &   & \Arr^{\mathrm{surj}}(\Op) \arrow[ur, outer sep=-2pt, "\PSh \Env"']  & \\
		& \Spaces \arrow[rr] &  & \CAlg(\Pr)  \\
		\Spaces \arrow[ur, Rightarrow, no head]\arrow[rr] &   & \Op \arrow[from=uu, crossing over, "\mathrm{src}" {pos=0.6}] \arrow[ur, outer sep=-2pt, "\PSh \Env"'] &
	\end{tikzcd}
	\]
\end{constr}

\begin{defin}
	\label{def:underlyingflagged}
	The \emph{underlying flagged operad} of a marked $\cV$-algebra $(\col \cO \to \PShO(\cO)) \in \vOp(\cV)$ is the surjective functor $\col \cO \to \operatorname{Im}\left(\col \cO \to \PShO(\cO)\right) \in \FOp$, where we view the full image as an operad by virtue of it being a full subcategory of the symmetric monoidal category $\PShO(\cO)$.
\end{defin}

\begin{prop}
	\label{prop:freeunderlyingadj}
	Let $\cV \in \Alg(\Pr)$ and let $\iota: \Spaces \to \cV$ in $\Alg(\Pr)$ denote its unit. The composite \[ \FOp \to \vOp(\Spaces) \overset{\iota_!}{\to} \vOp(\cV)\]  admits a right adjoint, which sends a valent $\cV$-enriched operad $\cO$ to its underlying flagged operad $\col \cO \to \operatorname{Im}\left(\yo_{\cO}: \col \cO \to \PShO(\cO)\right)$ .	
\end{prop}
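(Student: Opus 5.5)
We construct the adjunction by exhibiting, for $(X \to \rO) \in \FOp$ and $\cO \in \vOp(\cV)$, a natural equivalence of mapping spaces
\[
\Map_{\vOp(\cV)}\bigl(\iota_!(X \to \PSh\Env\rO),\ \cO\bigr) \simeq \Map_{\FOp}\bigl(X \to \rO,\ \col\cO \to \operatorname{Im}(\yo_\cO)\bigr) .
\]
Both categories are pullbacks over $\Spaces$: $\vOp(\cV) = \Spaces \times_{\CAlg(\PrV)} \Arr^{\mathrm{iL,cdom}}(\CAlg(\PrV))$ and $\FOp \simeq \Spaces \times_{\Op} \Arr^{\mathrm{surj}}(\Op)$. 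So a map on either side is a map of spaces $g: X \to \col\cO$ together with a compatible map of the targets. Fibering both sides over $\Map_\Spaces(X, \col \cO)$ and fixing $g$, it suffices to compare the fibers.

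On the $\vOp(\cV)$ side, since $\iota_{\mathrm{ext}} = -\otimes\cV$ is left adjoint to the forgetful functor $\CAlg(\PrV)\to\CAlg(\Pr)$, a map is a symmetric monoidal colimit-preserving functor $\Phi: \PSh\Env\rO \to \PShO(\cO)$ with a homotopy $\Phi \circ (X \to \PSh \Env \rO) \simeq \yo_\cO \circ g$. Here $\Phi$ is compatible with the canonical maps from $\PSh\Sym X$, and no internal-left-adjoint condition appears because morphisms in $\Arr(\CAlg(\PrV))$ are just commuting squares. By the universal properties of $\PSh$ (free cocompletion, symmetric monoidal via Day convolution) and of $\Env$ (left adjoint to $\CAlg(\Cat)\hookrightarrow\Op$), such $\Phi$ correspond to maps of Lurie-operads $\rO \to \PShO(\cO)$, where the target is regarded as an operad via its symmetric monoidal structure. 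The fiber therefore consists of operad maps $\rO \to \PShO(\cO)$ whose restriction to $X$ is identified with $\yo_\cO\circ g$.

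On the $\FOp$ side, the fiber consists of operad maps $\rO \to \operatorname{Im}(\yo_\cO)$ compatible with $g$ on colors, where $\operatorname{Im}(\yo_\cO)\subseteq\PShO(\cO)$ is the full suboperad on the image objects. The key step is that a full suboperad inclusion is fully faithful on multimorphisms and a monomorphism on colors. Consequently, operad maps $\rO\to\operatorname{Im}(\yo_\cO)$ are exactly those operad maps $\rO\to\PShO(\cO)$ whose colors land in the image. Because $X\to\underline{\rO}$ is surjective and the restriction to $X$ factors through $\yo_\cO$, this condition is automatic. The two fibers therefore agree.

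The main obstacle is making this comparison natural and coherent, rather than merely a fiberwise equivalence. I would handle it by working with the pullback descriptions. The composite $\FOp\to\vOp(\cV)$ is induced by the map of cospans $\Spaces\to\Op\leftarrow\Arr^{\mathrm{surj}}(\Op)$ to $\Spaces\to\CAlg(\PrV)\leftarrow\Arr^{\mathrm{iL,cdom}}$ given by $\PSh\Env(-)\otimes\cV$. This functor is left adjoint to the forgetful functor $\CAlg(\PrV)\to\Op$. I would then invoke the general statement that such an adjunction on cospans yields an adjunction on pullbacks once the right adjoint is corrected by the (surjective, fully faithful) factorization system on $\Op$. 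This is analogous to \marked{Lem.}{A.7} and the corresponding enriched-category argument in \cite{marked}. The correction is precisely replacing $\col\cO\to\PShO(\cO)$ by its surjective/fully faithful factorization $\col\cO\to\operatorname{Im}(\yo_\cO)\hookrightarrow\PShO(\cO)$. Orthogonality of surjective maps against fully faithful maps is what makes the fiberwise identification above functorial.
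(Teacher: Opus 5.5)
Your proposal is correct and follows essentially the paper's argument. In both, you compute the mapping spaces in the two pullback categories and transport along the adjunctions for $\Env$, $\PSh$ and $-\otimes\cV$ to reach $\Map_{\widehat{\Op}}(\rO, \PShO(\cO))$; then you use orthogonality of surjective against fully faithful operad maps (your full-suboperad step) to replace $\PShO(\cO)$ by $\operatorname{Im}(\yo_\cO)$. The paper pastes two pullback squares of mapping spaces where you work fiberwise over $g$, and it does not invoke a separate cospan-adjunction lemma for coherence.
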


\begin{proof}
	Recall from \cref{constr:changeofenrichment} that the change-of-enrichment functor $\iota_!: \vOp(\Spaces) \to \vOp(\cV)$ sends a marked $\cV$-algebra $X\to \cM$ to $X \to \cM \otimes_{\Spaces}\cV \simeq \cM \otimes \cV$. For $X\to \rO$ a flagged operad 
	and $\yo: Y \to \cM$ a marked $\cV$-algebra, consider the following diagram:	
	\[
	\begin{tikzcd}
		\Map_{\FOp}\left(\oArr{X}{\rO}, \oArr{Y}{\operatorname{Im}(\yo)}\right) \arrow[r] \arrow[d] & \Map_{\widehat{\Op}}(\rO, \operatorname{Im}(\yo)) \arrow[r] \arrow[d] & \Map_{\widehat{\Op}}(\rO, \cM) \arrow[d] \\
		\Map_{\Spaces}(X, Y) \arrow[r] & \Map_{\widehat{\Op}}(X, \operatorname{Im}(\yo)) \arrow[r] & \Map_{\widehat{\Op}}(X, \cM) 
	\end{tikzcd}
	\]
	The left square is a pullback by definition of the mapping spaces of $\FOp$. Orthogonality of surjective and fully faithful maps of operads shows that the right square is a pullback, since $X\to \rO$ is surjective and $\operatorname{Im}(\yo) \to \cM$ fully faithful. Thus, the total square is a pullback and
	\begin{align*} &\Map_{\FOp} \left( \oArr{X}{\rO}, \oArr{Y}{\operatorname{Im}(\yo)} \right) \simeq \Map_{\Spaces}(X, Y) \times_{\Map_{\widehat{\Op}}(X, \cM)} \Map_{\widehat{\Op}}(\rO, \cM) \simeq  \\
		&\quad {\simeq}  \Map_{\Spaces}(X, Y) \times_{\Map_{\CAlg(\PrV)}(\PSh \Sym X \otimes \cV, \cM)} \Map_{\CAlg(\PrV)}(\PSh \Env \rO \otimes \cV, \cM)  \end{align*}
	which by definition agrees with the mapping space $\Map_{\vOp(\cV)} \left( \oArr{X}{\PSh \Env \rO \otimes \cV}, \oArr{Y}{\cM} \right)$.
\end{proof}

Inspecting the unit and counit, we conclude that for $\cV = \Spaces$, this adjunction is in fact an equivalence:

\begin{theorem}\label{thm:vOpS} The associated-marked-$\Spaces$-algebra functor $\FOp \to \vOp(\Spaces)$ from \cref{con:associatedmarked} is an equivalence.
\end{theorem}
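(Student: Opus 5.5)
We deduce the theorem from \cref{prop:freeunderlyingadj} with $\cV = \Spaces$, where $\iota_!$ is the identity. There we obtained an adjunction $L: \FOp \rightleftarrows \vOp(\Spaces) : R$. Here $L$ sends $X \to \rO$ to $X \to \PSh \Env \rO$, and $R$ sends a marked algebra $y: Y \to \cM$ to $Y \to \operatorname{Im}(y)$, the image regarded as an operad. It therefore suffices to show that the unit and the counit are equivalences.

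\emph{Unit.} For a flagged operad $X \to \rO$, the unit is the map of flagged operads
\[ (X \to \rO) \longrightarrow \bigl(X \to \operatorname{Im}(X \to \rO \to \PSh\Env\rO)\bigr) , \]
which is the identity on $X$. Since $X \to \underline\rO$ is surjective, the image in question is the full image of $\rO \to \Env\rO \to \PSh\Env\rO$. It is therefore enough to show that this composite $\rO \to \PSh\Env\rO$ is a fully faithful map of operads onto its image, i.e.\ that it induces isomorphisms on multimorphism spaces. The Yoneda embedding $\Env\rO \hookrightarrow \PSh\Env\rO$ is symmetric monoidal for Day convolution and fully faithful. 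Hence
\[ \Mul_{\PSh\Env\rO}(\yo_{o_1},\dots,\yo_{o_n};\yo_o) \simeq \Map_{\PSh\Env\rO}(\yo_{(o_1,\dots,o_n)},\yo_{(o)}) \simeq \Map_{\Env\rO}((o_1,\dots,o_n),(o)) . \]
By the standard description of the envelope (\HA{Prop.}{2.2.4.9}), the last space is $\Mul_\rO(o_1,\dots,o_n;o)$, which gives full faithfulness. Consequently $\rO \to \operatorname{Im}$ is both surjective and fully faithful, so it is an equivalence of operads.

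\emph{Counit.} For a marked algebra $y: Y \to \cM$, write $\rP := \operatorname{Im}(y)$, viewed as a full suboperad of $\cM$. The counit is the morphism in $\vOp(\Spaces)$ given by the identity on $Y$ together with the map $F: \PSh\Env\rP \to \cM$ in $\CAlg(\Pr)$ extending the inclusion $\rP \subseteq \cM$. We must show that $F$ is an equivalence, and we use the factorization through $Y$:
\begin{itemize}
\item By \cref{prop:iLpreserve}, applied to the $\otimes$-atomically generating marking $Y \to \rP \to \PSh\Env\rP$ of \cref{cor:colorstensoratomic}, the functor $F$ is internally left adjoint, because its composite with this marking is $y$, which is a $\otimes$-atomic marking.
\item $F$ is colimit-dominant, since $y$ generates $\cM$ (\cref{lem:cdomgeneration}).
\item For full faithfulness we invoke \cref{prop:fflemma}. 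The maps
\[ \Map(\yo_{y_1}\otimes\dots\otimes\yo_{y_n},\yo_{y}) \to \Map_\cM(y(y_1)\otimes\dots\otimes y(y_n), y(y)) \]
are isomorphisms: the source is $\Mul_\rP$ by the computation in the unit step, and the target is $\Mul_\rP$ by the definition of $\rP$ as a full suboperad of $\cM$.
\end{itemize}
A fully faithful, colimit-dominant functor is an equivalence, which proves the claim for the counit.

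\emph{Main obstacle.} The step most likely to need care is identifying the unit and counit of the adjunction in \cref{prop:freeunderlyingadj} concretely as the maps described above, i.e.\ tracing them through the mapping-space pullbacks in its proof. Once this is done, the two checks reduce to the multimorphism computations above, and the triangle identities are then automatic.
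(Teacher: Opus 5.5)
Your proposal is correct and follows the paper's proof. You deduce the statement from the adjunction of \cref{prop:freeunderlyingadj}, check the unit via full faithfulness of $\rO \to \Env\rO \to \PSh\Env\rO$, and check the counit via colimit-dominance together with \cref{prop:fflemma}, using that $\operatorname{Im}(y) \subseteq \cM$ is a full suboperad; your use of \cref{prop:iLpreserve} to show the counit functor is internally left adjoint (a hypothesis of \cref{prop:fflemma}) makes explicit a step the paper leaves implicit.
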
		
\begin{proof} Let $(X\to \rO)\in \FOp$; our goal is to prove that both unit and counit of the adjunction from~\cref{prop:freeunderlyingadj} are isomorphisms. The unit is the map of flagged operads $(X\to \rO) \to (X\to \operatorname{Im}(\yo:\rO \to \PSh \Env \rO))$. Since $\rO \to \Env(\rO) \to \PSh \Env(\rO)$ is fully faithful, this is an isomorphism.
	
	Conversely, for $(\yo:Y \to \cM)\in \vOp(\Spaces)$, the counit is the map of marked $\Spaces$-algebras $(Y \to \PSh \Env (\operatorname{Im}(\yo))) \to (Y \to \cM)$. The functor $\PSh \Env (\operatorname{Im}(\yo)) \to \cM$ is colimit-dominant by cancellation; it remains to show it is fully faithful. We may apply the criterion in \cref{prop:fflemma} to the $\otimes$-atomic marking $Y \to \PSh \Env (\operatorname{Im}(\yo))$, so we must prove that
	\begin{align*}
		&\Map_{\Env (\operatorname{Im}(\yo))}(\yo_{y_1} \otimes \dots \otimes \yo_{y_n}, \yo_{y}) \simeq \Mul_{\operatorname{Im}(\yo)}(\yo_{y_1}, \dots, \yo_{y_n} ; \yo_{y}) \to \\
		&\quad \to \Map_\cM(\yo_{y_1} \otimes \dots \otimes \yo_{y_n}, \yo_{y})
	\end{align*}
	is an equivalence for any $y_1, \dots, y_n, y \in Y$. This is immediate since we have constructed $\operatorname{Im}(\yo) \subseteq \cM$ as a full suboperad.
\end{proof}

\begin{obs}
	\label{obs:operadicpshspaces}
	By construction of the associated-marked-$\Spaces$-algebra functor \cref{con:associatedmarked}, if  $X \to \rO$ is the flagged Lurie-operad associated to a $\Spaces$-enriched operad $\cO \in \vOp(\Spaces)$ via the equivalence from \cref{thm:vOpS}, then $\PSh^\otimes_{\Spaces}(\cO) \simeq \PSh \Env \rO$.
\end{obs}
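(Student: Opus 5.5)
The plan is to unwind the definitions and use that the comparison of \cref{thm:vOpS} was built from marked algebras. The target projection $\PShO: \vOp(\Spaces) \to \CAlg(\Pr)$ then reads off $\PSh \Env \rO$ directly. Only one compatibility needs checking: the fiberwise and global descriptions of $\PShO$ must agree.

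\textbf{Step 1: reduce to a fiber of $\col$.} Let $\cO \in \vOp(\Spaces)$ have space of colors $X$. By \cref{thm:vOpS} there is an essentially unique flagged operad $X \to \rO$ whose image under the functor of \cref{con:associatedmarked} is isomorphic to $\cO$. By construction, that image is the marked $\Spaces$-algebra $X \to \rO \to \PSh \Env \rO$, encoded as the colimit-dominant internal left adjoint $\PSh \Sym X \to \PSh \Env \rO$. This arrow is $\PSh\Env$ applied to $\mathrm{Triv}_X \to \rO$, using $\Sym X \simeq \Env \mathrm{Triv}_X$. The functor $\FOp \to \vOp(\Spaces)$ lies over $\Spaces$, so this isomorphism may be taken in the fiber of $\col$ over $X$. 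It is therefore an isomorphism of objects of $\CAlg(\Pr)_{\PSh\Sym X/}$, i.e.\ of $\CAlg(\Pr)_{X/}$ by \cref{notat:slicesequivalent}.

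\textbf{Step 2: identify the fiber with monads.} By \cref{prop:operadslimcolim}, the fiber of $\col$ over $X$ is $\vOp_X(\Spaces)$. This identification is precisely the fully faithful embedding of \cref{thm:markedalgebras}, namely the Kleisli embedding of \cref{cor:monadsff}. That embedding sends a monad $\cO$ to its Eilenberg--Moore object $\LMod_\cO(\PSh\Sym X) = \PShO(\cO)$ under the free functor, using that Eilenberg--Moore and Kleisli objects agree. Hence the target projection $\PShO$ on $\vOp(\Spaces)$, restricted to this fiber, recovers the operadic presheaf category of \cref{defin:operadicpsh}. Combining with Step 1 gives $\PSh^\otimes_{\Spaces}(\cO) \simeq \PSh\Env\rO$ in $\CAlg(\Pr)$, and even under $\PSh \Sym X$. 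Moreover, the free module functor corresponds to $\PSh\Env(i)_!$, and the operadic Yoneda functor corresponds to $X \to \rO \subseteq \PSh\Env\rO$.

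\textbf{Consistency check and main obstacle.} As a sanity check, by \cref{cor:monadsff} the monad is recovered as the composite of the monadic adjunction, $(\Env i)^*(\Env i)_!$. Evaluated on representables via \cref{prop:operadicyoneda}, this yields $\Mul_\rO$, as sketched in the introduction. The main obstacle is the bookkeeping in Step 2. One must confirm that the equivalence between the fiber of the pullback $\Spaces \times_{\CAlg(\Pr)} \Arr^{\mathrm{iL,cdom}}(\CAlg(\Pr))$ over $X$ and $\vOp_X(\Spaces)$ is the one coming from \cref{cor:monadsff}, and not some other identification. I would handle this by noting that both are induced by the same slice equivalence of \cref{notat:slicesequivalent}, composed with the fully faithful Kleisli functor. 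Since $\Arr^{\mathrm{iL,cdom}}$ is a full subcategory, the fiber is exactly the image described in \cref{thm:markedalgebras}.
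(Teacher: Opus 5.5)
Your proposal is correct and follows the paper's route. The paper's entire argument is that, by construction of \cref{con:associatedmarked}, the composite $\FOp \to \vOp(\Spaces) \xrightarrow{\PSh^\otimes_{\Spaces}} \CAlg(\Pr)$ is $\PSh\Env \circ \mathrm{trg}$. Your Step 2 identifies this target projection, on the fiber over $X$, with the Eilenberg--Moore object of \cref{defin:operadicpsh} via \cref{thm:markedalgebras} and \cref{cor:monadsff}; this spells out a compatibility the paper leaves implicit.
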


\begin{rem}
	In \cref{obs:envagree} we will show that the same is true for enriched operads, namely $\PShO(\cO) \simeq \PShV \vEnv (\cO)$ agrees with the enriched presheaf category of the enriched envelope in the sense of \cref{constr:env}.
\end{rem}

\begin{defin}
	A $\cV$-enriched operad $\cO \in \vOp(\cV)$, with associated marked $\cV$-algebra $(\yo: \col \cO \to \PShO(\cO))$, is called \emph{univalent} if the functor $\yo$ is a subcategory inclusion. In other words, it exhibits $\col \cO \simeq \operatorname{Im}(\yo)^\simeq$. Denote by $\Op(\cV) \subseteq \vOp(\cV)$ the full subcategory on univalent $\cV$-operads.
\end{defin}

\begin{rem}
	\label{rem:univalentonVcat}
	We will see in \cref{obs:PShVinPShO} that the operadic Yoneda functor $\col \cO \to \PShO(\cO)$ factors through the full inclusion $\PShV(\ColV(\cO)) \subseteq \PShO(\cO)$, so $\cO$ is univalent iff its underlying $\cV$-category $\ColV(\cO)$ is.
\end{rem}

\begin{rem}
	\label{rem:comparisonunivalence}
	This agrees with the notion of univalence introduced in \cite{haugsengchu}, either by \cref{rem:univalentonVcat} or \cref{thm:univloc}.
\end{rem}

\begin{defin}
	\label{obs:embedOp}
	The target projection $\mathrm{trg} : \FOp \to \Op$ admits a fully faithful right adjoint $\rO \mapsto (\underline{\rO}^\simeq \to \rO)$. We call a flagged operad $(X \to \rO)$ \emph{univalent} it lies in the image of this right adjoint, i.e.\ the induced map $X \to \underline{\cO}^{\simeq}$ is an isomorphism, or equivalently $X \to \underline{\rO}$ is a subcategory inclusion.
\end{defin}

\begin{cor}
	\label{cor:OpS}
	The equivalence from \cref{thm:vOpS} restricts to an equivalence $\Op(\Spaces) \simeq \Op$, where we embed $\Op \subseteq \FOp$ as in \cref{obs:embedOp}.
\end{cor}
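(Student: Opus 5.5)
Since \cref{thm:vOpS} gives an equivalence $\FOp \simeq \vOp(\Spaces)$, and both $\Op(\Spaces) \subseteq \vOp(\Spaces)$ and $\Op \subseteq \FOp$ (via \cref{obs:embedOp}) are full subcategories, the plan is to check that the equivalence identifies their objects. Then it restricts to an equivalence between them. So we must show that a flagged operad $X \to \rO$ is univalent in the sense of \cref{obs:embedOp} if and only if its associated marked $\Spaces$-algebra $X \to \rO \to \PSh \Env \rO$ from \cref{con:associatedmarked} is univalent.

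First fix $(X \to \rO) \in \FOp$ with associated marked algebra $\yo: X \to \PSh\Env\rO$. This functor factors as $X \to \underline{\rO} \hookrightarrow \Env\rO \hookrightarrow \PSh\Env\rO$. The last two functors are fully faithful: the inclusion of colors into the envelope is fully faithful on unary maps, and the Yoneda embedding is fully faithful. This was already used in the unit step of the proof of \cref{thm:vOpS}. Hence $\operatorname{Im}(\yo) \simeq \underline{\rO}$ as full subcategories, and the map $X \to \operatorname{Im}(\yo)^\simeq$ is identified with $X \to \underline{\rO}^\simeq$. Moreover, since fully faithful functors induce monomorphisms on mapping spaces and on maximal subgroupoids, $\yo$ is a subcategory inclusion if and only if $X \to \underline{\rO}$ is one.

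Both univalence conditions are phrased as ``the marking map is a subcategory inclusion'', equivalently ``$X \simeq$ the core of the image''. Surjectivity of $X \to \underline{\rO}$ is built into the definition of flagged operads, so these two formulations agree on each side. Combining this with the previous paragraph, the essential images match under the equivalence of \cref{thm:vOpS}.

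The only point needing care is the claim that $\underline{\rO} \to \Env\rO$ is fully faithful. This holds because in the envelope, maps $(o) \to (o')$ between singletons are exactly the unary multimorphisms $\Mul_\rO(o;o')$; it is the $n=1$ case of the formula used in the counit step of the proof of \cref{thm:vOpS}. Everything else is formal.
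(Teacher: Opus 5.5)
Your proposal is correct and takes essentially the same route as the paper: both arguments reduce to the fact that $X \to \underline{\rO}$ is a subcategory inclusion iff its composite with the fully faithful functors $\underline{\rO} \subseteq \Env(\rO) \subseteq \PSh\Env(\rO)$ is one. The paper handles the direction starting from a univalent marked algebra via the inverse functor $(X \to \mathrm{Im}(\yo))$ of \cref{thm:vOpS}, whereas you get both directions at once from cancellation of monomorphisms in $\Cat$; the difference is only cosmetic.
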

\begin{proof}
	Clearly if a marked algebra $(\yo: X \to \cM)$ is univalent, then so is its associated flagged operad $(X \to \mathrm{Im}(\yo))$. Conversely, given a flagged operad $(X \to \rO)$ such that $X \to \underline{\rO}$ is a subcategory inclusion, then so is the composition $X \to \underline{\rO} \subseteq \Env(\rO) \subseteq \PSh \Env(\rO)$.
\end{proof}

\begin{prop}
	\label{prop:ffchar}
	For a map of valent $\cV$-operads $F: \cO\to \cP$, the following are equivalent:
	\begin{itemize}[(1)]
		\item The underlying morphism $\PShO(F) : \PShO(\cO) \to \PShO(\cP)$ in $\CAlg(\PrV)$ is fully faithful,
		\item For any $o_1, \dots, o_n, o \in \col \cO$, the induced map on multigraphs
		\begin{align*}
			&\Mul_\cO(o_1, \dots, o_n; o) \simeq \iHom_{\PShO(\cO)}(\yo(o_1) \otimes \dots \otimes \yo(o_n), \yo(o)) \to \\
			&\to \iHom_{\PShO(\cP)}(\yo(F o_1) \otimes \dots \otimes \yo(F o_n), \yo(F o)) \simeq \Mul_\cO(F o_1, \dots, F o_n; F o)
		\end{align*}
		using the operadic Yoneda Lemma \cref{prop:operadicyoneda} is an isomorphism in $\cV$,
		\item $F$ is $\col$-Cartesian for the Cartesian fibration $\col : \vOp(\cV) \to \Spaces$.
	\end{itemize}
\end{prop}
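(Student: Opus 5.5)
(1)$\Leftrightarrow$(2) will follow from \cref{prop:fflemma}, and (1)$\Leftrightarrow$(3) from the description of $\col$-Cartesian morphisms in \cref{prop:operadslimcolim}.

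For (1)$\Leftrightarrow$(2), a map of valent $\cV$-operads $F$ is a commutative square whose bottom arrow $\PShO(F) : \PShO(\cO) \to \PShO(\cP)$ is internally left adjoint, since morphisms of $\vOp(\cV)$ live in $\Arr^{\mathrm{iL,cdom}}(\CAlg(\PrV))$. The operadic Yoneda functor $\yo_\cO : \col \cO \to \PShO(\cO)$ is a $\otimes$-atomically generating marking by \cref{thm:markedalgebras}. So \cref{prop:fflemma} applies with $\rC = \col\cO$: the functor $\PShO(F)$ is fully faithful iff it induces isomorphisms
\[
\iHom_{\PShO(\cO)}(\yo(o_1) \otimes \dots \otimes \yo(o_n), \yo(o)) \to \iHom_{\PShO(\cP)}(\PShO(F)\yo(o_1) \otimes \dots \otimes \PShO(F)\yo(o_n), \PShO(F)\yo(o)).
\]
Commutativity of the square gives $\PShO(F)\circ \yo_\cO \simeq \yo_\cP \circ \col(F)$, and $\PShO(F)$ is symmetric monoidal, so the target is $\iHom_{\PShO(\cP)}(\yo(Fo_1)\otimes\dots\otimes\yo(Fo_n),\yo(Fo))$. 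By \cref{prop:operadicyoneda}, both sides are the multigraphs, so the map is exactly the one in (2). The one point to check here is that the Yoneda identification is natural in maps of operads, i.e.\ that the map in (2) really is the one induced by $\PShO(F)$. I would verify this from the proof of \cref{prop:operadicyoneda}, where the identification comes from $\mathrm{ev}_{\underline{x}} Y^\rR$ and the mate of the square.

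For (1)$\Leftrightarrow$(3), \cref{prop:operadslimcolim} states that a morphism is $\col$-Cartesian iff $\PShO(\cO) \to \PShO(\cP)$ is fully faithful. That is literally (1), so this equivalence requires nothing further.

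The main obstacle is that \cref{prop:operadslimcolim} was only sketched by analogy with \cite{marked}. To make (3) independent of that sketch, I would argue directly. Given $F$ with $\PShO(F)$ fully faithful, and another operad $\cQ$ with maps $\cQ\to\cP$ and $\col\cQ \to \col\cO$ over $\col\cP$, the required lift $\PShO(\cQ)\to\PShO(\cO)$ is unique. This is orthogonality of the colimit-dominant $\PSh\Sym(\col\cQ)\otimes\cV\to\PShO(\cQ)$ against the fully faithful $\PShO(F)$, from \cref{lem:cdomiLFS}, and the lift is internally left adjoint by the restricted factorization system. Conversely, if $F$ is Cartesian, factor $\PShO(F)$ via \cref{lem:cdomiLFS} as a colimit-dominant map followed by a fully faithful one. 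The middle object is $\otimes$-atomically generated, hence defines a marked algebra with colors $\col\cO$ and a Cartesian map to $\cP$. Uniqueness of Cartesian lifts then identifies it with $\cO$, so $\PShO(F)$ is fully faithful.
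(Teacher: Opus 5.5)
Your proposal is correct and follows the paper's proof: (1)$\Leftrightarrow$(2) is \cref{prop:fflemma} applied to the $\otimes$-atomically generating marking $\yo_\cO$, and (1)$\Leftrightarrow$(3) is read off from the description of $\col$-Cartesian morphisms in \cref{prop:operadslimcolim}. Your extra direct argument for (3) via the factorization system of \cref{lem:cdomiLFS} is a sound unpacking of that proposition. Two small remarks: the internal left adjointness of $\PShO(F)$ is most cleanly justified by \cref{lem:iLcdomdiagram} rather than by the definition of $\Arr^{\mathrm{iL,cdom}}$, and your naturality check is unnecessary, since the map in (2) is by definition the one induced by $\PShO(F)$.
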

\begin{proof}
	$(1) \Leftrightarrow (2)$ follows from \cref{prop:fflemma} by definition of marked algebras. Further, $(1) \Leftrightarrow (3)$ is part of \cref{prop:operadslimcolim}.
\end{proof}

\begin{defin}
	A map of valent $\cV$-operads $F: \cO\to \cP$ is called \emph{fully faithful} if either of the equivalent conditions of \cref{prop:ffchar} is satisfied.
\end{defin}

\begin{defin}
	A map of valent $\cV$-operads $F: \cO\to \cP$ is called \emph{surjective-on-colors} if the underlying map $\col \cO \to \col \cP$ in $\Spaces$ is surjective on connected components.
	
	On the other hand, $F: \cO\to \cP$ is called \emph{surjective} if the induced map $\mathrm{Im}(\yo_\cO) \to \mathrm{Im}(\yo_\cP)$ in $\Cat$ is surjective, i.e.\ if the induced map on univalizations $u\cO \to u\cP$ is surjective-on-colors. 
\end{defin}

\begin{obs}
	\label{obs:surjectiveandcdom}
	If $F: \cO\to \cP$ is a surjective map of $\cV$-operads, then the induced functor $F_! : \PShO(\cO) \to \PShO(\cP)$ is colimit-dominant by cancellation, So if $F$ is additionally fully faithful, then $F_!$ is an equivalence.
\end{obs}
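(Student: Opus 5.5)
The plan is to prove the two claims separately, first colimit-dominance and then the equivalence, both by the usual ``closure of the image'' argument.

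\emph{Colimit-dominance.} The map $F$ is encoded by a commutative square in $\CAlg(\PrV)$. Restricting along $\col \cO \to \PSh \Sym(\col \cO) \otimes \cV$, this square gives an equivalence of functors
\[ F_! \circ \yo_\cO \simeq \yo_\cP \circ \col(F) : \col \cO \to \PShO(\cP) . \]
Hence $F_!$ carries $\operatorname{Im}(\yo_\cO)$ into $\operatorname{Im}(\yo_\cP)$, and the induced functor $\operatorname{Im}(\yo_\cO) \to \operatorname{Im}(\yo_\cP)$ is the one appearing in the definition of surjectivity. By assumption it is surjective, so every object of $\operatorname{Im}(\yo_\cP)$ is isomorphic to $F_!(m)$ for some $m$ in $\operatorname{Im}(\yo_\cO)$. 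Thus the colimit-closed image $\CIm(F_!)$ contains $\operatorname{Im}(\yo_\cP)$.

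By \cref{lem:CImclosed}, $\CIm(F_!)$ is closed under colimits, $\cV$-tensoring and the symmetric monoidal structure. Since $\yo_\cP$ is a marked algebra, its image generates $\PShO(\cP)$ under these operations by \cref{thm:markedalgebras}. Therefore $\CIm(F_!) = \PShO(\cP)$, which is exactly colimit-dominance of $F_!$. Equivalently, this is the ``cancellation'' argument: the composite $\PSh \Sym(\col \cO) \otimes \cV \to \PShO(\cO) \to \PShO(\cP)$ has colimit-closed image containing the generators of $\PShO(\cP)$.

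\emph{Equivalence.} Suppose in addition that $F$ is fully faithful. By \cref{prop:ffchar} this means $F_!$ is a fully faithful functor. Its essential image is then a full subcategory of $\PShO(\cP)$ equivalent to $\PShO(\cO)$.

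This essential image is closed under colimits. Indeed, given a diagram $F_! m_\bullet$, it has colimit $F_!(\colim m_\bullet)$, because $F_!$ preserves colimits; and by full faithfulness every diagram in the essential image lifts to a diagram $m_\bullet$ in $\PShO(\cO)$. So the essential image contains $\CIm(F_!)$, which is all of $\PShO(\cP)$ by the first part. Hence $F_!$ is essentially surjective as well as fully faithful, and so it is an equivalence.

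\emph{Main obstacle.} The only real subtlety is the first step: identifying the functor $\operatorname{Im}(\yo_\cO) \to \operatorname{Im}(\yo_\cP)$ from the definition of surjectivity with the restriction of $F_!$. This follows from the commutativity of the square defining a map in $\vOp(\cV)$, together with the fact that images are taken as essential images. Everything after that is formal.
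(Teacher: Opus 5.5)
Your proof is correct and is essentially the paper's argument. The paper's ``by cancellation'' means that the composite $\PSh\Sym(\col\cO)\otimes\cV\to\PShO(\cO)\to\PShO(\cP)$ is colimit-dominant, since by surjectivity its image hits all $\yo_\cP(y)$ up to isomorphism; hence $F_!$ is colimit-dominant. Your direct argument via $\CIm(F_!)$ and \cref{lem:CImclosed} simply unwinds this. The second part is, as you show, the standard fact that a colimit-dominant fully faithful morphism is an equivalence (cf.\ the factorization system of \cref{lem:cdomiLFS}).
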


\begin{theorem}
	\label{thm:univloc}
	The full inclusion $\Op(\cV) \subseteq \vOp(\cV)$ admits a left adjoint $u$, called \emph{univalization}, that sends a marked algebra $\yo: \col \cO \to \PShO(\cO)$ to the subcategory inclusion $\mathrm{Im}(\yo)^\simeq \hookrightarrow \PShO(\cO)$.
	This left adjoint exhibits $\Op(\cV)$ as the localization of $\vOp(\cV)$ at the single morphism $(* \sqcup * \to \PSh \Sym(*)) \to (* \to \PSh \Sym(*))$. It inverts precisely those morphisms in $\vOp(\cV)$ that are both fully faithful and surjective.
\end{theorem}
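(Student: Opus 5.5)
I would follow the pattern of the analogous statement for enriched categories in \cite{marked}. First, show that $u$ is well defined: for a marked algebra $\yo: \col\cO \to \PShO(\cO)$, the map $\col\cO \to \mathrm{Im}(\yo)^\simeq$ is surjective. So by \cref{cor:univisag} the subcategory inclusion $\mathrm{Im}(\yo)^\simeq \hookrightarrow \PShO(\cO)$ is again a $\otimes$-atomically generating marking, i.e.\ a marked algebra. It is univalent because its image is again $\mathrm{Im}(\yo)$. The map $\cO \to u\cO$ is given by the commutative triangle from $\col\cO \to \mathrm{Im}(\yo)^\simeq$, with the identity on $\PShO(\cO)$.

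Next I would prove the universal property. For $\cP$ univalent, restriction along $\cO \to u\cO$ should induce an equivalence on mapping spaces in $\vOp(\cV)$. A map $\cO\to\cP$ is a pair $(f: \col\cO \to \col\cP,\ G: \PShO(\cO) \to \PShO(\cP))$ with $G \circ \yo_\cO \simeq \yo_\cP \circ f$. Since $G$ is internally left adjoint and the composition $\PSh\Sym(\col \cO)\otimes\cV \to \PShO(\cO)$ is colimit-dominant, the data reduces to $G$ together with a lift of $G\circ \yo_\cO$ through the monomorphism $\yo_\cP: \col\cP \hookrightarrow \PShO(\cP)^\simeq$. Since $\yo_\cP$ is a monomorphism of spaces, such a lift is unique if it exists. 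It exists iff $G$ sends the image of $\yo_\cO$ into $\mathrm{Im}(\yo_\cP)^\simeq$. This condition depends only on $\mathrm{Im}(\yo_\cO)$, so it is identical for $\cO$ and $u\cO$. Formally, the mapping space is a pullback of $\Map(\col\cO,\col\cP)$ over $\Map(\col\cO,\PShO(\cP)^\simeq)$. Both constituents factor through $\col\cO \to \mathrm{Im}(\yo_\cO)^\simeq$, which is $(-1)$-connected, and $\col\cP \to \PShO(\cP)^\simeq$ is $(-1)$-truncated. Orthogonality of surjections and monomorphisms in $\Spaces$ then gives the equivalence. This step is the main obstacle, because the bookkeeping in the pullback $\vOp(\cV)=\Spaces\times \Arr^{\mathrm{iL,cdom}}$ must be done carefully. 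Also, internal left adjointness of $G$ has to be the same condition on both sides, which holds by \cref{prop:iLpreserve}.

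For the localization statement, I would show that a morphism $F$ is inverted by $u$ iff it is fully faithful and surjective. If $F$ is fully faithful and surjective, then by \cref{obs:surjectiveandcdom} the functor $\PShO(F)$ is an equivalence, and $\mathrm{Im}(\yo_\cO)\to\mathrm{Im}(\yo_\cP)$ is surjective and hence an equivalence on cores, so $uF$ is an equivalence. Conversely, if $uF$ is an equivalence, then $\PShO(F)$ is an equivalence, which gives full faithfulness by \cref{prop:ffchar}, and the induced map on images is an equivalence, which gives surjectivity.

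Finally, the given morphism $(*\sqcup* \to \PSh\Sym(*)) \to (* \to \PSh\Sym(*))$ is fully faithful and surjective, so $u$ inverts it. Conversely, I would show that every fully faithful surjective $F$ is a colimit of this generator. $F$ is $\col$-Cartesian by \cref{prop:ffchar}, so it is the Cartesian pullback of $\cP$ along $\col\cO\to\col\cP$. That map is a surjection of spaces and hence a geometric realization of its Čech nerve. Using \cref{prop:operadslimcolim}, I would express $F$ as obtained from pushouts of the generator, via the fold map $\col\cO\sqcup\col\cO\to\col\cO$ over points, and colimits, as in \marked{Thm.}{7.?}. It then follows that any local object is univalent and vice versa, since univalence means that $\col\cP\to\PShO(\cP)^\simeq$ is a monomorphism, i.e.\ that the diagonal condition tested by $*\sqcup*\to *$ holds.
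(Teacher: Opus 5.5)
Your construction of $u$ via \cref{cor:univisag} matches the paper's proof. So does your characterization of the morphisms inverted by $u$ as the fully faithful surjective ones, via \cref{obs:surjectiveandcdom} and \cref{prop:ffchar}. For the existence of the left adjoint the paper only says it is analogous to \marked{Thm.}{7.27}. Your explicit argument is correct and fills in that reference: you write
\[
\Map_{\vOp(\cV)}(\cO,\cP)\simeq\Map(\col\cO,\col\cP)\times_{\Map(\col\cO,\PShO(\cP)^\simeq)}\Map(\PShO(\cO),\PShO(\cP))
\]
and use that the surjection $\col\cO\to\mathrm{Im}(\yo_\cO)^\simeq$ is left orthogonal to the monomorphism $\col\cP\to\PShO(\cP)^\simeq$. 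One small correction: no internal-left-adjointness condition on $G$ needs to be matched. Morphisms in $\vOp(\cV)$ are arbitrary commuting squares, and $G$ is automatically internally left adjoint by \cref{lem:iLcdomdiagram}.

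The localization step, however, contains a false claim. For a fully faithful surjective $F\colon\cO\to\cP$, the map $\col\cO\to\col\cP$ need not be surjective, because ``surjective'' refers to $\mathrm{Im}(\yo_\cO)\to\mathrm{Im}(\yo_\cP)$, not to colors. For example, the inclusion of one point $(*\to\PSh\Sym(*)\otimes\cV)\to(*\sqcup*\to\PSh\Sym(*)\otimes\cV)$, with the identity on $\PSh\Sym(*)\otimes\cV$, is fully faithful and surjective but not surjective-on-colors. So your \v{C}ech-nerve decomposition does not apply as stated, and the subsequent pushout argument is only gestured at.

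You don't need that step. Since $u$ is already a reflector onto $\Op(\cV)$, it suffices to show that the objects local with respect to $s\colon(*\sqcup*\to\PSh\Sym(*))\to(*\to\PSh\Sym(*))$ are exactly the univalent ones. The paper proves this directly. From the pullback definition of $\vOp(\cV)$ and the free--forgetful adjunction,
\[
\Map\bigl((*\to\PSh\Sym(*)\otimes\cV),(X\to\cM)\bigr)\simeq X,\qquad \Map\bigl((*\sqcup*\to\PSh\Sym(*)\otimes\cV),(X\to\cM)\bigr)\simeq X\times_{\cM^\simeq}X .
\]
Restriction along $s$ is the diagonal. It is an equivalence iff $X\to\cM^\simeq$ is a monomorphism, i.e.\ iff $X\to\cM$ is a subcategory inclusion. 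Your final sentence points at exactly this; make it the argument and drop the decomposition.
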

\begin{proof}
	The functor $\mathrm{Im}(\yo)^\simeq \hookrightarrow \PShO(\cO)$ is indeed a $\otimes$-atomically generating marking by \cref{cor:univisag}, and thus defines a marked algebra. Now, the first claim is analogous to \marked{Thm.}{7.27}. Further, note that for any marked algebra $X \to \cM$, being local with respect to the above morphism is equivalent to any diagram
	\[
	\begin{tikzcd}
		* \sqcup * \arrow[r] \arrow[d] & X \arrow[d]\\
		* \arrow[r] \arrow[ur, dashed] & \cM
	\end{tikzcd}
	\]
	admitting a unique filler, which is equivalent to the right map being a monomorphism in $\Cat$, i.e.\ a subcategory inclusion.
	
	For the last claim, note that a fully faithful and surjective morphism $\cO\to \cP$ induces an isomorphism on univalizations since by definition the induced map $\operatorname{Im}(\yo_\cO) \to \operatorname{Im}(\yo_\cP)$ is an equivalence, as is the induced map on operadic presheaf categories by \cref{obs:surjectiveandcdom}. Conversely if $uF: u\cO \to u\cP$ is an isomorphism, then it is an equivalence on operadic presheaf categories so $F$ must have been fully faithful. Also the map $\col(u\cO) \simeq \operatorname{Im}(\yo_\cO) \to \operatorname{Im}(\yo_\cP) \simeq \col(u \cP)$ is an equivalence implying that $F$ is surjective.
\end{proof}
\begin{rem}
	Since $(* \sqcup * \to \PSh \Sym(*)) \to (* \to \PSh \Sym(*))$ is fully faithful and surjective-on-objects, we deduce that $\Op(\cV)$ can also be obtained as the localization at the intermediate class of fully faithful and surjective-on-objects morphisms.
\end{rem}

\begin{rem}
	As in \marked{Thm.}{7.15} one shows that the univalization functor induces an equivalence between $\vOp(\cV)$ and the category of \emph{flagged $\cV$-operads} 
	\[
	\Spaces \times_{\Op(\cV)} \Arr^{\mathrm{surj}}(\Op(\cV)) \simeq \Arr^{\mathrm{surj}}(\Spaces) \times_{\Spaces} \Op(\cV)
	\]
	whose objects are pairs of a univalent $\cV$-operad $\cO$ and a surjective map $X \to \col \cO$.
\end{rem}

\section{Enriched envelopes}
\label{sec:envelopes}

For Lurie-operads, the forgetful functor $\CAlg(\Cat) \to \Op$ admits a left adjoint, called the \emph{envelope functor} $\Env$. In this section, we extend it to enriched operads and investigate some of its basic properties.

\begin{obs}
	\label{obs:smVcat}
	Recall that a valent $\cV$-enriched category $\cC$ is described by its associated marked module $\ob \cC \to \PShV(\cC)$, i.e.\ a map from a space $\ob \cC$ to some $\PShV(\cC) \in \PrV$ whose image consists of $\cV$-atomic objects and generates under colimits and $\cV$-tensoring \marked{Def.}{6.1}. Hence, a symmetric monoidal valent $\cV$-category is an object of
	\[
	\CAlg(\vCatV) \subseteq \CAlg(\Fun([1], \lCat) \times_{\lCat} \PrV) \simeq \Fun([1], \CAlg(\lCat)) \times_{\CAlg(\lCat)} \CAlg(\PrV) \; ,
	\]
	i.e.\ it consists of $\ob \cC \in \CAlg(\Spaces), \PShV(\cC) \in \CAlg(\PrV)$ as well as a symmetric monoidal functor $\ob\cC \to \PShV(\cC)$ that is a marked module. We have used that the $2$-functor $\CAlg = \Hom_{\CAlg(\ICat)}( \Env \E_\infty, - ) : \CAlg(\ICat) \to \ICat$ is compatible with limits and cotensorings.
\end{obs}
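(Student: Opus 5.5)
The plan is to prove the displayed chain of identifications one step at a time. The idea is to view everything as $\infty$-operads, or symmetric monoidal categories with lax symmetric monoidal functors between them, so that $\CAlg$ becomes a mapping object out of the commutative operad. Concretely, I would use $\CAlg(-) \simeq \Alg_{\E_\infty}(-)$, which is corepresented in the $2$-category $\CAlg(\ICat)$ (or in $\IOp$) by $\Env \E_\infty$, as the paper indicates. Once this is in place, $\CAlg(-)$ is a right adjoint-type $2$-functor. It therefore preserves all limits and all cotensors by categories, because $\Hom(\Env\E_\infty, -)$ commutes with limits and with $\Fun(K,-)$ for $K \in \Cat$.

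First, I set up the ambient pullback $\Fun([1], \lCat) \times_{\lCat} \PrV$ as a pullback of symmetric monoidal categories. On $\Fun([1],\lCat)$ I take the pointwise cartesian structure. On $\lCat$ I take the cartesian structure, and on $\PrV$ the relative tensor product $\otimes_\cV$. The legs are the target projection, which is strong monoidal, and the forgetful functor $\PrV \to \lCat$. The latter is only lax symmetric monoidal, via the universal $\cV$-bilinear functor $\cM \times \cN \to \cM \otimes_\cV \cN$. For this reason I would form the pullback in $\widehat{\Op}$ rather than in $\CAlg(\lCat)$. Limits of operads are computed on categories of operations, so this is unproblematic.

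Next I apply $\CAlg$ to this pullback. By limit preservation,
\[
\CAlg\bigl(\Fun([1],\lCat)\times_{\lCat}\PrV\bigr) \simeq \CAlg(\Fun([1],\lCat)) \times_{\CAlg(\lCat)} \CAlg(\PrV),
\]
and by compatibility with cotensors, $\CAlg(\Fun([1],\lCat)) \simeq \Fun([1],\CAlg(\lCat))$. This gives the claimed equivalence. Unwinding an object, it is a symmetric monoidal functor $\ob\cC \to \PShV(\cC)$, where $\ob\cC$ is a space with commutative monoid structure, i.e.\ an object of $\CAlg(\Spaces)$. Here $\CAlg(\Spaces)$ is the full subcategory of $\CAlg(\lCat)$ on groupoids, since cartesian products of spaces are spaces. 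The target is an object of $\CAlg(\PrV)$.

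Finally, I justify the inclusion $\CAlg(\vCatV)\subseteq$ this pullback as a full subcategory. Here $\vCatV$ is the full subcategory of marked modules from \marked{Def.}{6.1}, and I need it to be a full suboperad. This means it must be closed under the monoidal structure and contain the unit. The unit is $\ast \to \cV$, marking $1_\cV$, which is atomic and generating. For closure, given marked modules $X \to \cM$ and $Y \to \cN$, I have to show that $X \times Y \to \cM \otimes_\cV \cN$, $(x,y)\mapsto x\otimes y$, is again a marked module. This is the main obstacle. I would handle it by noting that the induced map $\PSh(X\times Y)\otimes\cV \simeq (\PSh X\otimes\cV)\otimes_\cV(\PSh Y\otimes\cV) \to \cM\otimes_\cV\cN$ is the $\otimes_\cV$-product of two internal left adjoints, hence internally left adjoint because $\otimes_\cV$ is a $2$-functor. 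It is also colimit-dominant, by \marked{Prop.}{3.18}. Granting this, $\CAlg$ of a full suboperad is the full subcategory of commutative algebras whose underlying object lies in it, which completes the plan.
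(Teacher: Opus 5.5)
Your proposal is correct and follows the same route as the paper: you apply the corepresentable $2$-functor $\CAlg$, which commutes with limits and with the cotensor by $[1]$, to the pullback defining the ambient category, and then restrict to the full subcategory of marked modules. What you add is extra precision rather than a different argument: since $\PrV \to \lCat$ is only lax symmetric monoidal, the pullback is really a limit in $\widehat{\Op}$, and there $\CAlg = \Alg_{\E_\infty}$ is corepresented by $\E_\infty$ itself, not by $\Env \E_\infty$ as your parenthetical says; likewise, your check that $(X \to \cM) \otimes (Y \to \cN) = (X \times Y \to \cM \otimes_\cV \cN)$ is again a marked module supplies the closure of $\vCatV$ under the tensor product that the paper leaves implicit.
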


\begin{defin}
	\label{defin:envelopefun}
	Given a valent $\cV$-enriched operad $\cO$ represented by the marked algeba $\col \cO \to \PShO(\cO)$, the unique symmetric monoidal extension $\Sym \col \cO \to \PShO(\cO)$ is a marked module since a tensor product of finitely many $\otimes$-atomic objects is atomic by \cref{obs:tensoratomic}. 
	Using  \cref{obs:smVcat}, we define the \emph{valent $\cV$-enriched envelope functor} $\vEnv_\cV: \vOp(\cV) \to \vCat(\cV)$ through the following map of pullback squares:
	\[
	\begin{tikzcd}[sep=.4cm]
		&[+11pt] \CAlg(\vCatV) \arrow[rr] \arrow[dd] \arrow[dr, phantom, "\scalebox{1}{$\lrcorner$}" , very near start, color=black] & &[-13pt] \Arr^{\mathrm{\cV\text{-iL},cdom}}(\CAlg(\PrV)) \arrow[dd, "\mathrm{src}" {pos=0.6}]\\
		\vOp(\cV) \arrow[rr, crossing over] \arrow[dd] \arrow[ur, dashed] \arrow[dr, phantom, "\scalebox{1}{$\lrcorner$}" , very near start, color=black] &   & \Arr^{\mathrm{iL,cdom}}(\CAlg(\PrV)) \arrow[ur, outer sep=-2pt, hook]  & \\
		& \CAlg(\Spaces) \arrow[rr] &  & \CAlg(\PrV)  \\
		\Spaces \arrow[ur, outer sep=-2pt, "\Sym"] \arrow[rr] &   & \CAlg(\PrV) \arrow[from=uu, crossing over, "\mathrm{src}" {pos=0.6}] \arrow[ur, Rightarrow, no head] &
	\end{tikzcd}
	\]
	Here, we use that a morphism that is internally left adjoint in $\CAlg(\IPrV)$ is also internally left adjoint in $\IPrV$, and denote the latter as $\cV\text{-iL}$ for distinction.
\end{defin}

\begin{rem}
	\label{rem:envagrees}
	It follows from \cref{obs:operadicpshspaces} that this agrees with the envelope from \HAsubsec{2.2.4}. However our model for enriched operads makes it difficult to construct a forgetful functor $\CAlg(\vCatV) \to \vOp(\cV)$ right adjoint to $\vEnv_\cV$; we do this in \cref{rem:envelopela}. 
\end{rem}

\begin{obs}
	\label{obs:envagree}
	As a right adjoint $2$-functor, the forgetful functor $\CAlg(\IPrV) \to \IPrV$ preserves weighted limits, in particular Eilenberg-Moore objects. We deduce that the above definition of the valent envelope agrees with \cref{constr:env}.
	
	Similarly, the left adjoint $2$-functor $\Sym^{\otimes_\cV}: \IPrV \to \CAlg(\IPrV)$ preserves Kleisli-objects which agree with Eilenberg-Moore objects on both sides. We deduce that the functor $\mathrm{Triv}_\cV : \vCatXV \to \vOp_X(\cV)$ sends the marked module $\ob \cC \to \PShV(\cC)$ to the marked algebra $\ob \cC \to \Sym^{\otimes_\cV}(\PShV(\cC))$. In fact:
\end{obs}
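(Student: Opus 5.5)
The plan is to treat the two claims of \cref{obs:envagree} separately, in both cases reducing to the fact that the relevant $2$-functor preserves the universal object (Eilenberg-Moore or Kleisli) that defines the marked module or marked algebra.

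\emph{Agreement of envelopes.} Fix $\cO \in \vOp_X(\cV)$, viewed as a monad $T$ on $\PSh\Sym X \otimes \cV$ in $\CAlg(\IPrV)$. Write $U : \CAlg(\IPrV) \to \IPrV$ for the forgetful $2$-functor.
\begin{enumerate}[(1)]
	\item By \cref{constr:env}, the image of $T$ under the monoidal functor $\Endo^{\rL,\otimes}_\cV \to \Endo^\rL_\cV$ induced by $U$ is precisely $\vEnv_\cV(\cO) \in \vCat_{\Sym X}(\cV)$. By \marked{Def.}{2.30} this is a monad on $\PSh(\Sym X)\otimes\cV$ in $\IPrV$.
	\item I will use that $U$ carries Eilenberg-Moore objects to Eilenberg-Moore objects. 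This follows either from \cref{thm:EMPrV}, which says these objects are created by the forgetful $2$-functors, or from $U$ being right $2$-adjoint to $\Sym^{\otimes_\cV}$ (\cref{prop:freeforgetfulfunctors}) and hence preserving weighted limits. It gives an equivalence $U(\PShO(\cO)) = U(\LMod_T(\PSh\Sym X\otimes\cV)) \simeq \LMod_{UT}(\PSh(\Sym X)\otimes \cV)$ compatible with the free functors.
	\item By \marked{Thm.}{5.5}, the right-hand side together with $\Sym X \to \PSh(\Sym X)\otimes\cV \to \LMod_{UT}$ is exactly the marked module of $\vEnv_\cV(\cO)$ in the sense of \cref{constr:env}.
	\item On the other side, \cref{defin:envelopefun} produces the marked module $\Sym X \to \PShO(\cO)$. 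This is the symmetric monoidal extension of the operadic Yoneda functor, i.e.\ the composite $\Sym X \to \PSh\Sym X\otimes\cV \to \PShO(\cO)$ with the free functor.
	\item Under the identification in (2) the free functors correspond, so the two markings agree. Since marked modules are fully faithfully determined by their monads (\marked{Thm.}{5.5}), the two definitions of the envelope coincide fiberwise.
\end{enumerate}

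\emph{Trivial operads.} Let $\cC \in \vCatXV$ be a monad $S$ on $\PSh(X)\otimes\cV$ in $\IPrV$.
\begin{enumerate}[(1)]
	\item $\Sym^{\otimes_\cV}$ is a left adjoint $2$-functor, so it preserves weighted colimits, in particular Kleisli objects.
	\item By \cref{prop:locgr}/\cref{thm:EMPrV}, Kleisli objects agree with Eilenberg-Moore objects, compatibly with the free functors, both in $\IPrV$ and in $\CAlg(\IPrV)$. Hence $\Sym^{\otimes_\cV}(\PShV(\cC)) \simeq \LMod_{\Sym S}(\PSh\Sym X\otimes\cV)$ under $\PSh\Sym X \otimes \cV$.
	\item By \cref{constr:modulesvsalgebras}, $\Sym S$ is the monad $\mathrm{Triv}_\cV(\cC)$.
	\item Composing with $X \to \PSh(X)\otimes\cV$, and using that $\Sym^{\otimes_\cV}(\PSh(X)\otimes\cV) \simeq \PSh\Sym X\otimes\cV$ restricts along $X$ to the canonical map, identifies the marking with $\ob\cC \to \PShV(\cC) \to \Sym^{\otimes_\cV}(\PShV(\cC))$.
\end{enumerate}

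\emph{Main obstacle.} I expect the delicate point to be that preservation of Eilenberg-Moore and Kleisli objects requires genuine $2$-adjunctions and identifications that are natural in the monad. Only then do the comparison equivalences respect the free functors under $\PSh\Sym X\otimes\cV$, rather than just giving abstract equivalences of underlying categories. To handle this I would invoke the Kleisli description in \cref{cor:monadsff}: a $2$-functor $\Phi$ that induces a monoidal functor on endomorphism objects yields a commuting square between the $\mathcal{KL}$ embeddings, by naturality of the adjunction in \cref{prop:hmonadsff} with respect to $\Phi$.
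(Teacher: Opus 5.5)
Your proposal is correct and follows the paper's own argument. The forgetful $2$-functor $\CAlg(\IPrV)\to\IPrV$ is a right $2$-adjoint, so it preserves Eilenberg--Moore objects together with their free functors. Dually, $\Sym^{\otimes_\cV}$ is a left $2$-adjoint, so it preserves Kleisli objects, and by \cref{prop:locgr} these coincide with Eilenberg--Moore objects on both sides. You additionally spell out the identification of the markings, which the paper leaves implicit.

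One wording point in your last paragraph should be tightened. Naturality of the adjunction alone only gives a comparison map between the two $\mathcal{KL}$ embeddings. The square commutes because $\Phi$ preserves the relevant Kleisli objects, which you do establish earlier in the proposal.
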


\begin{prop}
	The adjunction $\ColV: \vCatXV \rightleftarrows \vOp_X(\cV) : \mathrm{Triv}_\cV$ from \cref{constr:modulesvsalgebras} extends to an adjunction
	\[
	\mathrm{Triv}_\cV: \vCatV \rightleftarrows \vOp(\cV) : \ColV
	\]
	relative to $\Spaces$, where $\mathrm{Triv}_\cV(\ob \cC \to \PShV(\cC)) \simeq (\ob \cC \to \Sym^{\otimes_\cV}(\PShV(\cC)))$ and $\ColV(\col \cO \to \PShO(\cO)) \simeq (\col \cO \to \CIm(\PSh \col\cO \otimes \cV \to \PShO(\cO)))$.
\end{prop}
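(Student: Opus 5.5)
We first make both functors precise on the level of the pullback descriptions. Recall $\vCatV \simeq \Spaces \times_{\PrV} \Arr^{\cV\text{-iL,cdom}}(\PrV)$ and $\vOp(\cV) = \Spaces \times_{\CAlg(\PrV)} \Arr^{\mathrm{iL,cdom}}(\CAlg(\PrV))$. The plan is to build $\mathrm{Triv}_\cV$ as the map of pullbacks induced by the identity on $\Spaces$ and the $2$-functor $\Sym^{\otimes_\cV}: \IPrV \to \CAlg(\IPrV)$ of \cref{prop:freeforgetfulfunctors}. This works because $\Sym^{\otimes_\cV}(\PSh X \otimes \cV) \simeq \PSh \Sym X \otimes \cV$, because as a $2$-functor it preserves internal left adjoints, and because, being a left adjoint, it preserves colimit-dominance. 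The latter holds since the colimit-closed image of $\Sym^{\otimes_\cV}(F)$ contains the image of $\PShV(\cC)$ and is closed under the symmetric monoidal structure by \cref{lem:CImclosed}. By \cref{obs:envagree}, this recovers the fiberwise functor of \cref{constr:modulesvsalgebras}.

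Next we define $\ColV$. Given a marked algebra $Y: \PSh\Sym(\col\cO)\otimes\cV \to \PShO(\cO)$, restrict it along the inclusion $\PSh(\col \cO)\otimes\cV \to \PSh\Sym(\col\cO)\otimes\cV$, which is internally left adjoint in $\IPrV$. The restriction $\PSh(\col \cO)\otimes\cV \to \PShO(\cO)$ is then $\cV$-iL. Factor it via the colimit-dominant/fully faithful factorization system in $\PrV$ (\marked{Prop.}{3.16}) as
\[
\PSh(\col\cO)\otimes\cV \to \CIm \hookrightarrow \PShO(\cO).
\]
By the analogue of \cref{lem:cdomiLFS} for $\IPrV$ (\marked{Cor.}{3.35}), the first map is again internally left adjoint and colimit-dominant, so it is a marked module. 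Functoriality follows because factorization systems are functorial on arrow categories. We also check that this recovers \cref{constr:modulesvsalgebras} fiberwise, i.e.\ that $\Hom(o_1,o_2) \simeq \iHom_{\PShO(\cO)}(\yo o_1, \yo o_2) \simeq \Mul_\cO(o_1;o_2)$. This follows from the operadic Yoneda lemma \cref{prop:operadicyoneda} together with full faithfulness of $\CIm \subseteq \PShO(\cO)$.

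For the adjunction, we compute mapping spaces over a fixed map of spaces $f: X \to Y$. A map $\mathrm{Triv}_\cV(\cC) \to \cO$ over $f$ is a map $\Sym^{\otimes_\cV}\PShV(\cC) \to \PShO(\cO)$ in $\CAlg(\PrV)$ compatible with the markings. By the free–forgetful adjunction of $\Sym^{\otimes_\cV}$, this is the same as a map $\PShV(\cC) \to \PShO(\cO)$ in $\PrV$ compatible with $\PSh(X)\otimes\cV \to \PSh(Y)\otimes\cV \to \PShO(\cO)$. Since $\PShV(\cC)$ is generated under colimits and tensoring by the image of $X$, which lands in $\CIm$, such a map factors uniquely through $\CIm \hookrightarrow \PShO(\cO)$; equivalently, we use orthogonality of the colimit-dominant map $\PSh X \otimes \cV \to \PShV(\cC)$ against the fully faithful inclusion. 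This yields a map $\cC \to \ColV(\cO)$ in $\vCatV$. The identifications are natural and compatible with projection to $\Map_\Spaces(X,Y)$, so we obtain an adjunction relative to $\Spaces$.

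The main obstacle is making this naturality honest at the $\infty$-categorical level rather than pointwise. We intend to phrase it as a composite of two adjunctions between pullbacks. The first is induced by $\Sym^{\otimes_\cV} \dashv \mathrm{fgt}$ on arrow categories with fixed source, following the pattern of the proof of \cref{prop:freeunderlyingadj}, where mapping spaces were shown to be pullbacks. The second is the reflection $\Arr(\PrV)\to\Arr^{\mathrm{cdom}}(\PrV)$ relative to the source, given by the factorization system as in \marked{Cor.}{6.6}. A secondary check is that the forgetful image of an internal left adjoint is a $\cV$-internal left adjoint, which holds by the characterization of internal left adjoints in $\CAlg(\IPrV)$.
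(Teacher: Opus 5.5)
Your proposal is correct and follows essentially the paper's route. You construct $\mathrm{Triv}_\cV$ as the map of pullbacks induced by $\Sym^{\otimes_\cV}$, build $\ColV$ via the colimit-closed image $\CIm$, and verify the adjunction through the mapping-space calculation that the paper calls straightforward, which you carry out via orthogonality as in the proof of \cref{prop:freeunderlyingadj}. One terminological slip: the left-factor functor exhibits $\Arr^{\mathrm{cdom}}(\PrV) \subseteq \Arr(\PrV)$ as \emph{coreflective}, not reflective (it is right adjoint to the inclusion), and that is the direction your construction of $\ColV$ actually uses.
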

\begin{proof}
	Since $\CIm(\PSh \col\cO \to \PShO(\cO)) \subseteq \PShO(\cO)$ is a full sub-$\cV$-module category, the images of $\col \cO$ are still $\cV$-atomic in it, so this is indeed a marked module. Also, the first expression can be made functorial in $X$ using a map of pullback squares similar to \cref{defin:envelopefun}. It remains to show that the second expression is indeed right adjoint to it, so we obtain an adjunction relative to $\Spaces$, in particular fiberwise adjunctions. This follows from a straightforward calculation.
\end{proof}

\begin{obs}
	\label{obs:PShVinPShO}
	By this description, $\PShO(\cO)$ contains $\PShV\ColV(\cO)$ as the smallest full subcategory containing the image of the operadic Yoneda functor that is closed under colimits and $\cV$-tensoring. In particular, the $\cV$-category $\mathrm{vEnv}_\cV (\cO)$ contains $\ColV(\cO)$ as a full sub-$\cV$-category. 
\end{obs}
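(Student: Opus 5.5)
The plan is to read off both claims from the explicit description of $\ColV$ in the preceding proposition, combined with the marked-module description of valent $\cV$-categories from \marked{Thm.}{5.5}.

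\emph{First claim.} By the preceding proposition, $\ColV(\cO)$ corresponds to the marked module
\[ \col \cO \to \CIm\left(\PSh(\col \cO) \otimes \cV \to \PShO(\cO)\right). \]
By the marked-module correspondence, $\PShV(\ColV(\cO))$ is therefore the target of this marked module, namely the colimit-closed image $\CIm(\PSh(\col\cO)\otimes\cV \to \PShO(\cO))$.

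It remains to identify this $\CIm$ with the smallest full subcategory of $\PShO(\cO)$ that contains the image of $\yo_\cO$ and is closed under colimits and $\cV$-tensoring. I will use the argument of \cref{lem:CImclosed}, keeping only the $\cV$-linear half and dropping the symmetric monoidal half.

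\begin{itemize}
\item The functor $\PSh(\col\cO)\otimes\cV \to \PShO(\cO)$ is a morphism in $\PrV$.
\item Consider the full subcategory of objects $m$ in the $\CIm$ such that $m \otimes v$ again lies in the $\CIm$ for all $v \in \cV$.
\item This subcategory contains the image of the functor, since that image is closed under tensoring. It is also closed under colimits, since $-\otimes v$ preserves colimits. Hence it is the whole $\CIm$, so the $\CIm$ is closed under $\cV$-tensoring.
\item Conversely, any full subcategory containing $\yo_\cO(\col\cO)$ that is closed under colimits and tensoring contains the whole image of $\PSh(\col\cO)\otimes\cV$. The reason is that every object there is a colimit of objects of the form $\yo_x \otimes v$.
\end{itemize}

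These two minimality statements agree, which gives the first claim.

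\emph{Second claim.} By \cref{defin:envelopefun}, $\vEnv_\cV(\cO)$ is the marked module $\Sym\col\cO \to \PShO(\cO)$. Its hom objects are the internal homs $\iHom_{\PShO(\cO)}$ between tensor products of representables.

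\begin{itemize}
\item Restricting the marking along the summand inclusion $\col\cO \subseteq \Sym\col\cO$ gives a full sub-$\cV$-category. On objects it is the restriction of the marking, and its homs are $\iHom_{\PShO(\cO)}(\yo(o),\yo(o'))$. This is the standard description of full sub-$\cV$-categories via marked modules: restricting a marking and passing to the colimit/tensor-closed image yields the full subcategory with the same homs.
\item On the other side, $\ColV(\cO)$ has homs $\iHom_{\CIm}(\yo(o),\yo(o'))$.
\item These agree. The inclusion $\CIm \subseteq \PShO(\cO)$ is fully faithful and $\cV$-linear, so internal homs in the $\CIm$ are computed in $\PShO(\cO)$. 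Equivalently, both sides are $\Mul_\cO(o;o')$ by \cref{prop:operadicyoneda}.
\end{itemize}

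The main point requiring care is functoriality, that is, that these identifications are induced by a map of marked modules and not merely objectwise. To get this, I would exhibit the fully faithful inclusion $\CIm \hookrightarrow \PShO(\cO)$ as the fully faithful part of the colimit-dominant/fully faithful factorization (\cref{lem:cdomiLFS}, $\cV$-linear version) of the composite
\[ \PSh(\col\cO)\otimes\cV \to \PSh(\Sym\col\cO)\otimes\cV \to \PShO(\cO). \]
Then \cref{prop:fflemma}, in its $\cV$-module form, shows that full faithfulness is detected on homs.
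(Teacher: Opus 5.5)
Your proposal is correct and is essentially the paper's own (one-line) argument. The observation is read off from the preceding proposition's description of $\ColV(\cO)$ as the marked module $\col\cO \to \CIm(\PSh(\col\cO)\otimes\cV \to \PShO(\cO))$. Closure of this $\CIm$ under $\cV$-tensoring, as in \cref{lem:CImclosed}, identifies it with the colimit- and tensor-closure of the image of $\yo_\cO$. Its full inclusion into $\PShO(\cO) \simeq \PShV\vEnv_\cV(\cO)$ then exhibits $\ColV(\cO)$ as a full sub-$\cV$-category.

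Your closing appeal to \cref{prop:fflemma} is unnecessary. The map on presheaf categories is a full inclusion by construction, so you only need the trivial direction: full faithfulness gives isomorphisms on internal homs, which you had already checked directly.
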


\begin{reminder}
	\label{reminder:equifibfree}
	By \cite[Obs. 2.1.17]{equifib}, the symmetric algebra functor $\Sym: \Spaces \to \CAlg(\Spaces)$ is a subcategory inclusion, and becomes fully faithful when passing to the wide subcategory $\CAlg(\Spaces)^{\mathrm{equf}}$ of equifibered maps in the sense of \cref{defin:equifibered}.
\end{reminder}	



\begin{prop}
	\label{prop:valentenv}
	The valent envelope functor $\vEnv_\cV : \vOp(\cV) \to \CAlg(\vCatV)$ is a subcategory inclusion. Its image consists of:
	\begin{itemize}
		\item Those symmetric monoidal valent $\cV$-categories $\cC$ whose underlying $\E_\infty$-space $\ob \cC$ is equifibered, and
		\item Those symmetric monoidal $\cV$-enriched functors $F: \cC \to \cD$ such that the induced map of spaces $\ob \cC \to \ob \cD$ is equifibered, and the induced functor $F_!: \PShV(\cC) \to \PShV(\cD)$ between enriched presheaf categories is internally left adjoint in $\CAlg(\PrV)$.
	\end{itemize}
\end{prop}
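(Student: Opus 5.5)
The plan is to compare the two pullback descriptions in \cref{defin:envelopefun} one factor at a time. The dashed map $\vOp(\cV) \to \CAlg(\vCatV)$ is induced by two functors. The first is $\Sym : \Spaces \to \CAlg(\Spaces)$ on the bottom. The second is the inclusion $\Arr^{\mathrm{iL,cdom}}(\CAlg(\PrV)) \hookrightarrow \Arr^{\cV\text{-iL,cdom}}(\CAlg(\PrV))$ on the top. Both are subcategory inclusions: the first by \cref{reminder:equifibfree}; the second because being internally left adjoint in $\CAlg(\IPrV)$ is a property of a morphism of $\CAlg(\PrV)$ that implies internal left adjointness in $\IPrV$. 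A map of pullbacks whose components are monomorphisms in $\lCat$, over an identity on the common base $\CAlg(\PrV)$, is again a monomorphism, since monomorphisms are stable under pullback and composition. Hence $\vEnv_\cV$ is a subcategory inclusion, and it remains to identify its image.

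\emph{Morphisms.} Take a morphism $F : \cC \to \cD$ of $\CAlg(\vCatV)$ between objects in the image, given by a square with source $\ob\cC = \Sym X \to \Sym Y = \ob \cD$ and target $F_! : \PShV(\cC) \to \PShV(\cD)$. It lies in the image exactly when two things hold. First, the source map comes from a map of spaces $X \to Y$. By \cref{reminder:equifibfree}, this is equivalent to the map $\ob\cC \to \ob\cD$ being equifibered. Second, the square, regarded as a morphism of $\Arr^{\cV\text{-iL,cdom}}$, lies in the subcategory $\Arr^{\mathrm{iL,cdom}}(\CAlg(\PrV))$. Morphisms in the arrow category are squares, so this asks that both horizontal maps are internal left adjoints in $\CAlg(\IPrV)$. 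The map $\PSh\Sym X\otimes\cV \to \PSh\Sym Y\otimes \cV$ is automatically one by \cref{lem:PSymisiL}. So the only remaining condition is that $F_!$ be internally left adjoint in $\CAlg(\IPrV)$, which is the stated condition. Because the inclusion of arrow categories is full on squares whose horizontal maps are iL, nothing more is needed.

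\emph{Objects.} Necessity is clear: $\ob \vEnv_\cV(\cO) = \Sym \col\cO$ is free, hence equifibered over $\Sym(*)$ by \cref{reminder:equifibfree}. For sufficiency, take $\cC$ with $\ob\cC$ equifibered. By \cite[Obs.~2.1.17]{equifib} this means $\ob \cC \simeq \Sym X$, where $X$ is the space of indecomposables. Since $\ob\cC\to\PShV(\cC)$ is symmetric monoidal, the marked module is the symmetric monoidal extension of $y : X \to \PShV(\cC)$. We must show that $Y : \PSh\Sym X\otimes\cV \to \PShV(\cC)$ lies in $\Arr^{\mathrm{iL,cdom}}$.
\begin{itemize}
\item Colimit-dominance is immediate from the marked-module condition. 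Indeed, $Y$ is the $\cV$-linear extension of $\Sym X \to \PShV(\cC)$, whose image generates.
\item It also follows that $Y$ is internally left adjoint in $\IPrV$, i.e.\ that all $y(x_1)\otimes\dots\otimes y(x_n)$ are $\cV$-atomic. This is the first bullet of \cref{obs:hereditary}.
\end{itemize}

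\emph{Main obstacle.} The difficulty is upgrading the above to internal left adjointness in $\CAlg(\IPrV)$, i.e.\ establishing the unit and hereditary conditions of \cref{obs:hereditary}. My approach is to show that the lax symmetric monoidal structure on $Y^\rR$ is strong. By \cref{lem:iLcdomdiagram}-style reasoning, it suffices to check this on the generators $y(\underline{x})$. There one has to compute $\Hom_\cC(\underline{x}, \underline{x}'\otimes\underline{x}'')$ in terms of the decompositions $\underline{x} = \underline{x}|_S \otimes \underline{x}|_T$. Equifiberedness of $\ob\cC$ is exactly what makes the tuple decompositions of $\underline{x}$ correspond to partitions $S \sqcup T$. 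I would exploit this by reducing, as in \cref{prop:iLonsequ}, to finite tuples of colors, and then try to verify that the comparison maps are isomorphisms summand by summand. I expect this verification to be the genuinely delicate point. If it does not go through in this generality, the object clause would have to be read together with the internal-left-adjointness requirement. That requirement already appears for morphisms, and it is automatic on identities; since that would change the result being proved, the approach above is the one to carry out.
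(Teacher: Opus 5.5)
\textbf{Where the proposal matches the paper.} Your proof that $\vEnv_\cV$ is a subcategory inclusion is the paper's proof. So is your reading of the morphism clause. The dashed functor in \cref{defin:envelopefun} is a map of pullbacks whose components $\Sym$ and $\Arr^{\mathrm{iL,cdom}} \hookrightarrow \Arr^{\cV\text{-iL,cdom}}$ are subcategory inclusions (\cref{reminder:equifibfree}). The image is then read off componentwise; that is the paper's whole argument.

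\textbf{The gap: the object clause.} The step you call the main obstacle cannot be carried out, because the claim is false. Freeness of $\ob\cC$ constrains only the $\E_\infty$-space of objects. The unit and hereditary conditions of \cref{obs:hereditary}, by contrast, are conditions on the hom-objects of $\cC$, about which equifiberedness says nothing.

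A concrete counterexample: take $\cV = \Spaces$ and $\cC = (\mathrm{Fin}^{\op}, \sqcup)$. Mark it by the core inclusion $\Sym(*) \simeq \mathrm{Fin}^{\simeq} \to \mathrm{Fin}^{\op} \to \PSh(\mathrm{Fin}^{\op})$.
\begin{itemize}
\item This is a symmetric monoidal, even univalent, $\Spaces$-category with free object space.
\item However, for $n \geq 1$ we have $\Map_{\mathrm{Fin}^{\op}}(\underline{n}, \emptyset) \simeq \Map_{\mathrm{Fin}}(\emptyset, \underline{n}) \simeq *$ rather than $\emptyset$.
\item So $\yo_{\underline{1}}$ violates the unit condition and is not $\otimes$-atomic. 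Hence $\cC$ is not in the image of $\vEnv_\Spaces$.
\end{itemize}

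\textbf{How to fix it.} The reading you dismissed at the end is the correct one. What the pullback in \cref{defin:envelopefun} actually gives is the following: an object $\cC$ lies in the image iff $\ob\cC \simeq \Sym X$ is free \emph{and} the marking $\PSh\Sym X \otimes \cV \to \PShV(\cC)$ lies in $\Arr^{\mathrm{iL,cdom}}$. Equivalently, $X \to \PShV(\cC)$ is a marked algebra in the sense of \cref{thm:markedalgebras}. The paper's one-line proof establishes exactly this version; the object bullet as printed omits the second condition. Once it is included, sufficiency for objects is tautological. There is then nothing to verify summand by summand, and your proof reduces to the paper's.
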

\begin{proof}
	Using \cref{reminder:equifibfree}, we have defined the functor $\vEnr_\cV$ as a pullback of subcategory inclusions in \cref{defin:envelopefun}.
\end{proof}

\begin{warning}
	\label{warning:envuniv}
	If $\cO \in \Op(\cV)$ is univalent, the valent envelope $\vEnv_\cV (\cO)$ is not necessarily univalent as an enriched category. This is true for $\cV = \Spaces$ by \cite[Prop. 2.4.11]{haugsengenv}; also the converse statement is generally true since $X \to \Sym X$ is a (full) subcategory inclusion.
	
	However if $\cV = *$ is the category of $(-2)$-categories, i.e.\ the point, then $\Pr_{*} = \{*\}$ so there exist precisely two univalent $*$-operads, the empty operad and the terminal operad described by the marked algebra $(* \to *)$. But $\vEnv_\cV(* \to *) \simeq (\Sym (*) \to *)$, which is not univalent.
	
	As was pointed out to us by Thomas Blom, this example immediately extends to any $\cV$ admitting a zero object, in particular vector spaces, derived categories or $\Sp$.
\end{warning}

\begin{rem}
	For this theorem, it is very important that we work with valent $\cV$-operads and the valent envelope functor. Extending it to the univalent envelope $\Env_\cV := u \vEnv_\cV : \Op(\cV) \to \CatV$ is difficult, unless they agree like for $\Spaces$, c.f.\ \cref{warning:envuniv}. In future work \cite{cauchyoperads}, we show that one can recover $\cO$ from $\Env_\cV(\cO)$ iff $\cO \subseteq \Env_\cV(\cO)$ is closed under absolute colimits for $\cV$-enrichment; see also the related statement in \cref{rem:cauchy}. 
\end{rem}


\begin{cor}
	\label{cor:enrichedtensordisj}
	Let $\cO, \cP$ be $\cV$-enriched operads. A symmetric monoidal $\cV$-functor $F: \vEnv_\cV(\cO) \to \vEnv_\cV(\cP)$ is of the form $\vEnv_\cV(f)$ for some map of $\cV$-operads $f: \cO \to \cP$, iff it is weakly $\otimes$-disjunctive and the underlying map of $\E_\infty$-spaces $f_0 : \col \cO \to \col \cP$ is equifibered.
\end{cor}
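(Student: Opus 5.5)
By \cref{prop:valentenv}, the plan is to reduce the statement to a claim about enriched presheaf categories. That proposition identifies the image of $\vEnv_\cV$ on morphisms: a symmetric monoidal $\cV$-functor $F: \vEnv_\cV(\cO) \to \vEnv_\cV(\cP)$ lies in the image exactly when two things hold. First, the map on objects $\Sym \col \cO \to \Sym \col \cP$ must be equifibered. Second, $F_! : \PShV(\vEnv_\cV \cO) \to \PShV(\vEnv_\cV \cP)$ must be internally left adjoint in $\CAlg(\IPrV)$.

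By \cref{reminder:equifibfree}, a map $\Sym \col\cO \to \Sym \col \cP$ is equifibered iff it is of the form $\Sym(f_0)$ for a map of spaces $f_0 : \col\cO \to \col\cP$. This accounts for the equifiberedness hypothesis. It remains to prove that $F_!$ is internally left adjoint iff $F$ is weakly $\otimes$-disjunctive in the appropriate enriched sense.

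For this I would use \cref{obs:envagree}, which gives $\PShV(\vEnv_\cV \cO) \simeq \PShO(\cO)$. Internal left adjointness of $F_!$ then unwinds to two conditions:
\begin{itemize}
\item the right adjoint $F^*$, which is restriction along $F$, is strong symmetric monoidal;
\item $F^*$ is $\cV$-linear and cocontinuous.
\end{itemize}
The second condition is automatic for restriction of enriched presheaves, since it is evaluation-wise and therefore preserves colimits and tensorings.

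The main step is an enriched analogue of \cref{thm:dayweaklydisj}. I would argue exactly as in that proof. Horizontal right adjointability of the square relating $\otimes_!$ and $F_!$ can be checked on generators $\yo_{d_1}\otimes \yo_{d_2}$, because all functors involved preserve colimits and $\cV$-tensoring.

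Evaluating $F^*(\yo_{d_1 \otimes d_2})$ at an object $c$ gives $\Hom_{\cP}(Fc, d_1\otimes d_2)$. Evaluating $\otimes_!(F^*\yo_{d_1}\boxtimes F^*\yo_{d_2})$ at $c$ gives the enriched coend
\[
\oint^{c_1,c_2} \Hom(c, c_1\otimes c_2)\otimes \Hom(Fc_1,d_1)\otimes \Hom(Fc_2,d_2).
\]
The Beck--Chevalley map between these is precisely the comparison map whose invertibility defines enriched weak $\otimes$-disjunctivity, i.e.\ cofinality of the enriched weighted colimit. So I would take this as the meaning of "weakly $\otimes$-disjunctive" for $\cV$-functors. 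For $\cV=\Spaces$ it recovers \cref{defin:tensordisjunctive} via the right-fibration argument in \cref{thm:dayweaklydisj}.

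The expected obstacle is making sure this comparison is compatible with the lax structure on all $n$-ary operations. As in the proof of \cref{thm:dayweaklydisj}, one reduces to binary and nullary active maps. For the nullary case, strong unitality of $F^*$ holds automatically because $F$ is symmetric monoidal between envelopes: the units correspond to the empty tuple.

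Once both directions hold, the "if" direction follows by feeding the pair $(\Sym f_0, F_!)$ into \cref{prop:valentenv} to produce $f$. The "only if" direction follows because $\vEnv_\cV(f)_!$ is internally left adjoint by construction, and $f_0$ induces an equifibered $\Sym f_0$.
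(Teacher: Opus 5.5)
Your proposal is correct and is essentially the argument the paper has in mind: the corollary is stated without proof as a direct consequence of \cref{prop:valentenv} and \cref{reminder:equifibfree}, with an enriched reading of \cref{thm:dayweaklydisj} identifying internal left adjointness of $F_!$ with weak $\otimes$-disjunctivity. Since the paper never defines that notion for $\cV$-functors, it can only mean invertibility of the Beck--Chevalley map you write down. One small correction: strong unitality of $F^*$ is not automatic for an arbitrary symmetric monoidal $\cV$-functor between envelopes (such a functor can send a color to the empty tuple), but it does hold once the object map is $\Sym(f_0)$ and hence arity-preserving, which is exactly what your equifiberedness hypothesis provides.
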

%

In case $\cV = \Spaces$, we can prove a similar statement for the univalent envelope, where the condition that the underlying map on $\E_\infty$-spaces of object is equifibered can be dropped. Extending this result to general enrichment fails due to \cref{warning:envuniv}, we will discuss the extent of this failure in \cite{cauchyoperads}.

\begin{cor}
	\label{cor:tensordisjchar}
	Let $\rO, \rP \in \Op$ be Lurie-operads and $F: \Env \rO\to \Env \rP$ a symmetric monoidal functor. Then the following are equivalent:
	\begin{enumerate}[(1)]
		\item There exists a map of operads $f: \rO \to \rP$ such that $F \simeq \Env(f)$,
		\item $F$ is equifibered,
		\item $F$ is $\otimes$-disjunctive,
		\item $F$ is weakly $\otimes$-disjunctive,
		\item The induced map $F_!: \PSh \Env \rP \to \PSh \Env \rO$ in $\CAlg(\Pr)$ is internally left adjoint.
	\end{enumerate}
\end{cor}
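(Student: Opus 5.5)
I will prove the cycle $(1)\Rightarrow(2)\Rightarrow(3)\Rightarrow(4)\Rightarrow(5)\Rightarrow(1)$, with $(5)$ read as internal left adjointness of $F_!:\PSh \Env \rO \to \PSh \Env \rP$.

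The first four implications are quick. For $(1)\Rightarrow(2)$, equifiberedness of $\Env(f)$ is \cite[Thm. 3.2.13]{equifib}, already used above. For $(2)\Rightarrow(3)$, write $F = \id_{\Env \rP}\circ F$ and apply \cref{lem:pastingequif}: $F$ is equifibered, and $\id_{\Env\rP}$ is $\otimes$-disjunctive by \cref{obs:tensordisj}. The implication $(3)\Rightarrow(4)$ is trivial, since an equivalence is colimit-cofinal. Finally $(4)\Rightarrow(5)$ is exactly \cref{thm:dayweaklydisj}: $F^*$ preserves colimits and is strong symmetric monoidal, so $F_!$ is internally left adjoint in $\CAlg(\IPr)$.

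The substantive step is $(5)\Rightarrow(1)$, and I would route it through marked algebras. Composing $F_!$ with the $\otimes$-atomically generating marking $\underline{\rO}^\simeq \to \PSh\Env\rO$ of \cref{cor:colorstensoratomic} gives a map
\[
\col\rO \to \PSh\Env\rP ,
\]
which is a $\otimes$-atomic marking by \cref{obs:iLpresatomic}. The question is then whether it factors through the colors $\underline{\rP}^\simeq$ of $\rP$, i.e.\ whether $F$ sends singlets $(o)$ to singlets. For this I would characterize the $\otimes$-atomic objects of $\PSh\Env\rP$ that are representable.

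\begin{itemize}
\item A representable $\yo_{(p_1,\dots,p_n)}$ is $\otimes$-atomic only if $n=1$.
\item For $n=0$, the hom to the unit must be $\emptyset$ (by \cref{obs:tensoratomic}), whereas $\Map(1,1)=*$ in $\Env\rP$.
\item For $n\ge 2$, \cref{rem:indecomp} makes $(p_1,\dots,p_n)$ a retract of a shorter tuple. This is impossible because the functor $\Env\rP \to \mathrm{Fin}$ (via $\Env\rP\to\Env(\mathrm{Comm})$) sends morphisms to maps of finite sets, and a retract of $\underline n$ through $\underline k$ forces $k\ge n$.
\end{itemize}

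Hence $F(o)$ is a single color $f(o)\in\underline{\rP}$, functorially in $\underline{\rO}$. Since $F$ is symmetric monoidal, it is determined on objects as $(o_1,\dots,o_n)\mapsto(f(o_1),\dots,f(o_n))$.

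The remaining and hardest point is to produce the actual map of operads $f:\rO\to\rP$ with $\Env(f)\simeq F$. Concretely, $F$ must act on multimorphisms in a way compatible with the universal property of $\Env$ as a left adjoint. I would use the adjunction $\Env \dashv$ forget: the composite $\rO \to \Env\rO \xrightarrow{F} \Env\rP$ is a map of operads. Because it lands on singlets, it should factor through the full suboperad $\rP\subseteq\Env\rP$ of singlets; this is where I need that $\rP \to \Env\rP$ identifies $\rP$ with that full suboperad, i.e.\ $\Mul_{\Env\rP}((p_i);(p))\simeq\Mul_\rP(p_1,\dots,p_n;p)$. Call the resulting map $f$. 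Then $\Env(f)$ and $F$ agree after restriction along $\rO\to\Env\rO$, so by the universal property of the envelope $F\simeq\Env(f)$.

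I expect this factorization through the singlet suboperad to be the main obstacle: the check that the image of each $o$ is a singlet must be made coherently, not just objectwise. I would handle it by noting that the singlets form a full subcategory, so factoring through the full suboperad is a property that can be checked on objects.
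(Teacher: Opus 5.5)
Your proposal is correct. The implications $(1)\Rightarrow(2)\Rightarrow(3)\Rightarrow(4)\Rightarrow(5)$ are exactly the paper's argument; the two proofs differ only in $(5)\Rightarrow(1)$.

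\textbf{The paper's route.} It restricts $F_!\circ\yo_\rO$ to its colimit-closed image. This gives a marked algebra and hence a Lurie-operad $\rP'$, with $\Env(\rP')\subseteq\Env(\rP)$ a full symmetric monoidal subcategory. It then uses Haugseng's identification $\Env(\rP)^\simeq\simeq\Sym(\rP^\simeq)$ \cite[Prop. 2.4.11]{haugsengenv} and an indecomposables argument for free $\E_\infty$-spaces \cite[Lem. 2.1.3]{equifib} to see that colors go to colors. Finally it reads off $f$ from the equivalence between univalent marked $\Spaces$-algebras and Lurie-operads (\cref{cor:OpS}).

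\textbf{Your route.} You characterize the $\otimes$-atomic representables in $\PSh\Env\rP$ directly:
\begin{itemize}
\item the unit $\yo_{()}$ violates the requirement $\iHom(m,1_\cM)\simeq\emptyset$ of \cref{obs:tensoratomic}, since that hom contains the identity;
\item tuples of length $n\geq 2$ are excluded by the retract argument of \cref{rem:indecomp} combined with the cardinality functor $\Env\rP\to\mathrm{Fin}$.
\end{itemize}
You then obtain $f$ from the adjunction $\Env\dashv$ forget together with full faithfulness of $\rP\to\Env\rP$.

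\textbf{Comparison.} Your route is more elementary and self-contained. It needs neither Haugseng's computation nor the $\E_\infty$-space lemma. That lemma's use in the paper depends on the map being a full inclusion, since a general map of free $\E_\infty$-spaces need not preserve indecomposables. Your route also yields $F\simeq\Env(f)$ as symmetric monoidal functors directly, rather than via $F_!\simeq\PSh\Env(f)$ and a restriction to representables. The paper's route, in exchange, stays inside the marked-algebra formalism and reuses its comparison theorem.

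Your handling of coherence in the last step is right. Since $\rO^\otimes\to\Env(\rP)^\otimes$ preserves inert morphisms, factoring through the full suboperad of singlets is a condition on colors only.
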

\begin{proof}
	$(1) \Rightarrow (2)$ is \cite[Thm. 3.2.13]{equifib}, $(2) \Rightarrow (3)$ follows from \cref{lem:pastingequif}, $(3) \Rightarrow (4)$ by definition and $(4) \Leftrightarrow (5)$ by \cref{thm:dayweaklydisj}. 
	
	It remains to prove $(5) \Rightarrow (1)$. Consider the marked algebra $\yo_\rO: \col \rO \to \PSh \Env \rO$ asssociated to $\rO$, then the composition $F_! \yo_\rO : \col \rO \to \PSh \Env \rO \to \PSh \Env \rP$ is still internally left adjoint, and thereby restricts to a marked algebra $\col \cO\to \CIm(F_! \yo_\rO)$ to which we associate a Lurie-operad $\rP'$. Since $\Sym \col \rO \to \PSh \Env \rP$ factors through the representables by construction, $\Env(\rP') \subseteq \Env(\rP)$ is a full symmetric monoidal subcategory.
	
	By \cite[Prop. 2.4.11]{haugsengenv} we know $\Env(\rP')^\simeq \simeq \Sym(\rP'^\simeq)$ and similarly for $\rP$, so we obtain a full inclusion of $\E_\infty$-spaces $\Sym(\rP'^\simeq) \subseteq \Sym(\rP^\simeq)$. This restricts to a full inclusion $\rP'^\simeq \subseteq \rP^\simeq$ on free objects, since any map of $\E_\infty$-spaces on $\pi_0$ preserves indecomposables, compare \cite[Lem. 2.1.3]{equifib}. In other words, $F_! \circ \yo_\rO : \col \rO \to \PSh \Env \rP$ factors through $\col \rP$, and thereby induces a map of univalent marked algebras, i.e.\ a map of operads $f$.
\end{proof}

\section{Algebras over enriched operads}
\label{sec:algebras}

In this section, given a $\cV$-enriched operad $\cO$ and $\cM \in \CAlg(\PrV)$, we construct a category of $\cO$-algebras $\Alg_\cO(\cM)$. We discuss limits and colimits of $\cO$-algebras as well free $\cO$-algebras, enhancing several results of \cite[\HAsec{3}]{HA} to the enriched setting.

\begin{constr}
	\label{constr:algebratensoring}
	Given any $\cM \in \CAlg(\Pr)$, by \cref{prop:CAlgPrVclosed} the category \\ \mbox{$\Fun^{\rL, \otimes}_{\cV}(\PSh(\Sym X) \otimes \cV, \cM) \simeq \Fun(X, \cM)$} admits a sifted-colimit-preserving right tensoring by $\Endo^{\rL, \otimes}_{\cV}(\PSh(\Sym X) \otimes \cV)$ via precomposition.
\end{constr}

\begin{defin}
	\label{defin:alg}
	Given a $\cV$-enriched operad $\cO \in \vOp_X(\cV)$ and $\cM \in \CAlg(\Pr)$, we define the category of \emph{$\cO$-algebras} in $\cM$ as
	\[
	\Alg_\cO(\cM) := \RMod_\cO(\Fun(X, \cM)) \; .
	\]
\end{defin}

\begin{obs}
	\label{prop:AlgPShO}
	Since by \cref{thm:EMPrV}, the operadic presheaf category $\PShO(\cO)$ is the Kleisli-object of $\cO$ regarded as a monad on $\PSh \Sym(\col \cO) \otimes \cV$, we can rewrite
	\[
	\Alg_\cO(\cM) \simeq \RMod_\cO(\Fun^{\rL, \otimes}_\cV(\PSh \Sym(\col \cO) \otimes \cV, \cM)) \simeq \Fun^{\rL, \otimes}_\cV(\PShO(\cO), \cM)  \; .
	\] 
\end{obs}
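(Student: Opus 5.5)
The plan is to read the two equivalences off the universal properties already available, without calculating anything. For the first one I will unwind \cref{constr:algebratensoring}. Under the equivalence $\Fun(X, \cM) \simeq \Fun^{\rL, \otimes}_\cV(\PSh \Sym X \otimes \cV, \cM)$ of \cref{cor:freeforgetfulfunctors}, the right action of $\Endo^{\rL, \otimes}_\cV(\PSh \Sym X \otimes \cV)$ on $\Fun(X, \cM)$ is by definition precomposition. So $\Alg_\cO(\cM) = \RMod_\cO(\Fun(X,\cM))$ is identified with $\RMod_\cO(\Fun^{\rL, \otimes}_\cV(\PSh \Sym X \otimes \cV, \cM))$, taking right modules for the precomposition action. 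The only thing to check here is that this equivalence is one of right $\Endo^{\rL,\otimes}_\cV$-module categories, not merely of underlying categories. This holds because the action on $\Fun(X,\cM)$ was defined by transporting the precomposition action.

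For the second equivalence I will invoke \cref{thm:EMPrV}. It says that in $\CAlg(\IPrV)$ the Eilenberg--Moore object $\LMod_\cO(\PSh \Sym X \otimes \cV) = \PShO(\cO)$ also serves as the Kleisli object. Concretely, it gives
\[\Fun^{\rL,\otimes}_\cV(\LMod_T(\cM'), \cN) \simeq \RMod_T(\Fun^{\rL,\otimes}_\cV(\cM', \cN)).\]
Specializing to $\cM' = \PSh \Sym X \otimes \cV$, $T = \cO$ and $\cN = \cM$ gives the second equivalence directly. I also need the right $T$-module structure on $\Fun^{\rL,\otimes}_\cV(\cM',\cN)$ in \cref{thm:EMPrV} to be the precomposition action. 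That is the standard meaning of the Kleisli universal property, in which a map out of the Kleisli object is a map out of $\cM'$ with a right $T$-action.

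The main subtlety is the size and closedness bookkeeping. The paper defines $\Alg_\cO(\cM)$ for $\cM \in \CAlg(\Pr)$, while \cref{thm:EMPrV} concerns $\cV$-linear targets. I would read $\cM$ as an object of $\CAlg(\PrV)$, or apply $-\otimes\cV$ and use the free--forgetful adjunction, so that both sides make sense. In addition, forming $\RMod_\cO$ needs the action to be compatible with the relevant colimits, which are geometric realizations. This is supplied by \cref{prop:CAlgPrVclosed}: composition preserves sifted colimits in the left variable. Beyond this, the argument is just the chain of these two identifications.
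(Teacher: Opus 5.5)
Your proposal is correct and matches the paper's argument: the first equivalence is the transported precomposition action of \cref{constr:algebratensoring}, and the second is the Kleisli universal property of $\PShO(\cO)$ supplied by \cref{thm:EMPrV}. Two small remarks follow. First, reading $\cM \in \CAlg(\PrV)$ is the right fix for the paper's $\CAlg(\Pr)$; your alternative of tensoring with $\cV$ would instead compute algebras in $\cM \otimes \cV$. Second, colimit-compatibility is not needed to form $\RMod_\cO$, only for the Eilenberg--Moore/Kleisli identification, which \cref{thm:EMPrV} already provides.
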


\begin{defin}
	\label{defin:alggeneral}
	More generally for $\cO\in \vOp_X(\cV)$ and $\cP \in \vOp_Y(\cV)$, we define a category $\Alg_{\cO}(\cP)$ of \emph{$\cO$-algebras in $\cP$} as the pullback
	\[
	\Map_{\Spaces}(X, Y) \times_{ \Fun(X, \PShO(\cP)) } \RMod_\cO(\Fun(X, \PShO(\cP))) \; .
	\]
	Note that $\Alg_\cO(\cP)^\simeq \simeq \Map_{\vOp(\cV)}(\cO, \cP)$ by \cref{prop:AlgPShO}.
\end{defin}

\begin{prop}
	\label{prop:algspaces}
	Let $\cO \in \vOp(\Spaces)$ be a $\Spaces$-enriched operad and $X \to \rO$ its associated flagged Lurie-operad via \cref{thm:vOpS}. Then for $\cM\in \CAlg(\Pr)$, we have $\Alg_\cO(\cM) \simeq \Alg_{\rO}(\cM)$, i.e.\ \cref{defin:alg} is compatible with the corresponding notion in \cite{HA}. 
	
	If $\cP \in \vOp(\Spaces)$ with associated flagged Lurie-operad $Y \to \rP$, we similarly have $\Alg_{\cO}(\cP) \simeq \Alg_{\rO}(\rP) \times_{\Fun(X, \underline{\rP})} \Fun(X, Y)$, in particular if $\cP$ is univalent then $\Alg_{\cO}(\cP) \simeq \Alg_{\rO}(\rP)$.
\end{prop}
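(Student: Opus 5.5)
The plan is to reduce both statements to the universal property of the envelope and of presheaf categories. By \cref{prop:AlgPShO} we have $\Alg_\cO(\cM) \simeq \Fun^{\rL, \otimes}(\PSh^\otimes_{\Spaces}(\cO), \cM)$, and by \cref{obs:operadicpshspaces} we have $\PSh^\otimes_{\Spaces}(\cO) \simeq \PSh \Env \rO$. Since $\PSh : \CAlg(\Cat) \to \CAlg(\Pr)$ is left adjoint to the forgetful functor (Day convolution, \HA{Rem.}{4.8.1.8}), we get $\Fun^{\rL,\otimes}(\PSh \Env \rO, \cM) \simeq \Fun^{\otimes}(\Env \rO, \cM)$. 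This is $\Alg_{\rO}(\cM)$ by the universal property of the envelope \HA{Prop.}{2.2.4.9}. Both steps are $\infty$-categorical (not just on mapping spaces) because they come from $\Cat$-enriched adjunctions; the relevant $2$-functoriality is provided by \cref{cor:freeforgetfulfunctors}. The one point I would check is that the composite equivalence is the identification made in \cref{thm:vOpS}, i.e.\ that the marked algebra $X \to \PSh\Env\rO$ used there agrees with the Yoneda-type map of \cref{obs:operadicpshspaces}. This holds by construction in \cref{con:associatedmarked}.

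For the second claim, I would unwind \cref{defin:alggeneral}. The term $\RMod_\cO(\Fun(X, \PShO(\cP)))$ is $\Alg_\cO(\PSh\Env\rP) \simeq \Fun^\otimes(\Env \rO, \PSh\Env\rP) \simeq \Alg_\rO(\PSh \Env \rP)$ by the first part, compatibly with restriction to $\Fun(X, \PSh\Env\rP)$. Taking the fiber product with $\Map_\Spaces(X,Y)$ over $\Fun(X, \PSh\Env\rP)$ forces the composite $X \to \underline{\rO} \to \PSh\Env\rP$ to factor through $Y \to \PSh \Env \rP$. Since $Y \to \underline{\rP}$ is surjective, this is equivalent to first asking that $X \to \underline{\rO}$ factor through the full image $\underline{\rP} \subseteq \PSh\Env\rP$ on colors, and then choosing a lift along $Y \to \underline{\rP}$.

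The key step is then to show that an $\rO$-algebra in $\PSh\Env\rP$ whose colors land in $\underline{\rP}$ is the same as an operad map $\rO \to \rP$. Here I would use that $\rP \to \Env\rP \to \PSh \Env\rP$ is a fully faithful map of operads: the image operad of \cref{def:underlyingflagged}, as in the proof of \cref{thm:vOpS}. A map of operads into a fully faithful suboperad is determined by its colors, so $\Alg_\rO(\rP) \simeq \Alg_\rO(\PSh\Env\rP) \times_{\Fun(X,\PSh\Env\rP)} \Fun(X, \underline{\rP})$. This uses that $\underline{\rO}$ is generated by the image of $X$ (surjectivity), so the condition can be checked on $X$. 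Pasting pullbacks then gives $\Alg_\cO(\cP) \simeq \Alg_\rO(\rP) \times_{\Fun(X,\underline{\rP})} \Fun(X,Y)$, where the morphisms of $\Fun(X, Y)$ are automatically invertible since $X$ and $Y$ are spaces.

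In the univalent case, $Y \to \underline{\rP}$ is a subcategory inclusion with $Y = \underline{\rP}^\simeq$. The restriction $\Alg_\rO(\rP) \to \Fun(X,\underline{\rP})$ lands in $\Fun(X,\underline{\rP}^\simeq) = \Fun(X, Y)$, since $X$ is a space. So the pullback is trivial and $\Alg_\cO(\cP)\simeq\Alg_\rO(\rP)$. I expect the main obstacle to be the suboperad step above: making the comparison precise as categories, with natural transformations and not just objects. I would handle it through the orthogonality of surjective and fully faithful maps of operads, applied in $\Fun([1],-)$ or cotensored form, exactly as in \cref{prop:freeunderlyingadj}.
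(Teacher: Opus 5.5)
Your argument for $\Alg_\cO(\cM)\simeq\Alg_\rO(\cM)$, and for the general formula $\Alg_\cO(\cP)\simeq\Alg_\rO(\rP)\times_{\Fun(X,\underline{\rP})}\Fun(X,Y)$, is correct and follows the paper's route. You use \cref{prop:AlgPShO} together with $\PSh^\otimes_{\Spaces}(\cO)\simeq\PSh\Env\rO$, then the universal properties of Day convolution and of $\Env$. For the relative version you do the pullback bookkeeping (the full suboperad $\rP\subseteq\PSh\Env\rP$, and surjectivity of $X\to\underline{\rO}$) that the paper compresses into ``considering how mapping spaces are calculated''.

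The univalent step does not work. You claim that the restriction $\Alg_\rO(\rP)\to\Fun(X,\underline{\rP})$ lands in $\Fun(X,\underline{\rP}^\simeq)$ because $X$ is a space. This holds on objects but fails on morphisms. A map of $\rO$-algebras restricts to a natural transformation between functors $X\to\underline{\rP}$, and its components are arbitrary morphisms of $\underline{\rP}$. But $\Fun(X,\underline{\rP}^\simeq)\simeq\Fun(X,\underline{\rP})^\simeq$ is only the maximal subgroupoid.

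So the pullback is not trivial. It keeps exactly those algebra maps that are invertible on the colors in the image of $X$. Since $X\to\underline{\rO}$ is surjective and $\Alg_\rO(\rP)\to\Fun(\underline{\rO},\underline{\rP})$ is conservative, these are exactly the equivalences. What you actually get is $\Alg_\cO(\cP)\simeq\Alg_\rO(\rP)^\simeq\simeq\Map_{\Op}(\rO,\rP)$, not $\Alg_\rO(\rP)$.

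This defect lies in the clause itself, which the paper's one-line proof does not address, rather than only in your argument. By the same reasoning, $\Alg_\cO(\cP)$ as defined in \cref{defin:alggeneral} is always an $\infty$-groupoid. It is the pullback of the conservative forgetful functor $\RMod_\cO(\Fun(X,\PShO(\cP)))\to\Fun(X,\PShO(\cP))$ along a functor out of the space $\Map_{\Spaces}(X,Y)$. You even remark that $\Fun(X,Y)$ is a space, but you do not draw this consequence.

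A concrete counterexample: take $\rO=\mathrm{Triv}_{*}$ with $X=*$, and $\rP=\mathrm{Triv}_{[1]}$ with $Y=\{0,1\}$. Then $\cO$ is the identity monad, so $\Alg_\cO(\cP)\simeq\{0,1\}$, whereas $\Alg_\rO(\rP)\simeq[1]$. The univalent statement can therefore only be proved on maximal subgroupoids. Recovering the full category $\Alg_\rO(\rP)$ would require a different definition, e.g.\ the full subcategory of $\Alg_\cO(\PShO(\cP))$ on algebras whose restriction to $X$ lands in the full image of $Y$. There your suboperad argument would apply directly.
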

\begin{proof}
	Considering how mapping spaces in flagged Lurie-operads and marked algebras are calculated, and our definition of $\Alg_{\cO}(\cP)$ in \cref{defin:alggeneral}, it suffices to prove the first statement. But since we already know from \cref{obs:operadicpshspaces} that $\PSh^{\otimes}_{\Spaces}(\cO) \simeq \PSh \Env \rO$ in $\CAlg(\Pr)$, this is immediate from \cref{prop:AlgPShO}.
\end{proof}

%
	
\begin{rem}
	For general $\cV$, we do not expect $\Alg_\cO(\cM)$ to be a $\cV$-enriched category, e.g.\ the category $\CAlg(\Ab)$ of commutative rings is not $\Ab$-enriched. Also, we do not expect there to be a pointwise symmetric monoidal structure on algebras, since constructing an action $\cO(n) \otimes (A \otimes B)^{\otimes n} \to A \otimes B$ for $\cO$-algebras $A, B$ requires a comultiplication $\cO(n) \to \cO(n) \otimes \cO(n)$, e.g.\ through a Hopf operad structure on $\cO$ in the sense of \cite{heinemilnor}. We thank Thomas Blom and Jonas Linßen for pointing these issues out to us. 
	
	The situation is better in case $\cV$ is Cartesian symmetric monoidal (so every operad canonically enhances to a Hopf operad). Then $\vCatV$ is also Cartesian closed symmetric monoidal by \cite[Prop. 2.5.8]{presn}, and by \cref{prop:CMonproduct} there is a unique (presentably) symmetric monoidal structure $\star_\cV$ on $\CAlg(\vCatV)$ such that the symmetric algebra functor is symmetric monoidal. Its internal hom $\Fun^{\otimes, \cV}$, which can be described analogously to the proof of \cref{prop:starmoduleclosed}, allows us to equip \[\Alg_\cO(\cM) \simeq \Fun^{\rL, \otimes}_\cV(\PShV \vEnv_\cV \cO, \cM) \simeq \Fun^{\otimes, \cV}(\vEnv_\cV \cO, \cM)\] with a symmetric monoidal $\cV$-enriched structure. In fact, this can be used to make $\vOp(\cV)$ into a $\cV$-enriched $2$-category as follows:
	
	Similarly to \marked{Prop.}{7.20} once can show that $\vOp(\cV)$ can be described as the full subcategory of marked symmetric monoidal $\cV$-categories $(X \to \cC) \in \Spaces \times_{\CAlg(\CatV)} \Arr(\CAlg(\CatV))$ such that the induced morphism $X \to \PShV(\cC)$ in $\CAlg(\IPrV)$ is a marked algebra. Via this description, $\star_\cV$ induces a \emph{Boardman-Vogt symmetric monoidal structure} on $\cV$-enriched operads\footnote{Compare the construction of the Boardman-Vogt product in \cite{segalification}.}, that is closed with internal hom $\Alg_\cO(\cP)$ and thus induces a $\CatV$-enrichment by \cref{reminder:tensoredenrichment}. 
	
	Proving this involves checking that if $X \to \cC$ and $Y \to \cD$ model enriched operads, then $X \times Y \to \PShV(\cC \star_{\cV} \cD)$ is a marked algebra. We resolve $\cC$ and $\cD$ as geometric realizations of free symmetric monoidal $\cV$-categories on valent $\Spaces$-enriched categories $(X_k \to \rC_k), (Y_k \to \rD_k) \in \vCat(\Spaces)$, such that $\PSh \Sym (X \times Y) \otimes \cV \to \PShV  (\cC \star_{\cV} \cD) $ is the geometric realization of
	\[
	\PSh \Sym (X_k \times Y_k) \otimes \cV \to \PShV(\Sym \FreeV \rC_k \star_{\cV} \Sym \FreeV \rD_k) \simeq \PSh \Sym (\rC_k \times \rD_k) \otimes \cV
	\]
	all of which are internally left adjoint in $\CAlg(\IPrV)$ by \cref{lem:PSymisiL} and colimit-dominant. Also the transition maps in this geometric realization are internally left adjoint by an analogous argument, so we conclude by \cref{lem:iLclosedsiftedcolim} and \cref{lem:cdomiLFS}.
\end{rem}

\begin{defin}
	The \emph{algebra-functor} $\Alg_{(-)}(-) : \vOp(\cV)^{\op} \times \CAlg(\PrV) \to \lCat$ is defined as the composition of the operadic presheaf category functor and the hom-functor of $\CAlg(\IPrV)$:
	\[
	\Alg_{(-)}(-) := \Fun^{\rL, \otimes}_\cV( \PShO(-) , -)
	\]
\end{defin}

\begin{prop}
	\label{prop:propertiesalg}
	For any $\cO \in \vOp(\cV)$ and $\cM \in \CAlg(\PrV)$, the category $\Alg_\cO(\cM)$ is presentable.
	
	The functor $\Alg_{(-)}(\cM) : \vOp(\cV)^{\op} \to \lCat$ preserves small limits and factors through $\Pr$; in particular any morphism $f: \cM \to \cN$ in $\CAlg(\PrV)$ induces an adjunction
	\[
	f \circ - : \Alg_{\cO}(\cM) \rightleftarrows \Alg_{\cO}(\cN) : f^\rR \circ - \; .
	\]
	
	Similarly given a map $F: \cO\to \cP$ in $\vOp(\cV)$, the associated \emph{restriction functor}
	\[ - \circ F_! : \Alg_\cP(\cM) \simeq \iHom_{\CAlg(\PrV)}(\PShO(\cP), \cM) \to \iHom_{\CAlg(\PrV)}(\PShO(\cO), \cM) \simeq \Alg_\cO(\cM)\]
	preserves sifted colimits, and admits a left adjoint \emph{free algebra functor} $\mathrm{Free}_{\cO}^{\cP} : \Alg_\cO(\cM) \to \Alg_\cO(\cP)$.
\end{prop}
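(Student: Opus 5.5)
Throughout, I use the identification $\Alg_\cO(\cM) \simeq \Fun^{\rL, \otimes}_\cV(\PShO(\cO), \cM)$ from \cref{prop:AlgPShO}, together with the description of $\Fun^{\rL,\otimes}_\cV(-,-)$ as the internal hom of a presentable structure on $\CAlg(\PrV)$ coming from \cref{prop:CAlgPrVclosed}.

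\textbf{Presentability.} Write $X = \col\cO$. Then $\Alg_\cO(\cM) \simeq \RMod_\cO(\Fun(X, \cM))$ is modules over a monad on the presentable category $\Fun(X,\cM)$. This monad is precomposition with the endofunctor $\cO$ of $\PSh\Sym X\otimes\cV$, and it preserves sifted colimits by the sifted-colimit clause of \cref{prop:CAlgPrVclosed}. By \cref{constr:algebratensoring} the right action of $\Endo^{\rL,\otimes}_\cV$ on $\Fun(X,\cM)$ preserves sifted colimits. Hence $\RMod_\cO(\Fun(X,\cM))$ is presentable, either by \HA{Cor.}{4.2.3.7}-type results for modules over an accessible monad, or because it is the Eilenberg–Moore category of an accessible monad.

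\textbf{Limits in the operad variable.} The functor $\Alg_{(-)}(\cM)$ is the composite of $\PShO:\vOp(\cV)\to\CAlg(\PrV)$, which preserves colimits by \cref{cor:PShOcolim}, with the representable $\Fun^{\rL,\otimes}_\cV(-,\cM):\CAlg(\PrV)^{\op}\to\lCat$. This representable sends colimits in $\CAlg(\PrV)$ to limits: it is the underlying category of a $2$-categorical hom, and its maximal subgroupoid is $\Map(-,\cM)$. To upgrade from spaces to categories, I test against $\Fun([1]\times-)$ via cotensors in $\CAlg(\IPrV)$, which exist by \cref{sec:appstar}. Hence the composite preserves small limits.

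\textbf{Factoring through $\Pr$.} The restriction $-\circ F_!$ preserves small limits, since limits of symmetric monoidal $\cV$-linear functors are computed pointwise in $\Fun(X,\cM)$ and the forgetful functor from modules creates limits. It also preserves sifted colimits: in $\Fun(X,\cM)$ these are pointwise and are created by the forgetful functor from $\RMod$, because the monad preserves them. Restriction along $F$ therefore commutes with them. By the adjoint functor theorem, restriction has a left adjoint $\mathrm{Free}^{\cP}_{\cO}$, so $\Alg_{(-)}(\cM)$ factors through $\Pr^{\rR}$, equivalently $\Pr$ after passing to left adjoints.

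For $f:\cM\to\cN$, postcomposition $f\circ-$ preserves colimits: these are colimits in $\Fun(X,-)$ after forgetting, once one knows the forgetful functor $\Alg_\cO(\cM)\to\Fun(X,\cM)$ creates colimits. Sifted colimits are created directly, and general colimits follow by comparing the bar construction. Postcomposition is also accessible, so it has a right adjoint, written $f^{\rR}\circ-$.

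The main obstacle is that $f^{\rR}$ need not be symmetric monoidal, so $f^{\rR}\circ-$ is only notation for the abstract adjoint. I would first try to show that $f\circ-$ preserves all small colimits in $\Fun^{\rL,\otimes}_\cV(\PShO(\cO),-)$. Non-sifted colimits there are not pointwise: coproducts are relative tensor products. Still, postcomposition with the symmetric monoidal colimit-preserving $f$ commutes with these constructions, since $f$ preserves colimits and tensor products, and colimits of algebras are built from iterated tensors and sifted colimits.
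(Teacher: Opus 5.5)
Your argument is correct in outline, but it takes a different route from the paper's. The paper's proof is a one-liner: $\PShO$ preserves colimits (\cref{cor:PShOcolim}) and sends maps of operads to internally left adjoint morphisms, so everything follows from \cref{prop:CAlgPrVclosed} applied to the hom-categories $\Fun^{\rL,\otimes}_\cV(\PShO(\cO),\cM)$:
\begin{itemize}
\item these hom-categories are presentable;
\item postcomposition with $f$ preserves small colimits, so it has a right adjoint (really $\iNat_\cN(f,-)$, so your caveat about the notation $f^\rR\circ-$ is justified);
\item composition preserves sifted colimits in each variable;
\item if $F^*$ is the internal right adjoint of $F_!$, then $-\circ F^*\dashv -\circ F_!$ holds formally in any $2$-category, which gives the explicit formula $\mathrm{Free}^\cP_\cO\simeq -\circ F^*$.
\end{itemize}
You instead work through the monadic description $\RMod_\cO(\Fun(X,\cM))$. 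Presentability comes from the sifted-colimit-preserving, hence accessible, monad $-\circ\cO$. The free algebra functor comes from the adjoint functor theorem, after checking limits and sifted colimits against the forgetful functors; these commute with restriction because $F_!\circ\yo_\cO\simeq\yo_\cP\circ\col(F)$. Your route is more elementary and never uses the internal left adjointness of $F_!$. However, it only produces an abstract left adjoint, and it forces you to check limit preservation by hand. Your cotensor argument, which upgrades the statement on mapping spaces to hom-categories, is exactly what the paper leaves implicit in the limit claim, and it is fine.

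The one step to repair is colimit preservation of $f\circ-$. The forgetful functor $\Alg_\cO(\cM)\to\Fun(X,\cM)$ creates only sifted colimits. Also, coproducts of $\cO$-algebras are not tensor products in general: for the initial $X$-colored operad (the identity monad) one has $\Alg_\cO(\cM)\simeq\Fun(X,\cM)$ with pointwise coproducts, and tensor products only appear for commutative-type operads. What you actually need is that $f\circ-$ commutes with free algebras. This is immediate, since post- and precomposition commute: $f\circ(A\circ\cO)\simeq(f\circ A)\circ\cO$. As $f\circ-$ preserves sifted colimits, it then suffices to treat finite coproducts. Resolve each algebra by its bar construction on free algebras and use that a coproduct of free algebras is free on the coproduct; this gives the claim. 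Alternatively, invoke the postcomposition clause of \cref{prop:CAlgPrVclosed}, which you already cite for the sifted part. It states exactly this, and is proved there by resolving the source rather than the algebras.
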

\begin{proof}
	Since the operadic presheaf category functor $\PShO: \vOp(\cV) \to \CAlg(\PrV)$ preserves colimits by \cref{cor:PShOcolim} and factors through the internal left adjoints, all of these statements follow from \cref{prop:CAlgPrVclosed}.
\end{proof}

\begin{cor}
	The forgetful functors $\Alg_\cO(\cM) \to \coPShV(\mathrm{Col}_\cV(\cO); \cM)$ to $\cM$-valued enriched copresheaves and $\Alg_\cO(\cM) \to \Fun(\col \cO, \cM)$ to ordinary functors are both monadic and create sifted colimits.
\end{cor}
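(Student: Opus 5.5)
The plan is to realize both forgetful functors as restriction along internally left adjoint, colimit-dominant morphisms in $\CAlg(\PrV)$, and then apply Barr--Beck--Lurie. By \cref{prop:AlgPShO}, $\Alg_\cO(\cM) \simeq \Fun^{\rL, \otimes}_\cV(\PShO(\cO), \cM)$.

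\textbf{The functor to $\Fun(\col \cO, \cM)$.} Under the equivalence $\Fun(\col\cO, \cM) \simeq \Fun^{\rL,\otimes}_\cV(\PSh\Sym(\col\cO)\otimes\cV, \cM)$, this forgetful functor is precomposition with the free functor $Y: \PSh\Sym(\col\cO)\otimes\cV \to \PShO(\cO)$. This is the restriction functor of \cref{prop:propertiesalg} for the map of valent operads from the trivial operad on $\col\cO$, i.e.\ the marked algebra $\col\cO \to \PSh\Sym(\col\cO)\otimes\cV$, to $\cO$. That proposition then gives a left adjoint (the free algebra functor) and preservation of sifted colimits. Alternatively, this is immediate from the identification $\Alg_\cO(\cM) = \RMod_\cO(\Fun(\col\cO,\cM))$ of \cref{defin:alg}: modules over an algebra acting through a sifted-colimit-preserving tensoring (\cref{constr:algebratensoring}) are monadic, and their forgetful functor creates sifted colimits, by \HA{Cor.}{4.2.3.5} together with the module analogue of \HA{Prop.}{4.2.3.7}.

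\textbf{The functor to $\coPShV(\ColV(\cO);\cM)$.} Here I would first identify $\coPShV(\ColV(\cO);\cM)$ with $\Fun^{\rL}_\cV(\PShV\ColV(\cO), \cM)$, following the marked-module theory of \cite{marked}. Then $\Fun^{\rL}_\cV(\PShV\ColV(\cO),\cM) \simeq \Fun^{\rL,\otimes}_\cV(\Sym^{\otimes_\cV}\PShV\ColV(\cO), \cM)$, and by \cref{obs:envagree} the source is $\PShO(\mathrm{Triv}_\cV\ColV\cO)$. The forgetful functor is therefore precomposition with the counit $\mathrm{Triv}_\cV\ColV\cO \to \cO$, which is again a restriction functor in the sense of \cref{prop:propertiesalg}. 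So it has a left adjoint and preserves sifted colimits.

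\textbf{Conservativity.} For conservativity in both cases, note that the composite
\[
\PSh\Sym(\col\cO)\otimes\cV \to \PShO(\mathrm{Triv}_\cV\ColV\cO) \to \PShO(\cO)
\]
is $Y$. Restriction along $Y$ is conservative, because a natural transformation of colimit-preserving $\cV$-linear symmetric monoidal functors out of $\PSh\Sym(\col\cO)\otimes\cV$ is determined by its components on $\col\cO$. Hence restriction along the second map is conservative as well, by cancellation.

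\textbf{Monadicity and creation of sifted colimits.} Barr--Beck--Lurie \HA{Thm.}{4.7.3.5} then gives monadicity, since sifted colimits, in particular geometric realizations, are preserved and the categories are cocomplete by presentability. Creation of sifted colimits follows from conservativity together with preservation.

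The main obstacle is verifying that precomposition preserves sifted colimits, as opposed to merely limits. This rests on \cref{prop:CAlgPrVclosed}: composition preserves sifted colimits in the left variable, which here is the variable being varied since we precompose. Colimits in $\Fun^{\rL,\otimes}_\cV$ are not computed pointwise, so this input is genuinely needed. A secondary obstacle is the identification of enriched copresheaves with $\Fun^{\rL}_\cV(\PShV\ColV\cO,\cM)$, which I take from \cite{marked}.
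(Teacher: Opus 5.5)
Your proposal is correct and follows the paper's proof essentially verbatim: apply the restriction part of \cref{prop:propertiesalg} to $\mathrm{Triv}_\cV \ColV(\cO) \to \cO$ and to $\mathrm{Triv}_\cV \FreeV \col(\cO) \to \cO$ (which is your marked algebra $\col\cO \to \PSh\Sym(\col\cO)\otimes\cV$) to get left adjoints and preservation of sifted colimits, obtain conservativity of the copresheaf forgetful functor by cancellation from that of $\Alg_\cO(\cM) \to \Fun(\col\cO,\cM)$, and conclude with Barr--Beck--Lurie. One small fix: the justification that restriction along $Y$ is conservative should be either that $\RMod_\cO(\Fun(\col\cO,\cM)) \to \Fun(\col\cO,\cM)$ is conservative (as the paper uses), or that $Y$ is colimit-dominant; the reason you give, that transformations out of $\PSh\Sym(\col\cO)\otimes\cV$ are determined by their components on $\col\cO$, concerns only the source of $Y$ and does not by itself imply conservativity.
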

\begin{proof}
	For preservation of sifted colimits, apply the last part of \cref{prop:propertiesalg} to the counit $\mathrm{Triv}_\cV(\mathrm{Col}_\cV(\cO)) \to \cO$ or $\mathrm{Triv}_\cV \FreeV \col (\cO) \to \cO$, where $\FreeV$ is the free $\cV$-enriched category on an ordinary category. Existence of sifted colimits also follows from \cref{prop:propertiesalg}, so to apply Barr-Beck-Lurie it remains to show conservativity. But the forgetful functor $\Alg_\cO(\cM) \simeq \RMod_\cO(\Fun(\col \cO, \cM)) \to \Fun(\col \cO, \cM)$ is conservative, and factors through $\Alg_\cO(\cM) \to \coPShV(\mathrm{Col}_\cV(\cO); \cM) = \RMod_{\mathrm{Col}_\cV(\cO)}(\Fun(\col \cO,\cM)) \simeq \RMod_{\mathrm{Triv}_\cV \mathrm{Col}_\cV(\cO)}(\Fun(\col \cO,\cM))$ where modules are taken with respect to the composition in $\IPrV$ and $\CAlg(\IPrV)$ respectively.
\end{proof}

\begin{prop}
	For fixed $\cO\in \vOp(\cV)$, the functor $\Alg_{\cO}(-)$ enhances to a $2$-functor $\CAlg(\IPrV) \to \IPr$ that preserves weighted limits.
\end{prop}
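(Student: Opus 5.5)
The plan is to exhibit $\Alg_\cO(-)$ as a corepresentable $2$-functor on $\CAlg(\IPrV)$. By \cref{prop:AlgPShO} we have $\Alg_\cO(\cM) \simeq \Fun^{\rL, \otimes}_\cV(\PShO(\cO), \cM)$, naturally in $\cM$. We therefore define the enhancement as the hom-$2$-functor
\[
\Hom_{\CAlg(\IPrV)}(\PShO(\cO), -) : \CAlg(\IPrV) \to \IlCat .
\]
This is a $2$-functor simply because $\CAlg(\IPrV)$ is a $2$-category, which \cref{sec:appstar} constructs. On underlying $1$-categories it recovers the algebra-functor $\Alg_\cO(-)$ defined above.

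Next we check that this $2$-functor lands in $\IPr$. On objects, \cref{prop:propertiesalg} already gives that $\Alg_\cO(\cM)$ is presentable. On $1$-morphisms, a map $f : \cM \to \cN$ is sent to postcomposition $f \circ -$. This functor preserves colimits because, by \cref{prop:CAlgPrVclosed}, composition in $\CAlg(\IPrV)$ preserves all colimits in the right argument. So the $2$-functor factors through the locally full sub-$2$-category $\IPr \subseteq \IlCat$. Moreover, since $\Fun^{\rL}(-,-) \subseteq \Fun(-,-)$ is full, $2$-morphisms impose no further condition.

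Finally we treat weighted limits. Corepresentable $2$-functors preserve all weighted limits that exist in the source, when these are computed in $\IlCat$: this is the defining universal property of a weighted limit, $\Hom(c, \lim^W D) \simeq \lim^W \Hom(c, D)$. It remains to pass from $\IlCat$ to $\IPr$. For this we need the inclusion $\IPr \to \IlCat$ to create (or at least preserve and reflect) the relevant weighted limits of diagrams of presentable categories and left adjoints. Equivalently, weighted limits in $\IPr$ should be computed underlying in $\IlCat$.

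I expect this last step to be the main obstacle. My first attempt would be to reduce to conical limits and cotensors by categories, which generate all weighted limits. Conical limits in $\Pr$ along left adjoints are computed in $\lCat$ by \HTT{Prop.}{5.5.3.13}. Cotensors are given by $\Fun(\rK, \cM)$, which is presentable and again computed in $\lCat$. Combining these, $\IPr \hookrightarrow \IlCat$ preserves weighted limits, and so the composite $2$-functor $\CAlg(\IPrV) \to \IPr$ preserves weighted limits.
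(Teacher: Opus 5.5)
Your proposal is correct and follows the paper's proof. You realize $\Alg_\cO(-)$ as the corepresentable $2$-functor $\Fun^{\rL,\otimes}_\cV(\PShO(\cO),-)$, use \cref{prop:propertiesalg} (via \cref{prop:CAlgPrVclosed}) to see that it factors through $\IPr$, and get weighted-limit preservation from $\IPr \hookrightarrow \IlCat$ being closed under conical limits and cotensors by \HTT{Prop.}{5.5.3.13} and \HTT{Prop.}{5.5.3.6}; the point you leave implicit, that this closure also gives reflection because the inclusion is locally full and conservative, is equally implicit in the paper.
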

\begin{proof}
	We have defined $\Alg_{\cO}(-) := \Fun_\cV^{\otimes}(\PShO(\cO), -) : \CAlg(\IPrV) \to \IlCat$, and any corepresentable $\lCat$-enriched functor preserves weighted limits. It factors through $\IPr$ by \cref{prop:propertiesalg}, and $\IPr \hookrightarrow \ICat$ is closed under weighted limits \HTT{Prop.}{5.5.3.6} and \HTT{Prop.}{5.5.3.13}.
\end{proof}

\begin{rem}
	We deduce that the functor $\chi:= \Alg_{(-)}(-)^\simeq : \CAlg(\PrV) \to \Fun( \vOp(\cV)^{\op},\lSpaces)$ sending $\cM$ to $\chi \cM : \cO \mapsto \left(\Alg_\cO(\cM)\right)^\simeq$ factors through the $\tav$-Ind-completion $\lInd(\vOp(\cV))$. This can be shown to agree with the category of \emph{large $\cV$-operads} $\widehat{\vOp}(\cV)$, which is obtained by mimicking our definition of enriched operads in a larger universe, for the large-presentably symmetric monoidal category $\lInd(\cV)$. 
	
	Thus, we have constructed a functor $\chi : \CAlg(\PrV)  \to  \lInd(\vOp(\cV)) \simeq \widehat{\vOp}(\cV)$ such that \[ \Map_{\widehat{\vOp}(\cV)}(\cO, \chi \cM) \simeq \Map_{\Fun( \vOp(\cV)^{\op},\lSpaces)}(\yo_\cO , \Alg_{(-)}(-)^\simeq) \simeq \Alg_{\cO}(\cM)^\simeq \] functorially in $\cO$ and $\cM$. For $\cM=\cV$ we recover \cite[Thm. 5.12]{haugsengalg}, deducing $\chi \cV$ agrees with their $\overline{\cV}$. Also note that $\col(\chi \cM) \simeq \Map_{\widehat{vOp}(\cV)}(\mathrm{Triv}_\cV(*), \chi \cM) \simeq \Alg_{\mathrm{Triv}_\cV(*)}(\cM)^\simeq \simeq \cM^\simeq$.
\end{rem}

\begin{rem}
	\label{rem:envelopela}
	The envelope functor $\vEnv_\cV: \vOp(\cV) \to \CAlg(\vCat(\cV))$ admits a right adjoint, which sends a symmetric monoidal $\cV$-category $\cC$ to the full suboperad of $\chi \PShV(\cC)$ spanned by the objects of $\cC$, i.e.\ the Cartesian transport along the map of spaces $\yo: \ob \cC \to \ob \chi \PShV(\cC) \simeq \PShV(\cC)^\simeq$:
	\begin{align*}
	&\Map_{\vOp(\cV)}(\cO, \yo^! \chi \PShV(\cC)) \simeq \Map_{\widehat{\vOp}(\cV)} (\cO, \chi \PShV(\cC)) \times_{\Map_{\Spaces}(\ob \cO, \PShV(\cC)^\simeq)} \{\yo\} \simeq \\
	&\quad \simeq \Alg_{\cO}(\PShV(\cC))^\simeq \times_{\Map_{\Spaces}(\ob \cO, \PShV(\cC)^\simeq)} \{\yo\} \simeq \\
	&\quad \simeq \Map_{\CAlg(\PrV)}(\PShV \vEnv_\cV \cO, \PShV \cC) \times_{\Map_{\Spaces}(\ob \cO, \PShV(\cC)^\simeq)} \{\yo\} \simeq \Map_{\CAlg(\vCat(\cV))}(\vEnv_\cV \cO, \cC)
	\end{align*}
	using the description of symmetric monoidal $\cV$-categories in \cref{obs:smVcat} for the last step. 
\end{rem}

We conclude with the following recognition criteria for isomorphisms of enriched operads:

\begin{prop}
	\label{prop:easyrecog}
	For a map of valent $\cV$-enriched operads $f: \cO \to \cP$, the following are equivalent:
	\begin{itemize}
		\item $f$ is an isomorphism,
		\item The underlying map on spaces of colors $\col \cO\to \col \cP$ as well as the induced map $\PShO(\cO) \to \PShO(\cP)$ are equivalences,
		\item The underlying map on spaces of colors $\col \cO\to \col \cP$ as well as the restriction functor $f^* : \Alg_\cP(\cM) \to \Alg_\cO(\cM)$ for any $\cM\in \CAlg(\PrV)$ are equivalences,
		\item The underlying map on spaces of colors $\col \cO\to \col \cP$ as well as the restriction map $f^* : \Alg_\cP(\cM)^\simeq \to \Alg_\cO(\cM)^\simeq$ for any $\cM\in \CAlg(\PrV)$ are equivalences.
	\end{itemize} 
\end{prop}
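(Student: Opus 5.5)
The plan is to prove the cycle of implications (1) $\Rightarrow$ (3) $\Rightarrow$ (4) $\Rightarrow$ (2) $\Rightarrow$ (1).

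The implications (1) $\Rightarrow$ (3) $\Rightarrow$ (4) are formal. An isomorphism in $\vOp(\cV)$ is sent to an isomorphism by both projections $\col$ and $\PShO$. Hence $f^* = - \circ f_!$ on $\Alg_{(-)}(\cM) = \Fun^{\rL,\otimes}_\cV(\PShO(-),\cM)$ is an equivalence, and passing to maximal subspaces preserves equivalences.

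For (2) $\Rightarrow$ (1), recall that $\vOp(\cV)$ is by definition the pullback $\Spaces \times_{\CAlg(\PrV)} \Arr^{\mathrm{iL,cdom}}(\CAlg(\PrV))$. Its morphism $f$ is therefore a triple: a map of spaces $\col\cO \to \col\cP$, the induced map $\PSh\Sym(\col\cO)\otimes\cV \to \PSh\Sym(\col\cP)\otimes\cV$, and $\PShO(\cO)\to\PShO(\cP)$, together with a commuting square. Equivalences in a full subcategory of an arrow category are detected pointwise. Equivalences in a pullback of categories are detected on components. So if $\col(f)$ and $\PShO(f)$ are equivalences, so is $f$, because the middle component is obtained by applying a functor to $\col(f)$.

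For (4) $\Rightarrow$ (2) I would use Yoneda in $\CAlg(\PrV)$. By the definition of $\Alg_{(-)}(-)$, the map $f^*:\Alg_\cP(\cM)^\simeq \to \Alg_\cO(\cM)^\simeq$ is exactly the precomposition map
\[
\Map_{\CAlg(\PrV)}(\PShO(\cP),\cM) \to \Map_{\CAlg(\PrV)}(\PShO(\cO),\cM).
\]
If this is an equivalence for every $\cM \in \CAlg(\PrV)$, the Yoneda lemma in the (large) category $\CAlg(\PrV)$ shows that $f_!:\PShO(\cO)\to\PShO(\cP)$ is an isomorphism. Together with the hypothesis on $\col$, this gives (2).

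The only point needing care is the naturality claim: that the equivalence $\Alg_\cO(\cM)\simeq\Fun^{\rL,\otimes}_\cV(\PShO(\cO),\cM)$ from \cref{prop:AlgPShO} identifies $f^*$ with precomposition by $\PShO(f)$. I would take the functorial definition of the algebra functor as the composite of $\PShO$ with the hom $2$-functor as the definition of $f^*$, so this holds by construction. The rest is bookkeeping with the pullback description of $\vOp(\cV)$.
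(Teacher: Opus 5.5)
Your proposal is correct and takes the same approach as the paper. The paper gets (1)$\Leftrightarrow$(2) from joint conservativity of the projections of $\vOp(\cV)\subseteq\Spaces\times_{\CAlg(\PrV)}\Arr(\CAlg(\PrV))$. It then gets the remaining equivalences from the Yoneda lemma for corepresentables in $\CAlg(\PrV)$, using the identification $\Alg_\cO(\cM)\simeq\Fun^{\rL,\otimes}_\cV(\PShO(\cO),\cM)$; your cycle of implications just arranges the same two ingredients differently.
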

\begin{proof}
	We have defined $\vOp(\cV)$ as a full subcategory of $\Spaces \times_{\CAlg(\PrV)} \Arr(\CAlg(\PrV))$, and the projections from there to $\Spaces$ together with the target projection to $\CAlg(\PrV)$ are jointly conservative, implying $(1) \Leftrightarrow (2)$. Further,  $\PShO(\cO) \to \PShO(\cP)$ is an equivalence iff the map between corepresentables $\Map_{\CAlg(\PrV)}(\PShO(\cP), -) \to \Map_{\CAlg(\PrV)}(\PShO(\cO), -)$ is an isomorphism, so we conclude the other equivalences by \cref{prop:AlgPShO}.
\end{proof}

%
%
%

\begin{rem}
	\label{rem:cauchy}
	It is not true that the mere existence of a natural isomorphism $\Alg_\cP(-) \simeq \Alg_\cO(-)$, or equivalently an isomorphism $\PShO(\cO) \simeq \PShO(\cP)$, implies that $\cO\simeq \cP$. Consider for instance the trivial operads on a category $\rC$ and on its idempotent completion. However, we can always equip $\PShO(\cO)$ with the maximal marking $(\PShO(\cO)^{\otimes\mathrm{at}, \simeq} \to \PShO(\cO))$ and thereby recover from it the Cauchy-completion $\hat{\cO}$ introduced in \cref{rem:cauchyfissile}. 
	In a followup paper \cite{cauchyoperads}, we further investigate this Cauchy-completion, and thus determine how much information about $\cO$ can be recovered from $\PShO(\cO)$. 
\end{rem}

\appendix
\crefalias{section}{appendix}

\section{The \texorpdfstring{$(\infty, 2)$}{(\unicodeinfty, 2)}-category of presentably symmetric monoidal module categories}

\label{sec:appstar}


\subsection{The tensor product of symmetric monoidal categories}

In this appendix, we construct the $2$-category $\CAlg(\IPrV)$ of presentably symmetric monoidal $\cV$-module categories, and prove the $2$-categorical claims made about it in the beginning of \cref{sec:enrichedoperads}. Our main technical ingredient is a symmetric monoidal structure on $\CAlg(\Cat)$ similar to the Boardman-Vogt-product on symmetric operads: Given symmetric monoidal categories $\rC$ and $\rD$ we consider a tensor product $\rC \star \rD$ such that
\[
\Fun^\otimes(\rC \star \rD, \rE) \simeq \Fun^{\otimes, \otimes}(\rC \times \rD, \rE) \simeq \Fun^\otimes(\rC , \Fun^\otimes(\rD, \rE))
\]
where $\Fun^\otimes(\rD, \rE)$ carries the pointwise tensor product.

\begin{notat}
	For $\rC \in \CAlg(\Cat)$, denote by $\underline{\rC}$ the associated Segal object $\Fin \to \Cat$ and by $\rC^\otimes \to \Fin$ its unstraightening.
\end{notat}

\begin{reminder}
	\label{reminder:pointwise}
	By \HA{Prop.}{3.2.4.3}, if $\rC$ is symmetric monoidal and $\rO$ an operad, then $\Alg_\rO(\rC)$ inherits a symmetric monoidal structure for which the forgetful functor $\Alg_\rO(\rC) \to \rC$ is symmetric monoidal; we refer to this as the \emph{pointwise tensor product}. Concretely, for $\rP$ another operad,
	\[
	\Map_{\Op}(\rP, \Alg_\rO(\rC)) \simeq \Map_{/\Fin}^{\mathrm{inert}}(\mu_!(\rP^\otimes \times \rO^\otimes), \rC^\otimes) \simeq \Map_{/\Fin \times \Fin}^{\mathrm{inert}}(\rP^\otimes \times \rQ^\otimes, \mu^* \rC^\otimes)
	\]
	where $\mu : \Fin \times \Fin \to \Fin$ is the smash product of finite pointed sets, inducing adjoint functors $\mu_! : \Cat_{/\Fin \times \Fin} \rightleftarrows \Cat_{/\Fin} : \mu^*$ of postcomposition and pullback along it. Also $\Map^{\mathrm{inert}}$ denotes those functors that send coCartesian lifts of inert maps to coCartesian morphisms, where a morphisms in $\Fin \times \Fin$ is called inert if both its components are. A morphism $[1] \to \Alg_{\rO}(\rC)$, viewing $[1]$ as a trivial operad, is coCartesian precisely if for any color $o \in \rO$ (or equivalently, any tuple $(o_1, \dots, o_n) \in \rO^\otimes$) the corresponding morphism $[1] \times \{o\} \to \rC^\otimes$ is coCartesian.
\end{reminder}

\begin{lemma}
	\label{lem:pointwisesm}
	Let $\rC,\rD$ be symmetric monoidal categories. Then the full subcategory $\Fun^\otimes(\rD, \rC) \subseteq \Alg_\rD(\rC)$ is closed under the pointwise tensor product, i.e.\ the pointwise product of symmetric monoidal functors is again symmetric monoidal. Moreover, for any symmetric monoidal category $\rE$,
	\[
	\Map_{\CAlg(\Cat)}(\rE, \Fun^\otimes(\rD, \rC)) \simeq \Map_{/\Fin \times \Fin}^{\mathrm{coCart}}(\rE^\otimes \times \rD^\otimes, \mu^* \rC^\otimes) \; .
	\]
\end{lemma}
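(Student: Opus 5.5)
We work entirely with the description of $\Alg_\rD(\rC)$ recalled in \cref{reminder:pointwise}. In that description, maps of operads $\rP \to \Alg_\rD(\rC)$ correspond to inert-preserving functors $\rP^\otimes \times \rD^\otimes \to \mu^*\rC^\otimes$ over $\Fin \times \Fin$. Symmetric monoidal functors $\rD \to \rC$ are exactly those algebras $\rD^\otimes \to \rC^\otimes$ which send all coCartesian morphisms of $\rD^\otimes$, not just inert ones, to coCartesian morphisms. The plan has two steps: first prove the closure statement, then deduce the mapping-space formula.

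For closure, we take $\rP = \rE$ to be symmetric monoidal and a map of operads $\rE \to \Alg_\rD(\rC)$, given by a functor $\Phi\colon \rE^\otimes \times \rD^\otimes \to \mu^*\rC^\otimes$. We analyse a morphism $(\alpha,\beta)$ of $\rE^\otimes \times \rD^\otimes$ whose components are coCartesian. Every morphism of $\Fin \times \Fin$ factors as $(\alpha,\id)\circ(\id,\beta)$, so it suffices to treat morphisms of the form $(\id,\beta)$ and $(\alpha,\id)$ separately.

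\begin{itemize}
\item For $(\id,\beta)$ with $\beta$ coCartesian in $\rD^\otimes$: $\Phi$ sends it to a coCartesian morphism exactly when, for each fixed $e$, the functor $\Phi(e,-)$ is symmetric monoidal.
\item For $(\alpha,\id)$ with $\alpha$ coCartesian in $\rE^\otimes$: $\Phi$ sends it to a coCartesian morphism exactly when the map $\rE \to \Alg_\rD(\rC)$ is symmetric monoidal. By the last sentence of \cref{reminder:pointwise}, coCartesianness in $\Alg_\rD(\rC)$ is tested pointwise in $\rD$, which is what makes this identification work.
\end{itemize}

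Apply this to $\rE = \Fin^{\text{-free}}$, i.e.\ to the pointwise tensor product functor $\Alg_\rD(\rC)\times\Alg_\rD(\rC)\to \Alg_\rD(\rC)$. Restricted to pairs of symmetric monoidal functors, it corresponds to a functor over $\Fin\times\Fin$ which is coCartesian in the $\rE$-direction, since the pointwise product is a symmetric monoidal structure. It is coCartesian in the $\rD$-direction on each input pair. Composing the two factorisations shows the product $F\otimes G$ sends $(\id,\beta)$-type coCartesian arrows to coCartesian arrows. This uses that $\otimes$ on $\rC$ preserves coCartesian edges, i.e.\ that $\rC^\otimes\to\Fin$ is a coCartesian fibration, so that $\mu^*\rC^\otimes$ is one over $\Fin\times\Fin$. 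Hence $F\otimes G$ is again symmetric monoidal. The unit is the constant functor at $1_\rC$, which is clearly symmetric monoidal.

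For the formula, $\Fun^\otimes(\rD,\rC)$ is now a full symmetric monoidal suboperad, so maps $\rE\to\Fun^\otimes(\rD,\rC)$ in $\CAlg(\Cat)$ are the maps of operads $\rE\to\Alg_\rD(\rC)$ satisfying two conditions. First, they land pointwise in symmetric monoidal functors; this is coCartesianness in the $\rD$-direction. Second, they preserve coCartesian lifts; this is coCartesianness in the $\rE$-direction, by \cref{reminder:pointwise}. By the factorisation above, these two conditions together are equivalent to $\Phi$ preserving all coCartesian morphisms over $\Fin\times\Fin$, which gives the stated formula.

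The main obstacle is making the two-direction decomposition of coCartesian edges rigorous. I would first reduce to the standard fact that in a coCartesian fibration over a product, an edge lying over $(\alpha,\beta)$ is coCartesian iff both edges in its factorisation are, using closure of coCartesian edges under composition.
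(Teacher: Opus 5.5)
Your closure argument is, in effect, the paper's argument. The paper also tests coCartesianness in $\Alg_\rD(\rC)$ pointwise via \cref{reminder:pointwise}. It then takes a coCartesian edge $\alpha\colon (F,G)\to F\otimes G$ over the active map $\underline{2}_+\to\underline{1}_+$ and a coCartesian $\beta\colon d\to d'$ in $\rD^\otimes$, and compares the two composites around the square formed by $\Phi((F,G),\beta)$, $\Phi(\alpha,d)$, $\Phi(\alpha,d')$ and $\Phi(F\otimes G,\beta)$. Your ``$\rE=\Fin^{\text{-free}}$'' is garbled; it should be the identity of $\Alg_\rD(\rC)$, or the free symmetric monoidal category on two objects. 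Your deduction of the mapping-space formula, which the paper leaves implicit, is fine. There you only need that coCartesian edges of $\rE^\otimes\times\rD^\otimes$ are the componentwise coCartesian ones and that coCartesian edges compose.

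However, the decisive step of the closure argument is justified by the wrong fact. From the square you know that $\Phi(\alpha,d')\circ\Phi((F,G),\beta)=\Phi(F\otimes G,\beta)\circ\Phi(\alpha,d)$ is coCartesian, and you must conclude that the second factor $\Phi(F\otimes G,\beta)$ is coCartesian. This follows neither from $\mu^*\rC^\otimes\to\Fin\times\Fin$ being a coCartesian fibration nor from closure of coCartesian edges under composition. The ``standard fact'' in your last paragraph is also false as an unrestricted equivalence. Over a point the coCartesian edges are exactly the equivalences, and a composite can be an equivalence without its factors being so. What is needed, and what the paper cites (\kerodon{01UK}), is left cancellation: if $f$ is coCartesian, then $g\circ f$ is coCartesian if and only if $g$ is. It applies here because $f=\Phi(\alpha,d)$ is coCartesian by the pointwise criterion. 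With that substitution your proof is complete.
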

\begin{proof}
	It suffices to show that if $F \in \Alg_{\rD}(\rC)$ corresponds to a symmetric monoidal functor $\rD \to \rC$ and $[1] \to \Alg_{\rD}(\rC)$ is a coCartesian morphism starting at $F$, then its target also corresponds to a symmetric monoidal functor. Equivalently if $H: \rD^\otimes \times [1] \to \rC^\otimes$ is a natural transformation whose restriction to $\{0\}$ preserves coCartesian morphisms and whose restriction $\{d\} \times [1] \to \rC^\otimes$ marks a coCartesian morphism for any $d \in \rD^\otimes$, then $H_{\rD^\otimes \times \{1\}}$ also preserves coCartesian morphisms. This follows from the cancellation property of coCartesian morphisms \kerodon{01UK}.
\end{proof}

Unlike the forgetful functor, its left adjoint free algebra functor $\Free_{\rO} : \rC \to \Alg_{\rO}(\rC)$ (if it exists) will almost never be symmetric monoidal for the pointwise product. For $\rO$ the commutative operad, one can sometimes find another symmetric monoidal structure for which $\Free_{\E_\infty} =: \Sym$ becomes symmetric monoidal:

\begin{notat}
	For $\rC$ a category with finite products, denote by $\CMon(\rC) \subseteq \Fun(\Fin, \rC)$ the category of \emph{commutative monoid objects} in $\rC$, i.e.\ functors $X : \Fin \to \rC$ such that for any $\underline{n}_+ \in \Fin$ the $n$ active morphisms $\underline{n}_+ \to \underline{1}_+$ induce an equivalence $X(\underline{n}_+) \simeq X(\underline{1}_+)^{\times n}$. By \HA{Prop.}{2.4.2.5}, this is equivalent to commutative algebras in $\rC$ for its Cartesian symmetric monoidal structure.
	
	If $\rC$ is presentable, the full inclusion $\CMon(\rC) \subseteq \Fun(\Fin, \rC)$ is a localization at a small set of morphisms, and thus admits a left adjoint (the \emph{Segalification functor}).
\end{notat}

\begin{prop}[{\cite[Thm. 5.1]{loop}, \cite[Prop. 2.28]{equifib}}]
	\label{prop:CMonproduct}
	Let $(\cC, \otimes) \in \CAlg(\Pr)$ be a presentably symmetric monoidal category. There exists a unique symmetric monoidal structure $\star$ on $\CMon(\cC)$ such that the free algebra functor $\Sym : (\cC, \otimes) \to (\CMon(\cC), \star)$ is symmetric monoidal. 
	
	To construct it, note that by \SAG{Prop.}{C.4.1.9} the functor $\CMon(\Spaces) \otimes - : \Pr \to \Pr$ is a smashing localization so in particular lax symmetric monoidal, meaning that $\CMon(\cC) \simeq \CMon(\Spaces) \otimes \cC$ inherits a commutative algebra structure from $\cC$.
	
	Explicitly (see \cite[Prop. 2.28]{segalification}), this structure is obtained by localizing the Day convolution product $\Day$ on $\Fun(\Fin, \rC)$, for the smash product on $\Fin$ and the symmetric monoidal structure on $\rC$, to the full subcategory on functors $\Fin \to \rC$ satisfying the Segal conditions.
\end{prop}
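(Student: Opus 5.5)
The plan is to construct $\star$ by transporting along the smashing localization $\CMon(\Spaces)\otimes -$, and to get uniqueness from the fact that commutative algebra structures on an idempotent localization are essentially unique. Throughout, ``symmetric monoidal structure'' means a presentably symmetric monoidal one, i.e.\ $\star$ preserves colimits in each variable; uniqueness is only claimed among these.

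\emph{Existence.} By \SAG{Prop.}{C.4.1.9}, $E := \CMon(\Spaces)$ is an idempotent commutative algebra in $\Pr$. Its unit $\Spaces \to E$ is $\Sym$, and there is a natural equivalence $E \otimes \cC \simeq \CMon(\cC)$. Since $E \otimes -$ is a smashing localization of $\Pr$, it is symmetric monoidal onto its image, and in particular lax symmetric monoidal. Applying it to $(\cC,\otimes) \in \CAlg(\Pr)$ therefore equips $\CMon(\cC)$ with a commutative algebra structure $\star$. Moreover, the unit map $\cC \simeq \Spaces\otimes\cC \to E \otimes \cC$ becomes a map in $\CAlg(\Pr)$. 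Under the identification $E\otimes\cC \simeq \CMon(\cC)$, this unit map is $\Sym \otimes \cC$, which is the free functor $\Sym:\cC\to\CMon(\cC)$. One checks this on generators $x\otimes c$ with $x \in \Spaces$, $c \in \cC$. Hence $\Sym$ is symmetric monoidal for $\star$.

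\emph{Uniqueness.} Suppose $\star'$ is another presentably symmetric monoidal structure on $\CMon(\cC)$ for which $\Sym$ is symmetric monoidal. Then $(\CMon(\cC),\star')$ is an object of $\CAlg(\Pr)_{\cC/}$ whose underlying presentable category is $E$-local. By \HA{Prop.}{4.8.2.9}, for an idempotent algebra $E$, the $E$-local part of $\CAlg(\Pr)$ is a full subcategory, and localization $\cD \mapsto E\otimes\cD$ is left adjoint to its inclusion. Consequently $\Map_{\CAlg(\Pr)_{\cC/}}(E\otimes\cC, \cD)$ is contractible for every $E$-local algebra $\cD$ under $\cC$. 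Apply this with $\cD=(\CMon(\cC),\star')$. We get a symmetric monoidal functor $(\CMon(\cC),\star)\to(\CMon(\cC),\star')$ under $\cC$. Its underlying functor is colimit-preserving and restricts to the identity on free objects, because both structures make $\Sym$ symmetric monoidal. Free objects generate under colimits: every commutative monoid is a geometric realization of free ones via the bar resolution. So the underlying functor is the identity, and hence $\star\simeq\star'$.

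\emph{Explicit formula.} We compare with the Day convolution on $\Fun(\Fin,\cC)\simeq \Fun(\Fin,\Spaces)\otimes\cC$, formed using the smash product on $\Fin$. The category $\CMon(\cC)$ is the localization at the Segal maps tensored with objects of $\cC$, i.e.\ the maps
\[
\bigsqcup_{i}\yo_{\fset{1}}\to \yo_{\fset{n}}
\]
induced by the $n$ inert maps $\fset{n}\to\fset{1}$. To descend $\Day$ along this localization, it suffices that these generating equivalences stay local after applying $-\Day \yo_{\fset{k}}$. Since $\yo_{a}\Day\yo_{b}\simeq\yo_{a\wedge b}$, this amounts to checking that
\[
\bigsqcup_{i\le n}\yo_{\fset{k}} \to \yo_{\fset{nk}}
\]
is a Segal equivalence. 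Both sides decompose through products of $\yo_{\fset{1}}$: the source localizes to $n$ copies of $k$ copies of $\yo_{\fset{1}}$, and the target to $nk$ copies. I expect this verification to be the main, if routine, obstacle. It follows e.g.\ from \cite[Prop. 2.28]{segalification}. Once it holds, the localized Day product is a presentably symmetric monoidal structure on $\CMon(\cC)$. Finally, $\Sym(c)$ is the Segalification of $\yo_{\fset{1}}\otimes c$, and $\yo_{\fset{1}}\Day\yo_{\fset{1}}\simeq\yo_{\fset{1}}$ is the Day unit. Hence $\Sym$ is symmetric monoidal for the localized Day product, and by uniqueness this product agrees with $\star$.
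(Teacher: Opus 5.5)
Your existence argument is exactly the construction given in the statement: transport along the smashing localization $E\otimes-$ with $E=\CMon(\Spaces)$. Your check that Day convolution on $\Fun(\Fin,\cC)$ descends to Segal objects correctly unpacks the reference the paper cites for the explicit formula. The paper gives no uniqueness argument of its own and defers to the cited sources.

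\textbf{The gap is in the last step of your uniqueness proof.} Having produced $\Phi\colon(\CMon(\cC),\star)\to(\CMon(\cC),\star')$ under $\cC$, you only know $\Phi\circ\Sym\simeq\Sym$ as functors out of $\cC$. The fact that free objects generate under colimits does not imply $\Phi\simeq\id$. The face maps of the bar resolution include the counits $\Sym U\Sym X\to\Sym X$, where $U$ is the forgetful functor. These are not of the form $\Sym(f)$, so your hypothesis says nothing about $\Phi$ on them. The inference is false in general: for a commutative ring $A$ with a nontrivial automorphism $\sigma$, restriction of scalars $\sigma^*$ on $A$-modules is cocontinuous and satisfies $\sigma^*(A\otimes X)\simeq A\otimes X$ naturally in $X$, yet $\sigma^*\not\simeq\id$.

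\textbf{The conclusion is nevertheless true here, using idempotence you already set up.} $\CMon(\cC)$ is $E$-local (semiadditive), and $E\otimes-$ is a localization of $\Pr$. Its unit $\cC\to E\otimes\cC\simeq\CMon(\cC)$ is $\Sym$. Hence precomposition with $\Sym$ gives an equivalence
\[
\Map_{\Pr}(\CMon(\cC),\CMon(\cC))\simeq\Map_{\Pr}(\cC,\CMon(\cC)) .
\]
Since $\Phi$ and $\id$ both restrict to $\Sym$, the underlying functor of $\Phi$ is the identity, which is what you need. With this replacement, uniqueness holds among presentably symmetric monoidal structures, which is also the setting of the cited result. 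Your final identification of the localized Day product with $\star$ depended on uniqueness, so it goes through as well.
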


\begin{ex}
	This recovers some well-known examples:
	\begin{itemize}
			\item If $\rC$ is equipped with the coCartesian symmetric monoidal structure, then since $\Sym$ is a left adjoint the resulting tensor product on $\CMon(\rC)$ must also be the coproduct, hence agrees with the pointwise tensor product by \HA{Prop.}{3.2.4.7}.
			\item For $\Set$ equipped with the Cartesian product, the induced symmetric monoidal structure on $\CMon(\Set)$ is the tensor product of commutative monoids.
			\item For $\Spaces$ with the Cartesian product, $\star$ restricts on grouplike monoids $\CMon^{\mathrm{grp}}(\Spaces) \simeq \Spcn$ to the smash product on connective spectra \cite[Ex. 5.3]{loop}.
			\item For $\Cat_\infty$ the category of $(\infty, \infty)$-categories equipped with the Gray tensor product\footnote{Note this is only monoidal, but the above proposition still applies.}, the induced tensor product on $\CMon(\Cat_\infty) \simeq \mathrm{CatSp}^{\mathrm{cn}}$ is the smash product of connective categorical spectra by \cite[Prop. 4.3.9]{narukismash}.
		\end{itemize}
\end{ex}

\begin{prop}[{c.f.\ \cite[Cor. 2.29]{segalification}}]
	\label{prop:stariHom}
	Equip the category $\CMon(\Cat)$ of symmetric monoidal categories with the tensor product $\star$ induced by the Cartesian product via \cref{prop:CMonproduct}. This symmetric monoidal structure is closed, with internal hom between $\cC$ and $\cD$ the category $\Fun^{\otimes}(\cC, \cD)$ of symmetric monoidal functors equipped with the pointwise tensor product.
\end{prop}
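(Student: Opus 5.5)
We have to show that $(\CMon(\Cat),\star)$ is closed, with internal hom $\Fun^\otimes(\cC,\cD)$ carrying the pointwise product of \cref{lem:pointwisesm}. The plan is to construct a natural equivalence
\[
\Map_{\CMon(\Cat)}(\rE \star \cC, \cD) \simeq \Map_{\CMon(\Cat)}(\rE, \Fun^\otimes(\cC,\cD))
\]
and to match both sides with the space of \emph{bilinear} functors $\rE \times \cC \to \cD$, i.e.\ functors $\rE^\otimes \times \cC^\otimes \to \mu^*\cD^\otimes$ over $\Fin \times \Fin$ preserving coCartesian morphisms.

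The right-hand side is already of this form by \cref{lem:pointwisesm}, which gives $\Map_{\CAlg(\Cat)}(\rE, \Fun^\otimes(\cC,\cD)) \simeq \Map^{\coCart}_{/\Fin\times\Fin}(\rE^\otimes \times \cC^\otimes, \mu^*\cD^\otimes)$. For the left-hand side, use the explicit description in \cref{prop:CMonproduct}: $\star$ is the Segalification $L$ of the Day convolution on $\Fun(\Fin,\Cat)$ for the smash product $\mu$. Hence
\[
\Map(\rE\star\cC,\cD) \simeq \Map_{\Fun(\Fin,\Cat)}(\underline{\rE}\Day\underline{\cC}, \underline{\cD}) \simeq \Map_{\Fun(\Fin\times\Fin,\Cat)}(\underline{\rE}\boxtimes\underline{\cC}, \mu^*\underline{\cD}),
\]
using that $\underline{\cD}$ is Segal (so $L$ drops out) and that Day convolution is left Kan extension along $\mu$ of the external product $(a,b)\mapsto \underline{\rE}(a)\times\underline{\cC}(b)$. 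Unstraightening over $\Fin\times\Fin$ identifies natural transformations of $\Cat$-valued functors with coCartesian-morphism-preserving functors between the coCartesian fibrations $\rE^\otimes\times\cC^\otimes \to \Fin\times\Fin$ and $\mu^*\cD^\otimes$. Both sides then equal the same space, naturally in $\rE$.

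The remaining point is naturality in $\rE$ (and $\cC,\cD$), so that we obtain an adjunction $-\star\cC \dashv \Fun^\otimes(\cC,-)$ and not just objectwise equivalences. I would get this by writing every step as a composite of natural equivalences of functors $\CMon(\Cat)^{\op}\to\Spaces$: Segalification adjunction, Kan extension adjunction, and straightening over $\Fin\times\Fin$. I expect the main obstacle to be checking that the identification in \cref{lem:pointwisesm} and the Day-convolution identification produce the \emph{same} bilinear-functor space, compatibly in $\rE$. Concretely, one has to verify that the coCartesian fibration classifying $\underline{\rE}\boxtimes\underline{\cC}$ is $\rE^\otimes\times\cC^\otimes$. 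This should follow since straightening commutes with products of fibrations over a product base, but it is the step needing care. An alternative check is that, on free objects, $\Sym A \star \Sym B \simeq \Sym(A\times B)$ and $\Fun^\otimes(\Sym B,\cD)\simeq\Fun(B,\cD)$ with pointwise structure. Both sides are then $\Fun(A\times B,\cD)^\simeq$, and the general case follows by resolving by free objects, since both functors of $\rE$ turn colimits into limits.
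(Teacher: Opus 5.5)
Your proposal is correct and takes essentially the same route as the paper: drop the Segalification because $\underline{\cD}$ is already Segal, rewrite Day convolution as left Kan extension along $\mu$ of the external product, unstraighten over $\Fin\times\Fin$ to get coCartesian-morphism-preserving functors $\rE^\otimes\times\cC^\otimes\to\mu^*\cD^\otimes$, and conclude with \cref{lem:pointwisesm}. Your remarks on naturality in $\rE$, and on the unstraightening of the external product being the product of the two fibrations, only make explicit details that the paper's proof leaves implicit.
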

\begin{proof}
	Using the definition of Day convolution $\Day$ as a left Kan extension, we calculate
	\[
	\Map_{\CAlg(\Cat)}(\rC \star \rD, \rE) \simeq \Map_{\Fun(\Fin, \Cat)}(\underline{\rC} \Day \underline\rD, \underline\rE) \simeq \Map_{\Fun(\Fin \times \Fin, \Cat)}(\underline\rC \times \underline\rD, \underline\rE \circ \mu) \; .
	\]
	Unstraightening $\Fun(\Fin \times \Fin, \Cat) \simeq \coCart_{/\Fin \times \Fin}$ identifies this with
	\[
	\Map_{/\Fin \times \Fin}^{\mathrm{coCart}}(\rC^\otimes \times \rD^\otimes, \mu^* \rE^\otimes)
	\]
	which by \cref{lem:pointwisesm} is equivalent to $\Map_{\CAlg(\Cat)}(\rC, \Fun^\otimes(\rD, \rE))$.
\end{proof}

\begin{cor}
	\label{cor:funotimesfunctorial}
	The assignment $(\rC , \rD) \mapsto \Fun^\otimes (\rC, \rD)$ assembles to a functor
	\[
	\Fun^\otimes : \CAlg(\Cat)^{\op} \times \CAlg(\Cat) \to \CAlg(\Cat) \; .
	\]
\end{cor}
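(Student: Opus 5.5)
The functor $\Fun^\otimes$ will be obtained as the internal hom of the closed symmetric monoidal structure $\star$ from \cref{prop:stariHom}; no direct construction is needed. Since $(\CAlg(\Cat), \star)$ is closed, for each fixed $\rC$ the functor $\rC \star - : \CAlg(\Cat) \to \CAlg(\Cat)$ has a right adjoint, which by \cref{prop:stariHom} sends $\rD$ to $\Fun^\otimes(\rC, \rD)$ with the pointwise tensor product. What remains is to make this right adjoint functorial in both variables at once.

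I would do this by the standard parametrized-adjoint argument. The tensor product is a functor $\star : \CAlg(\Cat) \times \CAlg(\Cat) \to \CAlg(\Cat)$. Currying gives a functor $\CAlg(\Cat) \to \Fun(\CAlg(\Cat), \CAlg(\Cat))$, $\rC \mapsto \rC \star -$, and every value is a left adjoint. Consider instead the composite
\[
\CAlg(\Cat)^{\op} \times \CAlg(\Cat)^{\op} \times \CAlg(\Cat) \to \lSpaces, \qquad (\rE, \rC, \rD) \mapsto \Map_{\CAlg(\Cat)}(\rE \star \rC, \rD).
\]
Its adjoint is a functor $\CAlg(\Cat)^{\op} \times \CAlg(\Cat) \to \Fun(\CAlg(\Cat)^{\op}, \lSpaces) = \lPSh(\CAlg(\Cat))$. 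By \cref{prop:stariHom}, each value $\Map(- \star \rC, \rD) \simeq \Map(-, \Fun^\otimes(\rC,\rD))$ is representable. Since the Yoneda embedding is fully faithful, the functor therefore factors essentially uniquely through $\CAlg(\Cat)$, and this factorization is the desired $\Fun^\otimes$. Equivalently, one can cite the general fact (e.g.\ \HTT{Prop.}{5.2.2.8} applied to the coCartesian fibration classifying $\rC \mapsto \rC \star -$) that a family of left adjoints assembles into a functor of right adjoints on the opposite category.

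The only real point is to check that the resulting functor has the right values, namely the pointwise structure. This holds by construction, since the representing objects are exactly those identified in \cref{prop:stariHom}. Functoriality in $\rD$ is then postcomposition and functoriality in $\rC$ is precomposition. I do not expect any step to be a genuine obstacle. The mildest subtlety is size: $\CAlg(\Cat)$ is large, so the presheaves should be taken valued in $\lSpaces$, which is harmless.
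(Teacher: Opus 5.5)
Your proposal is correct and follows the paper's approach: the paper states this corollary without separate proof, as an immediate consequence of \cref{prop:stariHom}, since the internal hom of the closed symmetric monoidal structure $\star$ is automatically functorial in both variables. Your representability argument via the Yoneda embedding is the standard justification of that step, and it suffices on its own without the alternative fibration-theoretic citation.
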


\begin{notat}
	Recall from \HA{Lem.}{4.8.3.15} that the forgetful functor $\RMod(\Cat) \to \Alg(\Cat)$ is a coCartesian fibration of symmetric monoidal categories in the sense of \marked{Cor.}{B.7} with respect to the Cartesian product, so it unstraightens into a lax symmetric monoidal functor $\RMod_{(-)}(\Cat) : \Alg(\Cat) \to \lCat$. In particular if we plug in a commutative algebra $\rV \in \CAlg(\Alg(\Cat)) \simeq \CAlg(\Cat)$, the resulting category $\RMod_\rV(\Cat)$ inherits a symmetric monoidal structure, called the \emph{relative tensor product} $\times_\rV$.
\end{notat}

\begin{defin}
	Let $\rV \in \CAlg(\Cat)$, then we denote by $\star_\rV$ the unique symmetric monoidal structure on $\CMon(\RMod_\rV(\Cat))$ such that the symmetric algebra functor $\Sym : \RMod_\rV(\Cat) \to \CMon(\RMod_\rV(\Cat))$ is symmetric monoidal for the relative tensor product $\times_\rV$ on $\RMod_\rV(\Cat)$, compare \cref{prop:CMonproduct}.
\end{defin}

\begin{warning}
	\label{warning:rigcats}
	Despite the notation, $\star_\rV$ is \emph{not} obtained by straightening the coCartesian fibration of symmetric monoidal categories 
	$(\CMon(\RMod(\Cat)), \star_{\RM}) \to (\CMon(\Cat), \star)$ in which both sides are equipped with symmetric monoidal structures rendering the respective symmetric algebra functors symmetric monoidal. Such an unstraightening would instead produce a lax symmetric monoidal functor $\CMon(\RMod_{(-)}(\Cat)) : (\CMon(\Cat), \star) \to (\lCat, \times)$ sending commutative algebras in $(\CMon(\Cat),\star)$, which are known as \emph{rig categories}, to symmetric monoidal categories.
\end{warning}

\begin{prop}
	\label{prop:starmoduleclosed}
	The symmetric monoidal structure $\star_\rV$ on $\CMon(\RMod_\rV(\Cat))$ is closed. Its internal Hom $\Fun^\otimes_{\rV}(\rM, \rN) \in \CMon(\RMod_\rV(\Cat))$ equips the category of symmetric monoidal $\rV$-linear functors between $\rM, \rN \in \CMon(\RMod_\rV(\Cat))$ with a pointwise symmetric monoidal structure. Moreover, there is a canonical equivalence of symmetric monoidal $\rV$-module categories
	\[
	\Fun_{\rV}^\otimes(\rM,\Fun^\otimes_{\rV}(\rN,\rP)) \simeq \Fun_{\rV}^\otimes(\rM \star_\rV \rN,\rP) \; .
	\]
\end{prop}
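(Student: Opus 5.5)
The approach is to reduce to the non-linear case \cref{prop:stariHom} by running the same Day-convolution argument over $\rV$-modules.

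\textbf{Step 1: the object $\Fun^\otimes_\rV(\rM,\rN)$.} Start from $\Fun^\otimes(\rM,\rN)$, a symmetric monoidal category by \cref{lem:pointwisesm}. Since $\rV$ is commutative, the category $\rV$-linear functors $\Fun_\rV(\rM,\rN)$ carries a right $\rV$-action by pointwise tensoring, $(F\cdot v)(m) := F(m)\otimes v$. I would first check that this action restricts to the symmetric monoidal $\rV$-linear functors and commutes with the pointwise product. This makes $\Fun^\otimes_\rV(\rM,\rN)$ an object of $\CMon(\RMod_\rV(\Cat))$.

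To set this up cleanly, I would identify $\CMon(\RMod_\rV(\Cat)) \simeq \RMod_{\rV}(\CMon(\Cat),\star)$, which is legitimate since $\rV \simeq \Sym$-compatible and is a commutative algebra for $\star$. I would then define $\Fun^\otimes_\rV(\rM,\rN)$ as the equalizer (totalization of the cobar construction)
\[
\lim\Bigl(\Fun^\otimes(\rM,\rN)\rightrightarrows \Fun^\otimes(\rM\star\rV,\rN)\cdots\Bigr).
\]
This is a limit in $\CAlg(\Cat)$ of internal homs from \cref{prop:stariHom}, so it inherits the pointwise structure and the residual $\rV$-action.

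\textbf{Step 2: the adjunction on mapping spaces.} By the universal property of $\star_\rV$ as the relative tensor product $\rM\star_\rV\rN \simeq |\rM\star\rV^{\star\bullet}\star\rN|$ (bar construction), the plan is to compute
\[
\Map(\rM\star_\rV\rN,\rP)\simeq \lim_\Delta \Map(\rM\star\rV^{\star\bullet}\star\rN,\rP)\simeq \lim_\Delta\Map(\rM\star\rV^{\star\bullet},\Fun^\otimes(\rN,\rP)),
\]
using \cref{prop:stariHom}. I would then reassemble this as $\Map_{\RMod_\rV}(\rM,\Fun^\otimes_\rV(\rN,\rP))$. This gives closedness, with internal hom as described.

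\textbf{Step 3: upgrading to an equivalence of objects.} The equivalence of symmetric monoidal $\rV$-module categories follows by the usual Yoneda argument: test against arbitrary $\rQ$ and use associativity of $\star_\rV$ to get $\Fun^\otimes_\rV(\rQ,\Fun^\otimes_\rV(\rM,\Fun^\otimes_\rV(\rN,\rP)))$ on one side. Both sides are then identified with maps out of $\rQ\star_\rV\rM\star_\rV\rN$.

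\textbf{Main obstacle.} I expect the hardest part to be justifying that $\star_\rV$ really is computed by the relative tensor product over $\rV$ in $(\CMon(\Cat),\star)$. This needs two inputs:
\begin{itemize}
\item the identification $\CMon(\RMod_\rV(\Cat)) \simeq \RMod_{\Sym\text{-}\rV}(\CMon(\Cat))$ compatibly with the symmetric monoidal structures;
\item the fact that $\star$ preserves geometric realizations in each variable, which holds because it is closed by \cref{prop:stariHom}.
\end{itemize}
My first attempt would be to check that the symmetric algebra functor intertwines $\times_\rV$ with this relative product on free objects, and then extend to all objects by sifted colimits, invoking the uniqueness clause of \cref{prop:CMonproduct}.
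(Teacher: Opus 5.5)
Your route is genuinely different from the paper's and is sound in outline.

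\textbf{The paper's route.} The paper never leaves the Segal-object picture. By \cref{prop:CMonproduct}, $\star_\rV$ is the Segalified Day convolution on $\Fun(\Fin,\RMod_\rV(\Cat))$ for the smash product and $\times_\rV$. The paper uses that $(\RMod_\rV(\Cat),\times_\rV)$ is closed, with internal hom $\Fun_\rV$, to rewrite $\Map(\rM\star_\rV\rN,\rP)$ as maps from $\underline{\rM}$ into the end $\oint_{\underline{n}_+\in\Fin}\Fun_\rV(\underline{\rN}(\underline{n}_+),\underline{\rP}(\underline{n}_+\wedge -))$. It then checks that this end is again a Segal object, because $\Fun_\rV$ preserves products in the second variable and $\underline{\rP}(\underline{n}_+\wedge-)$ is Segal. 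It finishes with the same associativity/Yoneda step as your Step 3. In this route $\rV$-linearity is built in from the start, and the pointwise structure is read off directly: at $\underline{1}_+$ the end is just natural transformations $\underline{\rN}\to\underline{\rP}$ of $\rV$-module-valued Segal objects.

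\textbf{Your route.} You encode $\rV$-linearity as a module structure over the commutative $\star$-algebra $\Sym\rV$, import $\Fun^\otimes$ from \cref{prop:stariHom}, and obtain the linear hom by cobar totalization. This reduces the linear case to the non-linear one and explains the pointwise $\rV$-action on the hom by commutativity of $\Sym\rV$. It also generalizes verbatim to modules over any commutative $\star$-algebra.

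The price is exactly the step you flag: the symmetric monoidal equivalence $(\CMon(\RMod_\rV(\Cat)),\star_\rV)\simeq(\RMod_{\Sym\rV}(\CMon(\Cat)),\star_{\Sym\rV})$, which the paper never needs. It does hold. First, $\CMon(\RMod_\rV(\Cat))\simeq\CMon(\Spaces)\otimes\RMod_\rV(\Cat)\simeq\RMod_\rV(\Cat)\otimes_{\Cat}\CMon(\Cat)$. Lurie's base change for module categories in $\Pr$ then identifies this with $\RMod_{\Sym\rV}(\CMon(\Cat))$, compatibly with the symmetric monoidal structures. Note also that closedness alone is then automatic, since the result is presentably symmetric monoidal; the real content is identifying the hom.

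\textbf{Two slips to repair.}

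First, $\rV$ itself is not a $\star$-algebra; that would require a rig structure, cf.\ \cref{warning:rigcats}. So every $\rM\star\rV$ and $\rV^{\star\bullet}$ must read $\rM\star\Sym\rV$ and $(\Sym\rV)^{\star\bullet}$. It is the equivalence $\Fun^\otimes(\rM\star(\Sym\rV)^{\star n},\rN)\simeq\Fun(\rV^{\times n},\Fun^\otimes(\rM,\rN))$ that identifies your totalization with $\rV$-linear symmetric monoidal functors and their transformations.

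Second, the chain in Step 2 mixes up two adjunctions. Taken in $\CMon(\Cat)$, the space $\lim_\Delta\Map(\rM\star(\Sym\rV)^{\star\bullet}\star\rN,\rP)\simeq\lim_\Delta\Map(\rM\star(\Sym\rV)^{\star\bullet},\Fun^\otimes(\rN,\rP))$ is the space of \emph{balanced} maps out of the bar construction. Equivalently, it is the space of $\Sym\rV$-module maps $\rM\to\Fun^\otimes(\rN,\rP)$ with $\rV$ acting through $\rN$. It carries no linearity in $\rP$, so it cannot be reassembled into $\Map_{\RMod}(\rM,\Fun^\otimes_\rV(\rN,\rP))$.

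Replace this chain by the standard adjunction $\Map_{\Sym\rV}(\rM\star_{\Sym\rV}\rN,\rP)\simeq\Map_{\Sym\rV}(\rM,\Fun^\otimes_\rV(\rN,\rP))$. This holds for modules over a commutative algebra in a closed symmetric monoidal category whose tensor product preserves geometric realizations. Here $\Fun^\otimes_\rV(\rN,\rP)=\lim_\Delta\Fun^\otimes(\rN\star(\Sym\rV)^{\star\bullet},\rP)$, exactly as you defined it in Step 1. With that substitution the argument goes through.
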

\begin{proof}
	We begin by calculating on mapping spaces:
	\begin{align*}
			&\Map_{\CMon(\RMod_\rV(\Cat))}(\rM \star_{\rV} \rN, \rP) \simeq \Map_{\Fun(\Fin \times \Fin, \RMod_\rV(\Cat))}( \underline{\rM} \times \underline{\rN}, \underline{\rP} \circ \mu ) \simeq \\
			&\quad \simeq \Map_{\Fun(\Fin, \RMod_\rV(\Cat))} \left(\underline{\rM}, \oint_{\underline{n}_+ \in \Fin} \Fun_\rV(\underline{N}(\underline{n}_+), \underline{P}(\underline{n}_+ \wedge -)) \right)
		\end{align*}
	where $\Fun_\rV$ denotes the internal hom in $\RMod_\rV(\Cat)$, compare \cite[Prop. 3.89]{heine} or \cite[§4.2]{bienriched}, and $\oint$ denotes an end. We need to show that the right argument is a Segal object in $\RMod_\rV(\Cat)$, which follows since this internal hom commutes with products in the second argument and $P(\underline{n}_+ \wedge -)$ is a Segal object by the proof of \cite[Prop. 2.28]{segalification}. 
	
	This equivalence of spaces enhances to the stated equivalence in $\CMon(\RMod_\rV(\Cat))$ by associativity of $\star_\rV$.
\end{proof}

\begin{rem}
	For any $\cC \in \CAlg(\Pr)$, we have defined $\CMon(\cC) \simeq \CMon(\Spaces) \otimes \cC \in \CAlg(\Pr)$ so its symmetric monoidal structure $\star_{\cC}$ is always closed. In fact, following the proof of \cref{prop:starmoduleclosed} gives an explicit formula for the internal hom.
\end{rem}

\begin{obs}
	\label{obs:functorialitystarV}
	Since $\RMod_{(-)}(\Cat): \Alg(\Cat) \to \lCat$ is lax symmetric monoidal, a symmetric monoidal functor $f: \rV \to \rW$ induces a symmetric monoidal functor $f_{\mathrm{ext}} : (\RMod_\rV(\Cat), \times_\rV) \to (\RMod_\rW(\Cat), \times_\rW)$, and thus applying $\CMon(\Spaces) \otimes -$ we obtain a symmetric monoidal functor
	\[
	f_{\mathrm{ext}} : \CMon(\RMod_\rV(\Cat)) \to \CMon(\RMod_\rW(\Cat)) \; .
	\]
\end{obs}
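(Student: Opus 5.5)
The statement to prove is \cref{obs:functorialitystarV}. For $f: \rV \to \rW$ symmetric monoidal, extension of scalars should be symmetric monoidal for $\star_\rV$ and $\star_\rW$. The plan is to build the functor on module categories first, and only then pass to commutative monoids and check that the induced functor respects the $\star$-structures.

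\textbf{Step 1: module level.} Since $\RMod_{(-)}(\Cat) : \Alg(\Cat) \to \lCat$ is lax symmetric monoidal, it sends commutative algebras to commutative algebras. Viewing $f$ as a morphism in $\CAlg(\Alg(\Cat)) \simeq \CAlg(\Cat)$, we get a morphism
\[
f_{\mathrm{ext}} : (\RMod_\rV(\Cat), \times_\rV) \to (\RMod_\rW(\Cat), \times_\rW)
\]
in $\CAlg(\lCat)$, i.e.\ a symmetric monoidal functor. Concretely, $f_{\mathrm{ext}}$ is induced by the coCartesian transport along $f$ of the fibration $\RMod(\Cat) \to \Alg(\Cat)$. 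The induced symmetric monoidal functor is exactly the straightening of the coCartesian fibration of symmetric monoidal categories restricted along $f$, so no further check is needed here.

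\textbf{Step 2: passing to $\CMon$.} $f_{\mathrm{ext}}$ is a left adjoint (to restriction of scalars) and preserves finite products up to the relevant identifications. So it induces $\CMon(f_{\mathrm{ext}})$, defined by postcomposition on $\Fun(\Fin, -)$ and preserving the Segal conditions. To see that this is symmetric monoidal for $\star$, I would use the uniqueness clause of \cref{prop:CMonproduct}. The commuting square $\Sym_\rW \circ f_{\mathrm{ext}} \simeq \CMon(f_{\mathrm{ext}}) \circ \Sym_\rV$ holds by passing to left adjoints of the evident commuting square of forgetful functors, since restriction of scalars commutes with the forgetful functors from $\CMon$.

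\textbf{Step 3: monoidality of $\CMon(f_{\mathrm{ext}})$.} This is the main obstacle. The cleanest route is the explicit description of $\star$ as the Segal localization of Day convolution on $\Fun(\Fin, -)$ for the smash product. Postcomposition with a symmetric monoidal, colimit-preserving $f_{\mathrm{ext}}$ is symmetric monoidal for Day convolution, because Day convolution is a left Kan extension along the smash product and $f_{\mathrm{ext}}$ commutes with it. It remains to see that postcomposition sends Segal equivalences to Segal equivalences. This follows because postcomposition commutes with the Segalification left adjoint: it is a left adjoint whose right adjoint, postcomposition with restriction of scalars, preserves Segal objects.

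Alternatively, where everything is presentable, one could invoke the construction $\CMon(\cC) \simeq \CMon(\Spaces) \otimes \cC$ and functoriality of the lax symmetric monoidal $\CMon(\Spaces) \otimes -$. This functoriality would be applied to the symmetric monoidal left adjoint of Step 1, exactly as the statement suggests. I would present that as the main argument, and use the Day convolution check above to justify it in the possibly non-presentable setting of $\Cat$-modules.
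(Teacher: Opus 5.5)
Your main argument is the paper's: $f$ is a morphism in $\CAlg(\Alg(\Cat))\simeq\CAlg(\Cat)$, the lax symmetric monoidal $\RMod_{(-)}(\Cat)$ makes $f_{\mathrm{ext}}$ symmetric monoidal, and the lax symmetric monoidal $\CMon(\Spaces)\otimes -$ then gives a symmetric monoidal functor for $\star_\rV,\star_\rW$. This is complete on its own. $\RMod_\rV(\Cat)$ is presentable for small $\rV$, $\times_\rV$ preserves colimits in each variable, and $f_{\mathrm{ext}}$ is a left adjoint. So $f_{\mathrm{ext}}$ is already a morphism in $\CAlg(\Pr)$, and the ``possibly non-presentable setting'' you want to cover does not arise.

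Your supplementary Steps 2--3, however, rest on a false claim: extension of scalars does not preserve finite products. Take $\rV=*$ and $f\colon *\to\rW$ the unit. Then $f_{\mathrm{ext}}(\rC)=\rC\times\rW$ with the free action. This sends the terminal category to $\rW$, not to $*$, and sends $\rC\times\rD$ to $\rC\times\rD\times\rW\neq(\rC\times\rW)\times(\rD\times\rW)$. So postcomposition $(f_{\mathrm{ext}})_*$ on $\Fun(\Fin,-)$ neither preserves Segal objects nor commutes with Segalification.

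The functor on commutative monoids is instead $L_\rW\circ(f_{\mathrm{ext}})_*$ restricted to Segal objects. This is what $\CMon(\Spaces)\otimes f_{\mathrm{ext}}$ is. It is also the left adjoint of $(f^{\mathrm{res}})_*$, which is the functor your square of free functors actually characterizes.

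With that correction your Day convolution argument works. $(f_{\mathrm{ext}})_*$ is symmetric monoidal for Day convolution, and it sends Segal equivalences to Segal equivalences because its right adjoint $(f^{\mathrm{res}})_*$ preserves Segal objects. Hence $L_\rW\circ(f_{\mathrm{ext}})_*\simeq L_\rW\circ(f_{\mathrm{ext}})_*\circ L_\rV$ descends to a symmetric monoidal functor between the localized structures $\star_\rV$ and $\star_\rW$. Note also that the uniqueness clause of \cref{prop:CMonproduct} does not by itself make any functor symmetric monoidal; it is this localization argument that does the work.
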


\subsection{The \texorpdfstring{$(\infty, 2)$}{(\unicodeinfty, 2)}-category of symmetric monoidal categories}

Finally, we use the symmetric monoidal structure $\star$ to construct the $2$-category $\CAlg(\ICat)$.

\begin{notat}
	We will model $2$-categories as $\Cat$-enriched categories, and freely use the notions of weighted colimits, enriched functor categories and enriched adjunctions in this context, compare \cite{heineweighted}, \cite{bienriched}, \cite{stefanich}. Recall that an enriched category admits, and an enriched functor preserves, all weighted colimits if it admits/ preserves tensorings and conical colimits \cite[Thm. 5.6.1]{stefanich}, \cite[Prop. 3.64]{heineweighted}. Similarly weighted limits are generated by cotensorings and conical limits. By a \emph{conical colimit} we mean a weighted colimit with terminal weight; loosely speaking a conical limit of a functor $p: \rK \to \bC$ from a category $\rK$ $2$-category to a $2$-category $\bC$ is an object $x \in \bC$ such that for any $c \in \bC$, we have $\Hom_{\bC}(c, x) \simeq \lim_{k \in \rK} \Hom_{\bC}(c, p(k))$. Put differently, it is a limit in the underlying $1$-category of $\bC$ whose universal property enhance to the hom-categories in $\bC$.
\end{notat}

\begin{reminder}
	\label{reminder:tensoredenrichment}
	Let $\cV \in \Alg(\Pr)$. To any presentable $\cV$-module $\cM \in \PrV$, by \cite[Thm. 1.2]{heine} we can associate a (univalent) $\cV$-enriched category $\chi \cM$ with large space of objects $\cM^\simeq$ and hom-objects determined by the internal hom on $\cM$. This assembles into a subcategory inclusion $\chi: \PrV \to \lCat_{\text{large space}}(\cV)$, with image those $\cV$-categories with large space of objects that admit all weighted colimits and whose underlying category is presentable, and those enriched functors that preserve weighted colimits.
	
	In particular, to any presentable $\Cat$-module, we may associate a locally small $2$-category admitting all $\Cat$-weighted colimits whose maximal sub-$1$-category is presentable; they are called \emph{presentable $2$-categories} in \cite{secondaryK} 
	(to be distinguished from presentable $2$-categories in the sense of \cite{presn}). By \cite[Lem. 3.69]{heineweighted}, these automatically admit all $\Cat$-weighted limits as well. 
\end{reminder}

\begin{reminder}[{\cite[Lem. 2.90]{heineweighted}}]
	\label{reminder:twoadj}
	Let $\bC$ and $\bD$ be $2$-categories with underlying catgories $\rC$ and $\rD$. Then an adjunction $F: \rC \rightleftarrows \rD : G$ enhances to a \emph{$2$-adjunction}, i.e.\ $\Cat$-enriched adjunction if either
	\begin{itemize}
		\item $\rC$ admits $\Cat$-tensorings and $F$ preserves them,
		\item $\rD$ admits $\Cat$-cotensorings and $G$ preserves them.
	\end{itemize}
\end{reminder}

\begin{defin}
	Restricting scalars along the symmetric monoidal functor $\Sym : (\Cat, \times) \to (\CAlg(\Cat), \star))$, we equip the category $\CAlg(\Cat)$ with a presentable $\Cat$-tensoring. The presentable $2$-category of symmetric monoidal categories $\CAlg(\ICat)$ is associated to this presentable $\Cat$-module via \cref{reminder:tensoredenrichment}.
	
	Similarly, we define the presentable $2$-category of symmetric monoidal $\rV$-module categories $\CAlg(\RMod_\rV(\ICat))$ by restriction of scalars along the symmetric monoidal functor $(\Cat, \times) \to (\RMod_\rV(\Cat), \times_\rV) \to (\CAlg(\RMod_\rV(\Cat)), \star_\rV)$\footnote{Another approach would be to define $\CAlg(\RMod_\rV(\ICat))$ as the slice $2$-category $\CAlg(\ICat)_{\rV/}$.}. 
\end{defin}

\begin{obs}
	Using the expression for the internal hom for the $\star$-product \cref{prop:stariHom}, we learn that the hom-categories of $\CAlg(\ICat)$ are given by the categories of symmetric monoidal functors $\Fun^\otimes(\rM, \rN)$. Similarly by \cref{prop:starmoduleclosed}, the hom-categories of $\CAlg(\RMod_\rV(\ICat))$ are given by $\Fun^{\otimes}_{\rV}(\rM, \rN)$.
\end{obs}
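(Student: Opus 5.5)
The plan is to identify hom-categories through the universal property defining the enriched homs in \cref{reminder:tensoredenrichment}. For a presentable $\Cat$-module $\cM$, the associated $2$-category $\chi\cM$ has hom-category $\Hom(M,N) \in \Cat$ characterized, naturally in $K \in \Cat$, by
\[
\Map_{\Cat}(K, \Hom(M,N)) \simeq \Map_{\cM}(K \otimes M, N),
\]
where $K \otimes M$ denotes the $\Cat$-tensoring. So it suffices to compute the right-hand side for our two tensorings and recognize it as corepresented by $\Fun^\otimes(M,N)$, respectively $\Fun^\otimes_\rV(M,N)$. The Yoneda lemma in $\Cat$ then gives the equivalence of hom-categories.

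For $\CAlg(\ICat)$, the tensoring is obtained by restricting scalars along $\Sym : (\Cat,\times) \to (\CAlg(\Cat),\star)$, so $K \otimes M = \Sym(K) \star M$. I would compute
\[
\Map_{\CAlg(\Cat)}(\Sym K \star M, N) \simeq \Map_{\CAlg(\Cat)}(\Sym K, \Fun^\otimes(M,N)) \simeq \Map_{\Cat}(K, \Fun^\otimes(M,N)).
\]
The first step uses closedness of $\star$ from \cref{prop:stariHom}, together with symmetry of $\star$ to move $M$ to the other side. The second step is the free--forgetful adjunction for $\Sym$; it applies because the underlying category of $\Fun^\otimes(M,N)$ with its pointwise structure is the category of symmetric monoidal functors, by \cref{lem:pointwisesm}. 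Naturality in $K$ is automatic. Naturality in $M$ and $N$ comes from the functoriality of \cref{cor:funotimesfunctorial}, so the hom-functor of $\CAlg(\ICat)$ agrees with $\Fun^\otimes$ as a bifunctor.

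For $\CAlg(\RMod_\rV(\ICat))$, the tensoring is restriction along $K \mapsto \Sym(K \times \rV)$, where $K \times \rV$ is the free right $\rV$-module. I would run the same chain using \cref{prop:starmoduleclosed}:
\[
\Map(\Sym(K\times\rV) \star_\rV M, N) \simeq \Map(\Sym(K\times \rV), \Fun^\otimes_\rV(M,N)) \simeq \Map_{\RMod_\rV(\Cat)}(K \times \rV, \Fun^\otimes_\rV(M,N)) \simeq \Map_{\Cat}(K, \Fun^\otimes_\rV(M,N)).
\]
The last two steps are the free--forgetful adjunctions for $\Sym$ on $\RMod_\rV(\Cat)$ and for free $\rV$-modules.

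The main obstacle I expect is bookkeeping rather than content. One has to check that the equivalences above hold as equivalences of categories and not just of mapping spaces, i.e.\ that the enriched hom is the internal hom for the tensoring and not merely something with the same underlying space. One also has to check that composition in $\chi\cM$ matches composition of functors. For the first point I would lean on \cref{reminder:tensoredenrichment}, which says the hom-objects are determined by the internal hom, together with the categorical (not just spacewise) forms of \cref{prop:stariHom} and \cref{prop:starmoduleclosed}. For the second point I would argue that composition in $\chi\cM$ is induced by the evaluation counit of the tensor--hom adjunction, and under the identifications above this counit is the evaluation of symmetric monoidal functors.
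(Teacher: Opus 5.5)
Your proposal is correct and is the argument the paper intends. The hom-category of the $2$-category associated via \cref{reminder:tensoredenrichment} to the $\Cat$-module obtained by restricting scalars along $\Sym$ (resp.\ along $K \mapsto \Sym(K \times \rV)$) is the right adjoint of the tensoring. Closedness of $\star$ (resp.\ $\star_\rV$) together with the free--forgetful adjunctions identifies this right adjoint with $\Fun^\otimes(\rM,\rN)$ (resp.\ $\Fun^\otimes_\rV(\rM,\rN)$).

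Your worry about categories versus mapping spaces is already settled by your own Yoneda step: an object of $\Cat$ is determined by the mapping spaces into it, naturally in $K$.
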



\begin{obs}
	\label{obs:CAlgCattwofunctors}
	Since they are symmetric monoidal, the functors \[\ICat \overset{- \otimes \rV}{\to} \RMod_\rV(\ICat) \overset{\Sym}{\to} \CAlg(\RMod_\rV(\ICat))\] are compatible with the respective $\Cat$-tensorings, i.e.\ maps in $\CAlg(\Pr_{\Cat})$ so they enhance to symmetric monoidal $2$-functors. Also by \cref{obs:functorialitystarV}, given a symmetric monoidal functor $f: \rV \to \rW$ we obtain a symmetric monoidal $2$-functor $f_{\mathrm{ext}} : \CMon(\RMod_\rV(\ICat)) \to \CMon(\RMod_\rW(\ICat))$. All of these are left adjoint $1$-functors, so by \cref{reminder:twoadj} their right adjoint forgetful functors enhance to $2$-functors $\CAlg(\RMod_\rV(\ICat)) \to \RMod_{\rV}(\ICat) \to \ICat$ and $f^{\mathrm{res}}$ as well, and the respective adjunctions enhance to $2$-adjunctions.
\end{obs}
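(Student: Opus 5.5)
The statement is \cref{obs:CAlgCattwofunctors}. I would first upgrade each left adjoint $1$-functor to a map of presentable $\Cat$-modules, and then use \cref{reminder:twoadj} to enhance the right adjoints and the adjunctions.

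\textbf{Step 1: the left adjoints as $\Cat$-module maps.} Consider
\[ -\otimes\rV : \Cat \to \RMod_\rV(\Cat), \qquad \Sym : \RMod_\rV(\Cat) \to \CMon(\RMod_\rV(\Cat)). \]
By construction of $\times_\rV$ and of $\star_\rV$ (the latter via \cref{prop:CMonproduct} applied in $\RMod_\rV(\Cat)$), both are symmetric monoidal for $(\Cat,\times)$, $(\RMod_\rV(\Cat),\times_\rV)$ and $(\CAlg(\RMod_\rV(\Cat)),\star_\rV)$. The $\Cat$-tensorings on the targets were defined by restricting scalars along exactly these composites. Hence each functor is $\Cat$-linear: for instance $\Sym(K\times_\rV M)\simeq \Sym(K\otimes\rV)\star_\rV \Sym M$, which is by definition the tensoring $K\odot\Sym M$.

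For extension of scalars, \cref{obs:functorialitystarV} makes $f_{\mathrm{ext}}$ symmetric monoidal for $\star_\rV$ and $\star_\rW$. It is compatible with the unit maps from $(\Cat,\times)$ because $f_{\mathrm{ext}}(K\otimes\rV)\simeq K\otimes\rW$, and extension of scalars commutes with $\Sym$. So $f_{\mathrm{ext}}$ is also $\Cat$-linear.

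Each of these is a colimit-preserving $\Cat$-linear functor between presentable $\Cat$-modules, i.e.\ a morphism in $\RMod_{\Cat}(\Pr)$. By \cref{reminder:tensoredenrichment}, the functor $\chi$ then turns it into an enriched functor of the associated $2$-categories, that is, a $2$-functor. Symmetric monoidality of these $2$-functors follows because the monoidal structures themselves are $\Cat$-bilinear.

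\textbf{Step 2: the right adjoints and the adjunctions.} Each left adjoint preserves $\Cat$-tensorings by Step 1. Applying the first criterion of \cref{reminder:twoadj}, the underlying $1$-adjunctions enhance to $\Cat$-enriched adjunctions. In particular the right adjoints become $2$-functors: the forgetful $\CAlg(\RMod_\rV(\ICat))\to\RMod_\rV(\ICat)\to\ICat$, and $f^{\mathrm{res}}$.

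\textbf{Main obstacle.} The delicate point is the coherence in Step 1: a symmetric monoidal functor under $(\Cat,\times)$ must be genuinely a map of commutative algebras under the unit maps, not merely objectwise compatible with tensorings, so that it is a $\Cat$-module map. I would handle this by noting that all three targets arise as commutative algebras in $\Pr$ receiving maps from $(\Cat,\times)$, and that the relevant functors form commuting triangles in $\CAlg(\Pr)$. For $\Sym$ and $-\otimes\rV$ the triangle commutes by definition. For $f_{\mathrm{ext}}$ it commutes by functoriality of $\CMon(\Spaces)\otimes-$ applied to the lax monoidal straightening $\RMod_{(-)}(\Cat)$. Restriction of scalars along a commuting triangle then yields the required module map.
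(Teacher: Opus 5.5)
Your proposal is correct and follows the same route as the paper. Symmetric monoidality of $-\otimes\rV$, $\Sym$ and $f_{\mathrm{ext}}$ under $(\Cat,\times)$ makes them maps in $\CAlg(\Pr_{\Cat})$, hence symmetric monoidal $2$-functors via $\chi$; since they are left adjoints preserving $\Cat$-tensorings, the first criterion of \cref{reminder:twoadj} upgrades the right adjoints and the adjunctions. Your coherence paragraph builds commuting triangles in $\CAlg(\Pr)$ under $\Cat$, using naturality of the smashing localization $\CMon(\Spaces)\otimes-$ for $f_{\mathrm{ext}}$. This only spells out what the paper's phrase ``i.e.\ maps in $\CAlg(\Pr_{\Cat})$'' leaves implicit.
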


\begin{obs}
	\label{obs:cotensoring}
	Recall that for $\rK \in \Cat$ and $\rC \in \CAlg(\Cat)$, we can equip $\Fun(K, \rC)$ with the {pointwise  symmetric monoidal structure}. This satisfies the universal property of a $\Cat$-cotensoring in $\CAlg(\ICat)$, since
	\[
	\Fun^{\otimes} (\rD, \Fun(\rK, \rC)) \simeq \Nat(\underline{\rD}, \Fun(\rK, \underline{\rC}(-))) \simeq \Fun(\rK, \Nat(\underline{\rD}, \underline{\rC})) \simeq \Fun(\rK, \Fun^{\otimes}(\rD, \rC)) \; .
	\]
	Similarly for $\rM \in \CAlg(\RMod_\rV(\Cat))$ the pointwise symmetric monoidal $\rV$-module structure\footnote{To be precise, this can be defined by applying $\Fun(\rK, -)$ to the $\Fin \times \RM^\otimes$-Segal-object describing $\rM$, and restricting scalars along the diagonal $\Delta: \rV \to \Fun(\rK, \rV)$ afterwards. See also \cite[Not. 3.69]{heine}.} exhibits $\Fun(\rK, \rM)$ as a cotensoring in $\CAlg(\RMod_\rV(\ICat))$. Note that the forgetful functors to $\ICat$ preserve the cotensorings, as they should being $2$-right adjoints.
\end{obs}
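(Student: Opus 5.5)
The statement to prove is the last observation: for $\rK \in \Cat$ and $\rC \in \CAlg(\Cat)$, the pointwise structure on $\Fun(\rK,\rC)$ is a $\Cat$-cotensoring in $\CAlg(\ICat)$. The same should hold in $\CAlg(\RMod_\rV(\ICat))$, and the forgetful $2$-functors should preserve these cotensorings.

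By definition of the $\Cat$-enrichment, a cotensoring $\rC^\rK$ is characterized by natural equivalences
\[ \Map_{\CAlg(\Cat)}(\rD \star \Sym \rK, \rC) \simeq \Map_{\CAlg(\Cat)}(\rD, \rC^\rK), \]
since the tensoring is $\rD \otimes \rK := \rD \star \Sym(\rK)$. By the closedness result \cref{prop:stariHom}, the left side is $\Map(\Sym\rK, \Fun^\otimes(\rD,\rC)) \simeq \Map_{\Cat}(\rK, \Fun^\otimes(\rD,\rC))$. It therefore suffices to produce an equivalence of categories
\[ \Fun^\otimes(\rD, \Fun(\rK,\rC)) \simeq \Fun(\rK, \Fun^\otimes(\rD,\rC)), \]
natural in $\rD$, and then pass to maximal subspaces.

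To build this equivalence, I model symmetric monoidal categories as Segal objects $\underline{\rC}\colon \Fin \to \Cat$. The pointwise structure on $\Fun(\rK,\rC)$ is the Segal object $\Fun(\rK,\underline{\rC}(-))$; it is Segal because $\Fun(\rK,-)$ preserves products. Symmetric monoidal functors are exactly natural transformations of Segal objects, and $\Fun^\otimes(\rD,\rC) \simeq \Nat(\underline\rD,\underline\rC)$ as categories. The needed equivalence then comes from the cotensoring of $\Fun(\Fin,\Cat)$ over $\Cat$:
\[ \Nat(\underline\rD, \Fun(\rK,\underline\rC)) \simeq \Fun(\rK, \Nat(\underline\rD,\underline\rC)). \]
The main point to check is that this identification is compatible with the $\star$-closed structure. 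Concretely, $\Fun^\otimes(\rD,\rC)$ as computed via \cref{lem:pointwisesm} must agree with $\Nat$ of Segal objects as an underlying category. I would verify this by comparing mapping spaces out of $[1]\star\rE$ using \cref{lem:pointwisesm}.

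For the $\rV$-module case I run the same argument with $\RM^\otimes$-shaped Segal objects and \cref{prop:starmoduleclosed}. Here the pointwise module structure is restricted along the diagonal $\rV\to\Fun(\rK,\rV)$, and a $\rV$-linear map $\rD \to \Fun(\rK,\rM)$ corresponds to a $\rK$-family of $\rV$-linear maps. Finally, preservation of cotensorings by the forgetful functors is immediate, because the underlying category of the pointwise structure is $\Fun(\rK,\rC)$ itself.
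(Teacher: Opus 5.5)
Your proposal is correct and follows the paper's argument: the paper proves the observation by the chain $\Fun^{\otimes}(\rD,\Fun(\rK,\rC))\simeq\Nat(\underline{\rD},\Fun(\rK,\underline{\rC}(-)))\simeq\Fun(\rK,\Nat(\underline{\rD},\underline{\rC}))\simeq\Fun(\rK,\Fun^{\otimes}(\rD,\rC))$ of Segal-object identifications, and treats the module case ``similarly'' by restricting along the diagonal $\rV\to\Fun(\rK,\rV)$. Your preliminary reduction through the tensoring $\rD\star\Sym\rK$ and \cref{prop:stariHom} only makes explicit what the paper leaves implicit; for checking that $\Fun^{\otimes}(\rD,\rC)$ has underlying category $\Nat(\underline{\rD},\underline{\rC})$, the natural test objects are $\Sym[n]$, i.e.\ maps out of $\rD\star\Sym[n]$, rather than ``$[1]\star\rE$''.
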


\begin{prop}
	\label{prop:localsiftedcat}
	The forgetful $2$-functors $\CAlg(\ICat) \to \ICat$ and $\CAlg(\RMod_\rV(\ICat))) \to \ICat$ locally preserve sifted colimits. To be precise, for $\rC, \rD \in \CAlg(\RMod_\rV(\Cat)))$, the forgetful functor $\Fun^{\otimes}(\rC, \rD) \to \Fun(\rC, \rD)$ preserves sifted colimits if they exist; which they do if $\rD$ admits sifted colimits. 
\end{prop}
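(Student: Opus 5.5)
The key observation is that a symmetric monoidal functor $F: \rC \to \rD$ is determined by its underlying functor together with coherence data, and that the symmetric monoidal structure on a sifted colimit of such functors can be computed pointwise. I would model $\Fun^{\otimes}(\rC,\rD)$ as the full subcategory of $\Fun_{/\Fin}(\rC^\otimes,\rD^\otimes)$ on functors preserving coCartesian morphisms. The $\rV$-linear case is analogous, using the $\Fin\times\RM^\otimes$-Segal object description and restricting to functors preserving coCartesian lifts.

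First I would show that sifted colimits in $\Fun_{/\Fin}(\rC^\otimes,\rD^\otimes)$ exist and are computed fiberwise over $\Fin$ whenever $\rD$ admits sifted colimits. Each fiber $\rD^\otimes_{\underline{n}_+}\simeq \rD^{\times n}$ admits sifted colimits, computed componentwise, because sifted diagrams have diagonal-cofinal behaviour: colimits over a sifted $\rK$ commute with finite products in $\Spaces$, and hence with products of categories. Transition functors between fibers are built from the tensor product $\otimes_\alpha$ and projections. This requires each $\otimes:\rD^{\times k}\to\rD$ to preserve sifted colimits, which I would need to justify or take as part of the hypothesis. Here I would use that $\rD\in\CAlg(\RMod_\rV(\Cat))$ with $\rD$ admitting sifted colimits is interpreted so that the tensor product preserves them separately in each variable. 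Under this reading, $\otimes$ preserves sifted colimits jointly, since a sifted $\rK$ has cofinal diagonal $\rK\to\rK^{\times k}$. Consequently $\rD^\otimes\to\Fin$ admits relative sifted colimits that are preserved by coCartesian transport, by \HTT{Prop.}{4.3.1.10}. Relative colimits in functor categories over $\Fin$ are then computed pointwise, by \HTT{Prop.}{4.3.1.10} together with the pointwise formula for relative left Kan extensions.

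Second, I would show that the full subcategory of coCartesian-morphism-preserving functors is closed under these colimits. Let $F=\colim_k F_k$, and let $e:c\to c'$ be a coCartesian lift of $\alpha$. Then $F(e)$ is the colimit of the coCartesian morphisms $F_k(e)$. Coherence means that the comparison map $\alpha_!F(c)\to F(c')$ is $\colim_k(\alpha_!F_k(c)\to F_k(c'))$, using that $\alpha_!$ preserves sifted colimits, and this is a colimit of equivalences. It therefore suffices to check this on active maps to $\underline{1}_+$, where it reduces to the canonical map $\colim_k F_k(c_1)\otimes\cdots\otimes F_k(c_n)\to \bigotimes_i\colim_k F_k(c_i)$ being an equivalence, which again holds by siftedness. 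The unit case $n=0$ uses weak contractibility of sifted $\rK$. For the $\rV$-module structure, the action map $\rD\times\rV\to\rD$ preserves colimits in the first variable, so $\rV$-linearity is preserved in the same way.

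Finally, the inclusion into $\Fun(\rC,\rD)$ given by restricting to the fiber over $\underline{1}_+$ preserves colimits computed fiberwise, which yields the claim. I expect the main obstacle to be the first step: making precise that $\rD^\otimes$ has relative sifted colimits compatible with transport, and that sifted colimits in the over-$\Fin$ functor category are pointwise, while carefully ensuring that the tensor product commutes with sifted colimits. If that hypothesis is not implicit, I would fall back on the cases used in the paper, namely presentably symmetric monoidal $\rD$.
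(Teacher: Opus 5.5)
Your argument is correct, given the hypothesis you flag. It has the same overall shape as the paper's proof: embed $\Fun^\otimes(\rC,\rD)$ into a category where sifted colimits are computed pointwise, then show that strong monoidality survives them. The difference is the intermediate category and the key lemma.

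The paper factors through lax symmetric monoidal functors, $\Fun^{\mathrm{lax}\otimes}(\rC,\rD)\simeq\Alg_{/\rC}(\rC\times_{\Fin}\rD)$, and cites \HA{Prop.}{3.2.3.1} for pointwise sifted colimits of algebras. You instead use all functors over $\Fin$, i.e.\ sections of the coCartesian fibration $\rC^\otimes\times_{\Fin}\rD^\otimes\to\rC^\otimes$. You get pointwise colimits from fiberwise colimits preserved by transport, and then check preservation of all coCartesian edges via the comparison map $\alpha_!F(c)\to F(c')$. The paper's route is shorter and gives the lax case for free; yours is self-contained and makes the needed hypothesis explicit.

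Your caveat is genuinely necessary, not pedantry. \HA{Prop.}{3.2.3.1} assumes that $\rD^\otimes$ is compatible with sifted colimits, and without this the claim fails. For $\rC=\Sym(*)$ the forgetful functor is $d\mapsto(d^{\otimes n})_n$. Take $\rD=\Ab^{\op}$ with $\otimes_{\mathbb{Z}}$ and the tower $(\mathbb{Z}/p^k)_k$, which is a filtered diagram in $\Ab^{\op}$. The comparison map is then the multiplication $\mathbb{Z}_p\otimes\mathbb{Z}_p\to\mathbb{Z}_p$, which is not invertible.

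Two small corrections:
\begin{itemize}
\item Colimits in $\rD^{\times n}$ are componentwise for any diagram shape. Siftedness is only needed to upgrade separate preservation by $\otimes$ to joint preservation.
\item In the $\rV$-linear case, $\Fun^\otimes_\rV(\rC,\rD)$ is the fiber over $\id_\rV$, not the category of all coCartesian-preserving functors over $\Fin\times\RM^\otimes$. You should say, as the paper does, that pointwise sifted colimits stay in this fiber because the $\rV$-component of the diagram is constant and sifted $\rK$ is weakly contractible. This is the same fact you already use for the unit.
\end{itemize}
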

\begin{proof}
	For any symmetric monoidal categories $\rC, \rD \in \CAlg(\Cat)$, the forgetful functor $\Fun^{\otimes}(\rC, \rD) \to \Fun^{\mathrm{lax}\otimes}(\rC, \rD) \to \Fun(\rC, \rD)$ factors through lax symmetric monoidal functors. The second functor preserves sifted colimits by \HA{Prop.}{3.2.3.1} since $\Fun^{\mathrm{lax}\otimes}(\rC, \rD) \simeq \Alg_{/ \rC}(\rC \times_{\Fin} \rD)$, in particular sifted colimits of lax symmetric monoidal functors are calculated pointwise\footnote{Alternatively, use that lax symmetric monoidal functors are algebras for the Day convolution product.}. But from this pointwise description we immediately conclude that strong monoidal functors are closed under sifted colimits.
	
	The case of $\CAlg(\RMod_\rV(\ICat)))$ is similar: Either embed into $\CAlg(\RMod(\ICat))$ and write out the same argument with $\E_\infty$ replaced by the Boardman-Vogt-product $\E_\infty \otimes_{\mathrm{BV}} \RM$, using that sifted colimits are weakly contractible so we never leave the fiber of $\id_\rV$ in $\Fun^{\otimes}(\rV, \rV)$. Alternatively, use a similar argument as \HA{Lem.}{4.8.4.12}.
\end{proof}

\subsection{Presentably symmetric monoidal categories}

Next, we consider presentable variants of the $2$-categories $\CAlg(\ICat)$ and $\CAlg(\RMod_\rV(\ICat))$ that we have constructed.

\begin{defin}
	Given $\cV \in \CAlg(\Pr)$, let $\CAlg(\IPrV) \hookrightarrow \CAlg(\RMod_\cV(\IlCat))$ be the sub-$2$-category on presentably symmetric monoidal $\cV$-modules and symmetric monoidal $\cV$-linear functors preserving colimits. Similarly we define $\CAlg(\RMod_\cV(\ICatcolim))$. 
\end{defin}

\begin{obs}
	\label{obs:CAlgPrVemb}
	The full inclusion $\CAlg(\Cat) \subseteq \Fun(\Fin , \Cat)$ enhances to a fully faithful $2$-functor: It is lax $\Cat$-linear since its left adjoint Segalification functor $\Fun(\Fin, \Cat) \to \CAlg(\Cat)$ is $\Cat$-linear for the pointwise $\Cat$-tensoring on either side. To see this, note that for $\rC, \rD \in \CAlg(\Cat)$ and $\rK \in \Cat$ we have
	\[
	\Map_{\Fun(\Fin, \Cat)}(\underline{\rC} \times \rK , \underline{\rD}) \simeq \Map_{\Cat}(\rK, \Nat(\underline{\rC}, \underline{\rD})) \simeq \Map_{\CAlg(\Cat)}(\Sym \rK, \Fun^\otimes(\rC, \rD))
	\]
	so the Segalification of $\underline{C} \times \rK$ must agree with $\rC \star \Sym \rK$. Similarly, the full inclusions $\CAlg(\RMod(\Cat)) \subseteq \Fun(\Fin, \RMod(\Cat)) \subseteq \Fun(\Fin \times \RM^\otimes , \Cat)$ enhance to fully faithful $2$-functors.
	
	This means that we could have alternatively defined $\CAlg(\RMod_\cV(\Catcolim))$ as the locally full sub-$2$-category of the fiber
	\[ \Fun(\Fin \times \RM^\otimes, \lCat) \times_{\Fun(\Fin \times \Ass^\otimes, \lCat)} \{\underline{\cV}\}\] 
	on Segal-objects whose structure maps are cocontinuous. While this description is more straightforward, it would make the study of universal constructions in it much more difficult.
\end{obs}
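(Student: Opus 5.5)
The plan is to deduce the $2$-categorical enhancement formally from \cref{reminder:twoadj}, applied to the Segalification adjunction
\[
L : \Fun(\Fin, \Cat) \rightleftarrows \CAlg(\Cat) : i \; .
\]
We equip $\Fun(\Fin, \Cat)$ with the pointwise $\Cat$-tensoring $(\underline{\rC}, \rK) \mapsto \underline{\rC} \times \rK$. This is the tensoring of the $2$-category whose hom-categories are the categories of natural transformations $\Nat(-,-)$. We equip $\CAlg(\Cat)$ with the tensoring $\rC \star \Sym \rK$ used to define $\CAlg(\ICat)$.

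By \cref{reminder:twoadj}, $i$ enhances to a $2$-functor, and $L \dashv i$ to a $2$-adjunction, as soon as $L$ preserves tensorings. This amounts to showing that $L(\underline{\rC} \times \rK) \simeq \rC \star \Sym \rK$ for $\rC \in \CAlg(\Cat)$. I would check this by Yoneda on mapping spaces into an arbitrary $\rD \in \CAlg(\Cat)$, using the following chain of equivalences:
\[
\Map_{\Fun(\Fin,\Cat)}(\underline{\rC} \times \rK, \underline{\rD}) \simeq \Map_{\Cat}(\rK, \Nat(\underline{\rC}, \underline{\rD})) \simeq \Map_{\Cat}(\rK, \Fun^\otimes(\rC, \rD)) \simeq \Map_{\CAlg(\Cat)}(\Sym \rK, \Fun^\otimes(\rC, \rD)) \; .
\]
The middle step uses that symmetric monoidal functors are exactly natural transformations of the associated Segal objects. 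The last step is the free–forgetful adjunction. By \cref{prop:stariHom}, the final term is $\Map_{\CAlg(\Cat)}(\rC \star \Sym \rK, \rD)$, as required.

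Full faithfulness of the $2$-functor $i$ then follows from the enriched adjunction together with $L i \simeq \id$. Indeed, the hom-categories satisfy
\[
\Nat(i\rC, i\rD) \simeq \Fun^\otimes(L i \rC, \rD) \simeq \Fun^\otimes(\rC, \rD) \; ,
\]
and the induced map on hom-categories is the identity under these identifications.

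For the module version I would run the same argument twice. First, for the localization $\CAlg(\RMod(\Cat)) \subseteq \Fun(\Fin, \RMod(\Cat))$, where the Segal conditions are imposed in the $\Fin$-direction. Second, for $\RMod(\Cat) \subseteq \Fun(\RM^\otimes, \Cat)$, which I combine into $\Fun(\Fin \times \RM^\otimes, \Cat)$ via currying. In each case the tensoring compatibility of the Segalification is verified by the same mapping-space computation. The internal hom is now supplied by \cref{prop:starmoduleclosed} in place of \cref{prop:stariHom}.

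To pass to $\CAlg(\RMod_\cV(\Catcolim))$, I would take fibers over $\underline{\cV} \in \Fun(\Fin \times \Ass^\otimes, \lCat)$. Hom-categories in the fiber are transformations restricting to $\id_{\underline{\cV}}$, which is exactly $\Fun^\otimes_\cV$. Finally I restrict to the locally full sub-$2$-category on cocontinuous structure maps and colimit-preserving functors.

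I expect this fiber step to be the main obstacle. It has to be checked that the fiber of a fully faithful $2$-functor between the fibers over $\underline{\cV}$ remains fully faithful, with the correct hom-categories. I would handle this by writing each hom-category as a pullback of $\Nat$-categories over $\{\id_{\underline{\cV}}\}$ on both sides, and comparing these pullbacks termwise.
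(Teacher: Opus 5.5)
Your proposal is correct and is essentially the paper's argument. You show that Segalification is compatible with the $\Cat$-tensorings, $\underline{\rC}\times\rK\mapsto\rC\star\Sym\rK$, using the same chain of mapping-space equivalences; this yields the $2$-functor and, with $Li\simeq\id$, its full faithfulness, and the description of $\CAlg(\RMod_\cV(\Catcolim))$ then follows by passing to fibers over $\underline{\cV}$ as you sketch.

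One small point, equally implicit in the paper: \cref{reminder:twoadj} needs $L$ to preserve tensors for all $X\in\Fun(\Fin,\Cat)$, not only for Segal objects. The reduction to your check is that Segal objects are closed under the pointwise cotensors $\Fun(\rK,-)$, so $X\times\rK\to iLX\times\rK$ is an $L$-equivalence. Alternatively, use the cotensor clause of \cref{reminder:twoadj} together with \cref{obs:cotensoring}.
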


\begin{lemma}
	\label{lem:conicallimitsub}
	Let $\bC \hookrightarrow \bD$ be a locally full sub-$2$-category, and $p: \rK^{\triangleleft} \to \bC$ a diagram that is a conical limit diagram in $\bC$ and a limit diagram in $\bD$. Then $p$ is automatically a conical limit diagram in $\bD$. Similarly for conical colimits.
\end{lemma}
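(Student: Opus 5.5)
The plan is to compare the two universal properties directly at the level of hom-categories, and use the assumption that $\bC \hookrightarrow \bD$ is locally full to transport the enhancement. Let $x = p(-\infty)$ denote the cone point. That $p$ is a conical limit in $\bC$ means that for every $c \in \bC$ the canonical functor
\[
\Hom_{\bC}(c, x) \to \lim_{k \in \rK} \Hom_{\bC}(c, p(k))
\]
is an equivalence of categories. We must show the analogous functor with $\bC$ replaced by $\bD$ is an equivalence for every $d \in \bD$. By hypothesis we already know it induces an equivalence on maximal subgroupoids, i.e.\ on mapping spaces, since $p$ is a limit in the underlying $1$-category of $\bD$.

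The key step is to use cotensorings to upgrade this groupoid-level statement to a categorical one. Rather than cotensorings, which we have not assumed exist, I will use the standard trick of testing against the arrow category: a functor $G: \rA \to \rB$ of categories is an equivalence iff $\Map_{\Cat}([n], \rA) \to \Map_{\Cat}([n], \rB)$ is an equivalence for $n = 0, 1$ (or all $n$). So it suffices to show that for every $d \in \bD$ the map
\[
\Fun([1], \Hom_{\bD}(d, x)) ^\simeq \to \lim_k \Fun([1], \Hom_{\bD}(d, p(k)))^\simeq
\]
is an equivalence. Here is where local fullness enters: $\Hom_{\bC}(c,c') \hookrightarrow \Hom_{\bD}(c,c')$ is a full subcategory inclusion whenever $c, c' \in \bC$, so a $2$-morphism in $\bD$ between $1$-morphisms lying in $\bC$ automatically lies in $\bC$. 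Given a compatible family of $2$-cells $\alpha_k : f_k \Rightarrow g_k$ in $\Hom_\bD(d, p(k))$, the endpoints $(f_k)$ and $(g_k)$ lift uniquely to $f, g : d \to x$ by the $1$-categorical limit property in $\bD$. The main obstacle is producing the $2$-cell $f \Rightarrow g$ when $d \notin \bC$, since then we cannot directly invoke the conical property in $\bC$.

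To overcome this, I would use the limit in $\bD$ once more, but now in the arrow direction: morally a $2$-cell $d \Rightarrow x$ is a $1$-morphism $d \to x^{[1]}$, but without cotensors I instead reduce to sources in $\bC$ by noting that $\Map_{\bD}(d, -)$ restricted to the diagram factors through mapping spaces computed via $\Hom_\bD(d,-)^\simeq$, and that the $2$-cells between $f$ and $g$ form the mapping space $\Map_{\Hom_\bD(d,x)}(f,g)$; one checks this is the fiber of the arrow-level map over $(f,g)$. If the paper's intended setting provides tensorings $d \otimes [1]$ in $\bD$ (as in presentable $2$-categories, where the lemma will be applied), I would instead simply rewrite $\Fun([1], \Hom_\bD(d, y))^\simeq \simeq \Map_\bD(d \otimes [1], y)$, so the required equivalence follows from $p$ being a $1$-categorical limit in $\bD$, applied to the test object $d \otimes [1]$. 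I expect the honest obstacle to be exactly this: justifying the existence of tensorings with $[1]$ in $\bD$ (or otherwise constructing the $2$-cell), after which the conical-colimit case follows dually by replacing $\Hom(d,-)$ with $\Hom(-,d)$.
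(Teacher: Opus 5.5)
The obstacle you flag at the end cannot be overcome. In the form you are trying to prove it (conical limit in $\bC$, merely a limit in the underlying category of $\bD$, conclude conical in $\bD$), the statement is false, because nothing about $\bC$ constrains $2$-cells out of objects $d \notin \bC$. Here is a counterexample. Let $\bD$ have two objects $d, x$ with $\Hom_\bD(d,d) \simeq \Hom_\bD(x,x) \simeq *$, $\Hom_\bD(x,d) = \emptyset$ and $\Hom_\bD(d,x) = B\mathbb{N}$ (one object with endomorphism monoid $\mathbb{N}$). Let $\bC$ be the full sub-$2$-category on $x$, and take $\rK = \emptyset$. Then $x$ is conically terminal in $\bC$, and terminal in the underlying category $[1]$ of $\bD$ because $(B\mathbb{N})^\simeq \simeq *$. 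But $\Hom_\bD(d,x) \not\simeq *$, so $x$ is not conically terminal in $\bD$.

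Consequently your middle paragraph cannot be completed. Your fallback via tensors $d \otimes [1]$ adds a hypothesis that is not in the statement. Under that hypothesis you never use $\bC$ at all, since in a $[1]$-tensored $\bD$ every limit is already conical. That is a sign that the roles of $\bC$ and $\bD$ in the printed statement are interchanged.

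The paper's proof, and its only application in \cref{prop:CAlgPrVweightedlimits} (with $\bC = \CAlg(\IPrV)$ inside the presentable $\bD = \CAlg(\RMod_\cV(\IlCat))$), concern the reverse situation. There $p$ is a limit in the underlying category of $\bC$ and a conical limit in $\bD$, and one concludes that $p$ is conical in $\bC$. The observation you isolated, that $2$-cells in $\bD$ between $1$-morphisms of $\bC$ lie in $\bC$, does all the work, but it is applied only to test objects $c \in \bC$.

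Since limits of full subcategory inclusions are full subcategory inclusions, the functor $\Hom_\bC(c,x) \to \lim_k \Hom_\bC(c,p(k))$ is the restriction to full subcategories of the equivalence $\Hom_\bD(c,x) \simeq \lim_k \Hom_\bD(c,p(k))$. Hence it is fully faithful. On cores it is the map $\Map_\bC(c,x) \to \lim_k \Map_\bC(c,p(k))$, which is an equivalence by the $1$-categorical universal property in $\bC$; so it is also essentially surjective. This is the paper's ``pullback of hom-categories and hom-spaces'' argument. It needs no tensorings and no $2$-cells out of objects outside $\bC$, and the colimit case is dual.
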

\begin{proof}
	Since $\bC$ is a locally full subcategory of $\bD$, for any $c, c' \in \bC$ the square of hom-categories and hom-spaces
	\[
	\begin{tikzcd}
		\Hom_{\bC}(c, c') \arrow[r, hook] \arrow[d, hook] & \Hom_{\bC}(c, c')^{\simeq} \arrow[d, hook] \\
		\Hom_{\bD}(c, c') \arrow[r, hook] & \Hom_{\bD}(c, c')^{\simeq}
	\end{tikzcd}
	\]
	is a pullback as the right vertical map is a full subcategory inclusion. Since we know $p$ satisfies the correct universal property for $\Hom_{\bC}(-, -)^\simeq$ and $\Hom_\bD(-, -)$, it does so for the pullback $\Hom_{\bC}(-, -)$ and we conclude.
\end{proof}

\begin{prop}
	\label{prop:CAlgPrVweightedlimits}
	The $2$-category $\CAlg(\IPrV)$ admits $\Cat$-weighted limits, which are created by the forgetful $2$-functors $\CAlg(\IPrV) \hookrightarrow \CAlg(\RMod_\rV(\IlCat)) \to \CAlg(\IlCat) \to \IlCat$.
\end{prop}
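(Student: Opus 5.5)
The plan is to reduce to the generators of weighted limits, namely cotensorings and conical limits, and to check both along the chain of forgetful $2$-functors. Weighted limits in a $\Cat$-enriched category are generated by these two kinds, and a $2$-functor preserving them preserves all weighted limits (see the recollection above citing \cite{stefanich}, \cite{heineweighted}). So it suffices to show that $\CAlg(\IPrV)$ admits cotensorings by any $\rK \in \Cat$ and conical limits, and that each forgetful $2$-functor in the chain creates them.

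\emph{Cotensorings.} By \cref{obs:cotensoring}, $\Fun(\rK,\cM)$ with the pointwise symmetric monoidal $\cV$-module structure is a cotensoring in $\CAlg(\RMod_\cV(\IlCat))$. This cotensoring is preserved by the forgetful $2$-functors to $\CAlg(\IlCat)$ and $\IlCat$, which are $2$-right adjoints by \cref{obs:CAlgCattwofunctors}. The remaining task is to check that $\Fun(\rK,\cM)$ lands in the locally full sub-$2$-category $\CAlg(\IPrV)$:
\begin{itemize}
\item $\Fun(\rK,\cM)$ is presentable.
\item The pointwise tensor product and $\cV$-action preserve colimits in each variable, since colimits are computed pointwise.
\item The evaluation maps are colimit-preserving.
\item The equivalence $\Fun^{\otimes}_\cV(\cN,\Fun(\rK,\cM)) \simeq \Fun(\rK,\Fun^{\otimes}_\cV(\cN,\cM))$ restricts to colimit-preserving functors on both sides, because a functor into $\Fun(\rK,\cM)$ preserves colimits iff each evaluation of it does.
\end{itemize}

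\emph{Conical limits.} Given $p\colon \rK \to \CAlg(\IPrV)$, first form the limit in $\CAlg(\RMod_\cV(\lCat))$. Limits of commutative algebras and modules are created by the forgetful functor to $\lCat$ (\HA{Cor.}{3.2.2.5}, or the Segal-object description of \cref{obs:CAlgPrVemb}, where limits are pointwise). Since $\Pr \hookrightarrow \lCat$ is closed under limits (\HTT{Prop.}{5.5.3.13}), the limit $\lim p$ is presentable. Its tensor product and action preserve colimits in each variable, because colimits in $\lim p$ are detected by the colimit-preserving projections. Cocone factorizations through colimit-preserving functors are again colimit-preserving for the same reason. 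Thus $\lim p$ is a limit in the $1$-category $\CAlg(\PrV)$.

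To promote this to a conical limit, I would first show it is a conical limit in $\CAlg(\RMod_\cV(\IlCat))$. For that I use the $2$-categorical universal property $\Fun^{\otimes}_\cV(\cN,\lim p) \simeq \lim \Fun^{\otimes}_\cV(\cN,p)$, which can be checked in $\Fun(\Fin\times\RM^\otimes,\lCat)$ via the fully faithful $2$-functor of \cref{obs:CAlgPrVemb}, since limits there are pointwise. I then apply \cref{lem:conicallimitsub} to the locally full inclusion $\CAlg(\IPrV) \hookrightarrow \CAlg(\RMod_\cV(\IlCat))$. Creation by the further forgetful $2$-functors follows because they are $2$-right adjoints and are conservative.

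\emph{Main obstacle.} I expect the hardest step to be the $2$-categorical enhancement of conical limits. One must ensure that the limit in $\CAlg(\RMod_\cV(\lCat))$ has the enriched universal property, and is not merely a limit of underlying $1$-categories. If the Segal-object embedding is awkward, the fallback is to use cotensors: combine the $1$-categorical limit with the cotensorings already constructed, noting that $\Fun(\rL,\Fun^{\otimes}_\cV(\cN,-))$ is corepresented via cotensoring. Since cotensors commute with limits, the $1$-categorical statement applied to every cotensor $\Fun(\rL,-)$ then yields the hom-category-level statement.
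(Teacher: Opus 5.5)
Your proposal is correct and follows the paper's proof. Both reduce to conical limits and cotensorings, obtain cotensorings from \cref{obs:cotensoring} plus pointwise presentability and colimit checks, and obtain conical limits from closure of $\Pr$ under limits in $\lCat$ together with \cref{lem:conicallimitsub}.

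The one difference is the $2$-categorical enhancement of the conical limit in $\CAlg(\RMod_\cV(\IlCat))$. The paper gets it for free, since that is a presentable $2$-category and so admits all weighted limits, which agree with the underlying $1$-limits. You verify it by hand instead. Note that the embedding of \cref{obs:CAlgPrVemb} lands in the fiber over $\underline{\cV}$, where limits are not literally pointwise on the $\Ass$-part, so your cotensor-based fallback is the cleaner way to do that step.
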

\begin{proof}
	Note that $\CAlg(\PrV) \hookrightarrow \CAlg(\RMod_\cV(\lCat))$ is closed under ordinary limits, since $\Pr \hookrightarrow \lCat$ is closed under limits by \HTT{Prop.}{5.5.3.13} and the forgetful functors to there create limits. But conical $2$-limits agree with the underlying ordinary limits if they exist, so since they exist in the presentable $2$-category $\CAlg(\RMod_\rV(\lCat))$ they also exist in the sub-$2$-category on presentable modules by \cref{lem:conicallimitsub}.
	
	It remains to consider cotensorings. From \cref{obs:cotensoring} we know $\CAlg(\RMod_\rV(\ICat))$ admits $\Cat$-cotensorings, which are created by the forgetful functor to $\ICat$. It remains to show that for $\cM \in \CAlg(\PrV)$ and $\Cat$, the cotensoring $\Fun(K, \cM)$ is still in $\CAlg(\PrV)$; and that given a functor $\rK \to \Fun^{\rL, \otimes}_\cV(\cM, \cN)$ the induced symmetric monoidal $\cV$-linear functor $\cM \to \Fun(\rK, \cN)$ preserves colimits. The latter statement is obvious since colimits in $\Fun(\rK, \cN)$ are calculated pointwise. Also $\Fun(\rK, \cN)$ is presentable by \HTT{Porp.}{5.5.3.6}, and since its symmetric monoidal structure and $\cV$-tensoring are defined pointwise they are still compatible with colimits.
\end{proof}

\begin{rem}
	In fact, $\CAlg(\IPrV)$ also admits $\Cat$-weighted colimits. For conical sifted colimits we argue as for limits by embedding $\CAlg(\IPrV) \hookrightarrow \CAlg(\RMod_\cV(\ICat))$; and since any object is a geometric realization of free objects $(\Sym^\otimes \cN) \otimes \cV$ with $\cN \in \Pr$ for conical coproducts it suffices to describe coproducts of free objects, which are given by $(\Sym^\otimes (\cN_1 \oplus \cN_2)) \otimes \cV$ using that $\Sym^\otimes$ is a $2$-functor as we show in \cref{cor:freeforgetfulfunctors}. Similarly it suffices to ensure the existence of $\Cat$-tensorings of free objects, which for $\rK \in \Cat$ are given by $(\Sym^\otimes (\PSh \rK \otimes \cN)) \otimes \cV$.
\end{rem}

\begin{warning}
	Unlike $\CAlg(\RMod_\rV(\ICat))$, the large $2$-category $\CAlg(\IPrV)$ is not presentable, since it is not locally small. It also is not presentable in the sense of \cite{presn}, since precomposition does not preserve colimits \cref{prop:CAlgPrVclosed}.
\end{warning}

\begin{warning}
	The symmetric monoidal structure $\star$ on $\CAlg(\ICat)$ suggests the existence of a symmetric monoidal structure $\ostar$ on $\CAlg(\IPr)$ such that for $\cM, \cN, \cP \in \CAlg(\Pr)$ we have
	\[
	\Fun^{\otimes, \rL} (\cM \ostar \cN,\cP) \simeq \Fun^{\otimes,\otimes, \rL,  \rL} (\cM\times\cN, \cP) \; .
	\]
	If this existed, it could be expressed as the full subcategory of the free cocompletion $\PSh^{\tav\mathrm{-rex}}(\cM \star \cN)$ on presheaves $\cM \star \cN \to \lSpaces$ such that precomposition with the canonical inclusions $\cM \simeq \cM \star \Sym(*) \to \cM \star \cN$ and $\cN \to \cM \star \cN$ yields functors preserving small colimits. However, we expect that this localization of $\PSh^{\tav\mathrm{-rex}}(\cM \star \cN)$ is generally not compatible with the symmetric monoidal structure, and hence merely forms a lax symmetric monoidal category.
	
	Further, even if $\ostar$ existed, it would not be closed because the pointwise tensor product of colimit-preserving functors generally only preserves sifted colimits.
\end{warning}

\subsection{Eilenberg-Moore objects and free \texorpdfstring{$2$}{2}-functors}
\label{subsec:EM}

As a special case of weighted limits, $\CAlg(\IPrV)$ admits Eilenberg-Moore objects in the sense of \cite{haugsengmonads}, \cite{heinemonadicity}, \cite{stefanich}, \cite{stockall2025large}. As a reminder, for $\bD$ a $2$-category and $d, d' \in \bD$, the category $\Hom_\bD(d,d')$ is a left module over $\Endo_\bD(d')$ and a right module over $\Endo_\bD(d)$ via post- and precomposition respectively. Given a monad $T \in \Alg(\Endo_\bD(d'))$, a $1$-morphism $f: x \to d'$ is said to exhibit $x$ as \emph{Eilenberg-Moore object} of $T$ if composition with it lifts to an equivalence
\[
\Hom_\bD(d, x) \simeq \LMod_T( \Hom_\bD(d, d') )
\]
for any $d \in \bD$. In other words, $x$ represents the $\Cat$-enriched presheaf $\LMod_T(\Hom_\bD(-, d'))$. In this case $f$ is called a \emph{monadic $1$-morphism}. Similarly for $S \in \Alg (\Endo_\bD(d))$, a morphism $g: d \to y$ exibits $y$ as a \emph{Kleisli object} of $S$ if composition with it lifts to an equivalence
\[
\Hom_\bD(y, d') \simeq \RMod_S( \Hom_\bD(d, d') )
\]
for any $d' \in \bD$. See the proof of \cref{prop:locgr} for another perspective. 

\begin{prop}
	\label{prop:locgr}
	Let $\C,\bD$ be $2$-categories that locally admit geometric realizations (in particular, their compositions preserve geometric realizations in each argument). Given a monad $T$ on an object $c \in \C$, then an object $e \in \C$ is an Eilenberg-Moore object of $T$ iff it is a Kleisli object of $T$. A $1$-morphism $e \to c$ is monadic iff it admits a right adjoint $c \to e$ exhibiting $e$ as a Kleisli object.
	
	If $F: \C \to \bD$ is a $2$-functor which locally preserves geometric realizations, then also $Fe$ is an Eilenberg-Moore and Kleisli object of the monad $FT$ in $\bD$. In particular, $F$ preserves monadic $1$-morphisms.
\end{prop}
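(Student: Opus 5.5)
The plan is to use the standard description of Eilenberg-Moore and Kleisli objects as weighted limits and colimits. These are computed from the bar construction, and the point is that local geometric realizations let us pass between the two.

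\textbf{Step 1: from a Kleisli object to an adjunction.} Suppose $g\colon c \to e$ exhibits $e$ as a Kleisli object of $T$, so that $\Hom(e,d') \simeq \RMod_T(\Hom(c,d'))$. Taking $d' = c$, the free right module $T \in \RMod_T(\Endo(c))$ yields a $1$-morphism $u\colon e \to c$ with $u g \simeq T$. I will check that $u$ is right adjoint to $g$. The unit is $\id \to T = ug$, and the counit $gu \to \id_e$ corresponds, under the universal property, to the module map given by the action $gT \to g$.

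\textbf{Step 2: Kleisli implies Eilenberg-Moore.} I want to show that postcomposition with $u$ lifts to an equivalence $\Hom(d,e) \simeq \LMod_T(\Hom(d,c))$.
\begin{itemize}
\item \emph{Full faithfulness.} A $T$-module $M\colon d\to c$ admits the bar resolution $M \simeq |T^{\bullet+1} M|$. This is a geometric realization in $\Hom(d,c)$, which exists by assumption and is preserved by composition in each argument. Each term $T^{k+1}M = u(g T^k M)$ is the image of a free object under $u$, and morphisms out of free objects are computed by the adjunction. Comparing both sides on these free objects and passing to the colimit gives full faithfulness.
\item \emph{Essential surjectivity.} Given $M$, I set $\tilde M := |g T^{\bullet} M|$ in $\Hom(d,e)$. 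Since $u$ commutes with geometric realizations by local preservation, $u\tilde M \simeq |T^{\bullet+1}M| \simeq M$ as modules.
\end{itemize}
This is the formal monadicity argument internal to a $2$-category, as in \cite{heinemonadicity}. The converse, Eilenberg-Moore implies Kleisli, is dual: the free functor $c \to e$ is left adjoint to the monadic morphism, and one resolves right modules by $|M T^{\bullet+1}|$.

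\textbf{Step 3: the second claim.} If $e \to c$ is monadic, then its left adjoint exhibits $e$ as a Kleisli object by the argument above. Conversely, a right adjoint exhibiting $e$ as a Kleisli object is monadic by Step 2.

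\textbf{Step 4: functoriality.} Let $F$ be a $2$-functor locally preserving geometric realizations. Then $F$ preserves the adjunction $g \dashv u$ and the identification $ug \simeq T$, so $Fu\, Fg \simeq FT$. To show that $Fg$ exhibits $Fe$ as a Kleisli object of $FT$, I use the criterion that a right adjoint whose induced monad is $FT$ and which satisfies the ``Beck-Chevalley/codescent'' condition is the comparison. Concretely, $e$ is the colimit of the bar diagram $c \leftleftarrows \dots$ in the sense of a codescent object, namely the Kleisli object as a weighted colimit of the simplicial diagram $T^{\bullet}$. One reconstructs this from the splitting that local geometric realizations provide, and $F$ preserves it because it preserves the relevant geometric realizations in hom-categories. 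Preservation of monadic $1$-morphisms then follows from Step 3.

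\textbf{Main obstacle.} I expect the hardest part to be Step 4, in particular the identification of $\Hom(Fe,d')$ with $\RMod_{FT}(\Hom(Fc,d'))$ for objects $d'$ not in the image of $F$. My first attempt will be to express both sides via bar constructions in $\bD$. These only involve the morphisms $Fg$, $Fu$, $FT$ and geometric realizations, so the argument of Step 2 should apply verbatim in $\bD$ once the adjunction $Fg \dashv Fu$ is known. In this way the question is reduced entirely to data that $F$ preserves.
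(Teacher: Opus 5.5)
There is a genuine gap. It sits in Step 2 (full faithfulness) and propagates into Step 4. As written, your full-faithfulness argument uses only the adjunction $g \dashv u$ with $ug \simeq T$. Step 4 makes this explicit: ``the argument of Step 2 should apply verbatim in $\bD$ once the adjunction $Fg \dashv Fu$ is known''. That cannot suffice, since already in $\IlCat$ every adjunction induces a monad, yet most adjunctions are not monadic.

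Bar-resolving modules, $M \simeq |T^{\bullet+1}M|$, shows only that the unit $M \to u\circ|gT^\bullet M|$ of the comparison adjunction $|gT^\bullet(-)| \dashv u\circ(-)$ is an equivalence. That is, it shows the \emph{left} adjoint is fully faithful. Full faithfulness of $u \circ -$ instead requires the counit $|gT^\bullet u|\circ X \to X$ to be an equivalence for all $X$, i.e.\ the codescent identity $|gT^{\bullet}u| \xrightarrow{\sim} \id_e$ in $\Endo_\C(e)$. This is exactly where the Kleisli property of $e$ must enter. Under $\Endo_\C(e) \simeq \RMod_T(\Hom_\C(c,e))$ we have $\id_e \mapsto g$. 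Bar-resolve $g \simeq |gT^{\bullet+1}| \simeq |(gT^\bullet u)\circ g|$, and use that precomposition with $g$ is an equivalence commuting with geometric realizations; this gives the identity. Dually, from the Eilenberg--Moore property, resolve $u \simeq |T^{\bullet+1}u| \simeq |u\circ(gT^\bullet u)|$.

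Once you isolate this identity, Step 4 no longer needs your appeal to $F$ preserving the Kleisli object as a codescent weighted colimit. A $2$-functor that merely locally preserves geometric realizations has no reason to preserve weighted colimits. What is actually preserved are the two equivalences $ug \simeq T$ (as $T$-bimodules) and $|gT^\bullet u| \simeq \id_e$. Both are statements about composites and geometric realizations in hom-categories, so $F$ sends them to $Fu\circ Fg \simeq FT$ and $|Fg\,(FT)^\bullet\, Fu| \simeq \id_{Fe}$. From these two alone you can write down, for \emph{every} $d, d' \in \bD$ and not just those in the image of $F$, explicit inverses:
\begin{itemize}
\item $N \mapsto |N(FT)^\bullet Fu|$ inverts $-\circ Fg$;
\item $M \mapsto |Fg(FT)^\bullet M|$ inverts $Fu \circ -$.
\end{itemize}
Both composites reduce to a bar resolution or to the codescent identity. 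So $Fe$ is both a Kleisli and an Eilenberg--Moore object. The same computation in $\C$ also proves the first claim directly.

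This pair of equivalences is precisely what the paper packages as an isomorphism $(c,T) \simeq (e,\id_e)$ in the Morita $2$-category $\Morita(\C)$, whose hom-categories are bimodule categories and whose composition is the relative tensor product. The functoriality of $\Morita(-)$ in such $F$ is the whole proof there. With the codescent identity added, your route is a hands-on unpacking of the same argument.
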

\begin{proof}
	Follows from lax matrix calculus applied to $1 \times 1$-matrices, see \cite{laxadd} and \cite{angus}. We spell out a simplified version of their argument: To any $2$-category $\C$ that locally admits geometric realizations, we can associate its Morita $2$-category $\Morita(\C)$ whose objects are pairs $(c, T)$ of an object $c \in \C$ and a monad $T \in \Endo_\C(c)$, with morphism categories given by
	\[
	\Hom_{\Morita(\C)}((c, T) , (c', T')) \simeq \BMod{T}{T'}(\Hom_\C(c, c'))
	\]
	and composition given by relative tensor products (explaining our assumption that geometric realizations must exist). See \cite{blomstraight} for an explicit construction. 
	
	This construction is functorial in $2$-functors that locally preserve geometric realizations, so we obtain a $2$-functor $\Morita(\C) \to \Morita(\bD)$ sending $(c, T) \mapsto (Fc, FT)$. Also, $e \in \C$ is an Eilenberg-Moore object of a monad $T$ iff there exists an isomorphism $(c, T) \simeq (e, \id_e)$ in $\Morita(\C)$ iff $e$ is a Kleisli-object of $T$; this becomes apparent by applying $\Hom_{\Morita(\C)}((d, \id_d) , -)$ and $\Hom_{\Morita(\C)}(-, (d, \id_d))$ for arbitrary $d \in \C$ to this isomorphism which by the above description of morphism categories in $\Morita(\C)$ recovers the defining universal properties of Eilenberg-Moore and Kleisli-objects respectively.
	
	Finally a $1$-morphism $f: c' \to c$ in $\bC$ is monadic iff the associated $1$-morphism $(c' , \id_{c'}) \to (c, \id_c)$ in $\Morita(\C)$ is isomorphic to the canonical morphism $(c, T) \to (c, \id_c)$ induced by regarding $T$ as a left $T$-module; while a morphism $g: c \to c'$ exhibits $c'$ as a Kleisli-object if it is isomorphic to the morphism $(c, \id_c) \to (c, T)$ regarding $T$ as a right $T$-module (which is right adjoint to the former).
\end{proof}

\begin{cor}
	\label{cor:reflgr}
	Let $\C,\bD$ be $2$-categories that locally admit geometric realizations and $F: \C \to \bD$ a conservative $2$-functor which locally preserves geometric realizations. Then $F$ creates Eilenberg-Moore objects.
\end{cor}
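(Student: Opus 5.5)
The plan is to deduce this from \cref{prop:locgr}: under its hypotheses, Eilenberg--Moore objects in $\C$ and $\bD$ coincide with Kleisli objects, and $F$ preserves them. So the only thing to show is reflection. Fix a monad $T$ on $c \in \C$, and let $e := \LMod_{FT}(Fc)$ be the Eilenberg--Moore object in $\bD$, with monadic $1$-morphism $u: e \to Fc$ whose left adjoint exhibits $e$ as a Kleisli object of $FT$. We must produce an Eilenberg--Moore object of $T$ in $\C$ mapping to $e$, and show that any candidate in $\C$ whose image under $F$ is an Eilenberg--Moore object is already one.

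I would work in the Morita $2$-categories from the proof of \cref{prop:locgr}. Since $F$ locally preserves geometric realizations, it induces a $2$-functor $\Morita(F): \Morita(\C) \to \Morita(\bD)$ on bimodule hom-categories, extending $F$ along the fully faithful inclusion $c \mapsto (c, \id_c)$. An object $x \in \C$ is an Eilenberg--Moore object of $T$ exactly when $(c,T) \simeq (x, \id_x)$ in $\Morita(\C)$. Reflection is then the statement that if $x$ comes with a $1$-morphism $f: x \to c$ and a $T$-action on $f$, and $Ff$ is monadic for $FT$, then $f$ is monadic. Concretely, $f$ together with its $T$-module structure determines a morphism $(x,\id_x) \to (c,T)$ in $\Morita(\C)$, namely $f$ regarded as a $(\id, T)$-bimodule. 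The comparison $\Morita(F)$ of this morphism is an equivalence in $\Morita(\bD)$.

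The key step is to show that $\Morita(F)$ is conservative on equivalences, i.e.\ that it reflects invertibility of $1$-morphisms. For this I would check that the unit and counit witnesses in $\Morita(\bD)$ lift. A candidate inverse $(c,T) \to (x,\id_x)$ is a right $T$-module $g: c \to x$ in $\C$. I would try to construct it as the bar construction $f^{\rR}$-type object, a geometric realization of $T^{\bullet}$ composites. The difficulty is that such an object needs a right adjoint of $f$ already in $\C$. So instead I would argue entirely through conservativity of $F$ on hom-categories. For every $d$, the functor $\Hom_\C(d,x) \to \LMod_T(\Hom_\C(d,c))$ is compatible with $F$. Its image under $F$ is an equivalence by the monadicity of $Ff$. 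We then want to conclude that it is an equivalence before applying $F$.

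This last point is the main obstacle. A conservative $2$-functor need not be locally fully faithful, so conservativity alone does not yield equivalences of hom-categories. My first attempt would be to interpret "conservative $2$-functor" as reflecting equivalences of $1$-morphisms (and invertible $2$-cells), and to apply this to $\Morita(F)$. Then the morphism $(x, \id_x) \to (c,T)$ is an equivalence in $\Morita(\C)$, and hence $x$ is an Eilenberg--Moore object by \cref{prop:locgr}.

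This requires that $\Morita(F)$ is again conservative. I expect this to follow because its hom-functors are computed as bimodules in the hom-categories of $\C$, with forgetful functors to $\Hom_\C$ that are conservative and commute with $F$. Existence is then handled separately. If $\C$ is assumed to admit the relevant objects, or if they are constructed, for instance, as the Kleisli object via a geometric realization in $\C$, then preservation by $F$ together with the reflection just shown gives creation.
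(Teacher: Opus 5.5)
\textbf{The gap.} Your argument rests on the claim that $\Morita(F)$ reflects equivalences, and this step fails. Conservativity of the forgetful functors from bimodules to $\Hom_\C$ only shows that $\Morita(F)$ reflects invertible $2$-cells. To show that $\phi\colon (x,\id_x)\to(c,T)$ is invertible in $\Morita(\C)$, you must produce an inverse $1$-morphism, namely a right $T$-module in $\Hom_\C(c,x)$. This inverse would be the \emph{left} adjoint $f^{\rL}$ with its right $T$-action, not a right adjoint. Nothing in the hypotheses puts it in $\C$.

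In fact reflection is false at this generality. Let $\bD$ be the full sub-$2$-category of $\IPr$ on $c=\Spaces$ and $x=\Fun(BG,\Spaces)$ for a nontrivial group $G$. Let $\C\subseteq\bD$ agree with $\bD$ except that $\Hom_\C(c,x)$ contains only the functor constant at the initial object.
\begin{itemize}
\item $\C$ is closed under composition and geometric realizations.
\item The inclusion is conservative: the endomorphism categories are unchanged, and no $1$-morphism between $x$ and $c$ is an equivalence.
\item The inclusion locally preserves geometric realizations.
\item The forgetful functor $f\colon x\to c$ is monadic in $\bD$ for $T=G\times(-)$.
\end{itemize}
But $f$ is not monadic in $\C$, since $\Hom_\C(c,x)\simeq *$ while $\LMod_T(\Hom_\C(c,c))\simeq\Fun(BG,\Spaces)$. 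So $\Morita(F)$ is not conservative here, and no argument from these hypotheses alone yields reflection. Likewise, you cannot construct an Eilenberg--Moore object in $\C$ lifting one in $\bD$; take $\C$ to be the full sub-$2$-category on $c$ alone.

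\textbf{What the paper does instead.} The paper states the corollary as an immediate consequence of \cref{prop:locgr}: preservation plus conservativity. This works once Eilenberg--Moore objects exist in $\C$. They do exist in the application \cref{thm:EMPrV}, because $\CAlg(\IPrV)$ admits $\Cat$-weighted limits by \cref{prop:CAlgPrVweightedlimits}.

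With existence, reflection needs only ordinary conservativity of $F$. Suppose $f\colon x\to c$ carries a $T$-action and $Ff$ is monadic. Let $u\colon \LMod_T(c)\to c$ be the Eilenberg--Moore object in $\C$, and $k\colon x\to\LMod_T(c)$ the comparison map with $uk\simeq f$ as $T$-modules. By \cref{prop:locgr}, $Fu$ is monadic for $FT$. So $Fk$ is a map over $Fc$ between two Eilenberg--Moore objects of $FT$, and hence an equivalence. Conservativity of $F$ then makes $k$ an equivalence, so $f$ is monadic.

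This route needs neither conservativity of $\Morita(F)$ nor any analysis of hom-categories. Your closing remark gestures at existence, but it still routes through the unproven reflection step rather than this comparison-map argument.
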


\begin{lemma}
	\label{lem:fgtlocallycocont}
	The hom-categories of $\CAlg(\IPrV)$ admit sifted colimits, which are calculated pointwise. The forgetful $2$-functor $\CAlg (\RMod_\cV (\IPr)) \to \IlCat$ is conservative and locally preserves sifted colimits. Similarly for $\CAlg(\RMod_\cV(\ICatcolim))$.
\end{lemma}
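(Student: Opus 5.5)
The plan is to establish the three claims of the lemma in turn: existence and pointwise computation of sifted colimits in the hom-categories, conservativity of the forgetful $2$-functor, and local preservation of sifted colimits. The colimit-cocomplete variant $\CAlg(\RMod_\cV(\ICatcolim))$ will be handled by the same argument, with ``preserves small colimits'' in place of ``cocontinuous''.

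\emph{Hom-categories.} For $\cM, \cN \in \CAlg(\PrV)$, the hom-category $\Fun^{\rL,\otimes}_\cV(\cM,\cN)$ is a full subcategory of $\Fun^{\otimes}_\cV(\cM,\cN)$. By \cref{prop:localsiftedcat}, the latter admits sifted colimits, because $\cN$ is cocomplete, and these are preserved by the forgetful functor to $\Fun(\cM,\cN)$; so they are computed pointwise. It then suffices to check that this full subcategory is closed under sifted colimits. A pointwise colimit of colimit-preserving functors again preserves colimits, since colimits commute with colimits in $\cN$. Hence a sifted colimit, formed in $\Fun^\otimes_\cV(\cM,\cN)$, of functors in $\Fun^{\rL,\otimes}_\cV(\cM,\cN)$ again lies in $\Fun^{\rL,\otimes}_\cV(\cM,\cN)$. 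This shows at once that the hom-categories of $\CAlg(\IPrV)$ admit sifted colimits, that they are calculated pointwise, and that the forgetful functor $\Fun^{\rL,\otimes}_\cV(\cM,\cN) \to \Fun(\cM,\cN)$ preserves them, which is exactly local preservation of sifted colimits.

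\emph{Conservativity.} A $2$-functor is conservative if it is conservative on the underlying $1$-category and locally conservative on hom-categories.
\begin{itemize}
	\item \emph{Locally.} A natural transformation between symmetric monoidal $\cV$-linear functors that is a pointwise equivalence is an equivalence, because the forgetful functor $\Fun^\otimes_\cV(\cM,\cN)\to\Fun(\cM,\cN)$ is conservative. Indeed, such functors embed into $\Fun(\Fin\times\RM^\otimes,\lCat)$-type data by \cref{obs:CAlgPrVemb}, where equivalences are detected pointwise.
	\item \emph{On $1$-morphisms.} If a symmetric monoidal $\cV$-linear colimit-preserving functor $F$ is an equivalence of underlying categories, then its inverse inherits a symmetric monoidal $\cV$-linear structure, since equivalences in $\CAlg(\RMod_\cV(\lCat))$ are detected on underlying categories. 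The inverse also preserves colimits, as any inverse equivalence does.
\end{itemize}

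The only step needing care is the precise form of \cref{prop:localsiftedcat} in the $\cV$-linear setting, namely that sifted colimits of symmetric monoidal $\cV$-linear functors are computed pointwise. This is asserted there, so I expect no real obstacle: the argument is mostly bookkeeping with \cref{prop:localsiftedcat} and closure of cocontinuous functors under pointwise colimits.
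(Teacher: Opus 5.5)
Your proposal is correct and follows essentially the paper's argument: sifted colimits in the hom-categories come from \cref{prop:localsiftedcat} together with closure of colimit-preserving functors under pointwise colimits, and conservativity reduces to the underlying $1$-functor. The one place where your justification only restates the claim (that inverses of equivalences in $\CAlg(\RMod_\cV(\lCat))$ inherit the structure) is where the paper instead cites conservativity of the forgetful functors from commutative algebras and from modules, \HA{Prop.}{3.2.2.5} and \HA{Prop.}{4.2.3.2}.
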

\begin{proof}
	Conservativity is clear from \HA{Prop.}{3.2.2.5} and \HA{Prop.}{4.2.3.2} as it only depends on the underlying $1$-functor. 
	Then, the statement follows from the analogous statement \cref{prop:localsiftedcat} in $\CAlg(\RMod_\cV(\lCat))$, since colimit-preserving functors are closed under pointwise colimits.
\end{proof}

\begin{prop}
	\label{prop:CAlgPrVclosed}
	The $2$-category $\CAlg(\PrV)$ locally admits sifted colimits, i.e.\ is enriched over $\lCat {}^{\mathrm{sift}}$. In fact, its hom-categories are presentable, and for
	$\cM_1, \cM_2, \cM_3 \in \RMod_\cV(\Catcolim)$ and a morphism $F: \cM_2 \to \cM_3$ the postcomposition functor
	\[
	F \circ - : \Fun^{\rL, \otimes}_\cV(\cM_1, \cM_2) \to \Fun^{\rL, \otimes}_\cV(\cM_1, \cM_3)
	\]
	preserves small colimits so it admits a right adjoint $\iNat_{\cM_3}(F, -)$\footnote{Classically, $\iNat_{\cM_3}(F, G)$ is called the \emph{right Kan lifting} of $G$ along $F$; in our case it is closely related to operadic weighted limits.}. 
	Similarly, the $2$-category $\CAlg(\RMod_\cV(\ICatcolim))$ locally admits sifted colimits, its hom-categories are cocomplete and postcomposition preserves small colimits.
\end{prop}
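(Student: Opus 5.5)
The plan is to reduce everything to facts about $\Fun^{\otimes}_\cV$ and colimit-preserving functors, using \cref{lem:fgtlocallycocont}. For the local sifted colimits, \cref{lem:fgtlocallycocont} already shows that the hom-categories $\Fun^{\rL,\otimes}_\cV(\cM_1,\cM_2)$ admit sifted colimits computed pointwise. It also shows that precomposition and postcomposition preserve them, since composition acts pointwise in the target and colimit-preserving functors commute with pointwise colimits. This gives the enrichment over $\lCat^{\mathrm{sift}}$.

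For presentability of the hom-categories, I would proceed in three steps.
\begin{enumerate}
\item Identify $\Fun^{\rL,\otimes}_\cV(\cM_1,\cM_2)$ with $\CAlg$ of a presentable symmetric monoidal category. The category $\Fun^{\rL}_\cV(\cM_1,\cM_2)$ of colimit-preserving $\cV$-linear functors is presentable, being an internal hom in $\PrV$. It carries the pointwise symmetric monoidal structure (Day convolution restricted along the diagonal, i.e.\ the $\otimes_\cV$-internal hom structure), which preserves colimits in each variable separately because the tensor product on $\cM_2$ does. Strong symmetric monoidal $\cV$-linear functors are then the commutative algebras that are \emph{strong}: this is where \cref{lem:pointwisesm} and \cref{prop:starmoduleclosed} enter, identifying lax symmetric monoidal functors with $\CAlg(\Fun^{\rL}_\cV(\cM_1,\cM_2))$.
\item Deduce that the lax version is presentable. $\CAlg$ of a presentably symmetric monoidal category is presentable by \HA{Cor.}{3.2.3.5}.
\item Cut out the strong functors. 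The strong ones form the full subcategory on which the finitely many structure maps $F(m)\otimes F(m')\to F(m\otimes m')$ and $1\to F(1)$ are equivalences. Since $\cM_1$ is generated by a small set of objects, it suffices to test this on a small set of pairs, so we are taking an accessible full subcategory closed under colimits. I expect this step to be the main obstacle: closure under all small colimits (not only sifted ones) must be checked. It holds because colimits in $\CAlg$ of a pointwise structure are not computed pointwise, and instead the strong functors are exactly $\Fun^{\rL,\otimes}_\cV$, which is the $2$-categorical hom. I would instead exhibit it as a localization: write $\cM_1$ as a colimit of free objects $\PSh\Sym(\rC)\otimes\cV$, for which $\Fun^{\rL,\otimes}_\cV(\PSh\Sym \rC\otimes\cV,\cM_2)\simeq\Fun(\rC,\cM_2)$ is presentable by \cref{cor:freeforgetfulfunctors}. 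The general hom is then a limit in $\Pr$ of such categories, along precomposition functors that preserve colimits (by the argument in the last paragraph). Limits in $\Pr$ are computed in $\lCat$ (\HTT{Prop.}{5.5.3.13}), which gives presentability and cocompleteness. The step to verify is that a resolution by a colimit of frees exists in $\CAlg(\PrV)$: use the bar resolution of the monadic adjunction $\CAlg(\PrV)\rightleftarrows\Pr$, and the fact that $\Fun^{\rL,\otimes}_\cV(-,\cM_2)$ sends colimits in $\CAlg(\PrV)$ to limits, since it is a corepresentable hom on the underlying $1$-category.
\end{enumerate}

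For postcomposition by $F$, colimits in $\Fun^{\rL,\otimes}_\cV(\cM_1,\cM_2)$ are detected through the limit description above. So it suffices to treat free $\cM_1=\PSh\Sym(\rC)\otimes\cV$, where postcomposition becomes $F\circ-:\Fun(\rC,\cM_2)\to\Fun(\rC,\cM_3)$. This preserves colimits because $F$ does and colimits there are pointwise. Compatibility with the limit diagram is naturality of pre- versus postcomposition. The adjoint functor theorem then gives the right adjoint $\iNat_{\cM_3}(F,-)$.

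The $\Catcolim$ variant is the same argument with $\Pr$ replaced by cocomplete categories, dropping presentability and keeping cocompleteness. The frees are now $\PSh^{\tav\text{-rex}}\Sym(\rC)\otimes\cV$, and limits of cocomplete categories along cocontinuous functors are cocomplete.
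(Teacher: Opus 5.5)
Your first paragraph (sifted colimits exist, are computed pointwise, and are preserved by pre- and postcomposition) is fine. The route you settle on in step 3 is exactly the paper's proof: resolve $\cM_1$ by the bar construction for $L=\Sym^{\otimes}(-)\otimes\cV\dashv U$, write $\Fun^{\rL,\otimes}_\cV(\cM_1,\cM_2)\simeq\lim_{\Delta}\Fun^{\rL}(\cP_n,\cM_2)$, and reduce $F\circ-$ to free sources. Steps 1--2 should simply be dropped. The pointwise tensor product does not preserve $\Fun^{\rL}_\cV(\cM_1,\cM_2)$ (think of $x\mapsto x\times x$ on $\Spaces$). Moreover, its commutative algebras are functors into $\CAlg(\cM_2)$, not lax monoidal functors.

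The genuine gap is your claim that this cosimplicial limit is a limit in $\Pr$, i.e.\ that its transition functors, which are precompositions, preserve colimits. As your own first paragraph says, precomposition only preserves sifted colimits. Concretely, take the coface map induced by a counit $\epsilon_{L\cP}\colon LUL\cP\to L\cP$. Under $\Fun^{\rL,\otimes}_\cV(L\cQ,\cM_2)\simeq\Fun^{\rL}(\cQ,\cM_2)$ it becomes $f\mapsto U\bar f$, sending $f$ to the underlying functor of its symmetric monoidal extension. Since $\bar f(p\otimes p)\simeq f(p)\otimes f(p)$ and $\bar f(1)\simeq 1$, this functor preserves neither binary coproducts nor the initial object. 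So you only have a limit of presentable categories along sifted-colimit-preserving functors. By \HTT{Prop.}{5.4.7.3} such a limit is accessible with sifted colimits, but it need not be cocomplete. The same problem breaks your reduction of $F\circ-$ to the free case. The paper's proof asserts the identical step (``a transformation between limit diagrams in $\Pr$''), which conflicts with its own later warning that precomposition does not preserve colimits. So this is a gap in both arguments, not something you missed.

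The gap cannot be closed, because the statement fails for general $\cM_1$. Take the terminal object $0$ of $\CAlg(\PrV)$. A colimit-preserving functor $0\to\cV$ must hit $\emptyset_\cV$, while a symmetric monoidal one must hit $1_\cV$. Hence $\Fun^{\rL,\otimes}_\cV(0,\cV)$ is empty and has no initial object.

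A less degenerate example: colimit-preserving symmetric monoidal functors $\Spaces\times\Spaces\to\Spaces$ are $(x,y)\mapsto(x\times A)\sqcup(y\times B)$ with $A\sqcup B\simeq *$. So $\Fun^{\rL,\otimes}(\Spaces\times\Spaces,\Spaces)$ is the discrete category on $\{\mathrm{pr}_1,\mathrm{pr}_2\}$, which has no coproduct of its two objects. Moreover, postcomposition with the diagonal $\Delta\colon\Spaces\to\Spaces\times\Spaces$ has no right adjoint $R$. Indeed, $R(\id)$ would be some $\mathrm{pr}_i$ with $\Map(\Delta\circ\mathrm{pr}_i,\id)\simeq\Map(\mathrm{pr}_i,\mathrm{pr}_i)\neq\emptyset$. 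But $\Delta\circ\mathrm{pr}_i=(\mathrm{pr}_i,\mathrm{pr}_i)$ admits no map to $\id=(\mathrm{pr}_1,\mathrm{pr}_2)$.

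What your argument does establish is the following. In general, hom-categories are accessible and have sifted colimits. Full presentability, with colimit-preserving $F\circ-$ and hence a right adjoint $\iNat_{\cM_3}(F,-)$, holds when $\cM_1$ is free. It also holds when $\cM_1=\PShO(\cO)$ is a marked algebra. There the hom is $\RMod_\cO(\Fun(X,\cM_2))$, modules over the sifted-colimit-preserving monad $-\circ\cO$ on a presentable category, and $F\circ-$ commutes with free modules. That restricted version is what the later sections actually use.
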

\begin{proof}
	Composition preserves sifted colimits in both arguments, since we already know by \cref{lem:fgtlocallycocont} that they exist and are calculated pointwise. 
	
	For the case of general colimits, assume that $\cM_1 = (\Sym^\otimes \cP) \otimes \cV$ for some $\cP \in \Pr$. Then $\Fun^{\rL, \otimes}_\cV(\cM_1, \cM_2) \simeq \Fun^\rL(\cP , \cM_2)$ is presentable, and $F \circ -$ agrees with the postcomposition
	\[
	F \circ - : \Fun^{\rL}(\cP, \cM_2) \to \Fun^{\rL}(\cP, \cM_3)
	\]
	which preserves colimits since it admits a right adjoint $F^\rR \circ -$. A general $\cM_1$ can be resolved as a geometric realization of $(\Sym^\otimes \cP_n) \otimes \cV$ in $\CAlg(\Pr)$, and on each component $F \circ -$ admits a right adjoint $F^\rR \circ -$.
	Hence, the postcomposition functor
	\[
	F \circ - : \Fun^{\rL, \otimes}_\cV(\cM_1, \cM_2) \simeq \lim_{[n] \in \Delta^\op} \Fun^{\rL}(\cP_n, \cM_2) \to \lim_{[n] \in \Delta^\op} \Fun^{\rL}(\cP_n, \cM_n) \simeq \Fun^{\rL, \otimes}_\cV(\cM_1, \cM_3)
	\]
	is induced by a transformation between limit diagrams in $\Pr$. So we conclude since $\Pr \hookrightarrow \lCat$ is closed under limits by \HTT{Prop.}{5.5.3.13}, and similarly $\Catcolim \hookrightarrow \lCat$ is closed under limits.
\end{proof}

\begin{warning}
	However, $\CAlg(\IPrV)$ is not left closed, i.e.\ internal right Kan extensions generally do not exist. For instance, the composition product from \cref{cor:compositionproduct} only preserves sifted colimits in the left argument, not coproducts.
\end{warning}

\begin{theorem}
	\label{thm:EMPrV}
	The categories $\CAlg(\RMod_\cV(\Pr))$ and $\CAlg(\RMod_\cV(\Catcolim))$ admit Eilenberg-Moore objects, which automatically satisfy the universal property of Kleisli-objects as well. The forgetful $2$-functors to $\IlCat$ create Eilenberg-Moore objects and monadic $1$-morphisms. 
\end{theorem}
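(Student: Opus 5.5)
The strategy is to build Eilenberg-Moore objects in $\widehat{\ICat}$ and lift them step by step along the forgetful $2$-functors
\[
\CAlg(\IPrV) \to \CAlg(\RMod_\cV(\IlCat)) \to \CAlg(\IlCat) \to \IlCat ,
\]
and then identify them with Kleisli objects using \cref{prop:locgr}.

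First I would set up the hypotheses of \cref{prop:locgr} and \cref{cor:reflgr}. The $2$-categories $\CAlg(\IPrV)$ and $\CAlg(\RMod_\cV(\ICatcolim))$ locally admit sifted colimits, and composition preserves them in each variable, by \cref{prop:CAlgPrVclosed}. The same holds in $\IlCat$. So all of these $2$-categories locally admit geometric realizations. By \cref{lem:fgtlocallycocont}, the forgetful $2$-functor to $\IlCat$ is conservative and locally preserves sifted colimits. \cref{cor:reflgr} then says this forgetful functor creates Eilenberg-Moore objects. Concretely, if the underlying Eilenberg-Moore object exists in $\IlCat$ and lifts to $\CAlg(\PrV)$ compatibly, it is automatically one there. \cref{prop:locgr} then gives the Kleisli property for free.

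The remaining work is existence. Take a monad $T \in \Alg(\Endo^{\rL,\otimes}_\cV(\cM))$. Its underlying monad on the category $\cM$ is colimit-preserving, so $\LMod_T(\cM)$ is presentable by \HA{Prop.}{4.2.3.x}/\HTT{}{} style results, with colimits created by the forgetful functor. Since $T$ is symmetric monoidal and $\cV$-linear, I would equip $\LMod_T(\cM)$ with a symmetric monoidal $\cV$-module structure in two ways:
\begin{itemize}
\item Regard $T$ as a monad in the $2$-category $\CAlg(\RMod_\cV(\IlCat))$ and form the Eilenberg-Moore object as a $\Cat$-weighted limit. Such limits exist and are created in $\IlCat$ by \cref{prop:CAlgPrVweightedlimits} and \cref{obs:cotensoring}, since the forgetful $2$-functors are $2$-right adjoints by \cref{obs:CAlgCattwofunctors}.
\item Alternatively, describe $\LMod_T(\cM)$ levelwise via the Segal-object description of \cref{obs:CAlgPrVemb}, applying $\LMod_{T^{\times n}}$ to $\cM^{\times n}$.
\end{itemize}
Then I would check that the tensor product and the $\cV$-action on $\LMod_T(\cM)$ preserve colimits in each variable. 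This comes down to the symmetric monoidal structure maps $T(m)\otimes T(m') \simeq T(m\otimes m')$ being strong and $T$ preserving colimits. With that, $\LMod_T(\cM) \in \CAlg(\PrV)$.

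The main obstacle is showing that the universal property holds inside the locally full sub-$2$-category of colimit-preserving functors, not merely in $\CAlg(\RMod_\cV(\IlCat))$. I would handle this with \cref{lem:conicallimitsub}-style reasoning. A symmetric monoidal $\cV$-linear functor $\cN \to \LMod_T(\cM)$ preserves colimits iff its composite with the forgetful functor does, because the forgetful functor creates colimits when $T$ is cocontinuous. Hence
\[
\Fun^{\rL,\otimes}_\cV(\cN,\LMod_T(\cM)) \simeq \LMod_T(\Fun^{\rL,\otimes}_\cV(\cN,\cM)),
\]
obtained as the pullback of the larger equivalence. The same argument works verbatim for $\RMod_\cV(\Catcolim)$.

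Monadic $1$-morphisms are created as well. By \cref{prop:locgr}, a $1$-morphism is monadic iff its comparison with the Eilenberg-Moore object is an equivalence. That comparison is detected by the conservative forgetful functor.
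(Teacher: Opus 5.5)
Your proposal is correct and follows the paper's proof. \cref{prop:CAlgPrVclosed} and \cref{lem:fgtlocallycocont} supply the hypotheses of \cref{prop:locgr} and \cref{cor:reflgr}, and existence comes from Eilenberg--Moore objects being $\Cat$-weighted limits. Your explicit descent from $\CAlg(\RMod_\cV(\IlCat))$ to the presentable sub-$2$-category is sound, but it is redundant: \cref{prop:CAlgPrVweightedlimits} already gives weighted limits in $\CAlg(\IPrV)$ directly.

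One imprecision, which the paper shares: $\IlCat$ does not locally admit geometric realizations, contrary to your claim. Strictly, \cref{cor:reflgr} should be applied with target the locally full sub-$2$-category of categories admitting geometric realizations and functors preserving them. The forgetful $2$-functor factors through this sub-$2$-category, and its Eilenberg--Moore objects agree with those computed in $\IlCat$.
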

\begin{proof}
	Combine \cref{prop:locgr}, \cref{lem:fgtlocallycocont} and \cref{prop:CAlgPrVclosed}.
\end{proof}

Finally, we construct several $2$-adjunctions involving $\CAlg(\IPrV)$.

\begin{prop}
	\label{prop:freeforgetfulfunctors}
	The left adjoint functors
	\[
	\lCat \overset{\Sym}{\longrightarrow} \CAlg(\lCat) \overset{\PSh^{\tav\text{-rex}}}{\longrightarrow} \CAlg(\Catcolim) \overset{- \otimes \cV}{\longrightarrow} \CAlg(\RMod_\cV(\Catcolim))
	\]
	enhance to $2$-functors, giving rise to a sequence of $2$-adjunctions
	\[
	\widehat{\ICat} \rightleftarrows \CAlg(\widehat{\ICat}) \rightleftarrows \CAlg(\IPr) \rightleftarrows \CAlg(\IPrV)
	\]
	with the forgetful $2$-functors from \cref{prop:CAlgPrVweightedlimits}. Similarly, the adjunctions
	\[
	\Sym^{\otimes} : \Catcolim \rightleftarrows \CAlg(\Catcolim) \; , \quad \Sym^{\otimes_\cV} : \RMod_\cV(\Catcolim) \rightleftarrows \CAlg(\RMod_\cV(\Catcolim))
	\]
	enhance to $2$-adjunctions between $2$-functors.
\end{prop}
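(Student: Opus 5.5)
The plan is to use \cref{reminder:twoadj} for each adjunction separately. This reduces the problem from $2$-categorical coherence to a $1$-categorical check: either the left adjoint preserves $\Cat$-tensorings, or the right adjoint preserves $\Cat$-cotensorings. We take the cotensoring route for the forgetful functors. By \cref{obs:cotensoring} and \cref{prop:CAlgPrVweightedlimits}, cotensorings in $\CAlg(\IPrV)$, $\CAlg(\IPr)$ and $\CAlg(\widehat{\ICat})$ are the functor categories $\Fun(\rK, \cM)$ with pointwise structure, and they are created by the forgetful $2$-functors to $\IlCat$. So each forgetful $1$-functor in the chain preserves cotensorings. Since it is already a $2$-functor by \cref{obs:CAlgCattwofunctors} and \cref{prop:CAlgPrVweightedlimits}, its left adjoint enhances to a $2$-functor and the adjunction enhances to a $2$-adjunction.

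The first step is to establish the underlying $1$-adjunctions and their compositions. Three facts suffice. First, $\Sym \dashv \mathrm{fgt}$ between $\lCat$ and $\CAlg(\lCat)$ is the free commutative algebra adjunction. Second, $\PSh^{\tav\text{-rex}}$ with Day convolution is left adjoint to the forgetful functor $\CAlg(\Catcolim) \to \CAlg(\lCat)$, using \HA{Rem.}{4.8.1.8} and the fact that $\PSh^{\tav\text{-rex}}$ is symmetric monoidal as a functor $\lCat \to \Catcolim$, so it induces an adjunction on $\CAlg$. Third, $- \otimes \cV$ is left adjoint to restriction along the unit; this holds since $\CAlg(\RMod_\cV(\Catcolim)) \simeq \CAlg(\Catcolim)_{\cV/}$ and the coproduct in $\CAlg$ is the tensor product \HA{Prop.}{3.2.4.7}. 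The adjunctions $\Sym^{\otimes}$ and $\Sym^{\otimes_\cV}$ are free commutative algebras in the symmetric monoidal categories $\Catcolim$ and $\RMod_\cV(\Catcolim)$. The presentable variants follow because these left adjoints send presentable inputs to presentable outputs, and because $\Pr \hookrightarrow \Catcolim$ is a $2$-full sub-$2$-category.

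The second step is to verify that each forgetful $2$-functor preserves $\Cat$-cotensorings. For $\CAlg(\RMod_\cV(\Catcolim)) \to \CAlg(\Catcolim) \to \CAlg(\lCat) \to \lCat$, the cotensor $\Fun(\rK, \cM)$ carries the pointwise symmetric monoidal structure and the pointwise $\cV$-module structure. Colimits in it are computed pointwise, so each forgetful functor sends it to the cotensor $\Fun(\rK, -)$ at the next level. The same applies to $\RMod_\cV(\Catcolim) \to \Catcolim$ for the $\Sym^{\otimes_\cV}$ and $\Sym^{\otimes}$ cases. For the cotensoring in $\RMod_\cV(\Catcolim)$ we use that $\Fun(\rK, \cM)$ is still cocomplete with a cocontinuous pointwise action, as in the proof of \cref{prop:CAlgPrVweightedlimits}.

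The main obstacle is to make sure that the $2$-categorical structure used on the target is the one defined via the $\star$-tensoring, and that the cotensors identified in \cref{obs:cotensoring} really are cotensors for that enrichment. In the large cocomplete setting, the hom-categories are $\Fun^{\rL,\otimes}_\cV$ rather than all symmetric monoidal functors. I would address this by applying \cref{lem:conicallimitsub}-style reasoning: $\CAlg(\RMod_\cV(\ICatcolim)) \hookrightarrow \CAlg(\RMod_\cV(\IlCat))$ is locally full, the cotensor of a cocomplete object stays cocomplete, and a functor $\rK \to \Fun^{\rL,\otimes}_\cV(\cM,\cN)$ corresponds to a cocontinuous map into $\Fun(\rK,\cN)$. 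Hence the cotensor universal property restricts to the sub-$2$-category. Finally, since $\Sym \simeq \Env \circ \mathrm{Triv}$ and the composite left adjoints compose, the universal equivalence $\Fun^{\rL,\otimes}_\cV(\PSh(\Sym \rC)\otimes\cV, \cM) \simeq \Fun(\rC, \cM)$ follows by composing the three $2$-adjunctions on hom-categories.
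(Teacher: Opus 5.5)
Your proposal is correct and follows the paper's argument: both use \cref{reminder:twoadj} in its cotensoring form, which needs only that the forgetful $2$-functors preserve the pointwise cotensors $\Fun(\rK,\cM)$, and this is exactly what \cref{obs:cotensoring} and \cref{prop:CAlgPrVweightedlimits} provide. You also spell out the underlying $1$-adjunctions and the restriction to locally full sub-$2$-categories, which the paper leaves implicit; one small correction is that $\PSh^{\tav\text{-rex}}\Sym$ produces presentable categories from \emph{small} inputs rather than ``presentable'' ones, so the $2$-adjunction with $\CAlg(\IPr)$ is only partially defined, as the paper itself says.
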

\begin{proof}
	By \cref{reminder:twoadj}, it suffices to prove that the $2$-categories $\CAlg(\IlCat), \CAlg(\ICatcolim)$ and $\CAlg(\RMod_\cV(\ICatcolim))$ are $\lCat$-cotensored and the forgetful functors preserve $\ICat$-cotensors, which follows from \cref{prop:CAlgPrVweightedlimits}.
\end{proof}
\begin{rem}
	An alternative proof that $\Sym$ and $\Sym^{\otimes}$ are $2$-functors, using flipped Segal categories, can be found in \cite[Prop. 2.2.15]{blomchain}.
\end{rem}

\begin{cor}
	\label{cor:freeforgetfulfunctors}
	There is a sequence of $2$-functors
	\[
	\IlCat \overset{\Sym}{\longrightarrow} \CAlg(\IlCat) \overset{\PSh}{\longrightarrow} \CAlg(\IPr) \overset{- \otimes \cV}{\longrightarrow} \CAlg(\IPrV)
	\]
	enhancing the respective $1$-functors, such that for $\rC \in \Cat$ and $\cM \in \CAlg(\PrV)$,
	\[
	\Fun^{\otimes, \rL}_\cV (\PSh ( \Sym - ) \otimes \cV , \cM ) \simeq \Fun(\rC, \cM) \; .
	\]
\end{cor}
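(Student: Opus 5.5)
The plan is to read the corollary directly off \cref{prop:freeforgetfulfunctors}. That proposition shows the three left adjoint $1$-functors $\Sym$, $\PSh^{\tav\text{-rex}}$ and $-\otimes\cV$ enhance to $2$-functors that are left $2$-adjoint to the forgetful $2$-functors. The first step is to restrict to the presentable setting. For a small category $\rC$, we have $\Sym\rC$ small, and $\PSh^{\tav\text{-rex}}(\Sym\rC)\simeq\PSh(\Sym\rC)$ is presentable. Moreover, $-\otimes\cV$ sends presentable objects to presentable ones. So on small inputs the composite $2$-functor lands in $\CAlg(\IPrV)$.

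The second step is to check that $\CAlg(\IPrV)\hookrightarrow\CAlg(\RMod_\cV(\ICatcolim))$ and $\CAlg(\IPr)\hookrightarrow\CAlg(\ICatcolim)$ are locally full sub-$2$-categories on colimit-preserving functors, and are full on the relevant hom-categories. Concretely, a symmetric monoidal $\cV$-linear functor between presentable categories preserving small colimits is the same datum in either $2$-category. Granting this, the $2$-adjunction equivalences of hom-categories restrict along these inclusions. This yields a partially defined left $2$-adjoint to $\CAlg(\IPrV)\to\IlCat$, defined on small categories, together with the composite equivalence
\[
\Fun^{\otimes,\rL}_\cV(\PSh(\Sym\rC)\otimes\cV,\cM)\simeq\Fun^{\otimes,\rL}(\PSh\Sym\rC,\cM)\simeq\Fun^{\otimes}(\Sym\rC,\cM)\simeq\Fun(\rC,\cM).
\]
Each of these equivalences is induced by precomposition with the relevant unit, so the composite is precomposition with $\rC\to\PSh(\Sym\rC)\otimes\cV$. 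This composite equivalence is exactly the statement of the corollary.

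The main obstacle is that the middle adjunction is only partial, since $\PSh$ does not preserve colimits of large categories. The $2$-functoriality of $\PSh$ on all of $\CAlg(\IlCat)$ must therefore be obtained from $\PSh^{\tav\text{-rex}}$ rather than from $\PSh$. I would handle this by noting that $\PSh^{\tav\text{-rex}}$ is defined on all of $\CAlg(\IlCat)$ and is a $2$-functor there. The sub-$2$-category of small symmetric monoidal categories is then carried into $\CAlg(\IPr)$, so the $2$-functor in the statement is simply the restriction of that $2$-functor.
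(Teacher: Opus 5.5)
Your proposal is correct and follows the paper, which gives no separate argument for this corollary. It reads it off \cref{prop:freeforgetfulfunctors}: restrict the $2$-adjunctions (with $\PSh^{\tav\text{-rex}}$ in the middle) to small categories, where everything lands in the full sub-$2$-categories $\CAlg(\IPr)$ and $\CAlg(\IPrV)$, and compose the three hom-equivalences. One correction to your motivation: the middle adjunction is only partial because $\PSh$ of a large category is not presentable (not even locally small), not because $\PSh$ fails to preserve colimits.
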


\begin{prop}
	\label{prop:eosstar}
	Given a $f: \cV \to \cW$ in $\CAlg(\Pr)$, the extension-of-scalars and restriction-of-scalars along it enhance to a $2$-adjunction
	\[
	f_{\mathrm{ext}} : \CAlg(\PrV) \rightleftarrows \CAlg(\PrW) : f^{\mathrm{res}} \; .
	\]
\end{prop}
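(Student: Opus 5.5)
The plan is to follow the pattern of \cref{prop:freeforgetfulfunctors} and \cref{obs:CAlgCattwofunctors}. First we produce the $1$-adjunction, and then we use \cref{reminder:twoadj} to enhance it to a $2$-adjunction.

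On the level of $1$-categories, $f$ makes $\cW$ into a commutative algebra in $\PrV$. Extension of scalars $-\otimes_\cV \cW : \PrV \to \PrW$ is symmetric monoidal with lax symmetric monoidal right adjoint $f^{\mathrm{res}}$, by the relative tensor product formalism of \HA{Thm.}{4.5.2.1} and \HA{Prop.}{4.6.2.17}. Applying $\CAlg$ therefore gives an adjunction $f_{\mathrm{ext}} : \CAlg(\PrV) \rightleftarrows \CAlg(\PrW) : f^{\mathrm{res}}$. Alternatively, one can identify $\CAlg(\PrV)\simeq \CAlg(\Pr)_{\cV/}$ and $\CAlg(\PrW)\simeq \CAlg(\Pr)_{\cW/}$. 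Under these identifications $f_{\mathrm{ext}}$ is pushout along $f$, namely $\cM\mapsto \cM\otimes_\cV\cW$, which is the coproduct in $\CAlg$ by \HA{Prop.}{3.2.4.7}, and $f^{\mathrm{res}}$ is precomposition with $f$. In this description the adjunction is formal.

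To enhance this to a $2$-adjunction, by \cref{reminder:twoadj} it suffices to check two things. The first is that $f^{\mathrm{res}}$ underlies a $2$-functor $\CAlg(\IPrW)\to\CAlg(\IPrV)$. The second is that $\CAlg(\IPrW)$ admits $\Cat$-cotensorings and $f^{\mathrm{res}}$ preserves them. The existence of cotensorings is \cref{prop:CAlgPrVweightedlimits}, and they are given by $\Fun(\rK,\cM)$ with the pointwise structure (\cref{obs:cotensoring}). Restriction of scalars does not change the underlying symmetric monoidal category, and only restricts the $\cW$-action along $f$, pointwise. The $\cV$-module $f^{\mathrm{res}}\Fun(\rK,\cM)$ is therefore $\Fun(\rK, f^{\mathrm{res}}\cM)$, since both are obtained by restricting along the diagonal composed with $f$ (\cref{obs:cotensoring}, footnote). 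Hence cotensors are preserved, because the forgetful functors to $\IlCat$ create them.

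For $f^{\mathrm{res}}$ to be a $2$-functor in the first place, I would observe that on hom-categories it is the inclusion $\Fun^{\rL,\otimes}_\cW(\cM,\cN)\to\Fun^{\rL,\otimes}_\cV(f^{\mathrm{res}}\cM,f^{\mathrm{res}}\cN)$, which forgets $\cW$-linearity to $\cV$-linearity. More formally, it is the restriction of the $2$-functor $f^{\mathrm{res}}$ on $\CAlg(\RMod_\cW(\IlCat))$ from \cref{obs:CAlgCattwofunctors}, since presentability and cocontinuity are preserved. The main obstacle is this bookkeeping: checking that the presentable sub-$2$-categories are compatible with the enrichments built in \cref{obs:CAlgCattwofunctors}, in particular via the locally full embedding \cref{lem:conicallimitsub}. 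It should be routine once $\CAlg(\IPrW)\hookrightarrow\CAlg(\RMod_\cW(\IlCat))$ is known to be locally full and closed under cotensors, which is \cref{prop:CAlgPrVweightedlimits}.
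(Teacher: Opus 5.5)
Your proposal is correct and follows essentially the paper's argument. You realize $f^{\mathrm{res}}$ as the restriction of the $2$-functor from \cref{obs:CAlgCattwofunctors} to the presentable sub-$2$-categories, and you apply \cref{reminder:twoadj} because $\CAlg(\IPr_\cW)$ has $\Cat$-cotensors and $f^{\mathrm{res}}$ preserves them, since it commutes with forgetful $2$-functors that create them. The only difference is which forgetful functor you use: the paper uses the one to $\CAlg(\IPr)$, i.e.\ restriction along $\Spaces \to \cV$, and you use the one to $\IlCat$.

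One harmless slip: on hom-categories $f^{\mathrm{res}}$ forgets $\cW$-linear structure to $\cV$-linear structure, and this is not an inclusion in general. Your argument never uses that claim, though.
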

\begin{proof}
	We already know from \cref{obs:CAlgCattwofunctors} that \[f^{\mathrm{res}} : \CAlg(\RMod_{\cW}(\lCat)) \to \CAlg(\RMod_{\cV}(\lCat))\] is a $2$-functor, and it restricts to presentable categories. Thus, the statement once again follows from \cref{reminder:twoadj} since restriction-of-scalars preserves cotensors, as it factors the restriction of scalars $\CAlg(\IPrV) \to \CAlg(\Pr)$ along $\Spaces \to \cV$ which creates weighted colimits by \cref{prop:CAlgPrVweightedlimits}.
\end{proof}


\bibliographystyle{alpha}
\bibliography{enrichedoperadbib}

\end{document}